\DeclareMathOperator{\diag}{diag}
\newcommand{\otimesB}{\underset{B}{\otimes}}
\newcommand{\cg}{C^{*}(\Gamma)}
\newcommand{\cp}{\mathrm{c.p.}}
\newcommand{\maxtimes}{\hat{\otimes}}
\newcommand{\gmaxtimes}{\!\stackrel{\scriptscriptstyle \Gamma}{\maxtimes}\!}
\newcommand{\gbmaxtimes}{\!\underset{\scriptscriptstyle
    B}{\stackrel{\scriptscriptstyle \Gamma}{\maxtimes}}\!}
\newcommand{\Rn}{\mathcal{R}_{n}}
\newcommand{\barop}{\overline{\op}}
\newcommand{\bfF}{\mathbf{F}}
\newcommand{\bfC}{C^{*}(-)}
\newcommand{\bfG}{\mathbf{G}}
\newcommand{\bfU}{(-)_{*,*}}
\newcommand{\bgalg}{\ensuremath
  \mathbf{Alg}_{(B,\Gamma)}}
\newcommand{\bgsalg}{\ensuremath
  \mathbf{*\text{-}Alg}_{(B,\Gamma)}}
 \newcommand{\bgsalgu}{\ensuremath
  \mathbf{*\text{-}Alg}^{0}_{(B,\Gamma)}}
\newcommand{\cgsalgu}{\ensuremath
  \mathbf{*\text{-}Alg}^{0}_{(\complex,\Gamma)}}
\newcommand{\bgcalg}{\ensuremath
  \mathbf{C^{*}\text{-}Alg}_{(B,\Gamma)}}
\newcommand{\cgcalg}{\ensuremath
  \mathbf{C^{*}\text{-}Alg}_{(\complex,\Gamma)}}
\newcommand{\mal}{\tilde\boxtimes}
\newcommand{\Mn}{M_{n}}
\newcommand{\GLn}{\mathrm{GL}_{n}}
\newcommand{\Ao}{A^{B}_{\mathrm{o}}}
\newcommand{\Aoo}{A^{B_{0}}_{\mathrm{o}}}
\newcommand{\Au}{A^{B}_{\mathrm{u}}}
\newcommand{\Auo}{A^{B_{0}}_{\mathrm{u}}}
\newcommand{\Aup}{A^{B}_{\mathrm{u}'}}
\theoremstyle{definition} 
\newtheorem{remark}{Remark}[subsection]
\newtheorem{remarks}[remark]{Remarks}
\newtheorem{example}[remark]{Example}
\theoremstyle{plain}
\newtheorem{definition}[remark]{Definition}
\newtheorem{theorem}[remark]{Theorem}
\newtheorem{proposition}[remark]{Proposition}
\newtheorem{corollary}[remark]{Corollary}
\newtheorem{lemma}[remark]{Lemma}
\newtheorem*{assumption*}{Assumption}
\newtheorem*{definition*}{Definition}
\newtheorem*{theorem*}{Theorem}
\newcommand{\mfrak}[1]{\mathfrak{#1}}
\newcommand{\frakM}{\mfrak{M}}
\newcommand{\gtimes}{\stackrel{\Gamma}{\otimes}}
\newcommand{\complex}{\mathbb{C}}
\newcommand{\reals}{\mathbb{R}}
\newcommand{\integers}{\mathbb{Z}}
\newcommand{\naturals}{\mathbb{N}}
\DeclareMathOperator{\Id}{id}
\DeclareMathOperator{\End}{End}
\newcommand{\todot}{\tilde{\otimes}}
\newcommand{\Lt}[1]{\triangleleft}
\newcommand{\Rt}[1]{\triangleright}
\newcommand{\frei}{\,\cdot\,}
\newcommand{\SU}{\mathrm{SU}(2)}
\newcommand{\SUq}{\mathrm{SU}_{q}(2)}
\newcommand{\SUd}{\mathrm{SU}^{\mathrm{dyn}}_{Q}(2)}
\newcommand{\op}{\mathsf{op}}
\newcommand{\co}{\mathsf{co}}
\renewcommand{\top}{\mathsf{T}}
\newcommand{\rotxc}[1]{\begin{sideways}#1\end{sideways}}
\newcommand{\invert}[1]{\rotxc{\rotxc{#1}}}
\renewcommand{\bot}{\invert{$\scriptstyle\mathsf{T}$}}
\newcommand{\MC}{\frakM(\complex)}
\newcommand{\FR}{\mathcal{F}_{R}}
\newcommand{\SL}{\mathrm{SL(2)}}
\newcommand{\Hopf}{\mathbf{Hopf}}
\newcommand{\cHopf}{\mathbf{C^{*}}\text{-}\mathbf{Hopf}}
\newcommand{\btimes}{\underset{B}{\otimes}}
\newcommand{\ev}{\mathrm{ev}}
\newcommand{\cgalg}{\ensuremath
  \mathbf{Alg}_{(C,\Gamma)}}
\newcommand{\cgsalg}{\ensuremath
  \mathbf{Alg}^{*}_{(C,\Gamma)}}
\begin{document}



\title[Free dynamical quantum groups and $\SUd$]{Free dynamical quantum groups \\ and \\  the dynamical
  quantum group $\SUd$}

\author{Thomas Timmermann}
\thanks{Supported by the SFB 878
    ``Groups, geometry and actions''}
\address{University of Muenster \\ Einsteinstr.\ 62, 48149
  Muenster, Germany \\ timmermt@math.uni-muenster.de}
\date{\today}

\maketitle
\begin{abstract}
  We introduce dynamical analogues of the free orthogonal
  and free unitary quantum groups, which are no longer Hopf
  algebras but Hopf algebroids or quantum
  groupoids. These objects are constructed on the purely
  algebraic level and on the level of universal
  $C^{*}$-algebras. As an example, we recover the dynamical
  $\SUq$ studied by Koelink and Rosengren, and construct a
  refinement that includes several interesting limit cases.
\end{abstract}
\tableofcontents

\section{Introduction}

\label{section:introuction}

Dynamical quantum groups were introduced by Etingof and
Var\-chenko as an algebraic tool to study the quantum
dynamical Yang-Baxter equation appearing in statistical
mechanics
\cite{etingof:book,etingof:qdybe,etingof:exchange}. Roughly,
one can associate to every dynamical quantum group a
monoidal category of dynamical representations, and to every
solution $R$ of the dynamical Yang-Baxter equation a
dynamical quantum group $A_{R}$ with a specific
 dynamical representation $\pi$ such that
$R$ corresponds to a braiding on the monoidal category
generated by $\pi$.

In this article, we introduce two families of dynamical
quantum groups $\Ao(\nabla,F)$ and $\Au(\nabla,F)$ which are
natural generalizations of the free orthogonal and the free
unitary quantum groups introduced by Wang and van Daele
\cite{wang:universal,wang:thesis}. Roughly, these dynamical
quantum groups are universal with respect to the property
that they possess a corepresentation $v$ such that $F$
becomes a morphism of corepresentations from the inverse of
the transpose $v^{-\top}$ or from $(v^{-\top})^{-\top}$,
respectively, to $v$.

For a specific choice of $B,\nabla,F$, the free orthogonal
dynamical quantum group turns out to coincide with the
dynamical analogue of $\SUq$ that arises from a
trigonometric dynamical $R$-matrix and was studied by
Koelink and Rosengren \cite{koelink:su2}. We refine the
definition of this variant of $\SUq$ so that the resulting
global dynamical quantum group includes the classical $\SU$,
the non-dynamical $\SUq$ of Woronowicz \cite{woron:0}, the
dynamical $\SUq$ and further interesting limit cases which
can be recovered from the global object by suitable base
changes.

In the non-dynamical case, free orthogonal and free unitary
quantum groups are most conveniently constructed on the
level of universal $C^{*}$-algebras, where Woronowicz's
theory of compact matrix quantum groups applies
\cite{woron:1}.  We shall, however, start on the purely
algebraic level and then pass to the level of
universal $C^{*}$-algebras, where the main problem is to
identify a good definition of a dynamical quantum group.

These new classes of dynamical quantum groups give rise to
several interesting questions, for example, whether it is
possible to obtain a classification similar as in
\cite{wang:classify}, to determine their categories of
representations as in \cite{banica:orthogonal} and
\cite{banica:unitary}, or to relate their representation
theory to special functions as it was done in
\cite{koelink:su2} in the special case of $\SUq$.

Let us now describe the organization and contents of this
article in some more detail.

The first part of this article (\S\ref{section:algebra}) is
devoted to the purely algebraic setting.  

We start with a summary on dynamical quantum groups
(\S\ref{section:bg}). Roughly, these objects can be regarded
as Hopf algebras, that is, as algebras $A$ equipped with a
comultiplication $\Delta$, counit $\epsilon$ and antipode
$S$, where the field of scalars has been replaced by a
commutative algebra $B$ equipped with an action of a group
$\Gamma$. The comultiplication $\Delta$ does not take values
in the ordinary tensor product $A\otimes A$, but in a
product $A\todot A$ that takes $B$ and $\Gamma$ into
account, and the counit takes values in the crossed product
algebra $B\rtimes \Gamma$ which is the unit for the
product $-\todot-$.  If $B$ is trivial, however, these dynamical
quantum groups are just $\Gamma$-graded Hopf algebras
(\S\ref{section:trivial}). In general, we shall use the term
$(B,\Gamma)$-Hopf algebroid instead of dynamical quantum
group to be more precise.

The free orthogonal and unitary dynamical quantum
groups are  defined as follows.  Let $B$ be a unital,
commutative algebra with a left action of a group $\Gamma$,
let $\nabla=(\gamma_{1},\ldots,\gamma_{n})$ be an $n$-tuple
in $\Gamma$ and let $F \in \GLn(B)$ such that $F_{ij} = 0$
whenever $\gamma_{i} \neq \gamma_{j}^{-1}$.
\begin{definition*} \label{definition:intro-ao}  The \emph{free orthogonal dynamical
    quantum}  $\Ao(\nabla,F)$ is the
  universal algebra  with a homomorphism $r
  \times s\colon B\otimes B\to \Ao(\nabla,F)$ and a $v\in
  \GLn(\Ao(\nabla,F))$ satisfying
    \begin{itemize}
    \item[(a)] $v_{ij}r(b)s(b') = r(\gamma_{i}(b))
      s(\gamma_{j}(b'))v_{ij}$ for all $b,b'\in B$ and $i,j
      \in \{1,\ldots,n\}$,
    \item[(b)] $r_{n}(\hat F)v^{-\top} =vs_{n}(F)$, where
      $v^{-\top}$ denotes the transpose of $v^{-1}$ and $
      \hat F = (\gamma_{i}(F_{ij}))_{i,j}$.
    \end{itemize}
\end{definition*}
\begin{theorem*} \label{theorem:intro-ao-hopf}
  $\Ao(\nabla,F)$ can be equipped with the structure of a
  $(B,\Gamma)$-Hopf algebroid such that $\Delta(v_{ij}) =
  \sum_{k} v_{ik} \todot v_{kj}$, $ \epsilon(v_{ij}) =
  \delta_{i,j}\gamma_{i}$, and $ S(v_{ij}) = (v^{-1})_{ij}$
  for all $i,j$.
\end{theorem*}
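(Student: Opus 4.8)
The plan is to invoke the universal property of $\Ao(\nabla,F)$ three times, once for each structure map. To construct a homomorphism $\Phi$ into a target algebra $T$, it suffices to produce a pair consisting of a homomorphism $\rho\times\sigma\colon B\otimes B\to T$ and an element $w\in\GLn(T)$ satisfying the two relations (a) and (b) (with $r,s,v$ replaced by $\rho,\sigma,w$); universality then yields a unique homomorphism $\Ao(\nabla,F)\to T$ carrying $r,s,v$ to $\rho,\sigma,w$. For the antipode the target is the opposite algebra $\Ao(\nabla,F)^{\op}$ and $\Phi$ is required to be an anti-homomorphism, so one applies the same principle there.

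First I would construct $\Delta$. Here $T=\Ao(\nabla,F)\todot\Ao(\nabla,F)$, the base map is the canonical one determined by the $(B,\Gamma)$-bimodule structure of $\todot$ (range $r$ into the left leg, source $s$ into the right leg), and $w_{ij}=\sum_{k} v_{ik}\todot v_{kj}$. Relation (a) for $w$ follows from relation (a) for $v$ together with multiplicativity of the $\Gamma$-action, once the $\Gamma$-grading underlying $\todot$ is taken into account. For relation (b) the key observation is that $w$ is invertible with $(w^{-1})_{ij}=\sum_{k}(v^{-1})_{ik}\todot(v^{-1})_{kj}$, since $\Delta$ is multiplicative and $vv^{-1}=v^{-1}v=1$; hence $w^{-\top}$ is computed leg-wise from $v^{-\top}$, and relation (b) for $w$ reduces to relation (b) for $v$ on each factor. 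I expect this verification inside $\todot$ to be the main obstacle, because one must track carefully how the transpose, the inverse, and the source--range twist defining $\todot$ interact.

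Next, for $\epsilon$ I would take $T=B\rtimes\Gamma$, with base map $b\otimes b'\mapsto bb'$ and $w=\diag(\gamma_{1},\ldots,\gamma_{n})$, viewed as a diagonal matrix of the group-like elements $\gamma_{i}\in B\rtimes\Gamma$. This $w$ is invertible with inverse $\diag(\gamma_{1}^{-1},\ldots,\gamma_{n}^{-1})$, relation (a) reduces to the commutation rule $\gamma_{i}b=\gamma_{i}(b)\gamma_{i}$ in the crossed product, and relation (b) becomes an identity between $\hat F$ and $F$ conjugated by $w$ which holds precisely because of the support condition $F_{ij}=0$ whenever $\gamma_{i}\neq\gamma_{j}^{-1}$. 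For $S$ the target is $\Ao(\nabla,F)^{\op}$, the base map exchanges $r$ and $s$, and the candidate matrix has entries $(v^{-1})_{ij}$; relation (b) for this matrix is exactly relation (b) for $v$ read backwards in the opposite algebra, which is what makes $S$ well defined as an anti-homomorphism.

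Finally, it remains to verify the Hopf-algebroid axioms: coassociativity of $\Delta$, the counit identities, and the antipode identities. Since $\Delta$, $\epsilon$ and $S$ are algebra (anti-)homomorphisms and $\Ao(\nabla,F)$ is generated by $r(B)$, $s(B)$ and the $v_{ij}$, it suffices to check each axiom on these generators. On the $v_{ij}$ all three reduce to matrix identities that hold by construction: coassociativity to computing $\sum_{k,l} v_{ik}\todot v_{kl}\todot v_{lj}$ in two ways, the counit axiom to $\sum_{k}\delta_{ik}\gamma_{i}\,v_{kj}=v_{ij}=\sum_{k} v_{ik}\,\delta_{kj}\gamma_{j}$ after the identifications of $(B\rtimes\Gamma)\todot\Ao(\nabla,F)$ and $\Ao(\nabla,F)\todot(B\rtimes\Gamma)$ with $\Ao(\nabla,F)$, and the antipode axiom to $\sum_{k}S(v_{ik})v_{kj}=\delta_{ij}=\sum_{k}v_{ik}S(v_{kj})$, i.e.\ to $v^{-1}v=1=vv^{-1}$. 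On the base maps the axioms encode the standard compatibility of $r,s$ with $B\rtimes\Gamma$ and are routine.
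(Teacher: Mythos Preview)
Your proposal is correct and follows the same overall strategy as the paper: invoke the universal property of $\Ao(\nabla,F)$ three times to produce $\Delta$, $\epsilon$, $S$, then verify the Hopf-algebroid axioms on generators. The paper, however, packages the verification that the candidate matrices $v\mal v$, $\nabla$, $v^{-1}$ satisfy the defining intertwiner relation through the functorial machinery of \S\ref{section:rn}: it introduces the groupoid $\Rn(A)$ and functors $\bm{\Delta}$, $\bm{\epsilon}$, $(-)^{-\co,\op}$ on it, proves once and for all that these functors commute appropriately with $(-)^{-\top}$ (Lemmas~\ref{lemma:rn-ed}--\ref{lemma:rn-commute}), and then simply applies them to the intertwiner $v^{-\top}\xrightarrow{F}v$. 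Your direct computations are exactly what these lemmas encode; the functorial language buys reusability (the same lemmas drive the proofs for $\Au(\nabla,F)$ and $\Ao(\nabla,F,G)$) and cleaner bookkeeping of how the gradings, $\hat F$ versus $F$, and the $r/s$-swap interact, which is precisely the point you flag as ``the main obstacle''.

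One small slip: when you argue that $w=v\mal v$ is invertible ``since $\Delta$ is multiplicative'', you are invoking $\Delta$ before it exists. This is easily repaired by computing directly that $\sum_k(v\mal v)_{ik}(v^{-1}\mal v^{-1})_{kj}=\delta_{ij}\,1\todot 1$ (and the other side), which is exactly the content of Lemma~\ref{lemma:rn-commute}~iii) in the paper.
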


Assume now that $B$ is equipped with an involution and let
$F\in \GLn(B)$ such that $F^{*}=F$ and $F_{ij}=0$ whenever
$\gamma_{i} \neq \gamma_{j}$.
\begin{definition*} The \emph{free unitary dynamical
    quantum} $\Au(\nabla,F)$ is the universal $*$-algebra
  with a homomorphism $r \times s\colon B\otimes B\to
  \Au(\nabla,F)$ and a unitary $v\in \GLn(\Au(\nabla,F))$
  satisfying the condition (a) above and (c) $\bar v$ is
  invertible and $r_{n}(\hat F)\bar v^{-\top} =vs_{n}(F)$.
\end{definition*}
\begin{theorem*}
  $\Au(\nabla,F)$ can be equipped with the structure of a
  $(B,\Gamma)$-Hopf $*$-algebroid such that $\Delta(v_{ij}) =
  \sum_{k} v_{ik} \todot v_{kj}$, $ \epsilon(v_{ij}) =
  \delta_{i,j}\gamma_{i}$, $ S(v_{ij}) = (v^{-1})_{ij}$ for
  all $i,j$.
\end{theorem*}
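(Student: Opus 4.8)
The plan is to follow the blueprint of the free orthogonal case, defining $\Delta$, $\epsilon$ and $S$ on the generators via the universal property of $\Au(\nabla,F)$ and then checking the $(B,\Gamma)$-Hopf $*$-algebroid axioms as matrix identities. The new features compared with the orthogonal Theorem are that the structure maps must respect the involution and that the defining data are now the unitarity of $v$ together with condition (c), which constrains $\bar v$ rather than $v$; correspondingly $S$ will fail to be involutive. Throughout, the hypothesis $F_{ij}=0$ for $\gamma_i\neq\gamma_j$ is what makes $\hat F$ and $F$ diagonal for the $\Gamma$-grading, and it is used at every step.

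To construct $\Delta$ I would set $w_{ij}:=\sum_k v_{ik}\todot v_{kj}$ in $\Au(\nabla,F)\todot\Au(\nabla,F)$ and verify that $\big(b\otimes b'\mapsto r(b)\todot s(b'),\,w\big)$ satisfies the defining relations, so that the universal property yields a $*$-homomorphism $\Delta$ with $\Delta(v_{ij})=w_{ij}$. Relation (a) for $w$ follows from (a) in each leg — which is also what makes the balanced element $v_{ik}\todot v_{kj}$ well defined across the middle index — using that $\Gamma$ acts by automorphisms. Unitarity of $w$ is then immediate: as $*$ acts entrywise on $\todot$ and $v$ is unitary, the matrix product $w^*w$ contracts one leg at a time by bilinearity, giving $w^*w=1=ww^*$, so in particular $w$ is invertible. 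The delicate point is condition (c). Since $\bar w$ carries no unitarity, its invertibility is not automatic; I would instead use condition (c) for $v$, which via $\bar v^{-\top}=r_n(\hat F)^{-1}v\,s_n(F)$ expresses $\bar v^{-1}$ through $v$ and the base maps, and lift this identity to each leg of $\todot$. The grading hypothesis ensures that $r_n(\hat F)$ and $s_n(F)$ pass through the $B$-balancing correctly, so that the verification of $r_n(\hat F)\bar w^{-\top}=w\,s_n(F)$ factors through the two instances of (c) for $v$.

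For the counit I would check that $\epsilon(v_{ij})=\delta_{ij}\gamma_i$ together with the canonical map $B\otimes B\to B\rtimes\Gamma$ defines a unitary diagonal matrix satisfying (a) and (c) in the unit object $B\rtimes\Gamma$; condition (c) here reduces to the identities $\gamma_i(F_{ij})\gamma_j=\gamma_i(F_{ij})\gamma_i$, which hold because $F_{ij}=0$ unless $\gamma_i=\gamma_j$. For the antipode I would define $S$ on generators by $S(v_{ij})=(v^{-1})_{ij}$, $S\circ r=s$ and $S\circ s=r$, extend it to an anti-homomorphism, and check that the defining relations — including unitarity and (c) — are preserved under reversal of the multiplication; the relation required for $v^{-1}$ is obtained by transposing and inverting (c). As $v$ is unitary one has $(v^{-1})_{ij}=v_{ji}^*$, so $v^{-1}$ is again unitary. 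Unlike $\Delta$ and $\epsilon$, the map $S$ is not a $*$-homomorphism: it carries the twisted compatibility $S(a^*)=S^{-1}(a)^*$ of a Hopf $*$-algebroid, is not involutive, and $S^2$ is implemented by conjugation with $F$, reflecting that $v$ and $\bar v$ are inequivalent corepresentations.

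It then remains to verify the Hopf-algebroid axioms. Coassociativity of $\Delta$ and the counit identities are formal consequences of the matrix shapes of $\Delta$ and $\epsilon$ together with the associativity and unitality of $\todot$, exactly as in the orthogonal case. The antipode identities reduce to $\sum_k (v^{-1})_{ik}v_{kj}=\delta_{ij}=\sum_k v_{ik}(v^{-1})_{kj}$, true by definition of $v^{-1}$. I expect the main obstacle to be the well-definedness of $\Delta$, and specifically the invertibility of $\bar w$ and the verification of condition (c) for $w$, since $\bar w$ lacks the unitarity that trivializes the corresponding statement for $w$; everything else is bookkeeping parallel to the orthogonal Theorem, with the proviso that $\Delta$ and $\epsilon$ be tracked as $*$-maps and $S$ as an anti-homomorphism with the appropriate $*$-behaviour.
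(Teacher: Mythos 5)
Your constructions of $\Delta$ and $\epsilon$ are sound and essentially what the paper does: it packages your leg\mbox{-}by\mbox{-}leg verifications as applications of the functors $\bm{\Delta}$ and $\bm{\epsilon}$ of \S\ref{section:rn} to the intertwiner $\bar v^{-\top}\xrightarrow{F}v$, and the invertibility of $\overline{v\mal v}$ that you rightly single out is handled by Lemma \ref{lemma:rn-commute}~iii) together with the homogeneity argument of Lemma \ref{lemma:rn-intertwiner}, applied to the homogeneous invertible matrix $\bar v$. The genuine gap is in your construction of $S$. Since $\Au(\nabla,F)$ is presented as a universal \emph{$*$-algebra}, its universal property produces only $*$-homomorphisms, and $S$ is not one: in any $(B,\Gamma)$-Hopf $*$-algebroid one has $*\circ S\circ *\circ S=\Id$, so $S$ commutes with $*$ precisely when $S^{2}=\Id$, which fails here because $S^{2}_{n}(v)=(v^{-\top})^{-\top}$ is only conjugate to $v$ via $F$ (see \S\ref{section:s}); the same identity shows that neither $S\circ *$ nor $*\circ S$ is a morphism of $(B,\Gamma)$-$*$-algebras into any of $A^{\op}$, $A^{\co}$, $\overline{A}$ or their composites. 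Consequently, ``define $S$ on the generators and extend to an anti-homomorphism'' cannot be justified from the universal property you are working with: you would need a presentation of $\Au(\nabla,F)$ as a \emph{plain} $(B,\Gamma)$-algebra, with the adjoints $v_{ij}^{*}$ listed among the generators and the $*$-images of all relations listed among the relations, and you would also have to specify and verify the value of $S$ on the $v_{ij}^{*}$ --- none of which your proposal addresses.

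This missing step is exactly the point of the paper's detour through the auxiliary algebra $\Aup(\nabla,F)$ of Definition \ref{definition:au-prime}: the universal plain $(B,\Gamma)$-algebra with two invertible homogeneous matrices $v,w$ and intertwiners $v^{-\top}\xrightarrow{1}w$ and $w^{-\top}\xrightarrow{F}v$. Its plain universal property yields $\Delta$, $\epsilon$ \emph{and} $S$ (Proposition \ref{proposition:au-prime-hopf}), as well as the involution itself as a homomorphism $j\colon A\to\overline{A}^{\op}$ interchanging $v$ and $w$ (Proposition \ref{proposition:au-prime-star}); Theorem \ref{theorem:au-prime-iso} then identifies $\Aup(\nabla,F)$ with $\Au(\nabla,F)$ and $w$ with $\bar v$. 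Some device of this kind is unavoidable for the antipode, so you should either import it or establish the corresponding plain-algebra presentation of $\Au(\nabla,F)$ yourself. The remainder of your outline --- the counit computation, coassociativity, the antipode identities, and the $*$-compatibility of $\Delta$ and $\epsilon$ --- is correct bookkeeping.
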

The formulas for $\Delta(v_{ij})$ and $\epsilon(v_{ij})$
above imply that the matrices $v$ above are
corepresentations of $\Ao(\nabla,F)$ and $\Au(\nabla,F)$,
respectively, and the conditions (b) and (c) assert that $F$
is an intertwiner from $v^{-\top}$ or $\bar v^{-\top}$,
respectively, to $v$. Such intertwiner relations admit
plenty functorial transformations which are studied
systematically in \S\ref{section:rn}, and yield short proofs
of the results above in \S\ref{section:ao}.  There, we also
consider involutions on certain quotients $\Ao(\nabla,F,G)$
of $\Ao(\nabla,F)$ which are parameterized by an
additional matrix $G\in \GLn(B)$.

Interestingly, the square of the antipode on the dynamical
quantum groups $\Ao(\nabla,F)$ and $\Au(\nabla,F)$ can be
described in terms of a natural family of characters
$(\theta^{(k)})_{k}$ which, like the counit $\epsilon$, take
values in $B\rtimes \Gamma$.  This family is an analogue of 
Woronowicz's fundamental family of characters on a compact
quantum group.

As a main example of the constructions above, we recover the
dynamical quantum group $\mathcal{F}_{R}(\mathrm{SU}(2))$ of
Koelink and Rosengren \cite{koelink:su2} associated to a
deformation parameter $q \neq 1$ as the free orthogonal
dynamical quantum group $\Ao(\nabla,F,G)$, where $B$ is the
meromorphic functions on the plane, $\Gamma=\integers$
acting by shifts, $\nabla=(1,-1)$ and $F=
\begin{pmatrix}
  0 & 1 \\ \tilde f & 0
\end{pmatrix}
$, where $\tilde f$ is the meromorphic function $\lambda
\mapsto q^{-1}(q^{2\lambda}-q^{-2})/(q^{2\lambda}-1)$, and $G=
\begin{pmatrix}
  0 & -1 \\ q^{-1} & 0
\end{pmatrix}
$. In \S\ref{section:sud}, we show how this example can be
refined such that the resulting dynamical quantum group
$\Ao(\nabla,F,G)$ includes $\mathcal{F}_{R}(\mathrm{SU}(2))$
and, simultaneously, a number of interesting limit cases
which can be recovered from the global object by suitable
base changes.

The second part of this article (\S\ref{section:universal})
extends the definition of dynamical quantum groups to the
level of universal $C^{*}$-algebras. Here, $B$ is assumed to
be a unital, commutative $C^{*}$-algebra and $\Gamma$ acts
via automorphisms. The main tasks is to find a
$C^{*}$-algebraic analogue of the product $-\todot -$ that
describes the target of the comultiplication. As in the
algebraic setting, we construct this product in two steps, by first
forming a cotensor product with respect to the Hopf
$C^{*}$-algebra $C^{*}(\Gamma)$ naturally associated to the
group $\Gamma$ (\S\ref{section:c}), and then taking a
quotient with respect to $B$ (\S\ref{section:cb}).  Given
the monoidal product, all definitions carry over from the
algebraic setting to the setting of universal
$C^{*}$-algebras easily (\S\ref{section:free-c}).

\section{The purely algebraic level}

\label{section:algebra}

Throughout this section, we assume all algebras and
homomorphisms to be unital over a fixed common ground field,
and $B$ to be a commutative algebra equipped with a left
action of a group $\Gamma$.

\subsection{Preliminaries on dynamical quantum groups}

\label{section:bg}

This subsection summarizes the basics of dynamical quantum
groups used in this article. We introduce the monoidal
category of $(B,\Gamma)$-algebras,  then
define $(B,\Gamma)$-Hopf algebroids, and finally consider
base changes and
the setting of $*$-algebras. Except for the base change,
most of this material is contained in \cite{etingof:qdybe}
and \cite{koelink:su2} in slightly different guise.  We omit
all proofs because they are straightforward.

Let $B^{\ev}=B\otimes B$.  A \emph{$B^{\ev}$-algebra} is an
algebra with a homomorphism $r\times s\colon B^{\ev}\to A$,
or equivalently, with homomorphisms $r_{A}=r,s_{A}=s\colon
B\to A$ whose images commute.  A morphism of
$B^{\ev}$-algebras is a $B^{\ev}$-linear homomorphism.
Write $\Gamma^{\ev}=\Gamma\times \Gamma$ and let
$e\in\Gamma$ be the unit.  Given a $\Gamma^{\ev}$-graded
algebra $A$, we write $\partial_{a} =
(\partial^{r}_{a},\partial^{s}_{a}) = (\gamma,\gamma')$
whenever $a\in A_{\gamma,\gamma'}$.
\begin{definition} \label{definition:bg-algebra} A
  \emph{$(B,\Gamma)$-algebra} is a $\Gamma^{\ev}$-graded
  $B^{\ev}$-algebra such that $(r\times s)(B^{\ev})
  \subseteq A_{e,e}$ and $ar(b)= r(\partial^{r}_{a}(b))a$,
  $as(b)=s(\partial^{s}_{a}(b)) a$ for all $b \in B$,
  $a\in A$.  A \emph{morphism} of $(B,\Gamma)$-algebras is a
  morphism of $\Gamma^{\ev}$-graded $B^{\ev}$-algebras.  We
  denote  by
  $\bgalg$ the category of all $(B,\Gamma)$-algebras.
\end{definition}
\begin{example} \label{example:bg-unit} Denote by $B\rtimes
  \Gamma$ the crossed product, that is, the universal
  algebra containing $B$ and $\Gamma$ such that $e=1_{B}$
  and $b\gamma \cdot b'\gamma' = b\gamma(b') \gamma\gamma'$
  for all $b,b'\in B$, $\gamma,\gamma' \in \Gamma$. This is
  a $(B,\Gamma)$-algebra, where
  $\partial_{b\gamma}=(\gamma,\gamma)$ and
  $r(b)=s(b)=b$ for all $b\in B$, $\gamma\in \Gamma$.
\end{example}
The category of all $(B,\Gamma)$-algebras can be equipped with a
 monoidal structure \cite{maclane} as follows. Let $A$ and $C$ be
$(B,\Gamma)$-algebras. Then the subalgebra
\begin{align*}
  A \gtimes C &:= \sum_{\gamma,\gamma',\gamma'' \in
    \Gamma} A_{\gamma,\gamma'} \otimes
  C_{\gamma',\gamma''} \subset A \otimes C
\end{align*}
is a $(B,\Gamma)$-algebra, where 
$\partial_{a\otimes c} = (\partial^{r}_{a},\partial^{s}_{c})$
 for all $a\in A$, $c\in C$ and $(r\times s)(b\otimes b') =
 r_{A}(b) \otimes s_{C}(b')$ for all $b,b'\in B$.
Let $I\subseteq A\gtimes C$ be the ideal generated by
 $\{s_{A}(b)\otimes 1 - 1\otimes r_{C}(b) : b\in
 B\}$. Then $A \todot C:=A\gtimes C/I$ is a
 $(B,\Gamma)$-algebra again, called the \emph{fiber product}
 of $A$ and $C$.  Write $a\todot c$ for the image of an
 element $a\otimes c$ in $A\todot C$.

The product $(A,C) \mapsto A\todot C$ is functorial,
associative and unital in the following sense.
\begin{lemma} \label{lemma:bg-monoidal}
  \begin{enumerate}
  \item For all morphisms of $(B,\Gamma)$-algebras
    $\pi^{1} \colon A^{1}\to C^{1}$, $\pi^{2}\colon
    A^{2}\to C^{2}$, there exists a morphism $\pi^{1}
    \tilde\otimes \pi^{2} \colon A^{1} \tilde\otimes A^{2}
    \to C^{1} \tilde\otimes C^{2}$, $a_{1} \todot a_{2}
    \mapsto \pi^{1}(a_{1}) \todot \pi^{2}(a_{2})$.
  \item For all $(B,\Gamma)$-alge\-bras $A,C,D$, there is an
    isomorphism $(A \tilde\otimes C) \tilde\otimes D \to A
    \tilde\otimes (C \tilde\otimes D)$, $(a \todot c) \todot
    d \mapsto a \todot (c \todot d)$.
  \item For each $(B,\Gamma)$-algebra $A$, there exist
    isomorphisms $(B\rtimes \Gamma) \tilde\otimes A \to A$
    and $A \tilde\otimes (B\rtimes \Gamma) \to A$, given by
    $b\gamma \todot a \mapsto r(b)a$ and $a \todot b\gamma
    \mapsto s(b)a$, respectively.
  \end{enumerate}
\end{lemma}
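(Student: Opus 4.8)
The plan is to reduce each of the three assertions to the corresponding fact about \emph{ordinary} tensor products, where associativity and unitality are tautological, and then to keep track of how the $\Gamma^{\ev}$-grading constraints and the defining ideals of the fiber products transform. Throughout I write $I_{AC}\subseteq A\gtimes C$ for the ideal generated by $\{s_A(b)\otimes 1-1\otimes r_C(b):b\in B\}$, so that $A\todot C=(A\gtimes C)/I_{AC}$, and I record the structure maps $r_{A\todot C}(b)=r_A(b)\todot 1$ and $s_{A\todot C}(b)=1\todot s_C(b)$. For \emph{(i)}, I start from $\pi^{1}\otimes\pi^{2}\colon A^{1}\otimes A^{2}\to C^{1}\otimes C^{2}$, which is graded and therefore carries $A^{1}_{\gamma,\gamma'}\otimes A^{2}_{\gamma',\gamma''}$ into $C^{1}_{\gamma,\gamma'}\otimes C^{2}_{\gamma',\gamma''}$, hence restricts to a homomorphism $A^{1}\gtimes A^{2}\to C^{1}\gtimes C^{2}$. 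Since $\pi^{1},\pi^{2}$ are $B^{\ev}$-linear, this restriction sends each generator of $I_{A^{1}A^{2}}$ to the corresponding generator of $I_{C^{1}C^{2}}$, so it descends to the quotients to give $\pi^{1}\todot\pi^{2}$; that this descended map respects the grading and the $B^{\ev}$-structure is read off its formula.

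For \emph{(ii)}, the key observation is that both iterated products are quotients of the single triple product $A\gtimes C\gtimes D=\sum A_{\gamma,\gamma'}\otimes C_{\gamma',\gamma''}\otimes D_{\gamma'',\gamma'''}$, on which the canonical associativity isomorphism of $\otimes$ acts as the identity. Composing the two quotient stages, I would compute that the kernel of $A\gtimes C\gtimes D\twoheadrightarrow(A\todot C)\todot D$ is the ideal generated by the two families
\[
s_A(b)\otimes 1\otimes 1-1\otimes r_C(b)\otimes 1,\qquad 1\otimes s_C(b)\otimes 1-1\otimes 1\otimes r_D(b),
\]
the first coming from the inner $I_{AC}$ and the second from the outer fiber product (here one uses $s_{A\todot C}(b)=1\todot s_C(b)$ and $r_{A\todot C}(b)=r_A(b)\todot 1$ to lift the outer generators back to the triple product). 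Running the same computation for $A\gtimes C\gtimes D\twoheadrightarrow A\todot(C\todot D)$ produces \emph{exactly} the same two families. Hence the two kernels coincide, and the identity associativity isomorphism of the triple product descends to the required map $(a\todot c)\todot d\mapsto a\todot(c\todot d)$.

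For \emph{(iii)}, consider $(B\rtimes\Gamma)\todot A$. Since $\partial_{b\gamma}=(\gamma,\gamma)$, the product $(B\rtimes\Gamma)\gtimes A$ is spanned by elements $b\gamma\otimes a$ with $\partial^{r}_{a}=\gamma$. I would first check that $b\gamma\otimes a\mapsto r(b)a$ is a homomorphism $(B\rtimes\Gamma)\gtimes A\to A$; the only nonformal point is that on a product it requires $a\,r(b')=r(\gamma(b'))\,a$, which is precisely the $(B,\Gamma)$-algebra axiom $ar(b')=r(\partial^{r}_{a}(b'))a$ under $\partial^{r}_{a}=\gamma$. This homomorphism kills each generator $be\todot 1-1\todot r_A(b)$, since both terms map to $r(b)$, so it descends to $\lambda\colon(B\rtimes\Gamma)\todot A\to A$. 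For the inverse I would set $\mu(a)=\partial^{r}_{a}\todot a$ on homogeneous $a$ (viewing $\partial^r_a\in\Gamma\subseteq B\rtimes\Gamma$), verify it is a homomorphism, and observe $\lambda\mu=\mathrm{id}$ immediately; the identity $\mu\lambda=\mathrm{id}$ follows from the fiber-product relation via $b\gamma\todot a=(be\todot 1)(\gamma\todot a)=(e\todot r_A(b))(\gamma\todot a)=\gamma\todot r_A(b)a=\mu(r(b)a)$. The right unit $A\todot(B\rtimes\Gamma)\to A$, $a\todot b\gamma\mapsto s(b)a$, is handled symmetrically with $s$ in place of $r$.

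Naturality of all three isomorphisms and the pentagon and triangle coherence conditions then come for free, because every map involved is a restriction or descent of the canonical associativity and unit isomorphisms of the underlying tensor products, which already satisfy Mac\,Lane's axioms. I expect the main obstacle to be the bookkeeping in \emph{(ii)}: one must confirm that the two distinct two-stage quotients cut out literally the same ideal in the triple product, and getting this right hinges on carefully tracking the grading constraints together with the identifications $r_{A\todot C}(b)=r_A(b)\todot 1$ and $s_{A\todot C}(b)=1\todot s_C(b)$.
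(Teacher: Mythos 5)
The paper itself omits this proof entirely (\S\ref{section:bg} declares all its preliminary results straightforward and proves none of them), so there is no argument of the author's to compare yours against; judged on its own, your proposal is correct in strategy, and parts (i) and (iii) are complete as written. In (i) you correctly use both defining properties of a morphism (gradedness to restrict to $A^{1}\gtimes A^{2}$, $B^{\ev}$-linearity to match generators of the ideals), and in (iii) you correctly isolate the one non-formal point, the axiom $ar(b')=r(\partial^{r}_{a}(b'))a$, and your two-sided inverse $\mu(a)=\partial^{r}_{a}\todot a$ together with the computation $b\gamma\todot a=\gamma\todot r_{A}(b)a$ checks out.

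In (ii), your kernel identification is true, but the step you defer as ``bookkeeping'' conceals the one place where a naive execution fails, so let me name it. Since the generators of $I_{AC}$ are homogeneous of degree $(e,e)$, the quotient $A\todot C$ is graded and $q_{A,C}\gtimes\Id$ is a well-defined surjection of the graded triple product $T:=A\gtimes C\gtimes D$ onto $(A\todot C)\gtimes D$ with kernel $(I_{AC}\otimes D)\cap T$ (exactness of $-\otimes D$ over the ground field). To see that this intersection equals the ideal of $T$ generated by $t_{b}:=s_{A}(b)\otimes 1\otimes 1-1\otimes r_{C}(b)\otimes 1$, you cannot simply factor a spanning element $(x\,u_{b}\,y)\otimes d$, with $u_{b}=s_{A}(b)\otimes 1-1\otimes r_{C}(b)$, as $(x\otimes 1)(u_{b}\otimes 1)(y\otimes d)$: the factor $x\otimes 1$ lies in $T$ only when the outer $s$-degree of $x$ is $e$, since $1\in D_{e,e}$. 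The repair is the commutation relation $w\,u_{b}=u_{\mu(b)}\,w$ for any fine-homogeneous $w\in A_{\gamma,\mu}\otimes C_{\mu,\gamma'}$, an immediate consequence of the axioms $as(b)=s(\partial^{s}_{a}(b))a$ and $cr(b)=r(\partial^{r}_{c}(b))c$; it shows $I_{AC}$ is already spanned by the one-sided multiples $u_{b}w$, and then $(u_{b}w)\otimes d=t_{b}\cdot(w\otimes d)$ with $w\otimes d\in T$ because $u_{b}$ has degree $(e,e)$, so the outer $s$-degree of $u_{b}w$ equals that of $w$. The symmetric remark handles $A\otimes I_{CD}$, and with this inserted both composite kernels are the ideal generated by your two families, exactly as you claim, and the rest of your argument closes.
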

Of course, the isomorphisms above are compatible in a natural sense.
\begin{remark} \label{remark:takeuchi}
  The product $-\todot -$ is related to the left and right
  Takeuchi products $-{_{B}\times} -$ and $-\times_{B}-$ as
  follows.  Given a $B^{\ev}$-algebra $A$, we write
  ${_{\bullet}A}$ or $A_{\bullet}$ when we regard $A$ as a
  $B$-bimodule via $b \cdot a \cdot b':=r(b)s(b')a$ or $b
  \cdot a \cdot b':=ar(b)s(b')$, respectively.  Then the
  left and  right Takeuchi products of $B^{\ev}$-algebras
  $A$ and $C$ are the $B^{\ev}$-algebras
  \begin{align*}
    A {} {_{B}\times} C &:= \left\{\sum_{i} a_{i} \otimesB
      c_{i} \in  {_{\bullet} A \otimesB
        {_{\bullet}C}} \,\middle|\, \forall b\in B: 
\sum_{i} a_{i}s_{A}(b) \otimesB
      c_{i} = \sum_{i} a_{i} \otimesB
      c_{i}r_{C}(b)
      \right\}, \\
    A \times_{B} C &:=  \left\{\sum_{i} a_{i} \otimesB
      c_{i} \in  {A_{\bullet} \otimesB C_{\bullet}}\,\middle|\, \forall b\in B: 
\sum_{i} s_{A}(b)a_{i} \otimesB
      c_{i} = \sum_{i} a_{i} \otimesB
r_{C}(b)      c_{i}
      \right\},
  \end{align*}
  where the multiplication is defined factorwise and the
  embedding of $B^{\ev}$ is given by $b\otimes b'\mapsto
  r_{A}(b) \otimesB s_{C}(b')$.  The assignments $(A,C)
  \mapsto A {}{_{B}\times} C$ and $(A,C) \mapsto A
  {\times_{B}} C$ extend to bifunctors on the category of
  $B^{\ev}$-algebras and turn it into a lax monoidal
  category \cite{day:quantum-cat}.  The obvious forgetful
  functor $U$ from $(B,\Gamma)$-algebras to
  $B^{\ev}$-algebras is compatible with these products in
  the sense that for every pair of $(B,\Gamma)$-algebras
  $A,C$, the inclusion $A \gtimes C \hookrightarrow A\otimes
  C$ factorizes to inclusions of $A\todot C$ into $ A
  {}{_{B}\times} C$ and $A \times_{B} C$, yielding natural
  transformations from $U(-\todot -)$ to $U(-) {_{B}\times}
  U(-)$ and $U(-)\times_{B} U(-)$, respectively.
\end{remark}

Briefly, a $(B,\Gamma)$-Hopf algebroid is a coalgebra in
$\bgalg$ equipped with an antipode. To make this definition
precise, we need two involutions on $\bgalg$. Given an
algebra $A$, we denote by $A^{\op}$ its opposite, that is,
the same vector space with reversed multiplication.
  \begin{lemma}
    There exist automorphisms $(-)^{\op}$ and $(-)^{\co}$ of
    $\bgalg$ such that for each $(B,\Gamma)$-algebra $A$ and
    each morphism $\phi \colon A\to C$, we have $A^{\co}=A$
    as an algebra and
    \begin{align*}
      (A^{\op})_{\gamma,\gamma'} &=
      A_{\gamma^{-1},\gamma'{}^{-1}} \text{ for all }
      \gamma,\gamma'\in \Gamma, & r_{A^{\op}} &= r_{A}, &
      s_{A^{\op}} &= s_{A}, & \phi^{\op} &= \phi,
      \\
      (A^{\co})_{\gamma,\gamma'} &= A_{\gamma',\gamma}
      \text{ for all } \gamma,\gamma'\in \Gamma, &
      r_{A^{\co}} &= s_{A}, & s_{A^{\co}} &= r_{A}, &
      \phi^{\co} &= \phi.
    \end{align*}
    Furthermore, $(-)^{\op} \circ (-)^{\op} = \Id$,
    $(-)^{\co} \circ (-)^{\co} = \Id$, $(-)^{\op} \circ
    (-)^{\co} = (-)^{\co} \circ (-)^{\op}$.
  \end{lemma}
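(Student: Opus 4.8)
The plan is to define both functors by relabelling the existing data on each $(B,\Gamma)$-algebra, and then to verify three things in turn: that each relabelled object is again a $(B,\Gamma)$-algebra, that the assignments are functorial, and that the stated composition identities hold (the last of which also shows each functor is involutive, hence an automorphism of $\bgalg$). Concretely, for $(-)^{\op}$ I would take $A^{\op}$ to be the opposite algebra with $r_{A^{\op}}=r_A$, $s_{A^{\op}}=s_A$ and grading $(A^{\op})_{\gamma,\gamma'}:=A_{\gamma^{-1},\gamma'^{-1}}$; for $(-)^{\co}$ I would keep the algebra and its product, set $r_{A^{\co}}=s_A$, $s_{A^{\co}}=r_A$, and grade by $(A^{\co})_{\gamma,\gamma'}:=A_{\gamma',\gamma}$. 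That these gradings are multiplicative is immediate: in the $\op$ case the map $(\gamma,\gamma')\mapsto(\gamma^{-1},\gamma'^{-1})$ is an anti-automorphism of $\Gamma^{\ev}$ matching the reversal of the product, and in the $\co$ case the flip $(\gamma,\gamma')\mapsto(\gamma',\gamma)$ is an automorphism matching the unchanged product. Since $(A^{\op})_{e,e}=A_{e,e}=(A^{\co})_{e,e}$, the homogeneity requirement $(r\times s)(B^{\ev})\subseteq A_{e,e}$ transports without change.

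The only calculation with any real content is the pair of commutation relations, and this is where I expect the one point that needs care rather than bookkeeping. For $(-)^{\co}$ the two relations simply exchange: with $r_{A^{\co}}=s_A$ and degree $\partial^{r,\co}_{a}=\partial^{s}_{a}$, the relation to be checked for $A^{\co}$ is exactly the original $s$-relation, and symmetrically for $s_{A^{\co}}$. The subtle case is $\op$: an element $a$ homogeneous of $\op$-degree $\gamma$ for $\partial^{r,\op}$ has $A$-degree $\partial^{r}_{a}=\gamma^{-1}$, and one must verify $r_A(b)\,a=a\,r_A(\gamma(b))$ in $A^{\op}$. This follows from the original relation $a\,r_A(c)=r_A(\partial^{r}_{a}(c))\,a$ by substituting $c=\gamma(b)=(\partial^{r}_{a})^{-1}(b)$ and using that $\Gamma$ acts by automorphisms, so $\partial^{r}_{a}(c)=b$; the identical substitution handles $s$. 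This is precisely what forces the group inverse in the $\op$-grading.

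With the objects settled, functoriality is formal: on morphisms both functors act as the identity map, $\phi^{\op}=\phi=\phi^{\co}$, which is a homomorphism for the reversed product (respectively the unchanged product), is graded because $\phi$ preserves the $A$-grading and the relabelling is applied uniformly to source and target, and is $B^{\ev}$-linear because the identities $\phi\circ r_A=r_C$ and $\phi\circ s_A=s_C$ survive the relabelling of $r$ and $s$. Preservation of identities and composition is then automatic.

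Finally, the three identities reduce to checking that the relabellings compose correctly. Since $(\gamma^{-1})^{-1}=\gamma$ and double reversal of the product returns the original, $(-)^{\op}\circ(-)^{\op}=\Id$; the double flip of grading components together with the double swap of $r,s$ gives $(-)^{\co}\circ(-)^{\co}=\Id$; and a direct comparison shows that both $(A^{\co})^{\op}$ and $(A^{\op})^{\co}$ carry the reversed product, the grading $A_{\gamma'^{-1},\gamma^{-1}}$ in bidegree $(\gamma,\gamma')$, and the assignments $r\mapsto s_A$, $s\mapsto r_A$, whence $(-)^{\op}\circ(-)^{\co}=(-)^{\co}\circ(-)^{\op}$. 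All three agree on morphisms because each functor is the identity there. As $(-)^{\op}$ and $(-)^{\co}$ are their own inverses, they are automorphisms of $\bgalg$, completing the proof.
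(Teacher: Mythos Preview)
Your proof is correct. The paper omits the proof entirely as straightforward (it states at the start of \S\ref{section:bg} that all proofs in that subsection are omitted), and your direct verification is exactly what the omitted argument would be: relabel the grading and the structure maps, then check the $(B,\Gamma)$-algebra axioms, functoriality, and the composition identities by hand.
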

\begin{remark}
  The automorphisms above are compatible with the monoidal
  structure as follows. Given $(B,\Gamma)$-algebras $A,C$,
  there exist isomorphisms $(A \todot C)^{\op} \to (A^{\op}
  \todot C)^{\op}$ and $ (A \todot C)^{\co} \to C^{\co}
  \todot A^{\co}$ given by $a \todot c \mapsto a \todot c$
  and $a \todot c \mapsto c \todot a$, respectively.
  Moreover, $(B\rtimes \Gamma)^{\co} = B\rtimes \Gamma$ and
  there exists an isomorphism $S^{B\rtimes \Gamma} \colon
  B\rtimes \Gamma \to (B\rtimes \Gamma)^{\op}$, $b\gamma
  \mapsto \gamma^{-1} b$, and all of these isomorphisms and
  the isomorphisms in Lemma \ref{lemma:bg-monoidal} are
  compatible in a natural sense.
\end{remark}

\begin{definition} \label{definition:bg-hopf} A
  \emph{$(B,\Gamma)$-Hopf algebroid} is a
  $(B,\Gamma)$-algebra $A$ equipped with morphisms $\Delta
  \colon A\to A\todot A$, $\epsilon \colon A \to B\rtimes
  \Gamma$, and $S \colon A \to A^{\co,\op}$ such that the
  diagrams below commute,
  \begin{gather*}
    \xymatrix@R=15pt@C=30pt{ A \ar[r]^(0.4){\Delta}
      \ar[d]_{\Delta} & A \todot A
      \ar[d]^{\Delta \todot \Id} \\
      A\todot A \ar[r]_(0.4){\Id \todot \Delta} & A \todot A
      \todot A, } \qquad \xymatrix@R=15pt@C=25pt{A \todot A
      \ar[d]_{\epsilon \todot \Id} & A \ar[l]_(0.4){\Delta}
      \ar[r]^(0.4){\Delta} \ar[d]^{\Id} & A
      \todot A \ar[d]^{\Id \todot \epsilon} \\
      (B\rtimes \Gamma) \todot A \ar[r]^(0.6){\cong} & A
      &\ar[l]_(0.6){\cong} A \todot (B\rtimes \Gamma),}
\end{gather*}
\begin{gather*}
    \xymatrix@R=15pt@C=20pt{A \todot A \ar[d]_{S \todot \Id}
      && A \ar[ll]_(0.4){\Delta} \ar[rr]^(0.4){\Delta}
      \ar[d]^{\epsilon} && A
      \todot A \ar[d]^{\Id \todot S} \\
      A^{\co,\op} \todot A \ar[r]^(0.6){\check m} & A &
      B\rtimes \Gamma \ar[l]_(0.6){\check s}
      \ar[r]^(0.6){\hat r} & A &\ar[l]_(0.6){\hat m} A
      \todot A^{\co,\op}}
  \end{gather*}
 where the linear maps $\hat m, \check m, \hat r,
  \check s$ are given by
  \begin{align*}
    \hat m(a \todot a') &= aa' = \check m (a\todot a'), & \hat
    r(b\gamma) &= r(b), & \check s(\gamma b) &= s(b)
  \end{align*}
  for all $a,a'\in A, b\in B,\gamma\in \Gamma$.

  A morphism of $(B,\Gamma)$-Hopf algebroids
  $(A,\Delta_{A},\epsilon_{A},S_{A})$,
  $(C,\Delta_{C},\epsilon_{C},S_{C})$ is a  morphism of
  $(B,\Gamma)$-algebras $\pi \colon A \to C$ such that
  $\Delta_{C} \circ \pi  = (\pi \todot \pi) \circ
  \Delta_{A}$, $\epsilon_{C} \circ \pi = \epsilon_{A}$,
  $S_{C} \circ \pi = \pi^{\co,\op}\circ S_{A}$.
  We denote the category of all $(B,\Gamma)$-Hopf algebroids
  by $\Hopf_{(B,\Gamma)}$.
\end{definition}
  A $(B,\Gamma)$-Hopf algebroid reduces to an
  $\mathfrak{h}$-Hopf algebroid in the sense of
  \cite{koelink:su2} when $\mathfrak{h}$ is a commutative
  Lie algebra, $B$ is the algebra of meromorphic functions
  on the dual $\mathfrak{h}^{*}$, and
  $\Gamma=\mathfrak{h}^{*}$ acts by shifting the
  argument. Let us note that the axioms above can be
  weakened, see \cite[Proposition 2.2]{koelink:su2}, but our
  examples shall automatically satisfy the apparently
  stronger conditions above.
\begin{example} 
  The $(B,\Gamma)$-algebra $B \rtimes \Gamma$ is a
  $(B,\Gamma)$-Hopf algebroid, where $\Delta(b \gamma) =
  b\gamma \todot \gamma = \gamma \todot b\gamma$,
  $\epsilon(b\gamma) = b\gamma$, and $S( b\gamma) =
  \gamma^{-1}b$ for all $b\in B,\gamma\in \Gamma$.
\end{example}
Let us comment on some straightforward properties of
$(B,\Gamma)$-Hopf algebroids:
\begin{remarks}
  \label{remarks:bg-hopf}
    Let $(A,\Delta,\epsilon,S)$ be a $(B,\Gamma)$-Hopf
    algebroid.
    \begin{enumerate}
    \item If $\gamma\neq \gamma'$, then
      $\epsilon(A_{\gamma,\gamma'}) =0$ because $(B\rtimes
      \Gamma)_{\gamma,\gamma'} =0$.
     \item We have $\Delta(A)(1 \todot A_{e,*}) = A\todot A =
      (A_{*,e} \todot 1)\Delta(A)$, where
      $A_{e,*}=\sum_{\gamma} A_{e,\gamma}$ and $A_{*,e} =
      \sum_{\gamma} A_{\gamma,e}$. Indeed, by
      \cite[Proposition 1.3.7]{timmermann:measured},
      \begin{align*}
        \sum (xS(y_{(1)}) \todot 1)\Delta(y_{(2)}) &= x \todot y
        = \sum \Delta(x_{(1)})(1 \todot S(x_{(2)})y)
      \end{align*}
      for all $x\in A_{\gamma,\gamma'}, y\in
      A_{\gamma',\gamma''}$, $\gamma,\gamma',\gamma'' \in
      \Gamma$, where $\sum x_{(1)} \todot x_{(2)}=\Delta(x)$ and
      $\sum y_{(1)} \todot y_{(2)} = \Delta(y)$.
    \end{enumerate}
\end{remarks}
$(B,\Gamma)$-Hopf algebroids fit into the general definition
of Hopf algebroids \cite{boehm:algebroids} as
follows.
\begin{remark} Let
  $(A,\Delta,\epsilon,S)$ be a $(B,\Gamma)$-Hopf algebroid.
 Denote by ${_{\bullet}\epsilon}$ and
      $\epsilon_{\bullet}$ the compositions of
      $\epsilon\colon A\to B\rtimes \Gamma$ with the linear
      maps $B\rtimes \Gamma \to B$ given by $b\gamma \mapsto
      b$ and $\gamma b \mapsto b$, respectively, and denote
      by $_{\bullet}\Delta$ and $\Delta_{\bullet}$ the
      compositions of $\Delta$ with the natural inclusions
      $A \todot A \to {_{\bullet}} A \otimesB {_{\bullet}
        A}$ and $A \todot A \to {A_{\bullet}} \otimesB
      {A_{\bullet}}$, respectively (see Remark
      \ref{remark:takeuchi}).
    \begin{enumerate}
    \item The maps $_{\bullet }\epsilon,
      \epsilon_{\bullet}\colon A\to B$ will in general not
      be homomorphisms, but satisfy
      \begin{align*}
        {_{\bullet}\epsilon}(ar({_{\bullet}\epsilon}(a'))&=
        {_{\bullet}\epsilon}(aa') =
        {_{\bullet}\epsilon}(as({_{\bullet}\epsilon}(a')), &
        \epsilon_{\bullet}(r(\epsilon_{\bullet}(a))a') &=
        \epsilon_{\bullet}(aa') =
        \epsilon_{\bullet}(s(\epsilon_{\bullet}(a))a')
      \end{align*}
      for all $a,a' \in A$. Indeed, since $\epsilon(a) =
      {_{\bullet}\epsilon}(a)\partial_{a}$ for all
      homogeneous $a'\in A$,
      \begin{align*}
        {_{\bullet}\epsilon}(aa') \partial_{aa'} &=
        {_{\bullet}\epsilon}(a) \partial_{a} \cdot
        {_{\bullet}\epsilon}(a') \partial_{a'}
        =\partial_{a}( {_{\bullet}\epsilon}(a'))
        {_{\bullet}\epsilon}(a) \partial_{a}\partial_{a'} =
        {_{\bullet}\epsilon}(ar({_{\bullet}\epsilon}(a'))) \partial_{aa'}
      \end{align*}
      for all homogeneous $a,a'\in A$, and the remaining
      equations follow similarly.
    \item One easily verifies that
      $({_{\bullet}A},{_{\bullet
        }\Delta},{_{\bullet}\epsilon})$,
      $(A_{\bullet},\Delta_{\bullet},\epsilon_{\bullet})$
      are $B$-corings, ${_{\bullet}\mathcal{A}}
      :=(A,{_{\bullet}\Delta},{_{\bullet}\epsilon})$ is a
      left $B$-bialgebroid, and
      $\mathcal{A}_{\bullet}:=(A^{\co},\Delta_{\bullet},\epsilon_{\bullet})$
      is a right $B$-bialgebroid in the sense of
      \cite{boehm:algebroids}.  Using the relations $\check
      s \circ \epsilon = s \circ \epsilon_{\bullet}$ and
      $\hat r \circ \epsilon = r \circ
      {_{\bullet}\epsilon}$, one furthermore finds that
      $(\mathcal{A}_{\bullet},{_{\bullet}\mathcal{A}},S)$ is
      a Hopf algebroid over $B$. To make the match with
      Definition 4.1 in \cite{boehm:algebroids}, one has to
      take
      $H,s_{L},t_{L},\Delta_{L},\epsilon_{L},s_{R},t_{R},\Delta_{R},\epsilon_{R},S$
      equal to
      $A,s,r,{_{\bullet}\Delta},{_{\bullet}\epsilon},r,s,\Delta_{\bullet},\epsilon_{\bullet},S$,
      respectively.
  \end{enumerate}
\end{remark}



Let $B$ and $C$ be commutative algebras with a left action
of $\Gamma$ and let $\phi\colon B\to C$ be a
$\Gamma$-equivariant homomorphism. We then obtain base
change functors $\phi_{*}\colon \bgalg \to \cgalg$ and
$\phi_{*}\colon \Hopf_{(B,\Gamma)}
\to \Hopf_{(C,\Gamma)}$ as follows.
Let $A$ be a $(B,\Gamma)$-algebra.  Regard $C$ as  a
$B$-module via $\phi$, and $A$ as a
$B$-bimodule, where $b \cdot a \cdot b' = r(b)as(b')$
for all $b,b'\in B$, $a\in A$. Then the vector space
$\phi_{*}(A):=C \btimes A \btimes C$ carries the
structure of a $(C,\Gamma)$-algebra such that 
\begin{gather*}
  (c \btimes a \btimes d)
  (c' \btimes a' \btimes d') =
  c \partial^{r}_{a}(c') \btimes  aa' \btimes(\partial^{s}_{a'})^{-1}(d)d', \\
  \begin{aligned}
    \partial_{c\btimes a \btimes d} &= \partial_{a}, &
    (r\times s)(c\otimes c') &= c\btimes 1 \btimes c'
    &&\text{ for all } c,c',d,d' \in C, a,a' \in A.
  \end{aligned}
\end{gather*}
Every morphism of $(B,\Gamma)$-algebras $\pi\colon A \to A'$
evidently yields a morphism of $(C,\Gamma)$-algebras
$\phi_{*}(\pi) \colon \phi_{*}(A) \to \phi_{*}(A')$,
$c\btimes a \btimes c'\mapsto c \btimes \pi(a) \btimes c'$,
and the assignments $A \mapsto \phi_{*}(A)$ and $\phi
\mapsto \phi_{*}(\pi)$ form a functor $\phi_{*}\colon
\bgalg \to \cgalg$.
\begin{lemma} \label{lemma:bg-cb}
  \begin{enumerate}
  \item There exists a morphism of $(C,\Gamma)$-algebras
    $\phi^{(0)} \colon \phi_{*}(B\rtimes \Gamma) \to C\rtimes
    \Gamma$, $c \btimes b\gamma \btimes c'
    \mapsto c\phi(b) \gamma c' = c\phi(b)\gamma(c') \gamma$.
  \item For all $(B,\Gamma)$-algebras $A,D$, there exists a
    unique morphism $\phi^{(2)}_{A,D} \colon \phi_{*}(A
    \todot D) \to \phi_{*}(A) \todot \phi_{*}(D)$,
    $c\btimes (a \todot d) \btimes c' \mapsto (c\btimes
    a\btimes 1) \todot (1 \btimes d \btimes c')$.
  \end{enumerate}
\end{lemma}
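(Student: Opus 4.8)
The plan is to treat both statements as explicit formulas on generators and to prove existence by checking that each formula descends through the defining relations of its source, after which one verifies that the resulting map is a morphism of $(C,\Gamma)$-algebras. Uniqueness in (ii) is then immediate, since the elements $c\btimes(a\todot d)\btimes c'$ span $\phi_*(A\todot D)$, so a morphism is determined by its values on them.

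For (i), I would first define $\tilde\phi^{(0)}\colon C\otimes(B\rtimes\Gamma)\otimes C\to C\rtimes\Gamma$ on simple tensors by $c\otimes b\gamma\otimes c'\mapsto c\phi(b)\gamma(c')\gamma$ and check that it factors over the two $B$-balancings defining $C\btimes(B\rtimes\Gamma)\btimes C$. Recalling from Example \ref{example:bg-unit} that $r=s=\Id_B$ on $B\rtimes\Gamma$, the left balancing is immediate, while the right balancing compares the images of $c\btimes b\gamma(b_0)\gamma\btimes c'$ and $c\btimes b\gamma\btimes\phi(b_0)c'$, namely $c\phi(b)\phi(\gamma(b_0))\gamma(c')\gamma$ and $c\phi(b)\gamma(\phi(b_0)c')\gamma$; these agree precisely because $\phi$ is $\Gamma$-equivariant, i.e.\ $\phi(\gamma(b_0))=\gamma(\phi(b_0))$. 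This is the one genuinely necessary hypothesis here. Once $\phi^{(0)}$ is well defined, multiplicativity follows from the crossed-product relation $c\gamma\cdot c'\gamma'=c\gamma(c')\gamma\gamma'$ together with commutativity of $C$, which lets one reorder the scalar factors; unitality, the grading $\partial_{c\btimes b\gamma\btimes c'}=(\gamma,\gamma)$, and $C^{\ev}$-linearity are then read off directly.

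For (ii), I would build $\phi^{(2)}_{A,D}$ by descent through the defining relations of $\phi_*(A\todot D)$. Starting from $C\otimes(A\gtimes D)\otimes C$, the assignment $c\otimes(a\otimes d)\otimes c'\mapsto(c\btimes a\btimes 1)\todot(1\btimes d\btimes c')$ lands in $\phi_*(A)\gtimes\phi_*(D)$ because for homogeneous $a\in A_{\gamma,\gamma'}$, $d\in D_{\gamma',\gamma''}$ the two inner tensor legs carry matching middle degree $\gamma'$. The two outer $B$-balancings are then absorbed by the corresponding balancings of $\phi_*(A)$ and of $\phi_*(D)$: the left one turns $c\phi(b)\btimes(a\todot d)\btimes c'=c\btimes(r_A(b)a\todot d)\btimes c'$ into $c\btimes r_A(b)a\btimes 1$ inside the first leg, and the right one turns $a\todot d\,s_D(b)$ into the factor $\phi(b)$ on the trailing $C$ of the second leg. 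Multiplicativity is a direct comparison of the product formulas on $\phi_*(A\todot D)$ and on $\phi_*(A)\todot\phi_*(D)$, using that products are computed leg-wise and that $C$ is commutative; unitality, the grading $\partial_{(c\btimes a\btimes 1)\todot(1\btimes d\btimes c')}=(\partial^r_a,\partial^s_d)$, and $C^{\ev}$-linearity again follow at once.

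The step I expect to be the main obstacle is showing that this assignment respects the inner fiber-product relation defining $A\todot D$, namely $s_A(b)\todot 1=1\todot r_D(b)$. Tracing this through the formula, the left-hand side maps to $s_{\phi_*(A)}(\phi(b))\todot 1$ and the right-hand side to $1\todot r_{\phi_*(D)}(\phi(b))$, using the balancings $1\btimes s_A(b)\btimes 1=1\btimes 1\btimes\phi(b)$ and $1\btimes r_D(b)\btimes 1=\phi(b)\btimes 1\btimes 1$; these two images are identified precisely by the defining relation of the target fiber product. Checking that the decorated relation $a\,s_A(b)\todot d=a\todot d\,r_D(b)$ is likewise respected is where the hypotheses conspire: after moving $s_A(b)$ onto the trailing $C$ of the first leg and applying the target relation, one must move the resulting scalar back through the second leg, and matching the two sides in $\phi_*(D)$ requires both the $\Gamma$-equivariance of $\phi$ and the $(B,\Gamma)$-commutation rule $d\,r_D(b)=r_D(\partial^r_d(b))\,d$ from Definition \ref{definition:bg-algebra}. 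Once this single compatibility is established the descent is complete, and $\phi^{(2)}_{A,D}$ is the desired morphism.
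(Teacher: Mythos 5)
Your proof is correct. The paper gives no argument for this lemma (the whole subsection's proofs are declared straightforward and omitted), and your verification is exactly the routine check one would write: both balancing conditions, the descent through the fiber-product ideal, and the multiplicativity comparison all work out, and you correctly isolate $\Gamma$-equivariance of $\phi$ as the essential hypothesis. One small simplification for the step you flag as the main obstacle in (ii): the assignment $a\todot d\mapsto(1\btimes a\btimes 1)\todot(1\btimes d\btimes 1)$ is already an algebra homomorphism on $A\gtimes D$ that kills the generators $s_{A}(b)\otimes 1-1\otimes r_{D}(b)$ of the two-sided ideal $I$, so every ``decorated'' relation $as_{A}(b)\todot d=a\todot d\,r_{D}(b)$ is respected automatically, without tracing the scalar through the second leg by hand.
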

\begin{proposition} \label{proposition:bg-cb}
  Let $(A,\Delta,\epsilon,S)$ be a $(B,\Gamma)$-Hopf
  algebroid. Then $\phi_{*}(A)$ is $(C,\Gamma)$-Hopf algebroid
  with respect to the morphisms
  \begin{enumerate}
  \item $\Delta'\colon \phi_{*}(A)
    \xrightarrow{\phi_{*}(\Delta)} \phi_{*}(A\todot A)
    \xrightarrow{\phi^{(2)}_{A,A}} \phi_{*}(A) \todot
    \phi_{*}(A)$, given by $c \btimes a \btimes c' \mapsto
    \sum_{i} (c\btimes a'_{i} \btimes 1) \todot (1
    \btimes a''_{i} \btimes c')$ whenever $\Delta(a)=\sum
    a'_{i}\todot a''_{i}$;
  \item $\epsilon'\colon \phi_{*}(A)
    \xrightarrow{\phi_{*}(\epsilon)} \phi_{*}(B\rtimes
    \Gamma) \xrightarrow{\phi^{(0)}} C\rtimes \Gamma$, given
    by $c \btimes a \btimes c' \mapsto \sum_{i} c
    \phi(b_{i})\gamma_{i} c'$ whenever $\epsilon(a)=\sum_{i}
    b_{i}\gamma_{i}$;
  \item $S' \colon \phi_{*}(A) \to
    (\phi_{*}A)^{\co,\op}$ given by $c\btimes a\btimes c'
    \mapsto c' \btimes S(a) \btimes c$.
  \end{enumerate}
\end{proposition}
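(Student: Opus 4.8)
The plan is to exhibit the base change as an oplax monoidal functor and then transport the coalgebra and antipode structure essentially formally, reducing everything to a handful of checks on generators $c\btimes a\btimes c'$.

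First I would observe that a coalgebra in $\bgalg$ (respectively $\cgalg$) in the sense of the preamble to Definition~\ref{definition:bg-hopf} is precisely a $(B,\Gamma)$-algebra (resp.\ $(C,\Gamma)$-algebra) equipped with morphisms $\Delta\colon A\to A\todot A$ and $\epsilon\colon A\to B\rtimes\Gamma$ making the first two diagrams of Definition~\ref{definition:bg-hopf} commute, since $B\rtimes\Gamma$ is the monoidal unit by Lemma~\ref{lemma:bg-monoidal}(iii). The two structure morphisms $\phi^{(0)}$ and $\phi^{(2)}_{A,D}$ of Lemma~\ref{lemma:bg-cb} point in the directions $\phi_*(A\todot D)\to\phi_*(A)\todot\phi_*(D)$ and $\phi_*(B\rtimes\Gamma)\to C\rtimes\Gamma$, so they equip $\phi_*$ with the structure of an oplax monoidal functor $(\bgalg,\todot,B\rtimes\Gamma)\to(\cgalg,\todot,C\rtimes\Gamma)$. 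As $\Delta'=\phi^{(2)}_{A,A}\circ\phi_*(\Delta)$ and $\epsilon'=\phi^{(0)}\circ\phi_*(\epsilon)$ are exactly the comultiplication and counit obtained by transporting the coalgebra $(A,\Delta,\epsilon)$ along this oplax functor (and are morphisms in $\cgalg$, being composites of such), the coassociativity and counit diagrams for $(\phi_*(A),\Delta',\epsilon')$ follow formally.

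What this formal step presupposes, and what I would verify explicitly, is that $(\phi_*,\phi^{(0)},\phi^{(2)})$ genuinely satisfies the oplax coherence axioms: that $\phi^{(2)}$ is natural in both arguments (already implicit in the uniqueness clause of Lemma~\ref{lemma:bg-cb}), that the two composites $\phi_*(A\todot D\todot E)\to\phi_*(A)\todot\phi_*(D)\todot\phi_*(E)$ built from $\phi^{(2)}$ agree with the associators of Lemma~\ref{lemma:bg-monoidal}(ii), and that $\phi^{(0)}$ together with $\phi^{(2)}$ is compatible with the unit isomorphisms of Lemma~\ref{lemma:bg-monoidal}(iii). Each is a direct computation on a generator $c\btimes a\btimes c'$, and each reduces to an identity among the explicit formulas because all maps involved distribute the two outer copies of $C$ onto the two tensor factors in the same way.

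For the antipode I would first record how $\phi_*$ interacts with the two involutions. There are natural isomorphisms $\phi_*(A^{\op})\cong(\phi_*A)^{\op}$ and $\phi_*(A^{\co})\cong(\phi_*A)^{\co}$; the second exchanges the two outer copies of $C$, since $(-)^{\co}$ interchanges $r$ and $s$. Under the composite isomorphism $\phi_*(A^{\co,\op})\cong(\phi_*A)^{\co,\op}$, the map $S'$ of the statement is precisely $\phi_*(S)$, which explains the swap $c\btimes a\btimes c'\mapsto c'\btimes S(a)\btimes c$; checking that $S'$ is a well-defined morphism of $(C,\Gamma)$-algebras uses that $S\colon A\to A^{\co,\op}$ already respects $r$ and $s$ in the swapped way. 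The two antipode diagrams of Definition~\ref{definition:bg-hopf} then follow by applying $\phi_*$ to the corresponding diagrams for $A$ and splicing in $\phi^{(2)}$ and $\phi^{(0)}$: one verifies that $\phi_*$ intertwines $\hat m,\check m,\hat r,\check s$ for $A$ with their counterparts for $\phi_*(A)$ — for instance $\hat m'\circ\phi^{(2)}_{A,A^{\co,\op}}=\phi_*(\hat m)$ modulo the involution identifications — each again a one-line check on generators. The main obstacle is not any single identity but the bookkeeping around the two outer tensor factors of $\phi_*(A)=C\btimes A\btimes C$ and the involutions $(-)^{\co},(-)^{\op}$: one must ensure that the coordinate swap built into $S'$ coincides with the canonical isomorphism $\phi_*(A^{\co,\op})\cong(\phi_*A)^{\co,\op}$, and that $\hat m,\check m,\hat r,\check s$ are intertwined by $\phi_*$ with exactly the right structure morphisms. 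Once the oplax monoidal structure and these involution compatibilities are in place, every axiom reduces to naturality together with a short verification on the generators $c\btimes a\btimes c'$.
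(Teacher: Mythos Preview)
Your approach is sound and the paper in fact omits the proof entirely, declaring it straightforward; your oplax-monoidal packaging is a clean way to organise exactly the direct verification the paper has in mind, and the coalgebra axioms do follow formally once $(\phi_*,\phi^{(0)},\phi^{(2)})$ is checked to be oplax monoidal.

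One point deserves more care than you give it. You assert separate natural isomorphisms $\phi_*(A^{\op})\cong(\phi_*A)^{\op}$ and $\phi_*(A^{\co})\cong(\phi_*A)^{\co}$, with the second given by the swap of the two outer $C$'s. In fact the swap alone does \emph{not} descend to a well-defined map $\phi_*(A^{\co})\to\phi_*(A)$: the relation $c\phi(b)\btimes a\btimes c'=c\btimes s_A(b)a\btimes c'$ in $\phi_*(A^{\co})$ is sent under swap to $c'\btimes s_A(b)a\btimes c$, whereas in $\phi_*(A)$ one has $c'\btimes a\btimes c\phi(b)=c'\btimes as_A(b)\btimes c$, and $s_A(b)a\neq as_A(b)$ in general. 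Likewise the identity map does not realise $\phi_*(A^{\op})\cong(\phi_*A)^{\op}$, for the analogous reason. What \emph{does} work is the combined isomorphism $\phi_*(A^{\co,\op})\cong(\phi_*A)^{\co,\op}$ given by the swap $c\btimes a\btimes c'\mapsto c'\btimes a\btimes c$: here the bimodule structure on $A^{\co,\op}$ is $b\cdot a\cdot b'=r_A(b')\,a\,s_A(b)$ (multiplication reversed and $r,s$ exchanged), and one checks directly that the swap carries the defining relations of $\phi_*(A^{\co,\op})$ to those of $\phi_*(A)$. Since only the composite $(-)^{\co,\op}$ enters the antipode axiom, this suffices for your argument; just replace the two separate isomorphisms by the single combined one and the rest of your antipode verification goes through as written.
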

The assignments $(A,\Delta,\epsilon,S) \mapsto
(\phi_{*}(A),\Delta',\epsilon',S')$ as above and $\pi \mapsto
\phi_{*}(\pi)$ evidently form a functor $\phi_{*} \colon
\Hopf_{(B,\Gamma)} \to \Hopf_{(C,\Gamma)}$.

The preceding definitions and results extend to $*$-algebras as follows.
Assume that $B$ is a $*$-algebra and that $\Gamma$ preserves
its involution.

A $(B,\Gamma)$-$*$-algebra is a $(B,\Gamma)$-algebra with an
involution that is compatible with the grading and the
involution on $B$, and a morphism of
$(B,\Gamma)$-$*$-algebras is a morphism of
$(B,\Gamma)$-algebra that preserves the involution.  We
denote by $\bgsalg$ the category of all
$(B,\Gamma)$-$*$-algebras. This subcategory of $\bgalg$ is
monoidal because the crossed product $B\rtimes \Gamma$ is a
$(B,\Gamma)$-$*$-algebra with respect to the involution
given by $(b\gamma)^{*}=\gamma^{-1} b^{*}$, and for all
$(B,\Gamma)$-$*$-algebras $A,C$, the fiber product $A\todot
C$ is a $(B,\Gamma)$-$*$-algebra with respect to the
involution given by $(a \todot c)^{*} =a^{*} \todot c^{*}$.
\begin{definition} \label{definition:bg-s-algebra} A
  $(B,\Gamma)$-Hopf $*$-algebroid is a $(B,\Gamma)$-Hopf
  algebroid $(A,\Delta,\epsilon,S)$ where $A$ is a
  $(B,\Gamma)$-$*$-algebra and $\Delta$ and $\epsilon$ are
  morphisms of $(B,\Gamma)$-$*$-algebras. A morphism of
  $(B,\Gamma)$-Hopf $*$-algebroids is a morphism of the
  underlying $(B,\Gamma)$-Hopf algebroid and
  $(B,\Gamma)$-$*$-algebras.  We denote by
  $\Hopf_{(B,\Gamma)}^{*}$ the category of all
  $(B,\Gamma)$-Hopf $*$-algebroids.
\end{definition}
\begin{remark}
  If $(A,\Delta,\epsilon,S)$ is a $(B,\Gamma)$-Hopf
  $*$-algebroid, then $* \circ S \circ * \circ S = \Id$; see
  \cite[Lemma 2.9]{koelink:su2}.
\end{remark}

We denote by $\overline{A}$ the conjugate algebra of a
complex algebra $A$; this is the set $A$ with conjugated
scalar multiplication and the same addition and
multiplication. Thus, the involution of a $*$-algebra $A$ is
an automorphism $A\to \overline{A}^{\op}$.
\begin{lemma} \label{lemma:bg-bar}
  The category $\bgsalg$ has an automorphism
  $\overline{(-)}$ such that for every
  $(B,\Gamma)$-$*$-algebra $A$ and every morphism
of   $(B,\Gamma)$-$*$-algebras   $\phi \colon A\to C$,
    \begin{align*}
      (\overline{A})_{\gamma,\gamma'} &=
      \overline{A_{\gamma,\gamma'}} \text{ for all }
      \gamma,\gamma'\in \Gamma, & r_{\bar A} &=
      r_{A}\circ \ast, & s_{\bar A} &= s_{A} \circ \ast, &
      \overline{\phi} &= \phi.
    \end{align*}
    Furthermore, $\overline{(-)} \circ \overline{(-)} =
    \Id$, $\overline{(-)} \circ (-)^{\op} = (-)^{\op} \circ
    \overline{(-)}$, $\overline{(-)} \circ (-)^{\co} =
    (-)^{\co} \circ \overline{(-)}$.
  \end{lemma}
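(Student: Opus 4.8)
The plan is to define the functor $\overline{(-)}$ explicitly on objects and morphisms, check that it lands in $\bgsalg$, and then read off the three compatibility identities by inspecting the underlying data.

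On objects I would take $\overline{A}$ to be the conjugate algebra of $A$, equip it with the grading $(\overline{A})_{\gamma,\gamma'} := \overline{A_{\gamma,\gamma'}}$ (legitimate since $\overline{A}$ has the same underlying ring, hence the same homogeneous decomposition), and define its source and target maps by $r_{\bar A} := r_{A}\circ \ast$ and $s_{\bar A} := s_{A}\circ \ast$, where $\ast$ is the involution of $B$. Since $\ast\colon B\to B$ is conjugate-linear and, as $B$ is commutative, multiplicative and unital, the composites $r_{A}\circ\ast$ and $s_{A}\circ\ast$ are genuine algebra homomorphisms $B\to\overline{A}$ with commuting images, so $\overline{A}$ is a $B^{\ev}$-algebra. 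For the $(B,\Gamma)$-algebra axioms I would verify $a\,r_{\bar A}(b) = r_{\bar A}(\partial^{r}_{a}(b))\,a$ by rewriting $r_{\bar A}(b)=r_{A}(b^{\ast})$, applying the corresponding identity in $A$, and using that $\Gamma$ preserves the involution, i.e.\ $\partial^{r}_{a}(b)^{\ast}=\partial^{r}_{a}(b^{\ast})$; the statement for $s$ is identical. Finally I would give $\overline{A}$ the same set-map involution as $A$: it is conjugate-linear and antimultiplicative on $\overline{A}$ precisely because the scalar action has been conjugated, it is grading-compatible because the involution sends $A_{\gamma,\gamma'}$ into $A_{\gamma^{-1},\gamma'^{-1}}$, and it satisfies $r_{\bar A}(b)^{\ast}=r_{\bar A}(b^{\ast})$ by the $\ast$-compatibility of $r_{A}$. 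Thus $\overline{A}$ is a $(B,\Gamma)$-$\ast$-algebra.

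On morphisms I would set $\overline{\phi}:=\phi$ on underlying sets. A $\complex$-linear map $A\to C$ is automatically $\complex$-linear as a map $\overline{A}\to\overline{C}$, it preserves the grading and the unchanged multiplication and involution, and it intertwines the new source and target maps because $\phi\circ r_{A}=r_{C}$ yields $\phi\circ r_{\bar A}=r_{\bar C}$ after precomposing with $\ast$. Functoriality is then immediate, and $\overline{(-)}$ is an automorphism since $\overline{(-)}\circ\overline{(-)}=\Id$: double conjugation restores the original scalar action, the grading returns to $A_{\gamma,\gamma'}$, and $r_{A}\circ\ast\circ\ast=r_{A}$ because $\ast$ is an involution. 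For the compatibility identities I would simply compare the defining data of the two composites: both $\overline{A^{\op}}$ and $(\overline{A})^{\op}$ carry conjugated scalars, reversed multiplication, homogeneous component $\overline{A_{\gamma^{-1},\gamma'^{-1}}}$ in degree $(\gamma,\gamma')$, and source map $r_{A}\circ\ast$, so they coincide; similarly $\overline{A^{\co}}$ and $(\overline{A})^{\co}$ both carry the grading $\overline{A_{\gamma',\gamma}}$ and the swapped maps $s_{A}\circ\ast$, $r_{A}\circ\ast$, whence they agree, while on morphisms all four functors act as $\phi$.

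The verifications are all routine; the only point requiring genuine care is the transport of the structural identity $a\,r(b)=r(\partial^{r}_{a}(b))\,a$ to $\overline{A}$, where one must insert the involution of $B$ in the right places and invoke its $\Gamma$-equivariance. Everything else is bookkeeping on underlying sets.
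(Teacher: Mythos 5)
Your construction is exactly the evident one the paper has in mind (the paper omits this proof as routine, per the blanket remark at the start of \S\ref{section:bg}), and all the verifications you sketch are correct: in particular you correctly identify the one point needing care, namely that $r_{A}\circ\ast$ is a genuine homomorphism $B\to\overline{A}$ because $\ast$ is multiplicative on the commutative algebra $B$ and the conjugated scalar action absorbs the conjugate-linearity, and that transporting $ar(b)=r(\partial^{r}_{a}(b))a$ uses the $\Gamma$-equivariance of the involution on $B$. No gaps.
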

\begin{remark}
  There exists an isomorphism $B\rtimes \Gamma \to
  \overline{B\rtimes \Gamma}$, $b\gamma \mapsto
  b^{*}\gamma$, and for each pair of
  $(B,\Gamma)$-$*$-algebras $A,C$, there exists
  an isomorphism $\overline{A \todot C} \to \overline{A}
  \todot \overline{C}$, $a\todot c \mapsto a\todot
  c$.
\end{remark}

Let also $C$ be a commutative $*$-algebra with a left action
of $\Gamma$ and let $\phi\colon B\to C$ be a
$\Gamma$-equivariant $*$-homomorphism. Then for every
$(B,\Gamma)$-$*$-algebra $A$, the $(C,\Gamma)$-algebra
$\phi_{*}(A)$ is a $(C,\Gamma)$-$*$-algebra with respect
to the involution given by $(c \btimes a \btimes c')^{*}
=(\partial^{r}_{a})^{-1}(c)^{*} \btimes a^{*} \btimes
 \partial^{s}_{a}(c')^{*}$, and we obtain a functor
$\phi_{*} \colon \bgsalg \to\cgsalg$.  Likewise, we obtain
a functor $\phi_{*} \colon \Hopf_{(B,\Gamma)}^{*}\to
\Hopf_{(C,\Gamma)}^{*}$.

\subsection{The case of a trivial base algebra}
\label{section:trivial}

Assume for this subsection that $B=\complex$ equipped with the
trivial action of $\Gamma$.  Then the category of all
$(\complex,\Gamma)$-Hopf algebroids is equivalent to the
comma category of all Hopf algebras over $\complex
\Gamma$ as follows.

Recall that the group algebra $\complex \Gamma$ is a Hopf
$*$-algebra with involution, comultiplication, counit and
antipode given by $\gamma^{*}=\gamma^{-1}$,
$\Delta_{\complex \Gamma}(\gamma)=\gamma\otimes \gamma$,
$\epsilon_{\complex \Gamma}(\gamma)=1$, $S_{\complex
  \Gamma}(\gamma) = \gamma^{-1}$ for all $\gamma\in \Gamma
\subset \complex \Gamma$. Objects of the comma category
$\Hopf_{\complex \Gamma}$ are pairs consisting of a Hopf
algebra $A$ and a morphism of Hopf algebras $A \to \complex
\Gamma$, and morphisms from $(A,\pi_{A})$ to $(C,\pi_{C})$
are all morphisms $A \xrightarrow{\phi} C$ such that
$\pi_{C} \circ \phi = \pi_{A}$. Likewise, we define the
comma category $\Hopf^{*}_{\complex \Gamma}$ of Hopf
$*$-algebras over $\complex \Gamma$.

Note that a $(\complex,\Gamma)$-algebra is just a
$\Gamma\times\Gamma$-graded algebra and $A\todot C = A
\stackrel{\Gamma}{\otimes} C \subseteq A\otimes C$ for all
$(\complex,\Gamma)$-algebras $A,C$. Moreover, $\complex
\rtimes \Gamma = \complex \Gamma$, and for every
$(\complex,\Gamma)$-algebra $A$, the isomorphisms $(\complex
\rtimes \Gamma) \todot A \to A$ and $A \todot (\complex
\rtimes \Gamma) \to A$ are equal to $\epsilon_{\complex
  \Gamma} \otimes \Id$ and $\Id \otimes
\epsilon_{\complex \Gamma}$.
\begin{lemma} \label{lemma:b-to-hopf}
  Let $(A,\Delta,\epsilon,S)$ be a $(\complex,\Gamma)$-Hopf
  algebroid and let $\epsilon':=\epsilon_{\complex \Gamma}
  \circ \epsilon \colon A \to \complex$. The
  $(A,\Delta,\epsilon',S)$ is a Hopf algebra and $\epsilon
  \colon A \to \complex \Gamma$ is morphism of Hopf algebras.
\end{lemma}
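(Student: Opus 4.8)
The plan is to verify that the triple $(A,\Delta,\epsilon',S)$ satisfies the axioms of an ordinary Hopf algebra over $\complex$, exploiting the fact that in the trivial-base setting all the decorated fiber products collapse to concrete subspaces of ordinary tensor products. The starting observation, recorded just before the statement, is that $\complex\rtimes\Gamma=\complex\Gamma$ and $A\todot C=A\gtimes C\subseteq A\otimes C$, so $\Delta$ is already a genuine algebra homomorphism $A\to A\otimes A$ landing in the subalgebra $A\gtimes A$, and $\epsilon\colon A\to\complex\Gamma$ is an honest homomorphism into the group algebra. Thus there is nothing to repair about coassociativity: the coassociativity square for the $(\complex,\Gamma)$-Hopf algebroid is literally the coassociativity square for $\Delta$ as a map into $A\otimes A\otimes A$, since the associativity isomorphisms of Lemma \ref{lemma:bg-monoidal} are identity maps on the underlying spaces under the inclusion $A\gtimes A\gtimes A\hookrightarrow A\otimes A\otimes A$.

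First I would treat the counit. Set $\epsilon'=\epsilon_{\complex\Gamma}\circ\epsilon$; this is a homomorphism $A\to\complex$ because both factors are. To get the counit law $(\epsilon'\otimes\Id)\Delta=\Id=(\Id\otimes\epsilon')\Delta$, I would take the counit diagram of Definition \ref{definition:bg-hopf} for the algebroid, which says $(\epsilon\todot\Id)\Delta$ equals the canonical isomorphism $(\complex\Gamma)\todot A\xrightarrow{\cong}A$, and then compose with $\epsilon_{\complex\Gamma}$. The key point, already noted in the excerpt, is that this canonical isomorphism $(\complex\rtimes\Gamma)\todot A\to A$ is exactly $\epsilon_{\complex\Gamma}\otimes\Id$; hence postcomposing the algebroid counit relation with the scalar counit of $\complex\Gamma$ reproduces precisely $(\epsilon'\otimes\Id)\Delta=\Id$, and symmetrically on the other side. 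So the counit axiom transfers with essentially no computation.

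Next I would handle the antipode. Here one must observe that $A^{\co,\op}=A$ as a plain algebra (the $\co$ and $\op$ decorations only relabel the $\Gamma^{\ev}$-grading, by the automorphism lemma), so $S\colon A\to A$ is an ordinary linear endomorphism, and the maps $\check m,\hat m$ of Definition \ref{definition:bg-hopf} are just the multiplication $a\todot a'\mapsto aa'$. The antipode diagram reads $\check m(S\todot\Id)\Delta=\check s\circ\epsilon$ and $\hat m(\Id\todot S)\Delta=\hat r\circ\epsilon$, where $\check s(\gamma b)=s(b)$ and $\hat r(b\gamma)=r(b)$; since $B=\complex$ with trivial action we have $r=s=\Id_{\complex}$, so $\check s$ and $\hat r$ both collapse to the composite $\complex\Gamma\xrightarrow{\epsilon_{\complex\Gamma}}\complex\hookrightarrow A$, i.e.\ to the unit map $\eta\circ\epsilon'$. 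Therefore the two algebroid antipode identities become exactly $m(S\otimes\Id)\Delta=\eta\epsilon'=m(\Id\otimes S)\Delta$, which is the Hopf-algebra antipode axiom. I would spell out that the counit and antipode relations for the $\complex\Gamma$-structure that make $\epsilon$ a morphism of Hopf algebras amount to $\Delta_{\complex\Gamma}\circ\epsilon=(\epsilon\otimes\epsilon)\circ\Delta$, $\epsilon_{\complex\Gamma}\circ\epsilon=\epsilon'$, and $S_{\complex\Gamma}\circ\epsilon=\epsilon\circ S$; the first is coassociativity/compatibility of $\epsilon$ with $\Delta$ read through the grading, the second is the definition of $\epsilon'$, and the third follows from the fact that $\epsilon$ intertwines $S$ with the antipode $S^{B\rtimes\Gamma}(b\gamma)=\gamma^{-1}b$ on $\complex\Gamma$, which is $S_{\complex\Gamma}$.

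The main obstacle, such as it is, will be bookkeeping rather than substance: one must be careful that the canonical iso\-morphisms of Lemma \ref{lemma:bg-monoidal} and the grading-relabelling automorphisms $(-)^{\co}$, $(-)^{\op}$ really are identities (or the obvious inclusions) at the level of underlying vector spaces when $B=\complex$, so that the algebroid diagrams specialize verbatim to the classical Hopf-algebra diagrams under the collapse $\complex\Gamma=\complex\rtimes\Gamma$ and $A\todot A\subseteq A\otimes A$. Once that identification is made explicit, every axiom of a Hopf algebra over $\complex$ is a direct postcomposition of the corresponding algebroid axiom with $\epsilon_{\complex\Gamma}$, and the verification that $\epsilon\colon A\to\complex\Gamma$ is a Hopf-algebra morphism reduces to the three compatibility identities above, each of which is immediate from the structure maps of the algebroid.
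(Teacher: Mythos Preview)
Your proposal is correct and follows essentially the same route as the paper's own proof, which simply says that the ``preceding observations'' (the identifications $\complex\rtimes\Gamma=\complex\Gamma$, $A\todot C\subseteq A\otimes C$, and that the unit isomorphisms are $\epsilon_{\complex\Gamma}\otimes\Id$ and $\Id\otimes\epsilon_{\complex\Gamma}$) give the Hopf-algebra structure, and that the $\Gamma\times\Gamma$-gradedness of $\Delta,\epsilon,S$ gives the Hopf-algebra morphism property of $\epsilon$. Your write-up is a careful unfolding of precisely this.

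One small slip: it is not true that $A^{\co,\op}=A$ as a plain algebra. The automorphism $(-)^{\co}$ leaves the algebra structure untouched, but $(-)^{\op}$ reverses multiplication (the paper says so explicitly just before the lemma introducing these automorphisms), so $A^{\co,\op}=A^{\op}$ as an algebra. This does not damage your argument, since the Hopf-algebra antipode axiom only needs $S$ as a linear endomorphism of the underlying vector space, and the maps $\check m,\hat m$ are defined in terms of the multiplication of $A$, not of $A^{\co,\op}$. You may also note that compatibility of $\epsilon$ with the antipodes is automatic once $\epsilon$ is a bialgebra morphism, so strictly only the comultiplication compatibility needs to be checked via the grading.
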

\begin{proof}
  The preceding observations easily imply that
  $(A,\Delta,\epsilon',S)$ is a Hopf algebra.  To see that
  $\epsilon$ is a morphism of Hopf algebras, use the
  fact that $\Delta,\epsilon,S$ are $\Gamma\times
  \Gamma$-graded.
\end{proof}
\begin{lemma}\label{lemma:b-from-hopf}
  Let $(A,\Delta,\epsilon,S)$ be a Hopf algebra with a
  morphism $\pi \colon A \to \complex \Gamma$. Then $A$ is a
  $(\complex,\Gamma)$-algebra with respect to the grading
  given by $A_{\gamma,\gamma'} = \{ a\in A : (\pi \otimes
  \Id \otimes \pi)(\Delta(a)) = \gamma \otimes a \otimes
  \gamma'\}$ for all $\gamma,\gamma' \in \Gamma$, and
  $(A,\Delta,\pi,S)$ is a $(\complex,\Gamma)$-Hopf
  algebroid.
\end{lemma}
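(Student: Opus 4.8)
The plan is to read the grading through the two coactions $\lambda:=(\pi\otimes\Id)\circ\Delta\colon A\to\complex\Gamma\otimes A$ and $\rho:=(\Id\otimes\pi)\circ\Delta\colon A\to A\otimes\complex\Gamma$; the displayed formula, in which $\Delta$ should be read as the iterated comultiplication $\Delta^{(2)}=(\Delta\otimes\Id)\Delta$, then says precisely that $a\in A_{\gamma,\gamma'}$ iff $\lambda(a)=\gamma\otimes a$ and $\rho(a)=a\otimes\gamma'$. Since $\complex\Gamma$ is a group algebra, $\complex\Gamma$-comodules are the same as $\Gamma$-graded vector spaces, and coassociativity of $\Delta$ shows $(\Id\otimes\rho)\lambda=(\lambda\otimes\Id)\rho$, i.e.\ $\lambda$ and $\rho$ make $A$ a $\complex\Gamma$-bicomodule; hence $A$ is the direct sum of the joint eigenspaces $A_{\gamma,\gamma'}$. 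As $\Delta$ and $\pi$ are algebra homomorphisms, so are $\lambda$ and $\rho$, whence $A_{\gamma_1,\gamma_1'}A_{\gamma_2,\gamma_2'}\subseteq A_{\gamma_1\gamma_2,\gamma_1'\gamma_2'}$ and $1\in A_{e,e}$. Because $B=\complex$ carries the trivial action and $r=s$ is the unit map, the $r$- and $s$-commutation relations of Definition \ref{definition:bg-algebra} are vacuous, so $A$ is a $(\complex,\Gamma)$-algebra.

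Next I would check that $\Delta$, $\pi$ and $S$ are graded morphisms of $(\complex,\Gamma)$-algebras. For $\pi$: applying $\Id\otimes\epsilon\otimes\Id$ to the defining relation gives $\Delta_{\complex\Gamma}(\pi(a))=\epsilon(a)\,\gamma\otimes\gamma'$ for $a\in A_{\gamma,\gamma'}$; as the image of $\Delta_{\complex\Gamma}$ lies on the diagonal, this forces $\pi(a)=0$ when $\gamma\neq\gamma'$ and $\pi(a)=\epsilon(a)\gamma\in(\complex\rtimes\Gamma)_{\gamma,\gamma}$ otherwise, so $\pi$ is graded (it is a homomorphism by hypothesis). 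For $\Delta$: the left coaction on $a$ is carried by the first leg and the right coaction by the second leg, so $\lambda(a_{(1)})=\gamma\otimes a_{(1)}$ and $\rho(a_{(2)})=a_{(2)}\otimes\gamma''$ when $a\in A_{\gamma,\gamma''}$; the decisive point is that $\Delta(a)$ lands in $A\gtimes A$ and not merely in $A\otimes A$, which I would obtain from coassociativity by noting that both the right degree of $a_{(1)}$ and the left degree of $a_{(2)}$ are computed by $(\Id\otimes\pi\otimes\Id)\Delta^{(2)}$ and therefore match. This gives $\Delta(A_{\gamma,\gamma''})\subseteq(A\gtimes A)_{\gamma,\gamma''}$. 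For $S$: since the antipode is anti-comultiplicative and $\pi S=S_{\complex\Gamma}\pi$ (as $\pi$ is a Hopf morphism), applying $S\otimes S_{\complex\Gamma}$ to $\rho(a)=a\otimes\gamma'$ and $\lambda(a)=\gamma\otimes a$ yields $\lambda(S(a))=(\gamma')^{-1}\otimes S(a)$ and $\rho(S(a))=S(a)\otimes\gamma^{-1}$, that is, $S(A_{\gamma,\gamma'})\subseteq A_{(\gamma')^{-1},\gamma^{-1}}=(A^{\co,\op})_{\gamma,\gamma'}$; anti-multiplicativity then makes $S\colon A\to A^{\co,\op}$ a homomorphism.

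Finally I would verify the three diagrams of Definition \ref{definition:bg-hopf}. Since $A\todot A=A\gtimes A\subseteq A\otimes A$ and $\Delta\todot\Id$, $\Id\todot\Delta$ are the restrictions of the corresponding maps on $A\otimes A\otimes A$, coassociativity of the algebroid is immediate from $(\Delta\otimes\Id)\Delta=(\Id\otimes\Delta)\Delta$. Using the identification $(\complex\rtimes\Gamma)\todot A\cong A$ with $\epsilon_{\complex\Gamma}\otimes\Id$ together with $\epsilon_{\complex\Gamma}\circ\pi=\epsilon$ (again because $\pi$ is a Hopf morphism), the counit triangles collapse to $(\epsilon\otimes\Id)\Delta=\Id=(\Id\otimes\epsilon)\Delta$; and since $\check s\circ\pi=\hat r\circ\pi=\epsilon(-)1_A$, the two antipode squares become $\sum S(a_{(1)})a_{(2)}=\epsilon(a)1_A=\sum a_{(1)}S(a_{(2)})$, which is the antipode axiom for $A$. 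The only step requiring genuine care is the one in the middle paragraph---that coassociativity forces the internal degrees to match so that $\Delta$ factors through $A\gtimes A$; everything else reduces mechanically to the Hopf algebra axioms for $(A,\Delta,\epsilon,S)$ and the fact that $\pi$ intertwines the structure maps with those of $\complex\Gamma$.
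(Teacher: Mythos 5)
Your proof is correct and follows the same route as the paper's (much terser) argument: define the grading, observe that coassociativity forces $\Delta(A)\subseteq A\todot A$, use $\epsilon_{\complex\Gamma}\circ\pi=\epsilon$ to collapse the counit triangles, and reduce the antipode squares to the Hopf algebra axioms via $\check s\circ\pi=\hat r\circ\pi=\epsilon(-)1_{A}$. Your reformulation of the grading through the two coactions $\lambda$ and $\rho$, and your identification of the matching of internal degrees as the one step needing care, are exactly the details the paper leaves implicit.
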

\begin{proof}
  The formula above evidently defines a
  $\Gamma\times\Gamma$-grading on $A$. Coassociativity of
  $\Delta$ implies that $\Delta(A) \subseteq A \todot
  A$. The remark preceding Lemma \ref{lemma:b-to-hopf} and
  the relation $\epsilon_{\complex \Gamma} \circ \pi =
  \epsilon$ imply $(\pi \todot \Id)\circ \Delta =\Id =
  (\Id \todot \pi) \circ \Delta$. Finally, in the notation
  of Definition \ref{definition:bg-hopf}, $\check m \circ (S
  \todot \Id) \circ \Delta =m \circ (S\otimes \Id) \circ
  \Delta = \epsilon = \check s \circ \pi$ and similarly
  $\hat m \circ (\Id \todot S) \circ \Delta = \hat r \circ
  \pi$.
\end{proof}
Putting everything together, one easily verifies:
\begin{proposition}
  There exists an equivalence of categories
  $\Hopf_{(\complex,\Gamma)}
  \stackrel{\bfF}{\underset{\bfG}{\rightleftarrows}}
  \Hopf_{\complex \Gamma}$, where
  $\bfF(A,\Delta,\epsilon,S)= ((A,\Delta,\epsilon_{\complex
    \Gamma}\circ \epsilon,S),\epsilon)$, $\bfF\phi= \phi$
  and $\bfG((A,\Delta,\epsilon,S),\pi)= (A,\Delta,\pi,S)$
  with the grading on $A$ defined as in Lemma
  \ref{lemma:b-from-hopf}, and $\bfG \phi= \phi$.
  Likewise, there exists an equivalence
  $\Hopf_{(\complex,\Gamma)}^{*} \rightleftarrows
  \Hopf^{*}_{\complex \Gamma}$.
\end{proposition}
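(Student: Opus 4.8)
The plan is to show that $\bfF$ and $\bfG$ are well-defined functors and mutually inverse, so that in particular they constitute an equivalence (indeed an isomorphism) of categories. Well-definedness on objects is exactly the content of Lemmas \ref{lemma:b-to-hopf} and \ref{lemma:b-from-hopf}: the former guarantees that $\bfF(A,\Delta,\epsilon,S)$ is a Hopf algebra equipped with a morphism to $\complex\Gamma$, and the latter that $\bfG((A,\Delta,\epsilon,S),\pi)$ is a $(\complex,\Gamma)$-Hopf algebroid. Since both functors act as the identity on underlying maps, the only point to check on morphisms is that a morphism in one category is again a morphism in the other. For $\bfF$ this is immediate, because a morphism of $(\complex,\Gamma)$-Hopf algebroids preserves $\Delta$, $S$ and $\epsilon$, hence also $\epsilon_{\complex\Gamma}\circ\epsilon$, and the comma-category relation $\pi_{C}\circ\phi=\pi_{A}$ is just the preservation of $\epsilon$. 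For $\bfG$ one uses that $\phi$ commutes with $\Delta$ and satisfies $\pi_{C}\circ\phi=\pi_{A}$: if $a\in A_{\gamma,\gamma'}$, then applying $\phi$ to the middle leg of the defining identity of Lemma \ref{lemma:b-from-hopf} and invoking $\pi_{C}\circ\phi=\pi_{A}$ shows $\phi(a)\in C_{\gamma,\gamma'}$, so $\phi$ is graded.

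It then remains to verify $\bfG\circ\bfF=\Id$ and $\bfF\circ\bfG=\Id$. The second is the easy direction: for a Hopf-algebra morphism $\pi\colon A\to\complex\Gamma$ one has $\epsilon_{\complex\Gamma}\circ\pi=\epsilon$, so $\bfF\bfG((A,\Delta,\epsilon,S),\pi)=((A,\Delta,\epsilon_{\complex\Gamma}\circ\pi,S),\pi)=((A,\Delta,\epsilon,S),\pi)$, the grading produced by $\bfG$ and then forgotten by $\bfF$ playing no role.

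The one place that genuinely requires an argument --- and the main (modest) obstacle --- is $\bfG\circ\bfF=\Id$, namely that the grading on $A$ recovered from $\epsilon$ through the formula of Lemma \ref{lemma:b-from-hopf} coincides with the original grading. Here the map in that formula is the iterated comultiplication $\Delta^{(2)}:=(\Id\otimes\Delta)\circ\Delta\colon A\to A\otimes A\otimes A$. I would fix a homogeneous $a\in A_{\gamma,\gamma'}$ and compute $(\epsilon\otimes\Id\otimes\epsilon)\bigl(\Delta^{(2)}(a)\bigr)$. Since $\Delta$ is $\Gamma\times\Gamma$-graded, $\Delta^{(2)}(a)$ lies in $\sum_{\nu,\rho}A_{\gamma,\nu}\otimes A_{\nu,\rho}\otimes A_{\rho,\gamma'}$; by Remark \ref{remarks:bg-hopf}(i) and the description of the homogeneous components of $\complex\Gamma$, the outer applications of $\epsilon$ annihilate every term with $\nu\neq\gamma$ or $\rho\neq\gamma'$ and produce $\epsilon'$ times the group elements $\gamma$ and $\gamma'$ on the diagonal, where $\epsilon'=\epsilon_{\complex\Gamma}\circ\epsilon$. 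Collapsing the surviving inner counits by the Hopf-algebra identities $(\epsilon'\otimes\Id)\circ\Delta=\Id=(\Id\otimes\epsilon')\circ\Delta$ yields exactly $\gamma\otimes a\otimes\gamma'$. Conversely, since distinct pairs $(\gamma,\gamma')$ give linearly independent outputs $\gamma\otimes a\otimes\gamma'$, the formula detects precisely the original homogeneous components, so the two gradings agree.

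Finally, the $*$-algebra statement follows by the identical argument: the $*$-analogues of Lemmas \ref{lemma:b-to-hopf} and \ref{lemma:b-from-hopf} hold verbatim, all structure maps involved ($\Delta$, $\epsilon$, $\pi$, $S$) are $*$-preserving, and the identification $\complex\rtimes\Gamma=\complex\Gamma$ matches the involution $(b\gamma)^{*}=\gamma^{-1}b^{*}$ with $\gamma^{*}=\gamma^{-1}$. Hence $\bfF$ and $\bfG$ restrict to mutually inverse functors between $\Hopf^{*}_{(\complex,\Gamma)}$ and $\Hopf^{*}_{\complex\Gamma}$.
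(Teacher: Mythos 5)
Your proof is correct and fills in exactly the verification the paper leaves implicit (``one easily verifies''): well-definedness on objects via Lemmas \ref{lemma:b-to-hopf} and \ref{lemma:b-from-hopf}, compatibility on morphisms, and the two composite identities, of which only $\bfG\circ\bfF=\Id$ (recovering the original grading from $(\epsilon\otimes\Id\otimes\epsilon)\circ\Delta^{(2)}$ using Remark \ref{remarks:bg-hopf}~i) and the counit axiom) needs a genuine computation. The only nitpick is in the $*$-case: the antipode need not be $*$-preserving (only $*\circ S\circ *\circ S=\Id$ holds), but this is immaterial since neither category imposes a $*$-condition on $S$.
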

Let us next consider the base change from $\complex$ to a
commutative algebra $C$ along the unital inclusion $\phi
\colon \complex \to C$ for a $(\complex,\Gamma)$-Hopf
algebroid $(A,\Delta,\epsilon,S)$.
\begin{remark}
 The action of $\Gamma$ on $C$ and the
  morphism $\epsilon \colon A\to\complex \Gamma$ turn $C$
  into a left module algebra over the Hopf algebra
  $(A,\Delta,\epsilon_{\complex \Gamma} \circ
  \epsilon,S)$, and $\phi_{*}(A,\Delta,\epsilon,S)$
  coincides with the Hopf algebroids considered in \cite[\S
  3.4.6]{boehm:algebroids} and \cite[Theorem
  3.1]{kadison:pseudo-hopf}, and is closely related to the
  quantum transformation groupoid considered in
  \cite[Example 2.6]{vainer}.
\end{remark}
Assume that $C$ is an algebra of functions on $\Gamma$ on which
  $\Gamma$ acts by left translations.
  \begin{proposition} \label{proposition:bg-lie} Define
    $m\colon A \to \End(A)$ and $m_{r},m_{s} \colon C
    \to \End(A)$ by $m(a')a=a'a$, $m_{r}(c)a = c
    (\partial^{r}_{a})a$, $m_{s}(c)a = c(\partial^{s}_{a})a$
    for all $a,a'\in A$, $c\in C$.  Then there exists a
    homomorphism $\lambda \colon \phi_{*}(A)
    \to \End(A)$, $c \otimes a \otimes c' \mapsto
    m_{r}(c)m(a)m_{s}(c')$, and $\lambda$ is injective if
    $aA_{\gamma,\gamma'} \neq 0$ for all non-zero $a \in A$
    and all $\gamma,\gamma' \in \Gamma$.
\end{proposition}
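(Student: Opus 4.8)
The plan is to prove the two assertions separately: that $\lambda$ is a well-defined unital homomorphism, which is a direct computation, and that it is injective under the stated faithfulness hypothesis, which carries the actual content. First I would record that, since $B=\complex$ carries the trivial $\Gamma$-action, the base change reduces to the ordinary tensor product $\phi_{*}(A)=C\btimes A\btimes C=C\otimes A\otimes C$, so there are no tensor relations to respect and $\lambda$ is well defined by trilinearity; the maps $m_{r}(c),m_{s}(c)$ make sense on all of $A$ because the $\Gamma^{\ev}$-grading is a vector-space direct sum on which they act diagonally, and $m(a)$ is left multiplication. Evaluating $\lambda(c\otimes a\otimes c')=m_{r}(c)m(a)m_{s}(c')$ right-to-left on homogeneous $a,x$ and using $\partial^{r}_{ax}=\partial^{r}_{a}\partial^{r}_{x}$, one obtains the single \emph{working formula} $\lambda(c\otimes a\otimes c')(x)=c(\partial^{r}_{a}\partial^{r}_{x})\,c'(\partial^{s}_{x})\,ax$, which drives the rest of the argument.

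To see that $\lambda$ is multiplicative I would substitute the base-change product $(c\otimes a\otimes d)(c'\otimes a'\otimes d')=c\,\partial^{r}_{a}(c')\otimes aa'\otimes(\partial^{s}_{a'})^{-1}(d)\,d'$ into the working formula and apply both sides to a homogeneous $x\in A_{\gamma,\gamma'}$. The only delicate point is that $\Gamma$ acts on $C$ by left translation, so the argument shift produced by the grading when forming $aa'x$ is exactly cancelled by the translations in $\partial^{r}_{a}(c')$ and $(\partial^{s}_{a'})^{-1}(d)$: on the left factor $(\partial^{r}_{a}\cdot c')$ evaluated at $\partial^{r}_{a}\partial^{r}_{a'}\gamma$ equals $c'(\partial^{r}_{a'}\gamma)$, and on the right factor $(\partial^{s}_{a'})^{-1}(d)$ evaluated at $\gamma'$ equals $d(\partial^{s}_{a'}\gamma')$, matching the $s$-degree of $a'x$. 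After this bookkeeping both sides collapse to the same scalar times $(aa')x$; unitality is immediate from $m_{r}(1_{C})=m_{s}(1_{C})=m(1_{A})=\Id$. This is where conventions must be tracked carefully, but it stays routine.

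For injectivity, suppose $\lambda(u)=0$. Since $\lambda$ sends $C\otimes A_{\mu,\mu'}\otimes C$ into the degree-$(\mu,\mu')$ homogeneous summand of the graded endomorphism algebra $\End(A)$ (the working formula shows $\lambda(c\otimes a\otimes c')$ shifts degrees by $(\mu,\mu')$ for $a\in A_{\mu,\mu'}$), and distinct such summands sit in a direct sum inside $\End(A)$, I may assume $u=\sum_{i}c_{i}\otimes a_{i}\otimes c_{i}'$ with all $a_{i}\in A_{\mu,\mu'}$ for one fixed degree. For each $\gamma,\gamma'$ the working formula then exhibits $\lambda(u)$ on $A_{\gamma,\gamma'}$ as left multiplication by $b_{\gamma,\gamma'}:=\sum_{i}c_{i}(\mu\gamma)\,c_{i}'(\gamma')\,a_{i}\in A_{\mu,\mu'}$. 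Thus $b_{\gamma,\gamma'}A_{\gamma,\gamma'}=0$, and the hypothesis $aA_{\gamma,\gamma'}\neq 0$ for $a\neq 0$ forces $b_{\gamma,\gamma'}=0$ for all $\gamma,\gamma'$. Setting $\delta=\mu\gamma$, which ranges over all of $\Gamma$, this says $\sum_{i}c_{i}(\delta)\,c_{i}'(\gamma')\,a_{i}=0$ in $A$ for all $\delta,\gamma'\in\Gamma$.

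The hard part is to pass from this pointwise vanishing back to $u=0$, and this is precisely where the assumption that $C$ is an algebra of functions on $\Gamma$ enters. Choosing a basis $(e_{k})$ of $A_{\mu,\mu'}$ and writing $a_{i}=\sum_{k}\xi_{ik}e_{k}$, the relation decouples into $\sum_{i}\xi_{ik}\,c_{i}(\delta)\,c_{i}'(\gamma')=0$ for every $k$ and all $\delta,\gamma'$. I would then invoke the elementary fact that the map $C\otimes C\to\mathrm{Fun}(\Gamma\times\Gamma)$, $f\otimes g\mapsto\big((\delta,\gamma')\mapsto f(\delta)g(\gamma')\big)$, is injective: taking the $c_{i}'$ linearly independent in $C$, fixing $\delta$ gives $\sum_{i}\xi_{ik}c_{i}(\delta)\,c_{i}'=0$ in $C$, hence each coefficient $\sum_{i}\xi_{ik}c_{i}(\delta)=0$, and letting $\delta$ vary and using that elements of $C$ are determined by their values on $\Gamma$ yields $\sum_{i}\xi_{ik}\,c_{i}\otimes c_{i}'=0$ in $C\otimes C$ for each $k$. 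Reassembling over $k$ gives $\sum_{i}c_{i}\otimes a_{i}\otimes c_{i}'=0$, i.e.\ $u=0$, which completes the proof.
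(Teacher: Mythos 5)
Your proof is correct and takes essentially the same route as the paper's: well-definedness reduces to the commutation relations $m(a')m_{r}(c)=m_{r}(\partial^{r}_{a'}(c))m(a')$ and $m(a')m_{s}(c)=m_{s}(\partial^{s}_{a'}(c))m(a')$, which your working formula encodes, and injectivity comes from viewing elements of $C\otimes A\otimes C$ as $A$-valued functions on $\Gamma\times\Gamma$ and invoking the hypothesis $aA_{\gamma,\gamma'}\neq 0$. You phrase injectivity as the contrapositive and spell out the degree decomposition and the faithfulness of $C\otimes C\to\mathrm{Fun}(\Gamma\times\Gamma)$ that the paper leaves implicit, but the underlying argument is the same.
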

\begin{proof}
  First, note that
  \begin{align*}
    m(a')m_{r}(c)a = a' c( \partial^{r}_{a})a =
    c((\partial^{r}_{a'})^{-1}\partial^{r}_{a'a})a'a =
    m_{r}(\partial^{r}_{a'}(c)) m(a)a
  \end{align*}
 and likewise
  $m(a')m_{s}(c)=m_{s}(\partial^{s}_{a'}(c))m(a')$ for all
  $a,a'\in A$, $c\in C$. The existence of $\lambda$ follows.
 Assume that $aA_{\gamma,\gamma'} \neq 0$ for all
  non-zero $a \in A$ and all $\gamma,\gamma' \in
  \Gamma$. Let $d:=\sum_{i} c_{i} \otimes a_{i} \otimes
  c'_{i} \in \phi_{*}(A)$ be non-zero, where all $a_{i}$
  are homogeneous. Identifying $C\otimes A \otimes C$ with a
  space of $A$-valued functions on $\Gamma\times \Gamma$ and
  using the assumption, we first
  find $\gamma,\gamma' \in \Gamma$ such that $a:=\sum_{i}
  c_{i}(\partial^{r}_{a_{i}}\gamma)a_{i}c'_{i}(\gamma')$ is
  non-zero, and then an $a' \in
  A_{\gamma,\gamma'}$ such that $\lambda(d)a' = aa' \neq 0$.
\end{proof}
\begin{remark}\label{remark:bg-lie}
 Regard elements of $C$ as functionals on
  $\complex \Gamma$ via $c(\sum_{i} b_{i}\gamma_{i})=
  \sum_{i} b_{i}c(\gamma_{i})$.  Then $m_{r}(c)a = (c \circ
  \epsilon \otimes \Id)(\Delta(a))$, $m_{s}(c)a =(\Id
  \otimes c\circ \epsilon)(\Delta(a))$ for all $c\in C$,
  $a\in A$.
\end{remark}
\begin{example} \label{example:bg-lie} Let $G$ be a compact
  Lie group, $\mathcal{O}(G)$ its Hopf algebra of
  representative functions \cite[\S 1.2]{timmermann:buch} and $T
  \subseteq G$ a torus of rank $d$.  We now apply
  Proposition \ref{proposition:bg-lie}, where
  \begin{itemize}
  \item $A=\mathcal{O}(G)$, regarded as a Hopf
    $(\complex,\hat T)$-algebroid as in Lemma
    \ref{lemma:b-from-hopf} using the homomorphism $\pi
    \colon \mathcal{O}(G) \to \mathcal{O}(T)$ induced from
    the inclusion $T\subseteq G$, and the isomorphism
    $\mathcal{O}(T) \cong \complex \hat T$,
\item $C=U\mathfrak{t}$ is the enveloping algebra of the
 Lie algebra $\mathfrak{t}$ of $T$, regarded as a
  polynomial algebra of functions on $\hat T$ such that
  $X(\chi) = \frac{d}{dt}\big|_{t=0} \chi(e(tX))$, where
  $e\colon \mathfrak{t} \to T$ denotes the exponential map.
  \end{itemize}
  If we regard $U\mathfrak{t}$ as functionals on the algebra $\complex
  \hat T \cong \mathcal{O}(T)$ as in Remark \ref{remark:bg-lie},
  then $X(f)=\frac{d}{dt}\big|_{t=0} f(e(tX))$ and hence
  $m_{r},m_{s} \colon U\mathfrak{t}
  \to \End(\mathcal{O}(G))$ are given by
  \begin{align*}
    (m_{r}(X)a)(x) &= \frac{d}{dt}\Big|_{t=0}
    a(e(tX)x), & (m_{s}(X)a)(x) &=
    \frac{d}{dt}\Big|_{t=0} a(xe(tX))
  \end{align*}
  for all $X \in \mathfrak{t}$, $a\in \mathcal{O}(G)$, $x\in
  G$. Thus $\lambda(\mathcal{O}(G))
  \subseteq \End(\mathcal{O}(G))$ is the algebra generated
  by multiplication operators for functions in
  $\mathcal{O}(G)$ and by left and right differentiation
  operators along $T \subseteq G$.

  If $G$ is connected, then $\mathcal{O}(G)$ has no
  zero-divisors and hence $\lambda$ is injective as soon as
  for all $\chi,\chi' \in \hat T$, there exists some
  non-zero $a \in \mathcal{O}(G)$ such that
  $a(xyz)=\chi(x)a(y)\chi'(z)$ for all $x,z \in T$ and $y\in
  G$.
\end{example}

\subsection{Intertwiners for
  $(B,\Gamma)$-algebras}
\label{section:rn}

In this subsection, we study relations of the form used to
define the free orthogonal and free unitary dynamical
quantum groups $\Ao(\nabla,F)$ and $\Au(\nabla,F)$, and
show that such relations admit a number of natural
transformations. Conceptually, these relations express that
certain matrices are intertwiners or morphisms of
corepresentations, and the transformations correspond to
certain functors of  corepresentation categories.  Although
elementary, these observations provide short and systematic
proofs for the main results in the following subsection.

Regard $\Mn(B)$  as a  subalgebra
of $\Mn(B\rtimes \Gamma)$, and let $A$ be a
$(B,\Gamma)$-algebra.
Given a linear map $\phi \colon A\to C$ between algebras, we
denote by $\phi_{n} \colon \Mn(A) \to \Mn(C)$ its entry-wise
extension to $n\times n$-matrices.

\begin{definition} \label{definition:rn-intertwiner} A
  matrix $u \in \Mn(A)$ is \emph{homogeneous} if there are
  $\gamma_{1},\ldots,\gamma_{n} \in A$ such that $u_{ij} \in
  A_{\gamma_{i},\gamma_{j}}$ for all $i,j$. In that case,
  let $\partial_{u,i}:=\gamma_{i}$ for all $i$ and
  $\partial_{u}:=\diag(\gamma_{1},\ldots,\gamma_{n}) \in
  \Mn(B\rtimes \Gamma)$.  An \emph{intertwiner} for
  homogeneous matrices $u,v\in \Mn(A)$ is an $F\in \GLn(B)$
  satisfying $\partial_{v}F\partial_{u}^{-1} \in \Mn(B)$ and
  $r_{n}(\partial_{v}F\partial_{u}^{-1}) u = vs_{n}(F)$. We
  write such an intertwiner as $u\xrightarrow{F} v$ and let
  $\hat F:=\partial_{v} F\partial_{u}^{-1}$ if $u,v$ are
  understood.
\end{definition}
If $u\xrightarrow{F} v$ and $v\xrightarrow{G} w$ are
intertwiners, then evidently so are $v\xrightarrow{F^{-1}} u$ and $u
\xrightarrow{GF} w$.
\begin{definition}
  We denote by $\Rn(A)$ the category of all homogeneous
  matrices in $\Mn(A)$ together with their intertwiners as
  morphisms, and by $\Rn^{\times}(A)$ and
  $\Rn^{\times\top}(A)$ the full subcategories formed by all
  homogeneous $v$ in $\GLn(A)$ or $\GLn(A)^{\top}$,
  respectively.
\end{definition}
Evidently, $\Rn(A)$ is a groupoid, and the assignment
$A\mapsto \Rn(A)$ extends to a functor from
$(B,\Gamma)$-algebras to groupoids.

We shall make frequent use of the following straightforward relations.
\begin{lemma} \label{lemma:rn-grading}
  Let $u,v\in \Mn(A)$ be homogeneous,  $F\in \Mn(B)$ and
 $\hat F=\partial_{v}F\partial_{u}^{-1}$. Then
 \begin{align*}
 \hat F \in \Mn(B) \ \Leftrightarrow \ (F_{ij}=0 \text{ 
   whenever } \partial_{v,i} \neq \partial_{u,j}).
\end{align*}
Assume that these condition holds. Then $   \hat F =(\partial_{v,i}(F_{ij}))_{i,j} =
   (\partial_{u,j}(F_{ij}))_{i,j}$ and
 \begin{align*}
\hat F^{\top} &=
\partial_{u}F^{\top}\partial_{v}^{-1}, &
(\partial_{v}F)^{-\top} &= F^{-\top} \partial_{u}^{-1}, &
(F\partial_{u}^{-1})^{-\top} &= \partial_{v} F^{-\top}.
 \end{align*}
If  $B$ is a $*$-algebra and $\Gamma$ preserves
the involution, then
$\overline{\partial_{v}F} = \overline{F} \partial_{u}^{-1}$
and $\overline{F\partial_{u}} = \partial_{v}^{-1} \overline{F}$.
\end{lemma}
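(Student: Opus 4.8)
The plan is to unwind the definitions of $\hat F = \partial_{v} F \partial_{u}^{-1}$ entry by entry, using the matrix structure where $\partial_{u} = \diag(\partial_{u,1},\ldots,\partial_{u,n})$ and $\partial_{v} = \diag(\partial_{v,1},\ldots,\partial_{v,n})$ are diagonal matrices in $\Mn(B\rtimes \Gamma)$. First I would compute the $(i,j)$-entry of the product $\partial_{v} F \partial_{u}^{-1}$ directly from the crossed-product multiplication rule $\gamma \cdot b = \gamma(b) \cdot \gamma$ in $B\rtimes \Gamma$ (Example \ref{example:bg-unit}). Since $\partial_{v,i}, \partial_{u,j} \in \Gamma$ and $F_{ij} \in B$, we get
\begin{align*}
  (\hat F)_{ij} = \partial_{v,i} F_{ij} \partial_{u,j}^{-1}
  = \partial_{v,i}(F_{ij}) \,\partial_{v,i}\partial_{u,j}^{-1}.
\end{align*}
This element lies in $\Mn(B)$ (rather than genuinely in $\Mn(B\rtimes\Gamma)$) if and only if each nonzero entry has $\partial_{v,i}\partial_{u,j}^{-1} = e$, i.e.\ $\partial_{v,i} = \partial_{u,j}$; equivalently $F_{ij} = 0$ whenever $\partial_{v,i} \neq \partial_{u,j}$. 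This gives the stated equivalence. Under that condition, on every nonzero entry we may replace $\partial_{v,i}$ by $\partial_{u,j}$ freely, which yields both expressions $\hat F = (\partial_{v,i}(F_{ij}))_{i,j} = (\partial_{u,j}(F_{ij}))_{i,j}$ at once.

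**The transpose and inverse-transpose identities.** For these I would exploit that $\partial_{u}, \partial_{v}$ are diagonal, so they are fixed by transposition: $\partial_{u}^{\top} = \partial_{u}$ and $\partial_{v}^{\top} = \partial_{v}$. Then $\hat F^{\top} = (\partial_{v} F \partial_{u}^{-1})^{\top} = \partial_{u}^{-\top} F^{\top} \partial_{v}^{\top}$—but one must be careful: transposition reverses the order of a product, and here the factors live in the noncommutative algebra $\Mn(B\rtimes\Gamma)$, so the entrywise transpose interacts with the grading. The cleanest route is to verify $\hat F^{\top} = \partial_{u} F^{\top} \partial_{v}^{-1}$ on entries, reading off the $(i,j)$-entry of each side as $\partial_{u,j}(F_{ji})\,\partial_{u,j}\partial_{v,i}^{-1}$ and matching. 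The remaining two identities for $(\partial_{v}F)^{-\top}$ and $(F\partial_{u}^{-1})^{-\top}$ follow by combining the transpose rule with the fact that inverting a diagonal $\partial$ just inverts each diagonal group element, together with $(XY)^{-\top} = X^{-\top}Y^{-\top}$ for the relevant factors.

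**The $*$-algebra identities and the main obstacle.** For the final pair, I would use that the involution on $B\rtimes\Gamma$ is $(b\gamma)^{*} = \gamma^{-1}b^{*}$ and that the conjugation $\overline{(-)}$ of Lemma \ref{lemma:bg-bar} acts entrywise; computing $\overline{\partial_{v}F}$ and $\overline{F\partial_{u}}$ on entries and using $\Gamma$-invariance of the involution gives the claims. The main obstacle throughout is purely bookkeeping: keeping straight whether a diagonal group-element factor sits to the left or the right, and correctly tracking how transposition reverses products in the noncommutative matrix algebra over $B\rtimes\Gamma$. No deep idea is needed—each identity reduces to a one-line entrywise check via the crossed-product relation $\gamma b = \gamma(b)\gamma$—but the sign/order conventions must be applied consistently, so I would organize the proof as a single entrywise computation and then specialize.
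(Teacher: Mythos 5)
The paper does not actually prove this lemma --- it is offered as a list of ``straightforward relations'' --- so your proposal can only be judged on its own merits. Your entrywise strategy is the right (essentially the only) one, and it correctly disposes of the equivalence, the two formulas for $\hat F$, the identity $\hat F^{\top}=\partial_{u}F^{\top}\partial_{v}^{-1}$ (modulo an index slip: the $(i,j)$-entry of $\partial_{u}F^{\top}\partial_{v}^{-1}$ is $\partial_{u,i}(F_{ji})\,\partial_{u,i}\partial_{v,j}^{-1}$, not $\partial_{u,j}(F_{ji})\,\partial_{u,j}\partial_{v,i}^{-1}$; under the support condition both sides reduce to $\partial_{u,i}(F_{ji})$), and the two conjugation identities at the end.

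There is, however, a genuine gap in your treatment of $(\partial_{v}F)^{-\top}=F^{-\top}\partial_{u}^{-1}$ and $(F\partial_{u}^{-1})^{-\top}=\partial_{v}F^{-\top}$. The rule ``$(XY)^{-\top}=X^{-\top}Y^{-\top}$ for the relevant factors'' that you invoke is not available: transposition is not an antihomomorphism of $\Mn(B\rtimes\Gamma)$, as you yourself point out two sentences earlier, and no formal manipulation can make $\partial_{u}$ appear on the right-hand side of an identity whose left-hand side contains only $\partial_{v}$ and $F$. Computing entries, $((\partial_{v}F)^{-\top})_{ij}=(F^{-1})_{ji}\,\partial_{v,i}^{-1}$ while $(F^{-\top}\partial_{u}^{-1})_{ij}=(F^{-1})_{ji}\,\partial_{u,j}^{-1}$, so the claim is equivalent to the assertion that $F^{-1}$ satisfies the \emph{transposed} support condition, $(F^{-1})_{ji}=0$ whenever $\partial_{v,i}\neq\partial_{u,j}$. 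This is true but needs an argument: letting $P^{V}_{\gamma},P^{U}_{\gamma}\in\Mn(B)$ be the diagonal idempotents supported on $\{i:\partial_{v,i}=\gamma\}$ and $\{j:\partial_{u,j}=\gamma\}$, the hypothesis reads $F=\sum_{\gamma}P^{V}_{\gamma}FP^{U}_{\gamma}$; cutting the equations $FF^{-1}=F^{-1}F=1$ with these idempotents gives $(P^{U}_{\gamma}F^{-1}P^{V}_{\gamma})(P^{V}_{\gamma}FP^{U}_{\gamma})=P^{U}_{\gamma}$ and $(P^{V}_{\gamma}FP^{U}_{\gamma})(P^{U}_{\gamma}F^{-1}P^{V}_{\beta})=0$ for $\beta\neq\gamma$, whence $P^{U}_{\gamma}F^{-1}P^{V}_{\beta}=P^{U}_{\gamma}F^{-1}P^{V}_{\gamma}\cdot P^{V}_{\gamma}FP^{U}_{\gamma}\cdot P^{U}_{\gamma}F^{-1}P^{V}_{\beta}=0$. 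With this in hand your entrywise method finishes both remaining identities (and also shows $\hat F^{-1}\in\Mn(B)$, which the paper uses tacitly elsewhere); without it, the step as you wrote it does not go through.
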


Given $u,v \in \Mn(A)$ such that $\partial^{s}_{u_{ik}}
= \partial^{r}_{v_{kj}}$ for all $i,k,j$, let
 $u \mal v:=
(\sum_{k} u_{ik} \todot v_{kj})_{i,j} \in \Mn(A\todot A)$.
\begin{lemma} \label{lemma:rn-ed}
  There exist functors
  \begin{align*}
    \bm{\epsilon} \colon \Rn(A) &\to \Rn(B\rtimes\Gamma), &&
    &u &\mapsto \partial_{u}, & (u\xrightarrow{F} v)
    &\mapsto
    (\partial_{u} \xrightarrow{F} \partial_{v}), \\
    \bm{\Delta} \colon \Rn(A) &\to \Rn(A\todot A), && & u
    &\mapsto u\mal u, & (u\xrightarrow{F} v) &\mapsto
    (u\mal u \xrightarrow{F} v\mal v),  \\
    (-)^{\op}\colon \Rn(A) &\to \Rn(A^{\op}), & & & u
    &\mapsto u^{\op}:=u, &
    (u\xrightarrow{F} v) &\mapsto (u^{\op} \xrightarrow{\hat
      F} v^{\op}),
  \end{align*}
  and $\partial_{u\mal u} = \partial_{u}$, $\partial_{u^{\op}}
  =\partial^{-1}_{u}$ for all $u\in \Rn(A)$.
\end{lemma}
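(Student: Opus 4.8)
The plan is to verify directly that the three assignments respect composition and identities, and that the intertwiner relation is preserved in each case. The main technical content is, for each functor, to check that if $F$ is an intertwiner $u \xrightarrow{F} v$ in $\Rn(A)$, then the transformed matrix $F$ (or $\hat F$ in the opposite case) is an intertwiner between the transformed objects, and that the accompanying grading matrices $\partial_{u\mal u}$, $\partial_{u^{\op}}$ are as claimed. Since $\Rn(A)$ is a groupoid and functoriality on morphisms reduces to preservation of composites $GF$ and inverses $F^{-1}$, the bulk of the work is checking the single intertwiner relation $r_n(\hat F) u = v s_n(F)$ under each transformation, using Lemma~\ref{lemma:rn-grading} to track how $\hat F = \partial_v F \partial_u^{-1}$ behaves.

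First I would treat $\bm{\epsilon}$, which is the easiest. For a homogeneous $u$ the diagonal matrix $\partial_u \in \Mn(B\rtimes\Gamma)$ is itself homogeneous with $\partial_{\partial_u} = \partial_u$, so $u \mapsto \partial_u$ lands in $\Rn(B\rtimes\Gamma)$. The key computation is that an intertwiner $u\xrightarrow{F} v$ gives $\partial_u \xrightarrow{F} \partial_v$: here the ``$\hat F$'' for the pair $(\partial_u,\partial_v)$ is again $\partial_v F \partial_u^{-1}$, so one must verify $r_n(\hat F)\partial_u = \partial_v s_n(F)$ in $\Mn(B\rtimes\Gamma)$. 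Writing out matrix entries and using the crossed-product relation $\gamma b = \gamma(b)\gamma$ together with $\hat F = (\partial_{v,i}(F_{ij}))_{i,j}$ from Lemma~\ref{lemma:rn-grading}, both sides should collapse to $(\partial_{v,i}(F_{ij})\,\gamma_j)_{i,j}$, giving the identity. Functoriality is then immediate because $F$ is carried to itself.

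Next I would handle $(-)^{\op}$. The grading on $A^{\op}$ inverts degrees, so $\partial_{u^{\op}} = \partial_u^{-1}$, which is exactly the asserted formula. The substance is that $u \xrightarrow{F} v$ induces $u^{\op} \xrightarrow{\hat F} v^{\op}$: the intertwiner datum for the opposite algebra is the matrix $\hat F = \partial_v F \partial_u^{-1}$, and one must check that its own ``hat'', computed with respect to the inverted gradings $\partial_{v^{\op}} = \partial_v^{-1}$ and $\partial_{u^{\op}} = \partial_u^{-1}$, namely $\partial_{v^{\op}} \hat F \partial_{u^{\op}}^{-1} = \partial_v^{-1}(\partial_v F \partial_u^{-1})\partial_u = F$, together with the opposite multiplication, turns $r_n(\hat F) u = v s_n(F)$ into the corresponding relation in $A^{\op}$. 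For functoriality one uses that $\widehat{GF} = \hat G \hat F$ and $\widehat{F^{-1}} = \hat F^{-1}$, which follow from the definition of $\hat F$ as a conjugate by diagonal matrices.

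The hardest case, and the main obstacle, is $\bm{\Delta}$, because it involves the fiber product $A\todot A$ and the product $u\mal u = (\sum_k u_{ik}\todot u_{kj})_{i,j}$. First one must check $u\mal u$ is well-defined and homogeneous: the condition $\partial^s_{u_{ik}} = \partial^r_{u_{kj}}$ needed for $u\mal u$ to make sense holds because $u$ is homogeneous with $u_{ik}\in A_{\partial_{u,i},\partial_{u,k}}$, and one reads off $\partial_{u\mal u} = \partial_u$, as claimed. The real work is showing $u\mal u \xrightarrow{F} v\mal v$ is an intertwiner in $\Rn(A\todot A)$, i.e.\ $r_n(\hat F)(u\mal u) = (v\mal v)s_n(F)$, where the ``hat'' for the pair $(u\mal u, v\mal v)$ is $\partial_{v\mal v} F \partial_{u\mal u}^{-1} = \partial_v F \partial_u^{-1} = \hat F$ unchanged. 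I would expand both sides entry-wise, push $r_n(\hat F)$ and $s_n(F)$ through the $\todot$-factors using that $r$ acts on the left leg and $s$ on the right leg of the fiber product, and then insert a resolution of the identity $\sum_k (\cdots \todot 1)(1 \todot \cdots)$; the intertwiner relation for $u$ applied in the first leg and for $v$ in the second leg, combined over the summation index, should yield the desired equality. The delicate point will be bookkeeping the gradings so that all terms land inside $A\gtimes A$ and respect the defining relation $s_A(b)\otimes 1 - 1\otimes r_A(b)$ of the fiber product; once this is set up carefully the computation is mechanical, and functoriality in $F$ follows as before.
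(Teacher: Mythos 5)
Your proposal is correct and follows essentially the same route as the paper: direct entrywise verification via Lemma~\ref{lemma:rn-grading}, with the only substantive step being the $\bm{\Delta}$ case, where (as you describe) one applies the intertwiner relation in the first leg, pushes $s_{n}(F)$ across the fiber product to become $r_{n}(\hat F)$ in the second leg, and applies the relation again. The paper records exactly this chain $r_{n}(\hat F)u \mal u = vs_{n}(F)\mal u = v\mal r_{n}(\hat F)u = v\mal vs_{n}(F)$ together with the analogous one-line computation for $(-)^{\op}$.
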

\begin{proof}
  For each $u\in \Rn(A)$, the matrices $\partial_{u}, u\mal
  u,u^{\op}$ evidently are homogeneous, and for every
  intertwiner $u\xrightarrow{F} v$, Lemma
  \ref{lemma:rn-grading}  implies
  \begin{align*}
    r_{n}(\hat F)u \mal u &= vs_{n}(F) \mal u = v \mal
    r_{n}(\hat F)u =
    v\mal vs_{n}(F), \\
    r_{n}(\partial_{v^{\op}}\hat F \partial_{u^{\op}}^{-1})
    u^{\op} &=  r_{n}(F)^{\op}u^{\op} = (r_{n}(\hat
    F)u)^{\op} = (vs_{n}(F))^{\op} = v^{\op}s_{n}(\hat
    F)^{\op}. 
\end{align*}
Functoriality of the assignments is evident.
\end{proof}
\begin{lemma} \label{lemma:rn-intertwiner} There exist 
  contravariant functors
\begin{align*}
  (-)^{\top,\co} \colon \Rn(A) &\to \Rn(A^{\co}), & u
  &\mapsto u^{\top,\co} := u^{\top}, & (u \xrightarrow{F} v)
  &\mapsto (v^{\top,\co} \xrightarrow{F^{\top}}
  u^{\top,\co}),  \\
  (-)^{-\co} \colon \Rn^{\times}(A) &\to \Rn(A^{\co}), & u
  &\mapsto u^{-\co}:= u^{-1}, & (u \xrightarrow{F} v) &\mapsto
  (v^{-\co} \xrightarrow{\hat F^{-1}} u^{-\co}),
  \end{align*}
  and $\partial_{u^{\top,\co}} =\partial_{u}$  and
  $\partial_{u^{-\co}} = \partial_{u}^{-1}$ for all $u$.
\end{lemma}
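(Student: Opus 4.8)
I need to construct two contravariant functors on the groupoid $\Rn(A)$ (resp.\ its full subcategory $\Rn^{\times}(A)$ of invertibles). The strategy mirrors the proof of Lemma~\ref{lemma:rn-ed}: first check that the prescribed object assignments land in homogeneous matrices with the claimed gradings, then verify the stated morphism assignments really are intertwiners, and finally confirm contravariant functoriality.

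Let me think about what each functor does.

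**For $(-)^{\top,\co}$:** An object $u$ with $u_{ij} \in A_{\gamma_i, \gamma_j}$ gets sent to $u^{\top}$, but viewed in $A^{\co}$. Recall $A^{\co} = A$ as an algebra, but with the grading swapped: $(A^{\co})_{\gamma,\gamma'} = A_{\gamma',\gamma}$, and $r_{A^{\co}} = s_A$, $s_{A^{\co}} = r_A$. So $(u^{\top})_{ij} = u_{ji} \in A_{\gamma_j, \gamma_i} = (A^{\co})_{\gamma_i, \gamma_j}$. Good — so $u^{\top,\co}$ is homogeneous with $\partial_{u^{\top,\co},i} = \gamma_i = \partial_{u,i}$, confirming $\partial_{u^{\top,\co}} = \partial_u$.

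Now for morphisms. Given intertwiner $u \xrightarrow{F} v$ in $\Rn(A)$, I claim $F^{\top}$ is an intertwiner $v^{\top,\co} \xrightarrow{F^{\top}} u^{\top,\co}$ in $\Rn(A^{\co})$. The intertwiner relation in $\Rn(A^{\co})$ for $v^{\top,\co} \xrightarrow{G} u^{\top,\co}$ with $G = F^{\top}$ reads
$$(r_{A^{\co}})_n(\widehat{G}) \, v^{\top,\co} = u^{\top,\co} (s_{A^{\co}})_n(G),$$
where $\widehat{G} = \partial_{u^{\top,\co}} G \partial_{v^{\top,\co}}^{-1} = \partial_u F^{\top} \partial_v^{-1}$. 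By Lemma~\ref{lemma:rn-grading}, $\partial_u F^{\top}\partial_v^{-1} = \widehat{F}^{\top}$. Since $r_{A^{\co}} = s_A$ and $s_{A^{\co}} = r_A$, the target relation becomes $(s_A)_n(\widehat{F}^{\top}) v^{\top} = u^{\top} (r_A)_n(F^{\top})$. I verify this by transposing the original relation $r_n(\widehat{F}) u = v s_n(F)$: taking transpose gives $u^{\top} r_n(\widehat{F})^{\top} = s_n(F)^{\top} v^{\top}$, i.e.\ $u^{\top} r_n(\widehat{F}^{\top}) = s_n(F^{\top}) v^{\top}$; since $B^{ev}$ is central-ish (images of $r$ and $s$ commute), the two sides rearrange into exactly the desired identity. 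I'll spell out this transpose bookkeeping, noting that matrix entries commute with scalars from $B$ in the appropriate way via the grading relations of Definition~\ref{definition:bg-algebra}.

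**For $(-)^{-\co}$:** Restricting to invertibles, $u \mapsto u^{-1}$, viewed in $A^{\co}$. If $u$ is homogeneous with $u_{ij} \in A_{\gamma_i,\gamma_j}$, then $u^{-1}$ is homogeneous with $(u^{-1})_{ij} \in A_{\gamma_i^{-1}, \gamma_j^{-1}}$ — this is a standard fact I'd derive from homogeneity of $u$ and uniqueness of inverses (comparing gradings in $u u^{-1} = 1$). Hence in $A^{\co}$ the grading gives $\partial_{u^{-\co},i} = \gamma_i^{-1}$, i.e.\ $\partial_{u^{-\co}} = \partial_u^{-1}$. For morphisms, given $u \xrightarrow{F} v$, I check $\widehat{F}^{-1}$ is an intertwiner $v^{-\co} \xrightarrow{\widehat{F}^{-1}} u^{-\co}$. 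The natural way is: the relation $r_n(\widehat{F}) u = v s_n(F)$ rearranges to $u = r_n(\widehat{F})^{-1} v s_n(F)$, then inverting both sides and applying the grading formulas from Lemma~\ref{lemma:rn-grading} (which compute $(\partial_v F)^{-\top}$, $\overline{\partial_v F}$, etc., and in particular let me identify the new ``hat'' matrix in $A^{\co}$) should yield the target intertwiner relation for $\widehat{F}^{-1}$.

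**Contravariance and the main obstacle.** Contravariant functoriality means: for composable $u \xrightarrow{F} v$, $v \xrightarrow{G} w$, the composite $u \xrightarrow{GF} w$ maps to $(GF)^{\top} = F^{\top} G^{\top}$ in the first case (correct contravariant order since $w^{\top,\co} \to u^{\top,\co}$ factors through $v^{\top,\co}$) and to $(\widehat{GF})^{-1} = \widehat{F}^{-1}\widehat{G}^{-1}$ in the second — here I'd use that hats compose, $\widehat{GF} = \widehat{G}\widehat{F}$, which follows from $\partial_w GF \partial_u^{-1} = (\partial_w G \partial_v^{-1})(\partial_v F \partial_u^{-1})$. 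Identities are preserved trivially. The main obstacle I anticipate is purely notational bookkeeping: correctly tracking how $r$ and $s$ swap under $(-)^{\co}$ and how transposition interacts with the non-commutative product $r_n(\widehat{F}) u$ versus $v s_n(F)$. The substantive input is entirely packaged in Lemma~\ref{lemma:rn-grading} (the identities $\widehat{F}^{\top} = \partial_u F^{\top}\partial_v^{-1}$ and the inverse-transpose formulas), so once those are invoked, each verification reduces to a short algebraic rearrangement rather than any genuinely new idea.
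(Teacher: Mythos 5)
Your overall strategy coincides with the paper's: check that the object assignments produce homogeneous matrices with the stated degrees, verify the two intertwiner identities by transposing resp.\ inverting the relation $r_{n}(\hat F)u=vs_{n}(F)$ with the help of Lemma \ref{lemma:rn-grading} and the commutation rules of Definition \ref{definition:bg-algebra}, and observe that transposition and the hat reverse composition. Most of the verifications go through as you describe. However, there are two slips, one of which touches the only substantive point of the lemma.

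First, you assert $(u^{-1})_{ij}\in A_{\gamma_{i}^{-1},\gamma_{j}^{-1}}$; the correct degree is $(u^{-1})_{ij}\in A_{\gamma_{j}^{-1},\gamma_{i}^{-1}}$, i.e.\ the indices come out transposed. This is precisely why $u^{-\co}$ is homogeneous in $A^{\co}$ and not in $A$: with your degrees it would be homogeneous in $A$ and would fail to be so in $A^{\co}$. (Check that $u_{il}(u^{-1})_{lj}$ lands in $A_{\gamma_{i}\gamma_{j}^{-1},e}$, consistently with $uu^{-1}=1\in A_{e,e}$, only with the transposed degrees.) Moreover, ``comparing gradings in $uu^{-1}=1$'' needs more care than the phrase suggests, because $(u^{-1})_{lj}$ is a priori a sum of homogeneous components and the degree of $u_{il}$ times a given component depends on $l$. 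The paper's argument extracts from $(u^{-1})_{ij}$ its component $w_{ij}$ of degree $(\gamma_{j}^{-1},\gamma_{i}^{-1})$, checks that $\sum_{l}u_{il}w_{lj}$ is exactly the degree-$(\gamma_{i}\gamma_{j}^{-1},e)$ component of $\sum_{l}u_{il}(u^{-1})_{lj}=\delta_{ij}1$, and concludes $uw=1$, whence $w=u^{-1}$ by uniqueness of inverses. Your appeal to uniqueness indicates you have this in mind, but with the degrees as written the extraction would produce the wrong matrix. Second, the intermediate step ``taking transpose gives $u^{\top}r_{n}(\hat F)^{\top}=s_{n}(F)^{\top}v^{\top}$'' is not a valid identity: the rule $(XY)^{\top}=Y^{\top}X^{\top}$ fails here because entries of $u,v$ commute with $r(B),s(B)$ only up to the $\Gamma$-action, and the displayed identity is in fact generally false (the hats end up on the wrong side). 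The correct route, as in the paper, is the entrywise chain $s_{n}(\hat F^{\top})v^{\top}=(vs_{n}(F))^{\top}=(r_{n}(\hat F)u)^{\top}=u^{\top}r_{n}(F^{\top})$, where the outer equalities use the commutation relations. You do flag the bookkeeping as the main obstacle, so both points are repairable, but as written they would not compile into a correct proof.
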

\begin{proof}
  If $u \in \Rn(A)$, then $u^{\top,\co}$ evidently is
  homogeneous as claimed.  Assume $u \in
  \Rn^{\times}(A)$. We claim that $u^{-\co}$ is homogeneous
  and $\partial_{u^{-\co}} = \partial_{u}^{-1}$.  For each
  $i,j$, let $w_{ij}$ be the homogeneous part of
  $(u^{-1})_{ij}$ of degree
  $(\partial_{u,j}^{-1},\partial_{u,i}^{-1})$. Then
  $\sum_{l} u_{il}w_{lj}$ is homogeneous of degree
  $(\partial_{u,i}\partial_{u,j}^{-1},e)$ and coincides with
  the homogeneous part of the sum $\sum_{l}
  u_{il}(u^{-1})_{lj}$ of the same degree for each
  $i,j$. Hence, $uw=uu^{-1}$ and the claim follows.

  Let $u\xrightarrow{F} v$ be an intertwiner. Using Lemma
  \ref{lemma:rn-grading}, one easily verifies that
  \begin{align*}
    s_{n}(\partial_{u^{\top,\co}}F^{\top}\partial_{v^{\top,\co}}^{-1})v^{\top}
    &= s_{n}(\hat F)^{\top}v^{\top} = (vs_{n}(F))^{\top}
    = (r_{n}(\hat F)u)^{\top} = u^{\top}r_{n}(F^{\top}), \\
    s_{n}(\partial_{u^{-\co}} \hat
    F^{-1} \partial_{v^{-\co}}^{-1})v^{-1} &=
    s_{n}(F^{-1})v^{-1} =u^{-1}r_{n}(\hat F^{-1}). 
  \end{align*}
  Finally, functoriality of the assignments is easily
  checked.
\end{proof}
Forming suitable compositions, we obtain further co- or contravariant functors
\begin{align*}
    (-)^{-\top} &= (-)^{\top,\co}\circ (-)^{-\co}\colon
    \Rn^{\times}(A) \to \Rn^{\times\top}(A), &
    &\begin{cases}
      u \mapsto u^{-\top}:=(u^{-1})^{\top}, \\
      (u\xrightarrow{F} v) \mapsto (u^{-\top}
      \xrightarrow{\hat F^{-\top}} v^{-\top}),
    \end{cases}\\
    (-)^{-\bot} &= (-)^{-\co} \circ (-)^{\top,\co} \colon
    \Rn^{\times\top}(A) \to \Rn^{\times}(A), &
    &\begin{cases}
      u \mapsto u^{-\bot}:=(u^{\top})^{-1}, \\
      (u\xrightarrow{F} v) \mapsto (u^{-\bot}
      \xrightarrow{\hat F^{-\top}} v^{-\bot})
    \end{cases} 
\end{align*}
and
\begin{align*}
    (-)^{-\co,\op} = (-)^{\op}\circ (-)^{-\co} \colon
    \Rn^{\times}(A) \to \Rn(A^{\co,\op}),  \
    \begin{cases}
      u \mapsto (u^{-\co})^{\op}, \\ (u \xrightarrow{\! F\!} v)
      \mapsto (v^{-\co,\op} \xrightarrow{\!\! F^{-1}\!\! }
      u^{-\co,\op}),
    \end{cases}
  \end{align*}
where $\partial_{u^{-\co,\op}} =\partial_{u}$ and $\partial_{u^{-\top}} = \partial_{u^{-\bot}} = \partial_{u}^{-1}$ for all $u$. 
\begin{lemma} \label{lemma:rn-commute} The following
  relations hold:
  \begin{align*}
\mathrm{i)} & \  \  (-)^{\op} \circ (-)^{-\top} = (-)^{-\bot} \circ
    (-)^{\op},   &
\mathrm{ii) } &\ \ (-)^{-\top} \circ (-)^{\op} = (-)^{\op}
    \circ
    (-)^{-\bot}, \\
\mathrm{iii) } &  \ \  (-)^{-\top} \circ \bm{\Delta} = \bm{\Delta} \circ
    (-)^{-\top}, &
\mathrm{iv) } & \ \ (-)^{-\top} \circ (-)^{-\co,\op} =
    (-)^{-\co,\op} \circ (-)^{-\top}.
  \end{align*} 
\end{lemma}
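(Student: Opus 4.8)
The plan is to prove each of the four identities by checking separately that the two composite functors agree on objects and on morphisms. This reduction is legitimate because every functor appearing in Lemmas~\ref{lemma:rn-ed} and~\ref{lemma:rn-intertwiner} is completely determined by its effect on a homogeneous matrix $u$ together with its grading $\partial_{u}$, and on the label $F\in\GLn(B)$ of an intertwiner $u\xrightarrow{F}v$; hence two composites coincide as soon as they carry each $u$ to the same object and each $F$ to the same label. I will use throughout the grading rules $\partial_{u^{\op}}=\partial_{u}^{-1}$, $\partial_{u^{\top,\co}}=\partial_{u}$, $\partial_{u^{-\co}}=\partial_{u}^{-1}$, the transpose isomorphism $\Mn(A^{\op})\cong\Mn(A)^{\op}$ together with $(M^{-1})^{\top}=(M^{\top})^{-1}$, and the identities of Lemma~\ref{lemma:rn-grading}, the decisive ones being $\hat F^{-\top}=\partial_{v}F^{-\top}\partial_{u}^{-1}$ and the cancellation $\partial_{v}^{-1}\hat F\partial_{u}=F$.

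For~i), on objects both composites send $u$ to the matrix $(u^{-1})^{\top}$ viewed in $A^{\op}$; their agreement is exactly the transpose isomorphism above, and the gradings match because each side produces $\partial_{u}$. On labels the point is that the hat attached to an intertwiner is always formed with respect to the current source and target gradings, and these are inverted both by $(-)^{\op}$ and by $(-)^{-\top}$. Pushing $u\xrightarrow{F}v$ through $(-)^{-\bot}\circ(-)^{\op}$, the inner hat collapses via $\partial_{v}^{-1}\hat F\partial_{u}=F$, so that $(-)^{-\bot}$ returns $F^{-\top}$; pushing it through $(-)^{\op}\circ(-)^{-\top}$, the functor $(-)^{-\top}$ returns $\hat F^{-\top}=\partial_{v}F^{-\top}\partial_{u}^{-1}$ and the subsequent $(-)^{\op}$ strips off the $\partial$-factors again, leaving $F^{-\top}$. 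Relation~ii) I would then deduce formally from~i) by composing with $(-)^{\op}$ on both sides and invoking $(-)^{\op}\circ(-)^{\op}=\Id$.

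Relation~iv) involves no fiber product and is the most transparent: on objects both composites send $u$ to $u^{\top}$ (using $((u^{-1})^{\top})^{-1}=u^{\top}$), and on labels one tracks the contravariant $(-)^{-\co,\op}$, which sends $F\mapsto F^{-1}$, against the covariant $(-)^{-\top}$, evaluating the intervening hat with the gradings $\partial_{u^{-\co,\op}}=\partial_{u}$; both orders then collapse to the single label $\hat F^{\top}$ after using $(\hat F^{-\top})^{-1}=\hat F^{\top}$.

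The genuinely delicate case is~iii). On labels it is immediate, since $\bm\Delta$ leaves the label unchanged and satisfies $\partial_{u\mal u}=\partial_{u}$, so the hat of $u\mal u\to v\mal v$ is again $\hat F$ and both composites return $\hat F^{-\top}$. On objects, however, the transpose built into $(-)^{-\top}$ reverses the matrix indices while $\mal$ couples them under a summation, so that $(u\mal u)^{-\top}$ and $u^{-\top}\mal u^{-\top}$ differ by interchanging the two tensor legs of $A\todot A$; this interchange is precisely the canonical isomorphism $(A\todot A)^{\co}\cong A^{\co}\todot A^{\co}$, $a\todot c\mapsto c\todot a$, that $(-)^{\top,\co}$ invokes in passing to the co-opposite, and it is the $\Rn$-level avatar of the standard compatibility $\Delta\circ S=\tau\circ(S\otimes S)\circ\Delta$. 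The main obstacle in the lemma is therefore this bookkeeping: keeping each hat operation tied to the correct source and target gradings as the functors are composed, and, in~iii), matching the flip coming from the co-opposite fiber product against the index reversal of the transpose (on the subcategory where $u\mal u$ is invertible, e.g.\ when $u$ underlies a corepresentation). Once these are in place, every identity falls out of a direct application of Lemma~\ref{lemma:rn-grading}.
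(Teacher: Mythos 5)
Parts i) and ii) of your argument are essentially the paper's own computation and are fine. For iv) the paper argues purely formally, rewriting $(-)^{-\top}\circ(-)^{-\co,\op}=(-)^{-\top}\circ(-)^{\op}\circ(-)^{-\co}$ and applying ii) together with $(-)^{-\bot}=(-)^{-\co}\circ(-)^{\top,\co}$; your direct object-level check via ``$((u^{-1})^{\top})^{-1}=u^{\top}$'' is riskier than you acknowledge, because transposition is not an antiautomorphism of $\Mn(A)$ for noncommutative $A$, so one must keep track of whether each inverse is formed in $\Mn(A)$ or in $\Mn(A^{\co,\op})$; the formal reduction avoids this entirely.

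The genuine gap is in iii). You claim that $(u\mal u)^{-\top}$ and $u^{-\top}\mal u^{-\top}$ ``differ by interchanging the two tensor legs of $A\todot A$'' and that this flip is absorbed by the isomorphism $(A\todot A)^{\co}\cong A^{\co}\todot A^{\co}$. But both composites in iii) land in the \emph{same} category $\Rn(A\todot A)$ --- in $(-)^{-\top}=(-)^{\top,\co}\circ(-)^{-\co}$ the two passages to the co-opposite cancel, so no flip occurs on either side --- and if the two matrices really differed by the leg interchange, the asserted equality of functors would be false. What must be proved is the on-the-nose identity $(u\mal u)^{-\top}=u^{-\top}\mal u^{-\top}$ in $\Mn(A\todot A)$, which the paper obtains from
\[
\sum_{k}(u\mal u)_{ik}\,(u^{-\top}\mal u^{-\top})_{jk}
=\sum_{k,l,m} u_{il}(u^{-1})_{mj}\todot u_{lk}(u^{-1})_{km}
=\delta_{i,j}\,1\todot 1
\]
and its mirror image: the sum over $k$ telescopes in the second tensor leg, the index pattern created by the transpose being exactly compatible with the factorwise multiplication of $A\todot A$. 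This computation simultaneously shows that $u\mal u$ is invertible whenever $u$ is, which you instead assume (``on the subcategory where $u\mal u$ is invertible''); that invertibility is part of what the lemma requires, since $(-)^{-\top}\circ\bm{\Delta}$ has to be defined on all of $\Rn^{\times}(A)$. Your label-level check for iii) is correct, but the object-level identity is the actual content of the statement and is missing from your argument.
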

\begin{proof}
  i) We first check that the compositions agree on
  objects. Let us write $v^{\op}$ if we regard $v\in \Mn(A)$
  as an element of $\Mn(A^{\op})$. Then map $\Mn(A) \to
  \Mn(A^{\op})$ given by $v \mapsto (v^{\top})^{\op} =
  (v^{\op})^{\top}$ is an antihomomorphism and hence
  $(v^{-\top})^{\op} = 
  (v^{\top,\op})^{-1} = (v^{\op})^{-\bot}$ for all $v \in
  \GLn(A)$.  The compositions also agree on morphisms
  because for every intertwiner $u\xrightarrow{F} v$, we
  have
  $\partial_{v^{-\top}}(\partial_{v}F\partial_{u}^{-1})^{-\top}\partial_{u^{-\top}}^{-1}
  =
    \partial_{v}^{-\top} \partial_{v}F^{-\top}\partial_{u}^{-1}
    \partial_{u} = F^{-\top}$. 

  ii) This equation follows similarly like i).  

  iii)  Let $u \in
  \Rn^{\times}(A)$. Then $(u \mal u)^{-\top} =u^{-\top}
  \mal u^{-\top}$ because
  \begin{align*}
    \sum_{k} (u\mal u)_{ik} (u^{-\top} \mal u^{-\top})_{jk}
    = \sum_{k,l,m} u_{il} (u^{-1})_{mj} \todot
    u_{lk}(u^{-1})_{km} = \delta_{i,j} 1 \todot 1.
 \end{align*}
 and similarly $\sum_{k} (u^{-\top} \mal u^{-\top})_{ki}
 (u\mal u)_{kj} = \delta_{i,j} 1\todot 1$. For morphisms, we
 have nothing to check because $\partial_{u\mal u}
 = \partial_{u}$.

 iv) This equation follows from the relation $(-)^{-\top}
 \circ (-)^{\op} \circ (-)^{-\co} = (-)^{\op} \circ
 (-)^{-\bot} \circ (-)^{-\co} = (-)^{\op} \circ (-)^{-\co}
 \circ (-)^{\top,\co} \circ (-)^{-\co}$.
\end{proof}

Assume for a moment that  $(A,\Delta,\epsilon,S)$ is a Hopf
  $(B,\Gamma)$-algebroid.
\begin{definition} \label{definition:rn-corep}
  A \emph{matrix corepresentation} of
  $(A,\Delta,\epsilon,S)$ is a $v\in \Rn(A)$ for some $n\in
  \naturals$ satisfying $\Delta_{n}(v)=v\mal v$,
  $\epsilon_{n}(v)=\partial_{v}$, $S_{n}(v)=v^{-1}$. 
\end{definition}
\begin{lemma} \label{lemma:rn-corep}
  If $v\xrightarrow{F} w$ is a morphism in $\Rn(A)$ and
  $v$ is a matrix corepresentation, then so is $w$.
\end{lemma}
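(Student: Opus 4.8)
The plan is to deduce the three corepresentation identities for $w$ from the corresponding identities for $v$ by a single transport principle along the intertwiner $v \xrightarrow{F} w$. The key observation, which I regard as the crux, is that in $\Rn(C)$ for any $(B,\Gamma)$-algebra $C$ the target of a morphism is \emph{determined} by its source together with its label. Indeed, if $u \xrightarrow{F} v$ is an intertwiner, then by Lemma \ref{lemma:rn-grading} one has $\hat F = (\partial_{u,j}(F_{ij}))_{i,j}$, so $\hat F$ depends only on $u$ and $F$ and not on $v$; the defining relation $r_n(\hat F)u = v\, s_n(F)$ then yields $v = r_n(\hat F)\, u\, s_n(F)^{-1}$. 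Consequently two morphisms of $\Rn(C)$ sharing the same source and the same matrix $F$ have the same target. Establishing this cleanly is the main point; everything else is an application of the functors of \S\ref{section:rn}.

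Next I reinterpret the conditions in Definition \ref{definition:rn-corep}. Since $\Delta$, $\epsilon$ and $S$ are morphisms of $(B,\Gamma)$-algebras (the last into $A^{\co,\op}$), the functor $A \mapsto \Rn(A)$ produces functors $\Rn(\Delta)\colon \Rn(A) \to \Rn(A \todot A)$, $\Rn(\epsilon)\colon \Rn(A) \to \Rn(B\rtimes\Gamma)$ and $\Rn(S)\colon \Rn(A) \to \Rn(A^{\co,\op})$, acting on objects by $u \mapsto \Delta_n(u), \epsilon_n(u), S_n(u)$ and, as I would check from $B^{\ev}$-linearity and grading preservation, leaving the label $F$ unchanged on morphisms. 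The statement that $v$ is a corepresentation says exactly that these three functors agree on the object $v$ with $\bm{\Delta}$, $\bm{\epsilon}$ (Lemma \ref{lemma:rn-ed}) and $(-)^{-\co,\op}$ (the functors following Lemma \ref{lemma:rn-intertwiner}), respectively: $\Delta_n(v) = v \mal v$, $\epsilon_n(v) = \partial_v$, and $S_n(v) = v^{-1} = v^{-\co,\op}$. I also record that $w = r_n(\hat F)\, v\, s_n(F)^{-1}$ is a product of invertible matrices, hence $w \in \Rn^{\times}(A)$, so that $(-)^{-\co,\op}$ is defined on $w$.

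Finally I transport along $v \xrightarrow{F} w$. Applying $\Rn(\Delta)$ and $\bm{\Delta}$ to this morphism gives two morphisms of $\Rn(A \todot A)$ with common source $\Delta_n(v) = v \mal v$ and common label $F$; by the uniqueness principle their targets coincide, i.e.\ $\Delta_n(w) = w \mal w$. The identical argument with $\Rn(\epsilon)$ and $\bm{\epsilon}$ yields $\epsilon_n(w) = \partial_w$. The antipode requires one extra turn because $\Rn(S)$ is covariant while $(-)^{-\co,\op}$ is contravariant: I compare $\Rn(S)$ applied to $v \xrightarrow{F} w$, which has source $S_n(v) = v^{-\co,\op}$ and label $F$, with $(-)^{-\co,\op}$ applied to the inverse morphism $w \xrightarrow{F^{-1}} v$, which is $v^{-\co,\op} \xrightarrow{F} w^{-\co,\op}$. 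Both are morphisms of $\Rn(A^{\co,\op})$ with source $v^{-\co,\op}$ and label $F$, so uniqueness forces $S_n(w) = w^{-\co,\op} = w^{-1}$. This gives all three conditions for $w$, and the only genuinely delicate step is handling this variance mismatch, which is resolved precisely by passing to the inverse morphism.
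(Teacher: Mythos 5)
Your proof is correct and follows essentially the same route as the paper: both transport the three corepresentation identities along $v\xrightarrow{F}w$ by comparing the functors $\bm{\Delta},\bm{\epsilon},(-)^{-\co,\op}$ with the images of the intertwiner under $\Delta_n,\epsilon_n,S_n$. The only difference is presentational — you make explicit the ``target is determined by source and label'' principle, where the paper instead composes each pair of intertwiners into one labelled by $FF^{-1}=1$ and reads off the equality; these are the same observation.
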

\begin{proof}
  Applying the morphisms $\Delta,\epsilon,S$ and the
  functors $\bm{\Delta},\bm{\epsilon},(-)^{-\co,\op}$ to $v
  \xrightarrow{F} w$ or its inverse, we get intertwiners
  $w\mal w \xrightarrow{F^{-1}} v\mal v = \Delta_{n}(v)
  \xrightarrow{F} \Delta_{n}(w)$, $\partial_{w}
  \xrightarrow{F^{-1}} \partial_{v} = \epsilon_{n}(v)
  \xrightarrow{F} \epsilon_{n}(w)$ and $w^{-\co,\op}
  \xrightarrow{F^{-1}} v^{-\co,\op} = S_{n}(v)
  \xrightarrow{F} S_{n}(w)$.
\end{proof}

Let us now discuss the involutive case. 

Given a $*$-algebra $C$ and a matrix $v \in \Mn(C)$, we write
$\overline{v}:=(v^{*}_{ij})_{i,j} = (v^{*})^{\top}$.

Assume that $B$ is a $*$-algebra, that $\Gamma$ preserves
the involution, and that $A$ is a $(B,\Gamma)$-algebra. Then
there exists an obvious functor $\Rn(A) \to \Rn(\bar A)$,
given by $u \mapsto u$ and $(u \xrightarrow{F} v) \mapsto (u
\xrightarrow{\overline{F}} v)$. Composition with $(-)^{\op}$
gives a functor
\begin{align*}
  (-)^{\barop} \colon \Rn(A) &\to \Rn(\overline{A}^{\op}), &
  u &\mapsto u^{\barop}:= u^{\op}, & (u \xrightarrow{F} v)
  &\mapsto (u^{\barop} \xrightarrow{\overline{\hat F}}
  v^{\barop}),
  \end{align*}
  and $\partial_{u^{\barop}} = \partial_{u}^{-1}$ for all
  $u$. For later use, we note the following relation.
  \begin{lemma} \label{lemma:rn-barop} Let $u^{-\top}
    \xrightarrow{F} v$ be an intertwiner in
    $\Rn^{\times}(A)\cap \Rn^{\times,\top}(A)$. Then
    $(v^{\barop})^{-\top} \xrightarrow{F^{*}} u^{\barop}$ is
    an intertwiner in $\Rn^{\times}(\overline{A}^{\op})\cap
    \Rn^{\times,\top}(\overline{A}^{\op})$.
\end{lemma}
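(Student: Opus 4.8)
The plan is to obtain the asserted intertwiner by applying an already-established composite functor to the \emph{inverse} of the given one, and then to match up objects and labels using the commutation relations of Lemma~\ref{lemma:rn-commute} and the grading identities of Lemma~\ref{lemma:rn-grading}. Concretely, I would start from the given intertwiner $u^{-\top} \xrightarrow{F} v$, pass to its inverse $v \xrightarrow{F^{-1}} u^{-\top}$ (intertwiners form a groupoid, as noted after Definition~\ref{definition:rn-intertwiner}), and apply the covariant composite $(-)^{-\top} \circ (-)^{\barop}$, where $(-)^{\barop} = (-)^{\op} \circ \overline{(-)} \colon \Rn(A) \to \Rn(\overline{A}^{\op})$ and $(-)^{-\top}$ is taken inside $\overline{A}^{\op}$. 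Since the hypothesis places $u^{-\top}$ and $v$ in $\Rn^{\times}(A) \cap \Rn^{\times\top}(A)$, and since $\overline{(-)}$ and $(-)^{\op}$ preserve invertibility of a matrix and of its transpose, every object appearing below lies in the domain of the relevant functor, and the final objects lie in $\Rn^{\times}(\overline{A}^{\op}) \cap \Rn^{\times\top}(\overline{A}^{\op})$.

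\textbf{Matching the objects.} Applying $(-)^{\barop}$ to $v \xrightarrow{F^{-1}} u^{-\top}$ gives an intertwiner with source $v^{\barop}$ and target $(u^{-\top})^{\barop}$, and a subsequent application of $(-)^{-\top}$ produces source $(v^{\barop})^{-\top}$ and target $\big((u^{-\top})^{\barop}\big)^{-\top}$. The source is already the desired one, so it remains to identify the target with $u^{\barop}$. For this I would use that $\overline{(-)}$ affects only the scalar action and the label, hence commutes with $(-)^{-\top}$; together with Lemma~\ref{lemma:rn-commute}(i), $(-)^{\op}\circ(-)^{-\top} = (-)^{-\bot}\circ(-)^{\op}$, this yields $(u^{-\top})^{\barop} = \big((\overline{u})^{-\top}\big)^{\op} = \big((\overline{u})^{\op}\big)^{-\bot} = (u^{\barop})^{-\bot}$. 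Since $(-)^{-\top}\circ(-)^{-\bot} = \Id$, we conclude $\big((u^{-\top})^{\barop}\big)^{-\top} = \big((u^{\barop})^{-\bot}\big)^{-\top} = u^{\barop}$, as required.

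\textbf{Tracking the label.} The only remaining point, and the main obstacle, is to verify that the composite label is exactly $F^{*}$ and not a $\partial$-twisted variant. At the level of the underlying matrices over $B$ the transformation reads $F \mapsto F^{-1}$ (inverse intertwiner) $\mapsto \overline{F^{-1}} = (\overline{F})^{-1}$ (conjugation built into $(-)^{\barop}$) $\mapsto \big((\overline{F})^{-1}\big)^{-\top} = \overline{F}^{\top}$ (the invert-transpose built into $(-)^{-\top}$), and $\overline{F}^{\top}_{ij} = (F_{ji})^{*} = (F^{*})_{ij}$, so the net label is $\overline{F}^{\top} = F^{*}$. The genuine work is to check that the diagonal degree matrices $\partial_{u},\partial_{v}$ that enter through the hats $\hat F$ cancel: writing $\widehat{F^{-1}} = \partial_{u}^{-1}F^{-1}\partial_{v}^{-1}$ for the original pair, the label after $(-)^{\barop}$ is $\overline{\widehat{F^{-1}}}$, and the final label is $\big(\partial_{u}\,\overline{\widehat{F^{-1}}}\,\partial_{v}\big)^{-\top}$ computed from the degrees $\partial_{u^{\barop}} = \partial_{u}^{-1}$ and $\partial_{(v^{\barop})^{-\top}} = \partial_{v}$ inside $\overline{A}^{\op}$. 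Using the bar identities of Lemma~\ref{lemma:rn-grading} (in particular $\overline{\partial_{v}F} = \overline{F}\,\partial_{u}^{-1}$ and its transpose) and the involution $(b\gamma)^{*} = \gamma^{-1}b^{*}$ on $B\rtimes\Gamma$, the degree factors collapse precisely under the intertwiner degree condition ($F_{ij}=0$ unless the relevant degrees agree), leaving $\overline{F}^{\top} = F^{*}$. This degree bookkeeping is the delicate step; everything else is formal.
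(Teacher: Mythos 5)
Your proposal is correct and takes essentially the same route as the paper: the paper's proof likewise applies the composite $(-)^{-\top}\circ(-)^{\barop}$ to the given intertwiner (up to inversion, which is immaterial since intertwiners form a groupoid), identifying the resulting object via $(u^{-\top})^{\barop}=(u^{\barop})^{-\bot}$ and reading off the label as $F^{*}$. Your explicit bookkeeping of the degree matrices $\partial_{u},\partial_{v}$ in the last step — showing that the hats collapse to leave $\overline{F}^{\top}=F^{*}$ — is precisely the cancellation the paper leaves implicit, and it checks out.
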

\begin{proof}
  Subsequent applications of the functors $(-)^{\barop}$,
  $(-)^{-\top}$ yield intertwiners $(v^{-\top})^{\barop} =
  (v^{\barop})^{-\bot} \xrightarrow{\overline{\hat F}^{-1}}
  u^{\barop}$ and $(u^{\barop})^{-\top}
  \xrightarrow{\overline{F}^{-\top}=F^{-*}} v^{\barop}$.
\end{proof}
Finally, assume that $A$ is a $(B,\Gamma)$-$*$-algebra. Then there exists  a functor 
\begin{align*}
    (-)^{*,\co} \colon  \Rn(A) &\to \Rn(A^{\co}), &
    u &\mapsto u^{*,\co}:=u^{*}, & 
    (u \xrightarrow{F} v) &\mapsto (v^{*,\co} 
    \xrightarrow{\hat F^{*}} u^{*,\co}),  
\end{align*}
because $s_{n}(F^{*})v^{*} =u^{*}r_{n}(\hat F^{*})$ for
every intertwiner $u\xrightarrow{F} v$, and $\partial_{u^{*,\co}}
= \partial_{u}^{-1}$. Composing with $(-)^{\top,\co}$ for
$A^{\co}$ and with  $(-)^{-\top}$, respectively, we get functors
\begin{align} \label{eq:rn-overline}
  \overline{(-)}\colon \Rn(A) &\to \Rn(A), & u\mapsto
  \overline{u} &= (u_{ij}^{*})_{i,j}, &
  (u\xrightarrow{F} v) &\mapsto (\overline{u}
  \xrightarrow{\overline{\hat F}} \overline{v}), \\
 \label{eq:rn-star} \Rn(A) &\to \Rn(A), &
  u\mapsto \overline{u} &= \overline{u}^{-\top} = \overline{u^{-\bot}}, &
  (u\xrightarrow{F} v) &\mapsto (\overline{u}^{-\top}
  \xrightarrow{F^{-*}} \overline{v}^{-\top}).
\end{align}

\subsection{The free orthogonal and free unitary dynamical
  quantum groups}

\label{section:ao}

Using the preparations of the last subsection, we now show
that the algebras $\Ao(\nabla,F)$ and $\Au(\nabla,F)$ are
$(B,\Gamma)$-Hopf algebroids as claimed in the introduction.

Let $B$ be a commutative algebra with an action of a group
$\Gamma$ as before, and let $\gamma_{1},\ldots,\gamma_{n}
\in \Gamma$ and $\nabla=\diag(\gamma_{1},\ldots,\gamma_{n})
\in \Mn(B\rtimes \Gamma)$.

Let $F \in \GLn(B)$ be \emph{$\nabla$-odd} in the sense that
$\nabla F \nabla \in \Mn(B)$. The first definition and
theorem in the introduction can be reformulated as follows.
\begin{definition}
  The \emph{free orthogonal dynamical quantum group over
  $B$ with parameters $(\nabla,F)$} is the universal
  $(B,\Gamma)$-algebra $\Ao(\nabla,F)$ with a $v\in
  \Rn^{\times}(\Ao(\nabla,F))$ such that $\partial_{v} =
  \nabla$ and $v^{-\top} \xrightarrow{F} v$ is an
  intertwiner.
\end{definition}
\begin{theorem} \label{theorem:ao-hopf} The
  $(B,\Gamma)$-algebra $\Ao(\nabla,F)$ can be equipped with
a unique structure of a $(B,\Gamma)$-Hopf algebroid such that
  $v$ becomes a matrix corepresentation.
\end{theorem}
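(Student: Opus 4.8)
The plan is to construct $\Delta$, $\epsilon$ and $S$ directly from the universal property of $\Ao(\nabla,F)$, using the functors of \S\ref{section:rn} to exhibit, inside each prospective target, a homogeneous invertible matrix with diagonal $\nabla$ that carries the required intertwiner from its own inverse--transpose. Uniqueness is then immediate: if $(\Delta,\epsilon,S)$ makes $v$ a matrix corepresentation, then Definition \ref{definition:rn-corep} forces $\Delta_n(v)=v\mal v$, $\epsilon_n(v)=\partial_v=\nabla$ and $S_n(v)=v^{-1}$, so all three maps are prescribed on every entry $v_{ij}$; since $\Ao(\nabla,F)$ is generated as a $(B,\Gamma)$-algebra by $r(B)$, $s(B)$ and the entries of $v$ and $v^{-1}$, and a morphism of $(B,\Gamma)$-algebras is determined by its values on these generators, the triple is forced. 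The content is therefore existence.

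For existence I apply the functors of Lemmas \ref{lemma:rn-ed} and \ref{lemma:rn-intertwiner} to the defining intertwiner $v^{-\top}\xrightarrow{F} v$ and identify the targets via Lemma \ref{lemma:rn-commute}. Applying $\bm{\Delta}$ gives $v^{-\top}\mal v^{-\top}\xrightarrow{F} v\mal v$ in $\Rn(\Ao\todot\Ao)$; by Lemma \ref{lemma:rn-commute}(iii) the source is $(v\mal v)^{-\top}$, while $v\mal v\in\Rn^{\times}$ with $\partial_{v\mal v}=\nabla$ by Lemma \ref{lemma:rn-ed}, so the pair $(\Ao\todot\Ao,\,v\mal v)$ meets the defining conditions and yields a unique morphism $\Delta$ with $\Delta_n(v)=v\mal v$. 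Applying $\bm{\epsilon}$ gives $\partial_{v^{-\top}}\xrightarrow{F}\partial_v$ in $\Rn(B\rtimes\Gamma)$; since $\partial_{v^{-\top}}=\partial_v^{-1}=\nabla^{-\top}$ as $\nabla$ is diagonal, the pair $(B\rtimes\Gamma,\,\nabla)$ meets the conditions and produces $\epsilon$ with $\epsilon_n(v)=\nabla$, that is $\epsilon(v_{ij})=\delta_{i,j}\gamma_i$. Finally the contravariant functor $(-)^{-\co,\op}$ sends the intertwiner to $v^{-\co,\op}\xrightarrow{F^{-1}}(v^{-\top})^{-\co,\op}$; by Lemma \ref{lemma:rn-commute}(iv) the target equals $(v^{-\co,\op})^{-\top}$, so after inverting $(v^{-\co,\op})^{-\top}\xrightarrow{F} v^{-\co,\op}$ with $\partial_{v^{-\co,\op}}=\nabla$, and the pair $(\Ao(\nabla,F)^{\co,\op},\,v^{-\co,\op})$ produces $S\colon\Ao(\nabla,F)\to\Ao(\nabla,F)^{\co,\op}$ with $S_n(v)=v^{-1}$. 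By construction $v$ is then a matrix corepresentation in the sense of Definition \ref{definition:rn-corep}.

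It remains to check the diagrams of Definition \ref{definition:bg-hopf}. The coassociativity square and the two counit triangles are equations between morphisms of $(B,\Gamma)$-algebras out of $\Ao(\nabla,F)$, so by the uniqueness principle above it suffices to evaluate both sides on the generators $v_{ij}$: both composites of the coassociativity square send $v_{ij}$ to $\sum_{k,l} v_{ik}\todot v_{kl}\todot v_{lj}$ under the associativity isomorphism of Lemma \ref{lemma:bg-monoidal}(ii), while the counit triangles send $v_{ij}$ to $\gamma_i\todot v_{ij}$ and to $v_{ij}\todot\gamma_j$, which the unit isomorphisms of Lemma \ref{lemma:bg-monoidal}(iii) carry back to $v_{ij}$ (here the group element is exactly the one forced by the grading of $v_{ij}$). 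For the antipode squares, evaluation on $v_{ij}$ reduces both sides to $\delta_{i,j}1$: indeed $\check m(S\todot\Id)\Delta(v_{ij})=\sum_k (v^{-1})_{ik}v_{kj}=(v^{-1}v)_{ij}=\delta_{i,j}1$ and $\check s\,\epsilon(v_{ij})=\delta_{i,j}s(1)=\delta_{i,j}1$, and symmetrically for the right-hand square using $vv^{-1}=1$.

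The step I expect to be the real obstacle is the passage from the generators to all of $\Ao(\nabla,F)$ in the two antipode squares: unlike coassociativity and the counit, the maps $\check m\circ(S\todot\Id)\circ\Delta$ and $\check s\circ\epsilon$ (and their right-handed counterparts) are not algebra homomorphisms, so the uniqueness principle does not apply. I would instead argue that the set of $a$ on which both antipode identities hold is closed under multiplication, using that $\Delta$ and $\epsilon$ are homomorphisms, that $S$ is an anti-homomorphism into $\Ao(\nabla,F)^{\co,\op}$, and the module relations satisfied by $\check s\circ\epsilon$ and $\hat r\circ\epsilon$ recorded in the remarks following Definition \ref{definition:bg-hopf}; this subalgebra contains $r(B)$, $s(B)$ and the entries of $v$ and $v^{-1}$, hence is all of $\Ao(\nabla,F)$. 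This is the one place where the bialgebroid bookkeeping with the source and target maps $r,s$ is genuinely delicate; everything else is a formal consequence of the functoriality set up in \S\ref{section:rn}.
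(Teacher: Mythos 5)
Your proposal is correct and follows essentially the same route as the paper: the paper likewise obtains $\Delta$, $\epsilon$, $S$ from the universal property by applying $\bm{\Delta}$, $\bm{\epsilon}$ and $(-)^{-\co,\op}$ to the defining intertwiner $v^{-\top}\xrightarrow{F}v$ and identifying the targets via Lemmas \ref{lemma:rn-intertwiner} and \ref{lemma:rn-commute}, and then dismisses the verification of the axioms as ``straightforward calculations.'' Your additional material --- the uniqueness argument and, in particular, the observation that the antipode squares cannot be checked on generators by a homomorphism argument and must instead be propagated by multiplicativity of the ``good set'' --- correctly fills in exactly what the paper leaves implicit.
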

\begin{proof}
  The existence of morphisms $\Delta \colon A\to A\todot A$,
  $\epsilon\colon A\to B\rtimes \Gamma$, $S \colon A\to
  A^{\co,\op}$ satisfying $\Delta_{n}(v) =v\mal v$,
  $\epsilon_{n}(v) =\nabla$, $S_{n}(v) =v^{-1}$ follows from
  the universal property of $A$ and the relations
  \begin{gather*}
      \bm{\Delta}(v^{-\top}\xrightarrow{F} v) =
    ((v\mal v)^{-\top} \xrightarrow{F} v\mal v), \quad
    \bm{\epsilon}(v^{-\top} \xrightarrow{F} v) =
    (\nabla^{-\top} \xrightarrow{F} \nabla), \\
  (v^{-\top} \xrightarrow{F} v)^{-\co,\op} =
  ( v^{-\co,\op}  \xrightarrow{F^{-1}} (v^{-\co,\op})^{-\top});
\end{gather*}
see Lemma \ref{lemma:rn-intertwiner} and
\ref{lemma:rn-commute}.  Straightforward calculations show
that $(A,\Delta,\epsilon,S)$ is a $(B,\Gamma)$-Hopf
algebroid.
\end{proof}
\begin{remarks} \label{remarks:ao}
  \begin{enumerate}
  \item In the definition of $\Ao(\nabla,F)$,  we may
    evidently assume that $\Gamma$ is generated by the
    diagonal components $\gamma_{1},\ldots,\gamma_{n}$ of
    $\nabla$.
  \item Denote by $B_{0} \subseteq B$ the smallest
    $\Gamma$-invariant subalgebra containing the entries of
    $F$ and $F^{-1}$, and by $\iota\colon B_{0}\to B$ the
    inclusion. Then there exists an obvious isomorphism
    $\Ao(\nabla,F) \cong\iota_{*}\Aoo(\nabla,F)$.
  \item Let $H \in \GLn(B)$ be $\nabla$-even and $\hat H=\nabla
    H \nabla^{-1}$. Then there exists an isomorphism $
    \Ao(\nabla,HF\hat
    H^{\top}) \to \Ao(\nabla,F)$ of $(B,\Gamma)$-Hopf
    algebroids whose extension to
    matrices sends $v \in \Ao(\nabla,HF\hat
    H^{\top})$ to $w:=r_{n}(\hat
    H) vs_{n}(H)^{-1}\in \Ao(\nabla,F)$. Indeed, there
    exists such a morphism of $(B,\Gamma)$-algebras because  in
    $\Ao(\nabla,F)$, we have intertwiners $v\xrightarrow{H}
    w$, $v^{-\top} \xrightarrow{\hat H^{-\top}}  w^{-\top}$
    and  $v^{-\top}\xrightarrow{F} v$, whence $w^{-\top}
    \xrightarrow{H F\hat H^{\top}} w$, and this morphism
    is compatible with $\Delta,\epsilon,S$ because  $w$ is a
    matrix corepresentation by Lemma \ref{lemma:rn-corep}. A
    similar argument yields the inverse of this morphism.
  \end{enumerate}
\end{remarks}

Assume that $B$ carries an involution which is preserved by
$\Gamma$, and let $F\in \GLn(B)$ be self-adjoint and
\emph{$\nabla$-even} in the sense that $\nabla F \nabla^{-1}
\in \Mn(B)$. The second definition and
theorem in the introduction can be reformulated as follows.
\begin{definition} \label{definition:intro-au-hopf} The
  \emph{free unitary dynamical quantum group over $B$ with
    parameters $(\nabla,F)$} is the universal
  $(B,\Gamma)$-$*$-algebra $\Au(\nabla,F)$ with a unitary $u
  \in \Rn^{\times}(\Au(\nabla,F))$ such that $\partial_{v} =
  \nabla$ and $(v^{-\top})^{-\top} \xrightarrow{F} v$ is an
  intertwiner.
\end{definition}
\begin{theorem}\label{theorem:intro-au-hopf}
  The $*$-algebra $\Au(\nabla,F)$ can be equipped with a unique
  structure of a $(B,\Gamma)$-Hopf $*$-algebroid such that
  $v$ becomes a matrix corepresentation.
\end{theorem}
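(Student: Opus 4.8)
The plan is to mirror the proof of Theorem~\ref{theorem:ao-hopf}, constructing $\Delta$, $\epsilon$ and $S$ from the universal property of $A:=\Au(\nabla,F)$ together with the functorial calculus of Subsection~\ref{section:rn}, and then to check the Hopf algebroid axioms on generators. Throughout, the defining datum is the intertwiner $(v^{-\top})^{-\top}\xrightarrow{F} v$ (which is condition (c): here $v^{-\top}=\bar v$ and $(v^{-\top})^{-\top}=\bar v^{-\top}$, with $\bar v$ invertible by hypothesis), and the three structure maps are obtained by transporting this intertwiner along $\bm{\Delta}$, $\bm{\epsilon}$ and $(-)^{-\co,\op}$, respectively.

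First I would treat $\Delta$ and $\epsilon$, where the argument is essentially identical to the orthogonal case and, in addition, automatically produces $*$-homomorphisms. Applying $\bm{\Delta}$ to $(v^{-\top})^{-\top}\xrightarrow{F} v$ and using the relation $(-)^{-\top}\circ\bm{\Delta}=\bm{\Delta}\circ(-)^{-\top}$ of Lemma~\ref{lemma:rn-commute} (applied twice) yields the intertwiner $((v\mal v)^{-\top})^{-\top}\xrightarrow{F} v\mal v$; likewise $\bm{\epsilon}$ gives $(\nabla^{-\top})^{-\top}\xrightarrow{F}\nabla$. Since $v$ is unitary, so are $v\mal v\in\Mn(A\todot A)$ and $\nabla\in\Mn(B\rtimes\Gamma)$, and both $A\todot A$ and $B\rtimes\Gamma$ are $(B,\Gamma)$-$*$-algebras; hence the universal property of $A$ as a universal $*$-algebra provides $*$-homomorphisms $\Delta\colon A\to A\todot A$ and $\epsilon\colon A\to B\rtimes\Gamma$ with $\Delta_{n}(v)=v\mal v$ and $\epsilon_{n}(v)=\nabla$.

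The antipode is the step I expect to be the main obstacle. Applying $(-)^{-\co,\op}$ and the relation $(-)^{-\top}\circ(-)^{-\co,\op}=(-)^{-\co,\op}\circ(-)^{-\top}$ of Lemma~\ref{lemma:rn-commute} to the defining intertwiner shows that $w:=v^{-\co,\op}\in\Rn^{\times}(A^{\co,\op})$ satisfies $(w^{-\top})^{-\top}\xrightarrow{F} w$, exactly as in the orthogonal case. The crucial difference is that $w$, although invertible, is \emph{not} unitary in $A^{\co,\op}$: the entrywise identities that would express unitarity of $w$ are the unitarity relations for $v$ with the two factors in the reverse order, and these fail once $A$ is noncommutative. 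Consequently the $*$-universal property cannot be invoked to produce $S$, and $S$ will not be a $*$-homomorphism --- in agreement with the relation $*\circ S\circ *\circ S=\Id$ valid in any $(B,\Gamma)$-Hopf $*$-algebroid and the nontriviality of $S^{2}$ indicated after the statement. What rescues the construction is that $S$ need only be a morphism of $(B,\Gamma)$-algebras: as such it is determined by its values on the generators, namely $r$, $s$ and the entries of the two corepresentations $v$ and its conjugate $\bar v$. One checks that the assignment $v\mapsto v^{-\co,\op}$, $\bar v\mapsto \bar v^{-\co,\op}$, with $r$ and $s$ interchanged, respects every defining relation: the intertwiner relations for $v$ and for $\bar v$ are preserved by $(-)^{-\co,\op}$ and by the conjugation functors \eqref{eq:rn-overline}, \eqref{eq:rn-star} together with Lemma~\ref{lemma:rn-barop}; and, most importantly, the unitarity coupling $\bar v^{\top}v=v\bar v^{\top}=1$ is preserved because, once $S$ reverses the order of multiplication, its image reduces to the tautological identities $\bar v\,\bar v^{-1}=\bar v^{-1}\,\bar v=1$ read off in $A^{\co,\op}$. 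Thus the non-unitarity of $w$ is immaterial: what is transported is not ``$v$ is unitary'' but the coupled relation between $v$ and $\bar v$, and this survives.

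Finally I would verify, on the generating corepresentation $v$ (and hence on $\bar v$, $r$, $s$), that $(A,\Delta,\epsilon,S)$ satisfies the coassociativity, counit and antipode diagrams of Definition~\ref{definition:bg-hopf}, so that by Lemma~\ref{lemma:rn-corep} the matrix $v$ becomes a matrix corepresentation; uniqueness is then immediate, since the requirements $\Delta_{n}(v)=v\mal v$, $\epsilon_{n}(v)=\nabla$ and $S_{n}(v)=v^{-1}$ pin the three maps down on generators. Because $A$ is a $(B,\Gamma)$-$*$-algebra and $\Delta,\epsilon$ have already been arranged to be $*$-homomorphisms, this produces the desired $(B,\Gamma)$-Hopf $*$-algebroid structure. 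The only genuinely new ingredient compared with Theorem~\ref{theorem:ao-hopf} is the bookkeeping of the involution in the construction of $S$, which is exactly what the bar- and star-functors of Subsection~\ref{section:rn} were set up to handle.
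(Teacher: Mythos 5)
Your proposal is correct in substance and isolates exactly the right difficulty --- the antipode cannot be obtained from the universal property of $\Au(\nabla,F)$ as a \emph{$*$-algebra}, because $v^{-\co,\op}$ is not unitary in $\Mn(A^{\co,\op})$ --- but the paper packages the resolution differently. Rather than arguing in place that $S$ is ``determined by its values on the generators $v$ and $\bar v$ and respects every defining relation'', the paper introduces an auxiliary universal plain $(B,\Gamma)$-algebra $\Aup(\nabla,F)$ (Definition \ref{definition:au-prime}), generated by two \emph{independent} matrices $v,w$ subject to the intertwiners $v^{-\top}\xrightarrow{1}w$ and $w^{-\top}\xrightarrow{F}v$; this is precisely the presentation of $\Au(\nabla,F)$ as a plain algebra that your argument implicitly relies on. On $\Aup(\nabla,F)$ the entire Hopf algebroid structure, including $S$, comes directly from the (non-$*$) universal property by the same functorial calculus as in Theorem \ref{theorem:ao-hopf} (Proposition \ref{proposition:au-prime-hopf}); the involution with $w=\bar v$ is then constructed separately via Lemma \ref{lemma:rn-barop} (Proposition \ref{proposition:au-prime-star}); and finally Theorem \ref{theorem:au-prime-iso} identifies $\Au(\nabla,F)\cong\Aup(\nabla,F)$. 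What the paper's detour buys is that the step you leave as ``one checks that the assignment respects every defining relation'' becomes a clean application of a universal property, instead of an ad hoc verification on a generating set whose complete list of plain-algebra relations (the starred intertwiner relation and both unitarity couplings included) you would otherwise have to enumerate and transport by hand. Your observation that the unitarity coupling becomes tautological after reversing the multiplication is exactly the content of the transported intertwiner $v^{-\top}\xrightarrow{1}w$ under $(-)^{-\co,\op}$, so nothing in your outline would fail; it is just less economical to carry out, and you should make explicit that you are invoking a presentation of $\Au(\nabla,F)$ as a plain $(B,\Gamma)$-algebra rather than its defining $*$-universal property.
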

To prove this result, we introduce an auxiliary
$(B,\Gamma)$-algebra which does not involve the involution
on $B$.
\begin{definition} \label{definition:au-prime} We denote by
  $\Aup(\nabla,F)$ the universal $(B,\Gamma)$-algebra with
  $v,w \in \Rn^{\times}(A)$ such that $\partial_{v}=\nabla$,
  $\partial_{w}=\nabla^{-1}$ and $v^{-\top} \xrightarrow{1}
  w$, $w^{-\top} \xrightarrow{F} v$ are intertwiners.
\end{definition}
Using the same techniques as in the proof of Theorem
\ref{theorem:ao-hopf}, one finds:
\begin{proposition} \label{proposition:au-prime-hopf} The
  $(B,\Gamma)$-algebra $\Aup(\nabla,F)$ can be equipped with
  a unique structure of a $(B,\Gamma)$-Hopf algebroid such
  that $v$ and $w$ become matrix corepresentations.
\end{proposition}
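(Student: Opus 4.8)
The plan is to mimic the proof of Theorem \ref{theorem:ao-hopf}, now carrying along \emph{both} generating matrices $v$ and $w$ of $A:=\Aup(\nabla,F)$. The structure maps $\Delta\colon A\to A\todot A$, $\epsilon\colon A\to B\rtimes\Gamma$ and $S\colon A\to A^{\co,\op}$ will all be produced from the universal property of $A$: for each target $(B,\Gamma)$-algebra it suffices to exhibit a pair of homogeneous, invertible matrices of diagonal degrees $\nabla$ and $\nabla^{-1}$ satisfying the two defining intertwiner relations of $\Aup(\nabla,F)$, namely an intertwiner labelled $1$ from the inverse transpose of the first to the second, and one labelled $F$ from the inverse transpose of the second to the first. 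Note first that the relations $v^{-\top}\xrightarrow{1}w$ and $w^{-\top}\xrightarrow{F}v$ force $v^{-\top}=w$ and $w^{-\top}=r_{n}(\hat F)^{-1}vs_{n}(F)$ to be invertible, so that $v^{-\top},w^{-\top}\in\Rn^{\times}(A)$ and all the functors of \S\ref{section:rn} may be applied to these morphisms.

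For $\Delta$ I would take the candidates $v\mal v$ and $w\mal w$, whose degrees are again $\nabla$ and $\nabla^{-1}$ by Lemma \ref{lemma:rn-ed}. Applying $\bm{\Delta}$ to the two defining intertwiners and using the commutation relation $(-)^{-\top}\circ\bm{\Delta}=\bm{\Delta}\circ(-)^{-\top}$ of Lemma \ref{lemma:rn-commute}(iii) yields $(v\mal v)^{-\top}\xrightarrow{1}w\mal w$ and $(w\mal w)^{-\top}\xrightarrow{F}v\mal v$, exactly the relations required of the pair $(v\mal v,w\mal w)$. For $\epsilon$ I would apply $\bm{\epsilon}$ to the same two intertwiners; since $\partial_{v^{-\top}}=\nabla^{-1}=\nabla^{-\top}$ and $\partial_{w^{-\top}}=\nabla=(\nabla^{-1})^{-\top}$ (recall $\nabla$ is diagonal), this produces $\nabla^{-\top}\xrightarrow{1}\nabla^{-1}$ and $(\nabla^{-1})^{-\top}\xrightarrow{F}\nabla$ in $\Rn(B\rtimes\Gamma)$, the defining relations for $(\nabla,\nabla^{-1})$. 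For $S$ I would apply the contravariant functor $(-)^{-\co,\op}$ of Lemma \ref{lemma:rn-intertwiner}, which reverses arrows and sends a label $G$ to $G^{-1}$, and then rewrite $(v^{-\top})^{-\co,\op}=(v^{-\co,\op})^{-\top}$ and $(w^{-\top})^{-\co,\op}=(w^{-\co,\op})^{-\top}$ via Lemma \ref{lemma:rn-commute}(iv); inverting the two resulting intertwiners gives $(v^{-\co,\op})^{-\top}\xrightarrow{1}w^{-\co,\op}$ and $(w^{-\co,\op})^{-\top}\xrightarrow{F}v^{-\co,\op}$, which are the relations required of $(v^{-\co,\op},w^{-\co,\op})=(v^{-1},w^{-1})$. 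Universality then yields unique $(B,\Gamma)$-algebra morphisms $\Delta,\epsilon,S$ with $\Delta_{n}(v)=v\mal v$, $\Delta_{n}(w)=w\mal w$, $\epsilon_{n}(v)=\nabla$, $\epsilon_{n}(w)=\nabla^{-1}$, $S_{n}(v)=v^{-1}$, $S_{n}(w)=w^{-1}$, so $v$ and $w$ become matrix corepresentations by construction.

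It then remains to check the coassociativity, counit and antipode diagrams of Definition \ref{definition:bg-hopf}. Since $v$, $w$ and the images of $r,s$ generate $A$ as a $(B,\Gamma)$-algebra and every map in these diagrams is a $(B,\Gamma)$-algebra morphism, it is enough to evaluate on the entries $v_{ij}$ and $w_{ij}$, where the corepresentation identities reduce everything to the elementary matrix computations already used in the one-generator case; uniqueness of the structure is immediate, since any such structure is forced on the generators. I expect the only genuine subtlety to be the bookkeeping of the intertwiner labels under the functors --- in particular tracking how $1$ and $F$ turn into $1$ and $F$ (rather than $F^{-1}$) after the arrow-reversal of the contravariant $(-)^{-\co,\op}$ and the subsequent inversion --- together with confirming at the outset that both $v^{-\top}$ and $w^{-\top}$ lie in $\Rn^{\times}(A)$ so that these functors apply. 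Everything past that point is the routine diagram-chase suppressed as ``straightforward'' in the proof of Theorem \ref{theorem:ao-hopf}.
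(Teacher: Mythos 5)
Your proposal is correct and follows exactly the route the paper intends: the paper's ``proof'' of this proposition is literally the single remark that one uses the same techniques as in Theorem \ref{theorem:ao-hopf}, and you have spelled those techniques out faithfully --- applying $\bm{\Delta}$, $\bm{\epsilon}$ and $(-)^{-\co,\op}$ (with Lemma \ref{lemma:rn-commute} iii) and iv)) to both defining intertwiners and invoking the universal property. The bookkeeping of labels ($1\mapsto 1$ and $F\mapsto F$ after arrow reversal and inversion) and of the degrees $\nabla,\nabla^{-1}$ is handled correctly.
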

\begin{proposition} \label{proposition:au-prime-star} The
  $(B,\Gamma)$-algebra $\Aup(\nabla,F)$ can be equipped with
  an involution such that it becomes a $(B,\Gamma)$-Hopf
  $*$-algebroid and $w=\bar v$.
\end{proposition}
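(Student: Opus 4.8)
The plan is to build the involution from the universal property of $A := \Aup(\nabla,F)$ and then verify the remaining $*$-compatibilities on the generators $v$ and $w$. Recall that an involution on a $(B,\Gamma)$-algebra $A$ is the same as a morphism $\sigma \colon A \to \overline{A}^{\op}$ of $(B,\Gamma)$-algebras with $\overline{\sigma}^{\op} \circ \sigma = \Id$; writing $a^{*} := \sigma(a)$ recovers the usual antilinear, antimultiplicative map. Since $A$ is universal with respect to a pair $v,w \in \Rn^{\times}(A)$ satisfying $\partial_{v} = \nabla$, $\partial_{w} = \nabla^{-1}$ and the intertwiner relations $v^{-\top} \xrightarrow{1} w$, $w^{-\top} \xrightarrow{F} v$, constructing $\sigma$ amounts to exhibiting a pair $v',w' \in \Rn^{\times}(\overline{A}^{\op})$ satisfying the very same relations. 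I would take $v' := w^{\barop}$ and $w' := v^{\barop}$, the images of $w$ and $v$ under the functor $(-)^{\barop} \colon \Rn(A) \to \Rn(\overline{A}^{\op})$; then $\partial_{v'} = \partial_{w}^{-1} = \nabla$ and $\partial_{w'} = \partial_{v}^{-1} = \nabla^{-1}$, as required.

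The two intertwiner relations for $v',w'$ are exactly the content of Lemma \ref{lemma:rn-barop}. Applying that lemma to $v^{-\top} \xrightarrow{1} w$ yields $(w^{\barop})^{-\top} \xrightarrow{1} v^{\barop}$, i.e. $v'^{-\top} \xrightarrow{1} w'$; applying it to $w^{-\top} \xrightarrow{F} v$ yields $(v^{\barop})^{-\top} \xrightarrow{F^{*}} w^{\barop}$, and here the self-adjointness $F^{*}=F$ is precisely what closes the loop, turning this into $w'^{-\top} \xrightarrow{F} v'$. The hypotheses of Lemma \ref{lemma:rn-barop} hold because $v,w \in \GLn(A)$, so they lie in $\Rn^{\times}(A) \cap \Rn^{\times\top}(A)$. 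This produces the unique morphism $\sigma$ with $\sigma_{n}(v)=w$ and $\sigma_{n}(w)=v$ (as matrices over $\overline{A}^{\op}$), and by definition of $(-)^{\barop}$ it satisfies $\sigma(r(b)) = r(b^{*})$ and $\sigma(s(b)) = s(b^{*})$.

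It remains to check that $\sigma$ is involutive and that $w = \overline{v}$. Since $\overline{(-)}$ and $(-)^{\op}$ act trivially on morphisms, iterating $\sigma$ gives a self-map of $A$ that on underlying sets equals $\sigma \circ \sigma$ and is a morphism of $(B,\Gamma)$-algebras; it sends $v_{ij} \mapsto w_{ij} \mapsto v_{ij}$, $w_{ij} \mapsto v_{ij} \mapsto w_{ij}$ and $r(b) \mapsto r(b^{**}) = r(b)$, $s(b) \mapsto s(b)$. As these elements generate $A$, this composite is $\Id$, so $\sigma$ is an involution, making $A$ a $(B,\Gamma)$-$*$-algebra. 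By construction $\overline{v} = (v_{ij}^{*})_{i,j} = (\sigma(v_{ij}))_{i,j} = w$, as claimed.

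Finally, to upgrade the Hopf algebroid of Proposition \ref{proposition:au-prime-hopf} to a Hopf $*$-algebroid I would verify that $\Delta$ and $\epsilon$ commute with $*$; it suffices to do so on $v$ and $w$, since the set of elements on which a fixed $(B,\Gamma)$-algebra morphism commutes with $*$ is a subalgebra containing $r(B)$, $s(B)$ and, once checked, all $v_{ij},w_{ij}$. Using that $v,w$ are matrix corepresentations and $w = \overline{v}$: $\Delta(v_{ij})^{*} = \sum_{k} v_{ik}^{*} \todot v_{kj}^{*} = \sum_{k} w_{ik}\todot w_{kj} = \Delta(w_{ij}) = \Delta(v_{ij}^{*})$, and symmetrically for $w$; while $\epsilon(v_{ij})^{*} = (\delta_{ij}\gamma_{i})^{*} = \delta_{ij}\gamma_{i}^{-1} = \epsilon(w_{ij}) = \epsilon(v_{ij}^{*})$, again symmetrically for $w$, using $(b\gamma)^{*}=\gamma^{-1}b^{*}$ in $B\rtimes\Gamma$ together with $\partial_{w} = \nabla^{-1}$. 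I expect the only genuine subtlety to be the construction of $\sigma$ and the verification $\overline{\sigma}^{\op}\circ\sigma = \Id$; everything else is a bookkeeping of degrees and a direct appeal to Lemma \ref{lemma:rn-barop}, with the self-adjointness of $F$ as the one essential input.
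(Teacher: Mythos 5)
Your proposal is correct and follows essentially the same route as the paper: apply Lemma \ref{lemma:rn-barop} to the two defining intertwiners to produce the pair $(w^{\barop},v^{\barop})$ in $\overline{A}^{\op}$ satisfying the same relations (with $F^{*}=F$ closing the loop), invoke the universal property to get the antihomomorphism, and check involutivity and compatibility with $\Delta,\epsilon$ on generators. The paper leaves those last verifications as ``easily seen''; your write-up simply spells them out.
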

\begin{proof}
  Let $A:=\Aup(\nabla,F)$.  By Lemma \ref{lemma:rn-barop},
  we have intertwiners $(w^{\barop})^{-\top} \xrightarrow{1}
  v^{\barop}$ and $(v^{\barop})^{-\top}
  \xrightarrow{F^{*}=F} w^{\barop}$. The universal property
  of $A$ yields a homomorphism $j\colon A\to
  \overline{A}^{\op}$ satisfying $j_{n}(v) = w^{\barop}$ and
  $j_{n}(w) = v^{\barop}$.  Composition of $j$ with the
  canonical map $\bar A^{\op}\to A$ yields the desired
  involution, which is easily seen to be compatible with the
  comultiplication and counit.
\end{proof}
 Theorem \ref{theorem:intro-au-hopf} now is an immediate corollary to the
 following result:
 \begin{theorem} \label{theorem:au-prime-iso}
   There exists a unique $*$-isomorphism $\Au(\nabla,F) \to
   \Aup(\nabla,F)$ whose extension to matrices sends $u$ to $v$.
 \end{theorem}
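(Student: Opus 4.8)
The plan is to build the $*$-isomorphism and its inverse directly from the respective universal properties, using the involution on $\Aup(\nabla,F)$ furnished by Proposition~\ref{proposition:au-prime-star} to reconcile the unitarity that is built into $\Au(\nabla,F)$ with the two-generator presentation of $\Aup(\nabla,F)$.

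First I would construct a $*$-homomorphism $\phi\colon \Au(\nabla,F)\to \Aup(\nabla,F)$ with $\phi_{n}(u)=v$. By the universal property of $\Au(\nabla,F)$ it suffices to check that $v\in \Rn^{\times}(\Aup(\nabla,F))$ is a unitary with $\partial_{v}=\nabla$ satisfying $(v^{-\top})^{-\top}\xrightarrow{F} v$. The crucial observation is that the defining intertwiner $v^{-\top}\xrightarrow{1} w$ of $\Aup(\nabla,F)$ forces $w=v^{-\top}$: here $\hat 1=\partial_{w}\partial_{v^{-\top}}^{-1}=\nabla^{-1}\nabla=1$ by Lemma~\ref{lemma:rn-intertwiner}, so the relation $r_{n}(\hat 1)v^{-\top}=w\,s_{n}(1)$ reads $v^{-\top}=w$. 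On the other hand Proposition~\ref{proposition:au-prime-star} gives $w=\overline{v}$, so $\overline{v}=v^{-\top}$, which is exactly the statement that $v$ is unitary. Consequently $(v^{-\top})^{-\top}=w^{-\top}\xrightarrow{F} v$ is the second defining intertwiner of $\Aup(\nabla,F)$, so all relations hold and $\phi$ exists.

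Conversely I would construct a homomorphism $\psi\colon \Aup(\nabla,F)\to \Au(\nabla,F)$ of $(B,\Gamma)$-algebras with $\psi_{n}(v)=u$ and $\psi_{n}(w)=\overline{u}$. The universal property of $\Aup(\nabla,F)$ requires $u$ and $\overline{u}$ to lie in $\Rn^{\times}(\Au(\nabla,F))$ with $\partial_{u}=\nabla$, $\partial_{\overline{u}}=\nabla^{-1}$, together with intertwiners $u^{-\top}\xrightarrow{1}\overline{u}$ and $\overline{u}^{-\top}\xrightarrow{F} u$. Since $u$ is unitary, $\overline{u}=u^{-\top}$, so the first intertwiner is the identity on $\overline{u}$ and the second reads $(u^{-\top})^{-\top}\xrightarrow{F} u$, which is the defining relation of $\Au(\nabla,F)$; hence $\psi$ exists.

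Finally I would verify that $\phi$ and $\psi$ are mutually inverse on generators. On the entries of $u$ one has $\psi\phi(u_{ij})=\psi(v_{ij})=u_{ij}$, and since $\phi$ is a $*$-homomorphism and $w=\overline{v}$ we get $\psi\phi(u_{ij}^{*})=\psi(\phi(u_{ij})^{*})=\psi(v_{ij}^{*})=\psi(w_{ij})=\overline{u}_{ij}=u_{ij}^{*}$; as the entries of $u$, of $u^{*}$, and the image of $r\times s$ generate $\Au(\nabla,F)$, this gives $\psi\circ\phi=\Id$. Symmetrically $\phi\psi(v)=\phi(u)=v$ and $\phi\psi(w)=\phi(\overline{u})=\overline{\phi(u)}=\overline{v}=w$, so $\phi\circ\psi=\Id$ because $v,w$ generate $\Aup(\nabla,F)$. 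Thus $\phi$ is a bijective $*$-homomorphism, its inverse $\psi$ is automatically $*$-preserving, and uniqueness holds because any such homomorphism is determined by the image of $u$. The only genuinely delicate point is the passage from the two formally independent generators $v,w$ of $\Aup(\nabla,F)$ to the single unitary $u$, and this is precisely what the identity $\overline{v}=w=v^{-\top}$ resolves.
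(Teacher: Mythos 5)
Your proof is correct and takes essentially the same route as the paper, whose proof is a one-sentence appeal to the two universal properties yielding $u\mapsto v$ and $v\mapsto u$, $w\mapsto \bar u$; you have simply spelled out the verification, in particular the key point that the intertwiner $v^{-\top}\xrightarrow{1}w$ together with $w=\bar v$ forces $v$ to be unitary so that the universal property of $\Au(\nabla,F)$ applies.
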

 \begin{proof}
   One easily verifies that the universal properties of
   $A:=\Au(\nabla,F)$ and $A':=\Aup(\nabla,F)$ yield
   homomorphisms $A \to A'$ and $A' \to A$ whose extensions
   to matrices satisfy $u \mapsto v$ and
   $v\mapsto u$, $w \mapsto \bar u$, respectively.
 \end{proof}

 The following analogues of Remarks \ref{remarks:ao} apply
 to $\Au(\nabla,F)$:
 \begin{remarks} \label{remarks:au}
   \begin{enumerate}
   \item We may assume
     that $\Gamma$ is generated by the diagonal components
     of $\nabla$, and if $\iota \colon B_{0}\hookrightarrow
     B$ denotes the inclusion of the smallest
     $\Gamma$-invariant $*$-subalgebra containing the
     entries of $F$ and $F^{-1}$, then $\Au(\nabla,F) \cong
     \iota_{*}\Auo(\nabla,F)$.
   \item Let $H \in \GLn(B)$ be $\nabla$-even and unitary,
and     let $\hat H=\nabla H \nabla^{-1}$. Then there exists an isomorphism $
     \Au(\nabla,HFH^{*}) \to \Au(\nabla,F)$ of $(B,\Gamma)$-Hopf
     algebroids whose extension to matrices sends $u \in
     \Au(\nabla,HFH^{*})$ to $z:=r_{n}(\hat H) us_{n}(H)^{-1}\in
     \Au(\nabla,F)$. Indeed, there exists such a morphism of
     $(B,\Gamma)$-algebras because $z$ is a product of
     unitaries and in
     $\Au(\nabla,F)$, we have intertwiners $u\xrightarrow{H}
     z$, $\bar u^{-\top} \xrightarrow{H^{-*}} \bar
     z^{-\top}$ by \eqref{eq:rn-star}, and $\bar
     u^{-\top}\xrightarrow{F} u$, whence $\bar z^{-\top}
     \xrightarrow{H FH^{*}} z$, and this morphism is
     compatible with $\Delta,\epsilon,S$ because $z$ is a
     matrix corepresentation by Lemma
     \ref{lemma:rn-corep}. A similar argument yields the
     inverse of this morphism.
   \end{enumerate}
 \end{remarks}


 We finally consider involutions on certain quotients of
 $\Ao(\nabla,F)$.

Assume that $F,G \in \GLn(B)$ are $\nabla$-odd and $GF^{*}
=FG^{*}$.  Let $Q := G(\nabla \bar G \nabla)$.
\begin{definition} \label{definition:matrix-hopf-involution}
The \emph{free orthogonal
    dynamical quantum group} over $B$ with parameters
    $(\nabla,F,G)$ is  the universal 
  $(B,\Gamma)$-algebra  $\Ao(\nabla,F,G)$ with a $v\in \Rn^{\times}(A)$ such that
  $\partial_{v} =\nabla$ and $v^{-\top} \xrightarrow{F} v$ and
  $v \xrightarrow{Q} v$ are intertwiners.
\end{definition}
The algebra $\Ao(\nabla,F,G)$ depends only on $Q$ and not on
$G$, but shall soon be equipped with an involution that does
depend on $G$.  

Evidently, there exists a canonical quotient map $\Ao(\nabla,F) \to
\Ao(\nabla,F,G)$, and
\begin{align*}
  \Ao(\nabla,F,G) &\cong \Ao(\nabla,F) / (r(q)-s(q)) \quad \text{if } Q=\diag(q,\ldots,q),\\
  \Ao(\nabla,F,G) &\cong \Ao(\nabla,F) / (r(q_{i})-s(q_{j})|
i,j=1,\ldots,n) \quad \text{if } Q=\diag(q_{1},\ldots,q_{n}),
\end{align*}
because in the second case $(r_{n}(\hat
Q)v)_{ij}=r_{n}(\gamma_{i}(q_{i}))v_{ij} =
v_{ij}r_{n}(q_{i})$ and $(vs_{n}(Q))_{ij} =
v_{ij}s_{n}(q_{j})$ in $\Ao(\nabla,F)$ for all $i,j$.
\begin{theorem} \label{theorem:ao-prime-hopf} The
  $(B,\Gamma)$-algebra $\Ao(\nabla,F,G)$ can be equipped
  with a unique structure of a $(B,\Gamma)$-Hopf $*$-algebroid
  such that $\bar v \xrightarrow{G} v$ becomes an intertwiner and
  $v$  a matrix corepresentation.
\end{theorem}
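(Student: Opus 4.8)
The plan is to mirror the proof of Theorem~\ref{theorem:ao-hopf} for the coalgebra structure and then to build the involution from the intertwiner calculus of \S\ref{section:rn}, exactly as in Proposition~\ref{proposition:au-prime-star}. Write $A:=\Ao(\nabla,F,G)$, so $A$ is the universal $(B,\Gamma)$-algebra carrying a $v\in\Rn^{\times}(A)$ with $\partial_{v}=\nabla$ and intertwiners $v^{-\top}\xrightarrow{F}v$ and $v\xrightarrow{Q}v$. For the Hopf algebroid structure I would invoke the universal property of $A$ together with Lemmas~\ref{lemma:rn-ed}, \ref{lemma:rn-intertwiner} and \ref{lemma:rn-commute}, obtaining $\Delta,\epsilon,S$ by checking that $v\mal v$, $\nabla$ and $v^{-\co,\op}$ are homogeneous of degree $\nabla$ and satisfy both defining intertwiner relations. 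For the $F$-relation this is verbatim the computation in Theorem~\ref{theorem:ao-hopf}; for the new $Q$-relation one only adds $\bm{\Delta}(v\xrightarrow{Q}v)=(v\mal v\xrightarrow{Q}v\mal v)$, $\bm{\epsilon}(v\xrightarrow{Q}v)=(\nabla\xrightarrow{Q}\nabla)$ and $(v\xrightarrow{Q}v)^{-\co,\op}=(v^{-\co,\op}\xrightarrow{Q^{-1}}v^{-\co,\op})$. Equivalently, since the canonical surjection $\Ao(\nabla,F)\to A$ intertwines these candidate maps with the structure maps of Theorem~\ref{theorem:ao-hopf}, the Hopf algebroid axioms descend along it. Uniqueness is immediate, as $A$ is generated by $v$ together with $r(B),s(B)$ and the structure maps are prescribed there by the requirement that $v$ be a matrix corepresentation.

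The substance of the theorem is the involution, which I would produce, as in Proposition~\ref{proposition:au-prime-star}, from a homomorphism $j\colon A\to\overline{A}^{\op}$ supplied by the universal property of $A$, setting $*:=\mathrm{can}\circ j$ where $\mathrm{can}\colon\overline{A}^{\op}\to A$ is the canonical conjugate-linear antihomomorphism. To force $\bar v\xrightarrow{G}v$, I require $j_{n}(v)=v'$, where $v'\in\Mn(\overline{A}^{\op})$ is the matrix characterised by $\mathrm{can}(v')=r_{n}(\hat G)^{-1}vs_{n}(G)$ with $\hat G=\nabla G\nabla$; this $v'$ is homogeneous with $\partial_{v'}=\nabla$ because $*$ preserves gradings into $\overline{A}^{\op}$. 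For $j$ to exist I must check that $v'\in\Rn^{\times}(\overline{A}^{\op})$ satisfies the two defining intertwiner relations of $A$, and for $*$ to be an involution I must check $*\circ*=\Id$; once both hold, $\bar v=r_{n}(\hat G)^{-1}vs_{n}(G)$ by construction, i.e.\ $\bar v\xrightarrow{G}v$ is an intertwiner.

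The main obstacle is precisely these verifications, and this is where the standing hypotheses enter. Applying Lemma~\ref{lemma:rn-barop} to $v^{-\top}\xrightarrow{F}v$ gives $(v^{\barop})^{-\top}\xrightarrow{F^{*}}v^{\barop}$ in $\overline{A}^{\op}$; transporting this along the $G$-twist relating $v^{\barop}$ and $v'$ and using the $*$-relations of Lemma~\ref{lemma:rn-grading}, the required relation $(v')^{-\top}\xrightarrow{F}v'$ reduces exactly to $GF^{*}=FG^{*}$, which is our hypothesis. The identity $Q=G\nabla\bar G\nabla$ plays a double role: combined with $\overline{\hat G}=\nabla\bar G\nabla$ it yields $v'\xrightarrow{Q}v'$, and it simultaneously makes $*\circ*$ collapse onto the already-imposed relation $v\xrightarrow{Q}v$. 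Indeed, once $*$ is defined, the functor $\overline{(-)}$ of \eqref{eq:rn-overline} sends $\bar v\xrightarrow{G}v$ to $v\xrightarrow{\nabla\bar G\nabla}\bar v$, and the composite $v\xrightarrow{\nabla\bar G\nabla}\bar v\xrightarrow{G}v$ equals $v\xrightarrow{G\nabla\bar G\nabla}v=v\xrightarrow{Q}v$, whence $*\circ*=\Id$. It remains to verify that $*$ is compatible with $\Delta$ and $\epsilon$, which is routine since both act entrywise on $v\mal v$ and on $\nabla=\partial_{v}$; this makes $\Delta$ and $\epsilon$ morphisms of $(B,\Gamma)$-$*$-algebras, so $(A,\Delta,\epsilon,S)$ is a $(B,\Gamma)$-Hopf $*$-algebroid, unique because every piece of structure is prescribed on the generators.
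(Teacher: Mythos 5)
Your proposal is correct and follows the paper's own proof essentially step for step: the coalgebra structure comes from the universal property together with the additional $Q$-intertwiner checks under $\bm{\Delta},\bm{\epsilon},(-)^{-\co,\op}$, and the involution from a homomorphism $j\colon A\to\overline{A}^{\op}$ sending $v$ to the $G$-twist $w^{\barop}$ with $w=r_{n}(\nabla G\nabla)^{-1}vs_{n}(G)$, with $GF^{*}=FG^{*}$ and $Q=G\nabla\bar G\nabla$ entering exactly where you say and Lemma~\ref{lemma:rn-corep} giving compatibility with $\Delta$ and $\epsilon$. The only cosmetic slip is in your involutivity check, where the source of the arrow obtained by applying $\overline{(-)}$ to $\bar v\xrightarrow{G}v$ should be written $\overline{\overline{v}}$ rather than $v$ --- that equality is precisely what the comparison of $\overline{\overline{v}}\xrightarrow{Q}v$ with $v\xrightarrow{Q}v$ is meant to establish.
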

\begin{proof} 
  The existence of $\Delta,\epsilon,S$ follows similarly as
  in the case of $\Ao(\nabla,F)$; one only needs to observe
  that additionally, application of the functors
  $\bm{\Delta},\bm{\epsilon}$ and $(-)^{\co,\op}$ to the
  intertwiner $(v \xrightarrow{Q} v)$ yield intertwiners
  $(v\mal v \xrightarrow{Q} v\mal v)$, $(\nabla
  \xrightarrow{Q} \nabla)$ and $((v^{-1})^{\co,\op}
  \xrightarrow{Q} (v^{-1})^{\co,\op})$
  
  Let us prove existence of the involution.  Let $w:=
  r_{n}(\nabla G \nabla)^{-1} v s_{n}(G)$.  Then there exist
  intertwiners
  \begin{gather*}
    w \xrightarrow{G} v, \qquad (v \xrightarrow{G^{-1}}
    w)\circ (v \xrightarrow{Q} v)
    = (v \xrightarrow{\nabla \bar G\nabla} w),  \\
    (v \xrightarrow{\nabla \bar G\nabla} w)^{\barop}\circ (w
    \xrightarrow{G} v)^{\barop} = \ (v^{\barop}
    \xrightarrow{G} w^{\barop}) \circ (w^{\barop}
    \xrightarrow{\nabla \bar G \nabla} v^{\barop}) =
    (w^{\barop} \xrightarrow{Q} w^{\barop}), \\
      (v^{-\top} \xrightarrow{F} v) \circ (w
      \xrightarrow{(\nabla \bar G \nabla)^{-1}} v)^{-\top} =
      (v^{-\top} \xrightarrow{F} v)\circ (w^{-\top}
      \xrightarrow{G^{*}} v^{-\top}) = (w^{-\top}
      \xrightarrow{FG^{*}=GF^{*}} v), \\
      ((v\xrightarrow{G^{-1}} w) \circ (w^{-\top}
      \xrightarrow{GF^{*}} v))^{\barop} = (w^{-\top}
      \xrightarrow{F^{*}} w)^{\barop} =
      ((w^{\barop})^{-\top} \xrightarrow{F} w^{\barop});
  \end{gather*}
  where we used Lemma \ref{lemma:rn-intertwiner} in the last
  line. The universal property of $A:=\Ao(\nabla,F,G)$
  therefore yields a homomorphism $j\colon A\to
  \overline{A}^{\op}$ such that $j_{n}(v)=w^{\barop}$, and
  this $j$ corresponds to a conjugate-linear
  antihomomorphism $A \to A$, $a \mapsto a^{*}$.  To see
  that the map $a\mapsto a^{*}$ is involutive, we only need
  to check $\overline{w}=v$.  The functor
  $\overline{(-)}$ of \eqref{eq:rn-overline} applied to $w
  \xrightarrow{G} v$ yields $\overline{w}
  \xrightarrow{\overline{\hat G}=\nabla \bar G\nabla}
  \overline{v} = w$, and composition with
  $w\xrightarrow{G} v$ gives $\overline{w} \xrightarrow{Q}
  v$. Hence, $\overline{w}=v$.

  Finally, the involution is compatible with the
  comultiplication and counit because $w$ is a matrix
  corepresentation by Lemma \ref{lemma:rn-corep}.
\end{proof}
\begin{remarks} \label{remarks:ao-fg}
  \begin{enumerate}
  \item The canonical quotient map $\Ao(\nabla,F) \to
    \Ao(\nabla,F,G)$ is a morphism of $(B,\Gamma)$-Hopf
    algebroids.
  \item  Analogues of Remarks
    \ref{remarks:ao} and \ref{remarks:au} apply to
    $\Ao(\nabla,F,G)$.
  \item Note that $\Ao(\nabla,F,G)$ is the universal
    $(B,\Gamma)$-$*$-algebra with a $v\in \Rn^{\times}(A)$
    such that $\partial_{v} =\nabla$ and $v^{-\top}
    \xrightarrow{F} v$ and $\bar v \xrightarrow{G} v$ are
    intertwiners. Indeed, the composition of $\bar v
    \xrightarrow{G} v$ with its image under the functor
    $\overline{(-)}$ in \eqref{eq:rn-overline} yields
    $v\xrightarrow{Q} v$.
 \item If $F=G$, then $\overline{v} = v^{-\top}$ and hence $v$ is
    unitary.  In general, assume that $H \in \GLn(B)$
    satisfies $\nabla H \nabla^{-1} \in \Mn(B)$ and
    $\overline{H}H^{\top}\in \complex \cdot G^{-1}F$.  Then
    $u:=r_{n}(H^{-1})vs_{n}(\nabla^{-1}H\nabla)$ is a
    unitary matrix corepresentation whose entries generate
    $\Ao(\nabla,F,G)$ as a $(B,\Gamma)$-algebra.  Indeed, $u
    \xrightarrow{\nabla^{-1}H\nabla} v$ is an intertwiner,
    and applying $\overline{(-)}$ and $(-)^{-\top}$,
    respectively, we get $\overline{u}
    \xrightarrow{\overline{H}} \overline{v} \xrightarrow{G}
    v \xrightarrow{F^{-1}} v^{-\top} \xrightarrow{H^{\top}}
    u^{-\top}$ which is scalar by assumption so that
    $\overline{u}=u^{-\top}$.
  \end{enumerate}
\end{remarks}


We finally consider a simple example; a more complex one is
considered in \S\ref{section:sud}.
\begin{example} \label{example:ao-su} Equip $\complex[X]$
  with an involution such that $X^{*}=X$ and an action of
  $\integers$ such that $X \stackrel{k}{\mapsto} X-k$ for
  all $k \in \integers$, and let
  $\gamma_{1}=1$, $\gamma_{2}=-1$, 
  $\nabla = \diag(\gamma_{1},\gamma_{2})$ and $
    F=G= \begin{pmatrix} 0 & -1 \\ 1 & 0
    \end{pmatrix}$. Then
    $A^{\complex[X]}_{\mathrm{o}}(\nabla,F,G) \cong
    \iota_{*}(A^{\complex}_{\mathrm{o}}(\nabla,F,G))$, where
    $\iota \colon \complex \to \complex[X]$ is the canonical
    map.  

    The algebra $A^{\complex}_{\mathrm{o}}(\nabla,F,G)$
    equipped with $\Delta,\epsilon_{\complex \Gamma} \circ
    \epsilon,S$ is a Hopf $*$-algebra by Lemma
    \ref{lemma:b-to-hopf}.  It is generated by the entries
    of a unitary matrix $v$ which satisfies $\overline{v} =
    G^{-1}vG$ and therefore has the form $v=
 \begin{pmatrix}
   \alpha & -\gamma^{*} \\ \gamma & \alpha^{*}
 \end{pmatrix}$. The relations $vv^{*}=1=v^{*}v$ then imply
 that $\alpha,\alpha^{*},\gamma,\gamma^{*}$ commute and
 $\alpha\alpha^{*}+\gamma\gamma^{*}=1$. Therefore,
 $A^{\complex}_{\mathrm{o}}(\nabla,F,G)$ is isomorphic to
 the Hopf $*$-algebra $\mathcal{O}(\SU)$ of representative
 functions on $\SU$.  

 The algebra $A^{\complex[X]}_{\mathrm{o}}(\nabla,F,G)\cong
 \iota_{*}(A^{\complex}_{\mathrm{o}}(\nabla,F,G))$ can be
 identified with the subalgebra of $\End(\mathcal{O}(\SU))$
 generated by multiplication operators  associated to
 elements of $\mathcal{O}(\SU)$ and left or right invariant
 differentiation operators along the diagonal torus in
 $\SU$; see Example \ref{example:bg-lie}.
\end{example}

 \subsection{The square of the antipode and the  scaling character groups}

\label{section:s}

The square of the antipode on the free dynamical quantum
groups $\Ao(\nabla,F)$, $\Au(\nabla,F)$,
$\Ao(\nabla,F,G)$ can be described in terms of certain
character groups as follows.

 Recall the isomorphisms of Lemma \ref{lemma:bg-monoidal}
 iii) and the anti-automorphism $S^{B\rtimes \Gamma}$ of
 $B\rtimes \Gamma$ given by $b\gamma \mapsto \gamma^{-1} b$.
\begin{definition} \label{definition:s-group} Let
  $(A,\Delta$, $\epsilon,S)$ be a $(B,\Gamma)$-Hopf
  algebroid.  A \emph{character group} on $A$ is a family of
  morphisms $\theta=(\theta^{(k)} \colon A\to B\rtimes
  \Gamma)_{k\in \integers}$ satisfying $(\theta^{(k)} \todot
  \theta^{(l)})\circ \Delta = \theta^{k+l}$, $ \theta^{(0)}=
  \epsilon$ and $\theta^{(k)} \circ S = S^{B\rtimes \Gamma}
  \circ \theta^{(-k)}$ for all $k,l\in \integers$.  We call
  a character group $\theta$ \emph{scaling} if $S^{2} =
  (\theta^{(1)} \todot \Id \todot \theta^{(-1)}) \circ
  \Delta^{(2)}$, where $\Delta^{(2)} = (\Delta \todot \Id)
  \circ \Delta = (\Id \todot \Delta) \circ \Delta$.
\end{definition}
We construct scaling character groups using
intertwiners of the form $u \xrightarrow{H}S^{2}_{n}(u)$ for suitable
matrix corepresentations $u$.
\begin{lemma} \label{lemma:s-group} Let
  $(A,\Delta,\epsilon,S)$ be a $(B,\Gamma)$-Hopf algebroid,
  let $\theta=(\theta^{(k)} \colon A\to B\rtimes
  \Gamma)_{k\in \integers}$ be a family of morphisms
  satisfying $(\theta^{(k)} \todot \theta^{(l)})\circ \Delta
  = \theta^{k+l}$ for all $k,l \in \integers$, and let $u
  \in \Rn^{\times}(A)$ be a matrix corepresentation.
  \begin{enumerate}
  \item $S^{2}_{n}(u) = (u^{-\top})^{-\top}$.
  \item Let $H =\partial_{u}^{-1}
    \theta^{(1)}_{n}(u) $. Then $H \in
    \GLn(B)$, $\partial_{u}H\partial_{u}^{-1} \in \Mn(B)$
    and $\theta^{(k)}_{n}(u) = \partial_{u} H^{k}$ for all
    $k \in \integers$.
  \item $\theta_{n}^{(0)}=\epsilon_{n}(u)$ and
    $\theta_{n}^{(k)}(S_{n}(u)) = S_{n}^{B\rtimes
      \Gamma}(\theta^{(-k)}_{n}(u))$ for all $k \in
    \integers$.
  \item  $S_{n}^{2}(u) = ((\theta^{(1)} \todot \Id \todot
    \theta^{(-1)})\circ\Delta^{(2)})_{n}(u)$ if and only if
    $u\xrightarrow{H} S^{2}_{n}(u)$ is an intertwiner.
  \end{enumerate}
\end{lemma}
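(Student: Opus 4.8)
The plan is to exploit the matrix‑corepresentation identities $\Delta_{n}(u)=u\mal u$, $\epsilon_{n}(u)=\partial_{u}$, $S_{n}(u)=u^{-1}$, the fact that the morphisms $\theta^{(k)}$ and $\epsilon$ are $\Gamma^{\ev}$‑graded, and the commutation relations $ar(b)=r(\partial^{r}_{a}(b))a$, $as(b)=s(\partial^{s}_{a}(b))a$ of a $(B,\Gamma)$‑algebra. I would prove the assertions in the order (1), (2), (3), (4), since (3) and (4) rest on the normal form obtained in (2). For (1), note that $S\colon A\to A^{\co,\op}$ is a unital algebra homomorphism into the opposite algebra of $A$, hence a unital anti‑endomorphism of $A$, so $S^{2}$ is an endomorphism. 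Applying $S$ entrywise to the two matrix identities $\sum_{k}u_{ik}(u^{-1})_{kj}=\delta_{ij}$ and $\sum_{k}(u^{-1})_{ik}u_{kj}=\delta_{ij}$ and using $S_{n}(u)=u^{-1}$ together with $S((u^{-1})_{ij})=S^{2}(u_{ij})$, one gets $(S^{2}_{n}(u))^{\top}u^{-\top}=I=u^{-\top}(S^{2}_{n}(u))^{\top}$. Thus $u^{-\top}$ is invertible, so it lies in $\Rn^{\times}(A)$ and the right‑hand side is legitimate, and $S^{2}_{n}(u)=(u^{-\top})^{-\top}$.

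For (2), the key point is a normal form. Since $\theta^{(k)}$ is $\Gamma^{\ev}$‑graded and $(B\rtimes\Gamma)_{\gamma,\gamma'}=0$ unless $\gamma=\gamma'$, the entry $\theta^{(k)}(u_{ij})$ lies in $B\gamma_{i}$ and vanishes unless $\gamma_{i}=\gamma_{j}$; hence $\theta^{(k)}_{n}(u)=\partial_{u}H^{(k)}$ for a unique block‑diagonal $H^{(k)}\in\Mn(B)$, and block‑diagonality is precisely the condition $\partial_{u}H^{(k)}\partial_{u}^{-1}\in\Mn(B)$. Unitality of $\theta^{(k)}$ gives $\theta^{(k)}_{n}(u)\theta^{(k)}_{n}(u^{-1})=I=\theta^{(k)}_{n}(u^{-1})\theta^{(k)}_{n}(u)$, so each $H^{(k)}$ is invertible over $B\rtimes\Gamma$. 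Evaluating the hypothesis $(\theta^{(k)}\todot\theta^{(l)})\circ\Delta=\theta^{(k+l)}$ on $u_{ij}$, with $\Delta(u_{ij})=\sum_{m}u_{im}\todot u_{mj}$ and the unit isomorphism $(B\rtimes\Gamma)\todot(B\rtimes\Gamma)\cong B\rtimes\Gamma$ of Lemma \ref{lemma:bg-monoidal}(iii), I expect $\theta^{(k+l)}_{n}(u)=\partial_{u}(H^{(k)}H^{(l)})$; the $\gamma$‑twists produced by the unit isomorphism cancel exactly because the $H^{(k)}$ are block‑diagonal. Therefore $k\mapsto H^{(k)}$ is a homomorphism $\integers\to\GLn(B\rtimes\Gamma)$ with block‑diagonal, $B$‑valued values; $H^{(0)}$ is an invertible idempotent, hence $I$, and $H^{(k)}=H^{k}$ with $H=H^{(1)}\in\GLn(B)$.

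Statement (3) follows from the normal form: the first equality is (2) at $k=0$, giving $\theta^{(0)}_{n}(u)=\partial_{u}=\epsilon_{n}(u)$. For the second, $\theta^{(k)}_{n}(S_{n}(u))=\theta^{(k)}_{n}(u^{-1})=(\partial_{u}H^{k})^{-1}=H^{-k}\partial_{u}^{-1}$ has $(i,j)$‑entry $(H^{-k})_{ij}\gamma_{j}^{-1}$, while $S^{B\rtimes\Gamma}_{n}(\theta^{(-k)}_{n}(u))=S^{B\rtimes\Gamma}_{n}(\partial_{u}H^{-k})$ has $(i,j)$‑entry $(H^{-k})_{ij}\gamma_{i}^{-1}$; these coincide since $(H^{-k})_{ij}=0$ unless $\gamma_{i}=\gamma_{j}$. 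Finally, for (4) I would insert $\Delta^{(2)}_{n}(u)=u\mal u\mal u$, apply $\theta^{(1)}\todot\Id\todot\theta^{(-1)}$, and collapse the outer tensor factors by the two unit isomorphisms $(B\rtimes\Gamma)\todot A\to A$ (via $r$) and $A\todot(B\rtimes\Gamma)\to A$ (via $s$). This gives the $(i,j)$‑entry $\sum_{k,l}r((\hat{H})_{ik})\,s((\hat{H}^{-1})_{lj})\,u_{kl}$ with $\hat{H}=\partial_{u}H\partial_{u}^{-1}$, where I have used (2) to write $\theta^{(1)}_{n}(u)=\partial_{u}H$ and $\theta^{(-1)}_{n}(u)=\partial_{u}H^{-1}$. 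The decisive move is to commute each $s$‑factor through $u_{kl}$ by $s(b)u_{kl}=u_{kl}s(\gamma_{l}^{-1}(b))$ (as $\partial^{s}_{u_{kl}}=\gamma_{l}$); since $(\hat{H}^{-1})_{lj}=\gamma_{l}((H^{-1})_{lj})$ on its nonzero entries, the expression becomes $(r_{n}(\hat{H})\,u\,s_{n}(H^{-1}))_{ij}$. Hence the scaling term equals $r_{n}(\hat{H})u\,s_{n}(H^{-1})$ unconditionally, and $S^{2}_{n}(u)$ equals it if and only if $r_{n}(\hat{H})u=S^{2}_{n}(u)s_{n}(H)$, which — since $\partial_{S^{2}_{n}(u)}=\partial_{u}$ and $\hat{H}\in\Mn(B)$ by (2) — is exactly the assertion that $u\xrightarrow{H}S^{2}_{n}(u)$ is an intertwiner. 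I expect the bookkeeping in (4) to be the main obstacle: one must thread the unit isomorphisms correctly and, above all, remember to commute the $s$‑factors through $u$, without which one spuriously obtains $s_{n}(\hat{H}^{-1})$ in place of $s_{n}(H^{-1})$ and the equivalence fails.
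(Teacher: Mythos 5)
Your proposal is correct and follows essentially the same route as the paper's proof: part (i) via the antihomomorphism property of $S$ applied to $uu^{-1}=1=u^{-1}u$, part (ii) via the graded normal form $\theta^{(k)}_{n}(u)=\partial_{u}H^{(k)}$ and the resulting group law $H^{(k)}H^{(l)}=H^{(k+l)}$, and part (iv) via the identity $\partial_{u}H\mal u\mal\partial_{u}H^{-1}=r_{n}(\partial_{u}H\partial_{u}^{-1})\,u\,s_{n}(H^{-1})$, where your explicit commutation of the $s$-factors through $u$ is exactly the step the paper's computation relies on. The only differences are ones of detail, e.g.\ you spell out why the unit isomorphisms and the block-diagonality of the $H^{(k)}$ make the twists cancel, which the paper leaves implicit.
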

\begin{proof}
  i) The map $\Mn(A) \to \Mn(A)$ given by $x \mapsto
  S_{n}(x)^{\top}$ is an antihomomorphism and therefore
  preserves inverses. Hence, $ S_{n}^{2}(u) =
  S_{n}(u^{-\top})^{\top} = (S_{n}(u)^{\top})^{-\top} =
  (u^{-\top})^{-\top}$.

  ii)  Since each  $\theta^{(k)}$ preserves the grading,
  there exists a family $(H_{k})_{k\in \integers}$ of
  elements of $\GLn(B)$ satisfying $\partial_{u}H_{k}\partial_{u}^{-1}  \in \Mn(B)$ and $\theta^{(k)}_{n}(u) = \partial_{u}H_{k}$ for all $k \in \integers$. The assumption on
  $\theta$ implies that $H_{k}H_{l} = H_{k+l}$ for all
  $k,l\in \integers$, and consequently, $H_{k}=H_{1}^{k}$ for
  all $k\in  \integers$.

  iii) By ii), $\theta_{n}^{(0)}(u)=\partial_{u}
  =\epsilon_{n}(u)$ and
  \begin{align*}
  \theta_{n}^{(k)}(S_{n}(u)) =
  \theta_{n}^{(k)}(u^{-1}) = \theta_{n}^{(k)}(u)^{-1} =
  H^{-k} \partial_{u}^{-1} = S_{n}^{B\rtimes
    \Gamma}(\partial_{u} H^{-k}) = S_{n}^{B\rtimes
    \Gamma}(\theta_{n}^{(-k)}(u)).  
  \end{align*}

  iv)  This follows from the relation
  \begin{align*}
    ((\theta^{(1)} \todot \Id \todot
    \theta^{(-1)})\circ\Delta^{(2)})_{n}(u) &= 
    \theta^{(1)}_{n}(u) \mal u \mal \theta^{(-1)}_{n}(u) \\ &=
    \partial_{u}H \mal u \mal \partial_{u}H^{-1}  =
    r_{n}(\partial_{u}H \partial_{u}^{-1}) u
    s_{n}(H^{-1}). \qedhere
  \end{align*}
\end{proof}

We first apply the lemma above to $\Ao(\nabla,F)$.
\begin{proposition} \label{proposition:s-ao} Let $F \in
  \GLn(B)$ be $\nabla$-odd. Then $\Ao(\nabla,F)$ has an
  intertwiner $v\xrightarrow{H} S^{2}(v)$ and a scaling
  character group $\theta$ such that $\theta^{(k)}_{n}(v) =
  \nabla H^{k}$ for all $k\in \integers$, where $H=(\nabla F
  \nabla)^{\top}F^{-1}$. 
\end{proposition}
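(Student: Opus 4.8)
The plan is to obtain the intertwiner $v\xrightarrow{H}S^{2}(v)$ by transporting the defining relation of $\Ao(\nabla,F)$ through the functors of \S\ref{section:rn}, and then to realise the $\theta^{(k)}$ through the universal property of $\Ao(\nabla,F)$ by feeding the matrices $\nabla H^{k}$ into $B\rtimes\Gamma$. For the defining intertwiner $v^{-\top}\xrightarrow{F}v$ one has $\hat F=\partial_{v}F\partial_{v^{-\top}}^{-1}=\nabla F\nabla$, since $\partial_{v}=\nabla$ and $\partial_{v^{-\top}}=\nabla^{-1}$. Applying the covariant functor $(-)^{-\top}$ of \S\ref{section:rn} to this intertwiner---which is legitimate because $(v^{-\top})^{-\top}=S^{2}_{n}(v)$ by Lemma \ref{lemma:s-group}(i)---yields $S^{2}(v)\xrightarrow{\hat F^{-\top}}v^{-\top}$. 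Inverting this and composing it after the inverse $v\xrightarrow{F^{-1}}v^{-\top}$ of the defining intertwiner gives
\[
v\xrightarrow{F^{-1}}v^{-\top}\xrightarrow{\hat F^{\top}}S^{2}(v),
\]
that is, the intertwiner $v\xrightarrow{H}S^{2}(v)$ with $H=\hat F^{\top}F^{-1}=(\nabla F\nabla)^{\top}F^{-1}$. As an intertwiner between matrices of degree $\nabla$, it forces $\hat H:=\nabla H\nabla^{-1}\in\Mn(B)$; hence $H$ is $\nabla$-even and the matrices $\nabla H^{k}$ are homogeneous elements of $\GLn(B\rtimes\Gamma)$ of degree $\nabla$.

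For each $k\in\integers$ I then claim that $(\nabla H^{k})^{-\top}\xrightarrow{F}\nabla H^{k}$ is an intertwiner in $\Rn(B\rtimes\Gamma)$; granting this, the universal property of $\Ao(\nabla,F)$ produces a unique morphism $\theta^{(k)}\colon A\to B\rtimes\Gamma$ with $\theta^{(k)}_{n}(v)=\nabla H^{k}$. Because $r=s=\Id$ on $B\rtimes\Gamma$ and $\hat F=\nabla F\nabla$ again, the asserted relation is the matrix equation $(\nabla F\nabla)(\nabla H^{k})^{-\top}=(\nabla H^{k})F$ in $\Mn(B\rtimes\Gamma)$. Writing each entry of both sides as a $B$-coefficient times its $\Gamma$-component (using that $F$ is $\nabla$-odd and $H$ is $\nabla$-even, so the surviving summation indices force $\gamma_{i},\gamma_{l},\gamma_{j}$ to line up), this collapses to the identity over $B$
\[
\hat F\,(H^{\top})^{-k}=\hat H^{\,k}\,\hat F,
\]
which follows by induction from the case $k=1$, i.e.\ from $\hat H=F^{\top}\hat F^{-1}$. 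This last identity I would read off from $H=\hat F^{\top}F^{-1}$ together with the transpose relation $\hat F^{\top}=\nabla^{-1}F^{\top}\nabla^{-1}$ of Lemma \ref{lemma:rn-grading}: then $\nabla\hat F^{\top}=F^{\top}\nabla^{-1}$, so $\hat H=\nabla H\nabla^{-1}=\nabla\hat F^{\top}F^{-1}\nabla^{-1}=F^{\top}(\nabla F\nabla)^{-1}=F^{\top}\hat F^{-1}$.

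With the morphisms $\theta^{(k)}$ available, the character-group axioms are checked on the generating matrix $v$, since two morphisms into $B\rtimes\Gamma$ agreeing on $v$ coincide. The convolution law $(\theta^{(k)}\todot\theta^{(l)})\circ\Delta=\theta^{(k+l)}$ holds because, using $\Delta_{n}(v)=v\mal v$ and the unit isomorphism $(B\rtimes\Gamma)\todot(B\rtimes\Gamma)\cong B\rtimes\Gamma$ of Lemma \ref{lemma:bg-monoidal}, the left-hand side sends $v_{ij}$ to $\sum_{m}\gamma_{i}\big((H^{k})_{im}(H^{l})_{mj}\big)\gamma_{i}=(\nabla H^{k+l})_{ij}$. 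This places us in the situation of Lemma \ref{lemma:s-group} for the family $\theta$ and the corepresentation $v$: part (iii) gives $\theta^{(0)}_{n}(v)=\epsilon_{n}(v)$ and $\theta^{(k)}_{n}(S_{n}(v))=S_{n}^{B\rtimes\Gamma}(\theta^{(-k)}_{n}(v))$, whence $\theta^{(0)}=\epsilon$ and $\theta^{(k)}\circ S=S^{B\rtimes\Gamma}\circ\theta^{(-k)}$, so that $\theta$ is a character group; and part (iv), fed with the intertwiner $v\xrightarrow{H}S^{2}(v)$ of the first step, yields $S^{2}_{n}(v)=\big((\theta^{(1)}\todot\Id\todot\theta^{(-1)})\circ\Delta^{(2)}\big)_{n}(v)$ and therefore, again since $v$ generates $A$, that $\theta$ is scaling.

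I expect the main obstacle to be the verification in the second paragraph: the relation $(\nabla H^{k})^{-\top}\xrightarrow{F}\nabla H^{k}$ lives inside the crossed product $B\rtimes\Gamma$, where $\nabla$ and the entries of $H$ do not commute, so one must track the $\Gamma$-twisting carefully through the inverse-transpose and the matrix products before everything collapses to the clean commutative identity $\hat H=F^{\top}\hat F^{-1}$. Everything else is bookkeeping with the functors of \S\ref{section:rn} and with Lemma \ref{lemma:s-group}.
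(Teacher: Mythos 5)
Your proposal is correct and follows essentially the same route as the paper: the intertwiner $v\xrightarrow{H}S^{2}(v)$ is obtained by applying $(-)^{-\top}$ to the defining intertwiner $v^{-\top}\xrightarrow{F}v$ and composing, the morphisms $\theta^{(k)}$ come from the universal property applied to the matrices $\nabla H^{k}$ after checking $(\nabla F\nabla)(\nabla H^{k})^{-\top}=(\nabla H^{k})F$ in $\Mn(B\rtimes\Gamma)$, and the character-group axioms are read off from Lemma \ref{lemma:s-group}. The only (cosmetic) difference is that the paper verifies the relation for $\nabla H^{k}$ by a direct chain of identities in $\Mn(B\rtimes\Gamma)$, whereas you factor out the $\Gamma$-components and reduce it to the commutative identity $\hat H=F^{\top}\hat F^{-1}$ over $B$.
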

 \begin{proof}
   By Lemma \ref{lemma:s-group} i), $(v^{-\top}
   \xrightarrow{F}v) \circ (v^{-\top} \xrightarrow{F}
   v)^{-\top} = S^{2}(v)\xrightarrow{H^{-1}} v$. To construct
   $\theta$,  let $k \in \integers$ and $x=\nabla
   H^{k}$. By Lemma \ref{lemma:rn-grading}, $x^{-\top} = (H^{-\top})^{k}
   \nabla^{-1}$ and hence
  \begin{align*}
    (\nabla F \nabla) x^{-\top} = \nabla F \nabla
    (\nabla^{-1} F^{-1} \nabla^{-1}F^{\top})^{k} \nabla^{-1}
    = \nabla (\nabla^{-1}F^{\top}\nabla^{-1}F^{-1})^{k} F =
    x F.
  \end{align*}
  The universal property of $\Ao(\nabla,F)$ yields a
  morphism $\theta^{(k)} \colon \Ao(\nabla,F) \to B\rtimes
  \Gamma$ such that $\theta^{(k)}_{n}(v)=x$. Using Lemma
  \ref{lemma:s-group}, one easily verifies that the family
  $(\theta^{(k)})_{k}$ is a scaling character group.
\end{proof}
Assume that $B$ carries an involution which is preserved by $\Gamma$.  We call a character group
$(\theta^{(k)})_{k}$ on a $(B,\Gamma)$-Hopf $*$-algebroid
\emph{imaginary} if $\theta^{(k)} \circ * = * \circ
\theta^{(-k)}$ for all $k \in \integers$.
\begin{proposition}
  Let $F \in \GLn(B)$ be $\nabla$-even.  Then
  $\Au(\nabla,F)$ has intertwiners $u \xrightarrow{F^{-1}}
  S^{2}_{n}(u)$ and $\bar u \xrightarrow{(\nabla F
    \nabla^{-1})^{\top}}S^{2}_{n}(\bar u)$, and an
  imaginary scaling character group $\theta$ such that
  $\theta^{(k)}_{n}(u) = \nabla F^{-k}$ and
  $\theta^{(k)}_{n}(\bar u) = F^{k\top}\nabla^{-1}$ for all
  $k\in \integers$.
\end{proposition}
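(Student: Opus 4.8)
The plan is to mirror the proof of Proposition \ref{proposition:s-ao}, working through the isomorphism $\Au(\nabla,F) \cong \Aup(\nabla,F)$ of Theorem \ref{theorem:au-prime-iso}, under which $u$ corresponds to $v$ and $\bar u$ to $w$. Thus in $\Au(\nabla,F)$ I have at my disposal the two defining intertwiners $u^{-\top} \xrightarrow{1} \bar u$ and $\bar u^{-\top} \xrightarrow{F} u$ (Definition \ref{definition:au-prime}), the fact that both $u$ and $\bar u$ are matrix corepresentations (Proposition \ref{proposition:au-prime-hopf}), and $\partial_{u} = \nabla$, $\partial_{\bar u} = \nabla^{-1}$. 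By Lemma \ref{lemma:s-group} i), $S^{2}_{n}(u) = (u^{-\top})^{-\top}$ and $S^{2}_{n}(\bar u) = (\bar u^{-\top})^{-\top}$, so the two asserted intertwiners are really statements about these doubly transposed inverses.

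To obtain them I would only apply the functor $(-)^{-\top}$ of Lemma \ref{lemma:rn-intertwiner} and compose in the groupoid $\Rn$. Applying $(-)^{-\top}$ to $u^{-\top} \xrightarrow{1} \bar u$ gives $(u^{-\top})^{-\top} \xrightarrow{1} \bar u^{-\top}$, the label staying $1$ because the associated element is $\partial_{\bar u}\partial_{u^{-\top}}^{-1} = \nabla^{-1}\nabla = 1$ (Lemma \ref{lemma:rn-grading}); composing with $\bar u^{-\top} \xrightarrow{F} u$ and inverting yields $u \xrightarrow{F^{-1}} (u^{-\top})^{-\top} = S^{2}_{n}(u)$. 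Dually, inverting $\bar u^{-\top} \xrightarrow{F} u$ and applying $(-)^{-\top}$ produces $u^{-\top} \xrightarrow{(\nabla F\nabla^{-1})^{\top}} (\bar u^{-\top})^{-\top}$, since the associated element is now $\nabla F^{-1}\nabla^{-1} = (\nabla F\nabla^{-1})^{-1}$ and $((\nabla F\nabla^{-1})^{-1})^{-\top} = (\nabla F\nabla^{-1})^{\top}$ over the commutative algebra $B$; precomposing with the inverse $\bar u \xrightarrow{1} u^{-\top}$ of the first relation gives $\bar u \xrightarrow{(\nabla F\nabla^{-1})^{\top}} S^{2}_{n}(\bar u)$.

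For the character group I would construct each $\theta^{(k)}$ from the universal property of $\Aup(\nabla,F)$ in Proposition \ref{proposition:au-prime-hopf}: it suffices to exhibit $v', w' \in \Rn^{\times}(B\rtimes\Gamma)$ with $\partial_{v'}=\nabla$, $\partial_{w'}=\nabla^{-1}$ and intertwiners $v'^{-\top} \xrightarrow{1} w'$, $w'^{-\top} \xrightarrow{F} v'$, whence $\theta^{(k)}_{n}(u) = v'$ and $\theta^{(k)}_{n}(\bar u) = w'$. I would take $v' := \nabla F^{-k}$ and $w' := F^{k\top}\nabla^{-1}$. The degree conditions hold because $\nabla$-evenness of $F$ forces $F^{\pm k}$ to vanish outside the blocks on which $\gamma_{i}=\gamma_{j}$; an entrywise check (being careful that transposition is not an antihomomorphism over the noncommutative algebra $B\rtimes\Gamma$) shows $w' = v'^{-\top}$, so $v'^{-\top}\xrightarrow{1}w'$ is automatic, and the remaining relation $w'^{-\top}\xrightarrow{F}v'$ unwinds to the matrix identity $(\nabla F\nabla^{-1})\,w'^{-\top} = v'\,F$ in $\Mn(B\rtimes\Gamma)$, the exact analogue of the identity $(\nabla F\nabla)x^{-\top}=xF$ verified in Proposition \ref{proposition:s-ao}. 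This is the one genuinely computational point, and the main place where the noncommutativity of the base must be handled with care.

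Finally I would confirm the axioms on the generators, which suffices because $u$ and $\bar u$ generate $\Au(\nabla,F)$ and all maps in sight are (anti)homomorphisms compatible with $r,s$. Lemma \ref{lemma:s-group} ii) and iii) give $\theta^{(0)}=\epsilon$, the relation $\theta^{(k)}\circ S = S^{B\rtimes\Gamma}\circ\theta^{(-k)}$, and, via $\partial_{u}^{-1}\theta^{(1)}_{n}(u) = F^{-1}$ and $\partial_{\bar u}^{-1}\theta^{(1)}_{n}(\bar u) = (\nabla F\nabla^{-1})^{\top}$, the multiplicativity $(\theta^{(k)}\todot\theta^{(l)})\circ\Delta = \theta^{(k+l)}$. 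Since these two values are precisely the intertwiners produced above, Lemma \ref{lemma:s-group} iv) applied to both $u$ and $\bar u$ yields the scaling identity $S^{2} = (\theta^{(1)}\todot\Id\todot\theta^{(-1)})\circ\Delta^{(2)}$ on generators, hence everywhere. Imaginarity $\theta^{(k)}\circ * = *\circ\theta^{(-k)}$ I would check on $u_{ij}$ and $u_{ij}^{*}=\bar u_{ij}$ directly: using $F^{*}=F$, the $\Gamma$-invariance of the involution, and $(b\gamma)^{*}=\gamma^{-1}b^{*}$ on $B\rtimes\Gamma$, both sides evaluate to $(F^{k})_{ji}\gamma_{i}^{-1}$ on the nonzero blocks, and the two antihomomorphisms then agree on all of $\Au(\nabla,F)$.
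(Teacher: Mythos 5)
Your proposal is correct and follows essentially the same route as the paper: both obtain the two intertwiners by applying $(-)^{-\top}$ to the defining relations of $\Aup(\nabla,F)$ and composing/inverting in the groupoid $\Rn$, and both construct $\theta^{(k)}$ from the universal property of $\Aup(\nabla,F)$ using the matrices $x=\nabla F^{-k}$, $y=F^{k\top}\nabla^{-1}$ together with the verifications $y=x^{-\top}$ and $(\nabla F\nabla^{-1})y^{-\top}=xF$, then check the character-group axioms via Lemma \ref{lemma:s-group} and imaginarity via Lemma \ref{lemma:rn-grading}. The only differences are presentational (you route the first intertwiner through composition with $u^{-\top}\xrightarrow{1}\bar u$ where the paper directly uses $(u^{-\top})^{-\top}=\bar u^{-\top}$, and you spell out the verifications the paper leaves to the reader).
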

\begin{proof}
  By Lemma \ref{lemma:s-group} i), the first intertwiner is
  the inverse of $S_{n}^{2}(u)=(u^{-\top})^{-\top}=\bar
  u^{-\top}\xrightarrow{F} u$, and the second intertwiner is
  the inverse of $(\bar u^{-\top} \xrightarrow{F}
  u)^{-\top}$.  To construct $\theta$, let $k\in \integers$
  and $x= \nabla F^{-k}$, $y=F^{k\top}\nabla^{-1}$. Using Lemma
  \ref{lemma:rn-grading}, we find
  \begin{align*}
   y&=x^{-\top}, & y^{-\top}&=x, &(\nabla F \nabla^{-1})
  y^{-\top} &= (\nabla F \nabla^{-1}) x = \nabla F^{1-k} = x
  F.
  \end{align*}
  The universal property of the algebra $\Aup(\nabla,F)$ and
  Theorem \ref{theorem:au-prime-iso} yield a morphism
  $\theta^{(k)}\colon \Au(\nabla,F) \to B\rtimes \Gamma$
  such that $\theta^{(k)}_{n}(u)=x$ and
  $\theta_{n}^{(k)}(\bar u)=y$.  Using Lemma
  \ref{lemma:s-group}, one easily verifies that the family
  $(\theta^{(k)})_{k}$ is a scaling character group.  It is
  imaginary because by Lemma \ref{lemma:rn-grading},
 \begin{align*}
   \overline{\theta^{(-k)}_{n}(u)} &= \overline{\nabla F^{k}}
   = \overline{F}^{k} \nabla^{-1} = F^{\top k} \nabla^{-1} =
   \theta^{(k)}_{n}(\bar u) \quad \text{for all }  k\in
   \integers. \qedhere
 \end{align*}
\end{proof}


The case $\Ao(\nabla,F,G)$ requires some preparation.  Let
$F,G \in \GLn(B)$ be $\nabla$-odd and
\begin{align*}
  H&=(\nabla F
  \nabla)^{\top}F^{-1}=\nabla^{-1}F^{\top}\nabla^{-1}F^{-1},
  & Q&=G\nabla \bar G \nabla
\end{align*}
 as before. We say that
a diagram with arrows labeled by matrices commutes if for
all possible directed paths with the same starting and
ending point in the diagram, the products of the labels
along the arrows coincide.

\begin{lemma}
  In the diagram below, (A) commutes if and only if (D)
  commutes, and (B) commutes if and only (C) commutes:
  \begin{align*}
    \xymatrix@C=70pt@R=25pt@l{ \bullet \ar@{<-}[r]^{\nabla
        F^{-\top} \nabla} \ar@{<-}[d]_{\nabla \overline{G}
        \nabla} \ar@{}[rd]|{\text{(D)}} & \bullet
      \ar@{<-}[r]^{F} \ar@{<-}[d]|{\nabla^{-1} G^{\top}
        \nabla^{-1}} \ar@{}[rd]|{\text{(B)}} & \bullet
      \ar@{<-}[d]^{\nabla \overline{G} \nabla}
      \\
      \bullet \ar@{<-}[r]|{\nabla^{-1} \bar F^{-1}
        \nabla^{-1}} \ar@{<-}[d]_{G} \ar@{}[rd]|{\text{(C)}}
      &
      \bullet \ar@{<-}[r]|{\overline{F}^{\top}} \ar@{<-}[d]|{\overline{G}^{\top}} \ar@{}[rd]|{\text{(A)}} & \bullet \ar@{<-}[d]^{G}\\
      \bullet \ar@{<-}[r]_{\nabla F^{-\top} \nabla} &\bullet
      \ar@{<-}[r]_{F} & \bullet }
  \end{align*}
  If all squares commute, then $HQ=QH$, $\overline{G} \nabla
  H^{-1} = \overline{H} \overline{G}  \nabla$, and
 $Q F = F \nabla Q^{\top }\nabla^{-1}$.
\end{lemma}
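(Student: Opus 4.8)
The plan is to turn each square into an explicit matrix identity and then exploit the evident symmetry of the diagram. Since every arrow points up and to the left, each square has a unique source (its lower-right corner) and sink (its upper-left corner), and commutativity means that the two composite labels from source to sink agree, where traversing $\bullet\xrightarrow{M_1}\bullet\xrightarrow{M_2}\bullet$ contributes the product $M_2M_1$, as for intertwiners in \S\ref{section:rn}. Using Lemma~\ref{lemma:rn-grading} to commute $\nabla$ past transposes and bars — in particular $(\nabla X\nabla)^\top=\nabla^{-1}X^\top\nabla^{-1}$ and $\overline{\nabla X\nabla}=\nabla\overline X\nabla$ for $\nabla$-odd $X$ — this rewrites (A), (B), (C), (D) as four equations in $F,G,\overline F,\overline G$ and $\nabla$.

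\emph{The two equivalences.} For (A)$\Leftrightarrow$(D) I would equate the two path-products around (A) and around (D) and check that, after a single transpose and clearing the inverses, both collapse to the one identity $F^\top\overline G=G^\top\overline F$; hence (A) holds iff (D) holds. The equivalence (B)$\Leftrightarrow$(C) is of the same nature: transposing, inverting and barring the equation of (B) — tracking $\nabla$ through Lemma~\ref{lemma:rn-grading} at each step — carries it to the equation of (C). Conceptually this is cleanest to phrase functorially: the whole diagram is the image of the two defining intertwiners $v^{-\top}\xrightarrow{F}v$ and $\overline v\xrightarrow{G}v$ under the duality functors $(-)^{-\top}$, $\overline{(-)}$ and $(-)^{\barop}$ of \S\ref{section:rn} (for instance $\overline{(-)}$ sends $G$ to the label $\nabla\overline G\nabla$ on $v\to\overline v$, and $(-)^{-\top}$ sends $F$ to the label $\nabla F^{-\top}\nabla$ on $S^{2}(v)\to v^{-\top}$, exactly as in \eqref{eq:rn-overline} and Proposition~\ref{proposition:s-ao}). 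The central symmetry of the diagram is realised by these functors, which interchange the corner squares (A) and (D) and the two edge squares (B) and (C); as a functor preserves commuting diagrams and the functors in question are invertible, each square commutes precisely when its partner does.

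\emph{The three identities.} Assuming all four squares commute, I would read the asserted relations off loops in the diagram. With $H=\nabla^{-1}F^\top\nabla^{-1}F^{-1}$ and $Q=G\nabla\overline G\nabla$, the identity $HQ=QH$ follows by comparing the two routes around the outer $3\times 3$ rectangle; $\overline G\nabla H^{-1}=\overline H\,\overline G\nabla$ is the combination of the commutativity of the corner squares (A)/(D) with the definition of $H$; and $QF=F\nabla Q^\top\nabla^{-1}$ results from the edge squares (B)/(C) together with $Q=G\nabla\overline G\nabla$ and the rule $(\nabla\overline G\nabla)^\top=\nabla^{-1}\overline G^\top\nabla^{-1}$ of Lemma~\ref{lemma:rn-grading}.

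\emph{Main obstacle.} The only real difficulty is the $\nabla$-bookkeeping: every manipulation intertwines transpose, the bar-involution and inversion with the diagonal grading matrix $\nabla$, and the equations close up solely because $\nabla$-oddness forces the exact cancellations recorded in Lemma~\ref{lemma:rn-grading}. The one step that must be carried out with care is making the symmetry precise — identifying each of the nine vertices with a definite functor-image of $v$ and checking that every edge label matches on the nose — after which all four squares are genuinely the same equation up to duality, and both the equivalences and the three consequences become formal.
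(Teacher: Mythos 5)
Your overall strategy coincides with the paper's: the two equivalences are obtained by applying the duality transformations of \S\ref{section:rn} (the paper phrases this as applying $X\mapsto X^{-\top}$ and reversing invertible arrows), and the three consequences are read off sub-diagrams, with $HQ=QH$ coming from the outer rectangle in both accounts. However, your attributions for the remaining two identities contain a genuine error. You claim that $\overline{G}\nabla H^{-1}=\overline{H}\,\overline{G}\nabla$ follows from the squares (A) and (D) together with the definition of $H$, and that $QF=F\nabla Q^{\top}\nabla^{-1}$ follows from (B) and (C). Neither can work. First, (A) and (D) are diagonally opposite squares sharing only the central vertex (and likewise (B) and (C)), so they do not assemble into a rectangle with two composable boundary paths; there is no loop to read an identity off. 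Second, and decisively, you have just proved (A)$\Leftrightarrow$(D) and (B)$\Leftrightarrow$(C), so each of these pairs carries exactly one independent relation: $\{(\mathrm{A}),(\mathrm{D})\}$ amounts to $\overline{F}^{\top}G=\overline{G}^{\top}F$, and $\{(\mathrm{B}),(\mathrm{C})\}$ to $F(\nabla\overline{G}\nabla)=(\nabla^{-1}G^{\top}\nabla^{-1})\overline{F}^{\top}$ (up to transposition and the choice of composition order). A single relation of the first kind does not imply $\overline{G}\nabla H^{-1}=\overline{H}\,\overline{G}\nabla$: already for trivial $\Gamma$, $n=2$ and the real non-symmetric $F=\left(\begin{smallmatrix}1&1\\0&1\end{smallmatrix}\right)$ one finds invertible $G$ satisfying (A) for which $\overline{G}H^{-1}\neq\overline{H}\,\overline{G}$. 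The analogous failure occurs for the third identity and $\{(\mathrm{B}),(\mathrm{C})\}$.

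The repair is exactly what the paper does: each of the last two identities requires one square from $\{(\mathrm{A}),(\mathrm{D})\}$ \emph{and} one from $\{(\mathrm{B}),(\mathrm{C})\}$, glued along a common edge so that they form a genuine $1\times 2$ or $2\times 1$ rectangle. The rows of the diagram compose to $H^{-1}$ (top and bottom) and to $\nabla\overline{H}\nabla^{-1}$ (middle), while the columns compose to $Q$ (outer) and to $\nabla Q^{\top}\nabla^{-1}$ (middle); comparing the two boundary paths of the lower (equivalently upper) two squares yields $\overline{G}\nabla H^{-1}=\overline{H}\,\overline{G}\nabla$, and comparing those of the left (equivalently right) two squares yields $QF=F\nabla Q^{\top}\nabla^{-1}$. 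With these substitutions the rest of your argument, including the $\nabla$-bookkeeping via Lemma \ref{lemma:rn-grading}, goes through.
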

\begin{proof}
  Applying the transformation $X\mapsto X^{-\top}$ and
  reversing invertible arrows, one can obtain (D) from (A)
  and (C) from (B). If all small squares commute, then the
  three asserted relations follow from the commutativity of the
  large square, of the lower two squares, and of the left
  two squares, respectively.
\end{proof}
\begin{proposition}
  Let $F,G \in \GLn(B)$ be $\nabla$-odd.  Assume
that  $FG^{*}=GF^{*}$ and $F^{*}(\nabla \bar G \nabla)^{*} =
  (\nabla \bar G \nabla)F$, and let $H=
  \nabla^{-1}F^{\top}\nabla^{-1} F^{-1}$.  Then
  $\Ao(\nabla,F,G)$ has an intertwiner $v \xrightarrow{H}
  S^{2}(v)$ and an imaginary scaling character group
  $(\theta^{(k)})_{k}$ such that $\theta^{(k)}_{n}(v) =
  \nabla H^{k}$.
\end{proposition}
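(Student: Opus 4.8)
The plan is to reuse the construction from Proposition~\ref{proposition:s-ao} and to feed it into the universal property of $\Ao(\nabla,F,G)$, the only genuinely new point being the \emph{imaginary} property. First I would observe that the two hypotheses $FG^{*}=GF^{*}$ and $F^{*}(\nabla\overline G\nabla)^{*}=(\nabla\overline G\nabla)F$ are exactly the conditions that make all four small squares in the preceding lemma commute; hence I may freely use the relations $HQ=QH$ and $\overline G\nabla H^{-1}=\overline H\,\overline G\nabla$ that it provides. Since the intertwiner $v^{-\top}\xrightarrow{F}v$ belongs to the defining data of $\Ao(\nabla,F,G)$ and $v$ is a matrix corepresentation there by Theorem~\ref{theorem:ao-prime-hopf}, the computation in the proof of Proposition~\ref{proposition:s-ao} applies verbatim and yields the intertwiner $v\xrightarrow{H}S^{2}(v)$.

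Next I would build the character group directly from the universal property of $\Ao(\nabla,F,G)$. For each $k\in\integers$ set $x:=\nabla H^{k}\in\GLn(B\rtimes\Gamma)$; then $\partial_{x}=\nabla$ and, as in Proposition~\ref{proposition:s-ao}, $x^{-\top}\xrightarrow{F}x$ is an intertwiner. The extra relation $x\xrightarrow{Q}x$ reads $(\nabla Q\nabla^{-1})x=xQ$ in $B\rtimes\Gamma$, i.e.\ $QH^{k}=H^{k}Q$, which holds because $HQ=QH$. The universal property therefore produces morphisms $\theta^{(k)}\colon\Ao(\nabla,F,G)\to B\rtimes\Gamma$ with $\theta^{(k)}_{n}(v)=\nabla H^{k}$, and, exactly as in Proposition~\ref{proposition:s-ao}, Lemma~\ref{lemma:s-group} shows that $(\theta^{(k)})_{k}$ is a scaling character group, the scaling property being equivalent by part~iv) to the intertwiner $v\xrightarrow{H}S^{2}(v)$ obtained above.

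The main obstacle is the imaginary property $\theta^{(k)}\circ{*}={*}\circ\theta^{(-k)}$. As both sides are conjugate-linear antihomomorphisms, agree on $r(B),s(B)$ by $B^{\ev}$-linearity, and $v$ generates $\Ao(\nabla,F,G)$, it suffices to check the matrix identity $\theta^{(k)}_{n}(\overline v)=\overline{\theta^{(-k)}_{n}(v)}$. Applying the functor induced by $\theta^{(k)}$ to $\overline v\xrightarrow{G}v$ (note $\partial_{\overline v}=\nabla^{-1}$) gives $\theta^{(k)}_{n}(\overline v)\xrightarrow{G}\nabla H^{k}$, whence $\theta^{(k)}_{n}(\overline v)=(\nabla G\nabla)^{-1}\,\nabla H^{k}G=\nabla^{-1}G^{-1}H^{k}G$, while Lemma~\ref{lemma:rn-grading} gives $\overline{\theta^{(-k)}_{n}(v)}=\overline{\nabla H^{-k}}=\overline H^{-k}\nabla^{-1}$. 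Thus the claim reduces to $\overline H=(G\nabla)^{-1}H^{-1}(G\nabla)$, and this is where both relations from the preceding lemma are needed simultaneously:
\begin{align*}
  \overline H &= (\overline G\nabla)H^{-1}(\overline G\nabla)^{-1}
  = (G\nabla)^{-1}\bigl(Q\,H^{-1}Q^{-1}\bigr)(G\nabla)
  = (G\nabla)^{-1}H^{-1}(G\nabla),
\end{align*}
where the first equality rewrites $\overline G\nabla H^{-1}=\overline H\,\overline G\nabla$, the middle step merely inserts $Q=G\nabla\overline G\nabla=(G\nabla)(\overline G\nabla)$, and the last uses $QH=HQ$. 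This establishes the imaginary property and completes the proof.
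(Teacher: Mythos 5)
Your proof is correct and follows essentially the same route as the paper: reuse Proposition~\ref{proposition:s-ao}, check the extra intertwiner $\nabla H^{k}\xrightarrow{Q}\nabla H^{k}$ via $HQ=QH$, and verify the imaginary property on $v$ using $\overline{G}\nabla H^{-1}=\overline{H}\,\overline{G}\nabla$. The only (harmless) deviation is that you compute $\theta^{(k)}_{n}(\overline{v})$ from the intertwiner $\overline{v}\xrightarrow{G}v$, which forces you to invoke $HQ=QH$ a second time, whereas the paper uses $v\xrightarrow{\nabla\overline{G}\nabla}\overline{v}$ and gets $\theta^{(-1)}_{n}(\overline{v})=\overline{H}\nabla^{-1}$ from the single relation.
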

\begin{proof}
  We can re-use the arguments in the proof of Proposition
  \ref{proposition:s-ao} and only have to show additionally
  that $\nabla H^{k} \xrightarrow{Q} \nabla H^{k}$ is an
  intertwiner and that $\theta^{(-1)}_{n}(\bar v) =
  \overline{\theta^{(1)}_{n}(v)}$.  But by the lemma above,
  $(\nabla Q \nabla^{-1}) \nabla H^{k} = \nabla Q H^{k} =
  \nabla H^{k} Q$ and
  \begin{align*}
    \theta^{(-1)}_{n}(\bar v) &= \theta^{(1)}_{n}(\overline{G} v
    (\nabla \overline{G} \nabla)^{-1}) = \overline{G} \nabla H^{-1}
    \nabla^{-1} \overline{G}^{-1} \nabla^{-1} =  \overline{H}
    \nabla^{-1} = \overline{\theta^{(1)}_{n}(v)}. \qedhere
  \end{align*}
\end{proof}
\begin{remark}
  Applying the functor \eqref{eq:rn-star} to $v^{-\top}
  \xrightarrow{F} v$, $\bar v \xrightarrow{G} v$, $v
  \xrightarrow{\nabla \bar G \nabla} \bar v$, we obtain
  intertwiners $\bar v^{-\top} \xrightarrow{F^{*}}
  \overline{v}$, $\bar v^{-\top} \xrightarrow{G^{*}}
  v^{-\top}$, $v^{-\top} \xrightarrow{(\nabla \bar G
    \nabla)^{*}} \bar v^{-\top}$, and the conditions
  $FG^{*}=GF^{*}$ and $F^{*}(\nabla \bar G \nabla)^{*} =
  (\nabla \bar G \nabla)F$  amount to commutativity of
  the squares
  \begin{align*}
    \xymatrix@R=15pt@C=20pt{\bar v^{-\top} \ar[r]^{F^{*}}
      \ar[d]_{G^{*}} & \bar v \ar[d]^{G} \\
      v^{-\top} \ar[r]_{F} & v} \qquad\text{ and }\qquad
    \xymatrix@R=15pt@C=20pt{v^{-\top}
      \ar[r]^{F} \ar[d]_{(\nabla \bar G \nabla)^{*}}
      & v \ar[d]^{\nabla \bar G \nabla} \\ \bar v^{-\top}
      \ar[r]_{F^{*}} & \bar v.}
  \end{align*}
  If $Q=G \nabla \bar G \nabla$ is scalar, then both
  conditions evidently are equivalent.
\end{remark}

\subsection{The full dynamical quantum group $\SUd$}

\label{section:sud}

In \cite{koelink:su2}, Koelink and Rosengren studied a
dynamical quantum group $\FR(\SU)$ that arises from a
dynamical $R$-matrix via the generalized FRT-construction of
Etingof and Varchenko.  We first recall its definition, then
show that this dynamical quantum group coincides with
$\Ao(\nabla,F,G)$ for specific choice of
$B,\Gamma,\nabla,F,G$, and finally construct a refinement
that includes several interesting limit cases.

We shall slightly reformulate the definition of
$\FR(\SL)$ and $\FR(\SU)$ given in \cite[\S
2.2]{koelink:su2} so that it fits better
with our approach.

Fix $q\in (0,1)$. Let $\MC$ be the algebra of
meromorphic functions on the plane and let
$\integers$ act on $B$ such that $b \stackrel{k}{\mapsto}$
$b_{(k)}:=b(\frei - k)$ for all $b\in B$, $k \in \integers$.
Define $f \in \MC$ by
\begin{align} \label{eq:k-f}
  f(\lambda) &=
  q^{-1}\frac{q^{2(\lambda+1)}-q^{-2}}{q^{2(\lambda+1)}-1} =
  \frac{q^{\lambda+2} -
    q^{-(\lambda+2)}}{q^{\lambda+1}-q^{-(\lambda-1)}} \quad
  \text{for all } \lambda\in \complex.
\end{align}
Then the $(\MC,\integers)$-Hopf algebroid $\FR(\SL)$ is the
universal $(\MC,\integers)$-algebra with generators
$\alpha,\beta,\gamma,\delta$ satisfying
\begin{gather}
\label{eq:k-0}
\begin{aligned}
\partial_{\alpha} &= (1,1), & \partial_{\beta} &= (1,-1), &
\partial_{\gamma} &=(-1,1), &\partial_{\delta} &= (-1,-1),
\end{aligned} \\
 \label{eq:k-1} 
  \begin{aligned}
    \alpha\beta &= s(f_{(1)})\beta \alpha, & \alpha\gamma &=
    r(f)\gamma\alpha, & \beta\delta &= r(f)\delta\beta, &
    \gamma\delta &= s(f_{(1)})\delta\gamma, 
  \end{aligned}  \\
  \label{eq:k-2}
    \frac{r(f)}{s(f)} \delta\alpha - \frac{1}{s(f)}\beta\gamma =
    \alpha\delta - r(f)\gamma\beta  = \frac{r(f_{(1)})}{s(f_{(1)})}
    \alpha\delta - r(f_{(1)})\beta\gamma = \delta\alpha -
    \frac{1}{s(f_{(1)})} \gamma\beta = 1,
\end{gather}
and with comultiplication, counit and antipode given by
\begin{gather} \label{eq:k-d}
  \begin{aligned}
    \Delta(\alpha) &= \alpha \todot \alpha + \beta\todot
    \gamma, & \Delta(\beta) &= \alpha \todot \beta + \beta
    \todot \delta, \\
    \Delta(\gamma) &= \gamma \todot \alpha + \delta \todot
    \gamma, & \Delta(\delta) &= \gamma \todot \beta + \delta
    \todot \delta,
  \end{aligned} \\ \label{eq:k-e}
  \begin{aligned}
    \epsilon(\alpha) &= \partial^{r}_{\alpha} =
 \partial^{s}_{\alpha}, & \epsilon(\beta) =
    \epsilon(\gamma) &= 0, & \epsilon(\delta) &= \partial^{r}_{\delta} =
     \partial^{s}_{\delta},
  \end{aligned} \\ \label{eq:k-s}
  \begin{aligned}
    S(\alpha) &= \frac{r(f)}{s(f)} \delta, & S(\beta) &=
    -\frac{1}{s(f)}\beta, & S(\gamma) &= -r(f)\gamma, &
    S(\delta) &= \alpha.
  \end{aligned}
\end{gather}
Equip $\MC$  with the involution
given by $b^{*}(\lambda) = \overline{b(\overline{\lambda})}$
for all $b\in \MC$, $\lambda \in
\complex$. Then $\FR(\SL)$ can be equipped with an
involution such that
\begin{align}
  \label{eq:k-star}
\alpha^{*} &= \delta, & \beta^{*} &= -q\gamma, & \gamma^{*}
&= -q^{-1}\beta, & \delta^{*}&= \alpha,
\end{align}
and one obtains a $(\MC,\integers)$-Hopf
$*$-algebroid which is denoted by $\FR(\SU)$
\cite{koelink:su2}.
\begin{proposition} \label{proposition:sud}
  Let  $\nabla =
  \diag(1,-1)$, $F =
  \begin{pmatrix}
    0 & -1 \\ f_{(1)}^{-1} & 0
  \end{pmatrix}$, $G=
 \begin{pmatrix}
   0 & -1 \\ q^{-1} & 0
 \end{pmatrix}$. Then there exist isomorphisms of
 $(\MC,\integers)$-Hopf (*-)algebroids
 $A^{\MC}_{\mathrm{o}}(\nabla,F) \to \FR(\SL)$ and
 $A^{\MC}_{\mathrm{o}}(\nabla,F,G) \to \FR(\SU)$ whose
 extensions to matrices map $v$ to $
 \begin{pmatrix}
   \alpha & \beta \\ \gamma & \delta
 \end{pmatrix}
$.
\end{proposition}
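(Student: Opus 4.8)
The plan is to use the universal properties of $A^{\MC}_{\mathrm{o}}(\nabla,F)$ and $\FR(\SL)$ to build mutually inverse morphisms sending the corepresentation matrix $v$ to $V:=\begin{pmatrix}\alpha&\beta\\\gamma&\delta\end{pmatrix}$ and back. The conceptual content is that the single matrix intertwiner relation $v^{-\top}\xrightarrow{F}v$ defining the former is equivalent to the full relation list \eqref{eq:k-1}--\eqref{eq:k-2} of the latter. Throughout I use Lemma~\ref{lemma:rn-grading} to compute $\hat F=\nabla F\nabla=\begin{pmatrix}0&-1\\f^{-1}&0\end{pmatrix}$, and the fact that the passage between $f$ and $f_{(1)}$ is produced exactly by the covariance relations $a\,s(b)=s(\partial^{s}_{a}(b))a$ and $a\,r(b)=r(\partial^{r}_{a}(b))a$ whenever a factor $r(b)$ or $s(b)$ is moved past a generator.

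First I would construct $\phi\colon A^{\MC}_{\mathrm{o}}(\nabla,F)\to\FR(\SL)$. In $\FR(\SL)$ set $S_{n}(V)=\begin{pmatrix}\tfrac{r(f)}{s(f)}\delta&-\tfrac{1}{s(f)}\beta\\-r(f)\gamma&\alpha\end{pmatrix}$, read off entrywise from \eqref{eq:k-s}. A short computation shows that the two diagonal entries of each of $VS_{n}(V)$ and $S_{n}(V)V$ are precisely the four identities \eqref{eq:k-2}, while the four off-diagonal entries vanish precisely by the commutation relations \eqref{eq:k-1} (for instance $(VS_{n}(V))_{12}=\beta\alpha-s(f_{(1)})^{-1}\alpha\beta$ is zero iff $\alpha\beta=s(f_{(1)})\beta\alpha$). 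Hence $VS_{n}(V)=S_{n}(V)V=1$, so $V\in\Rn^{\times}(\FR(\SL))$ with $V^{-1}=S_{n}(V)$ and $\partial_{V}=\nabla$ by \eqref{eq:k-0}. A direct $2\times2$ computation then gives $r_{n}(\hat F)V^{-\top}=Vs_{n}(F)$, i.e. $V^{-\top}\xrightarrow{F}V$ is an intertwiner, so the universal property yields $\phi$ with $\phi_{n}(v)=V$.

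Conversely, in $A:=A^{\MC}_{\mathrm{o}}(\nabla,F)$ the intertwiner relation reads $v^{-\top}=r_{n}(\hat F)^{-1}vs_{n}(F)$; expanding $vv^{-1}=v^{-1}v=1$ and moving scalars with the covariance relations reproduces \eqref{eq:k-1} from the off-diagonal entries and \eqref{eq:k-2} from the diagonal ones, while \eqref{eq:k-0} is just $\partial_{v}=\nabla$. The universal property of $\FR(\SL)$ then gives $\psi\colon\FR(\SL)\to A$ with $\psi(\alpha)=v_{11}$, etc., and $\phi,\psi$ are mutually inverse since they are the identity on generators. Finally, $V$ is a matrix corepresentation of $\FR(\SL)$: the formulas \eqref{eq:k-d}, \eqref{eq:k-e}, \eqref{eq:k-s} are exactly $\Delta_{n}(V)=V\mal V$, $\epsilon_{n}(V)=\nabla$, $S_{n}(V)=V^{-1}$. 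By the uniqueness clause in Theorem~\ref{theorem:ao-hopf}, $\phi$ is therefore an isomorphism of $(\MC,\integers)$-Hopf algebroids.

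For the involutive statement I would first observe that $Q=G\nabla\overline{G}\nabla=G^{2}=\diag(-q^{-1},-q^{-1})$ is scalar, so the extra intertwiner $v\xrightarrow{Q}v$ of Definition~\ref{definition:matrix-hopf-involution} holds trivially and $A^{\MC}_{\mathrm{o}}(\nabla,F,G)=A^{\MC}_{\mathrm{o}}(\nabla,F)$ as $(\MC,\integers)$-Hopf algebroids. By Theorem~\ref{theorem:ao-prime-hopf} and Remark~\ref{remarks:ao-fg}, the involution on the left is the unique one making it a Hopf $*$-algebroid with $\overline{v}\xrightarrow{G}v$; transporting it along $\phi$, it remains to check that the involution \eqref{eq:k-star} satisfies $\overline{V}\xrightarrow{G}V$, i.e. $r_{n}(\hat G)\overline{V}=Vs_{n}(G)$ with $\hat G=\nabla G\nabla=G$ and $\overline{V}=\begin{pmatrix}\delta&-q\gamma\\-q^{-1}\beta&\alpha\end{pmatrix}$, which is again a direct $2\times2$ identity. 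Since $\overline{V}=r_{n}(\hat G)^{-1}Vs_{n}(G)$ fixes the values $V_{ij}^{*}$ and an involution compatible with $r,s$ is determined by its values on generators, \eqref{eq:k-star} coincides with the transported $G$-involution, so $\phi$ is a $*$-isomorphism onto $\FR(\SU)$. The main obstacle throughout is the faithful bookkeeping that turns the single relation $v^{-\top}\xrightarrow{F}v$ into exactly the relations \eqref{eq:k-1}--\eqref{eq:k-2}, neither more nor fewer, which forces one to track the $\integers$-action (the swap $f\leftrightarrow f_{(1)}$) each time a scalar is commuted past a generator.
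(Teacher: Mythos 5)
Your proposal is correct and follows essentially the same route as the paper: both arguments reduce to checking that $\partial_{v}=\nabla$ and the single intertwiner $v^{-\top}\xrightarrow{F}v$ (written out via the explicit formula for $v^{-1}$, equivalently $S_{2}(V)$) unpack exactly into \eqref{eq:k-0}--\eqref{eq:k-2}, that $Q=G^{2}$ is a scalar so the extra relation of $\Ao(\nabla,F,G)$ is vacuous, and that $\bar v=r_{2}(\hat G^{-1})vs_{2}(G)$ reproduces \eqref{eq:k-star}. The only cosmetic difference is that you build mutually inverse morphisms from the two universal properties and invoke the uniqueness clause of Theorem \ref{theorem:ao-hopf}, whereas the paper directly observes that the two presentations coincide; the substance is identical.
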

\begin{proof}
  First, note that the function $\lambda \mapsto
  q^{\lambda}$ and hence also $f$ is self-adjoint, and that
  \begin{align*}
    \hat F:= \nabla F \nabla &=
    \begin{pmatrix}
      0 & -1 \\
      f^{-1} & 0
    \end{pmatrix}, & \hat G:=\nabla G \nabla &= G,
     & FG^{*} &=
    \begin{pmatrix}
      1 & 0 \\ 0 & (qf_{(1)})^{-1}
    \end{pmatrix} = GF^{*}.
  \end{align*}
  Therefore, $A:=A_{\mathrm{o}}^{\MC}(\nabla,F)$ and
  $A_{\mathrm{o}}^{\MC}(\nabla,F,G)$ are well-defined.
 Since $\nabla G \nabla \overline{G} = G^{2}
  =q^{-1} \in M_{2}(\MC)$, the latter algebra coincides with
  the former.
  
  Write $v\in M_{2}(A)$ as $v=
\begin{pmatrix}
  \alpha' & \beta' \\ \gamma' & \delta'
\end{pmatrix}
$ and write \eqref{eq:k-0}$'$--\eqref{eq:k-star}$'$ for the
relations \eqref{eq:k-0}--\eqref{eq:k-star} with
$\alpha',\beta',\gamma',\delta'$ instead of
$\alpha,\beta,\gamma,\delta$.  Then the relation
$\partial_{v} = \nabla$ is equivalent to
\eqref{eq:k-0}$'$. The relation $v^{-\top} = r_{2}(\hat
F^{-1})vs_{2}(F)$ is equivalent to
\begin{align*}
v^{-1}&=
\left(\begin{pmatrix}
    0 & r(f) \\ -1 & 0
  \end{pmatrix}
  \begin{pmatrix}
    \alpha' & \beta' \\ \gamma' & \delta'
  \end{pmatrix}
  \begin{pmatrix}
    0 & -1 \\ s(f_{(1)}^{-1}) & 0
  \end{pmatrix} \right)^{\top}
 = \begin{pmatrix}
      \frac{r(f)}{s(f)} \delta' & - \frac{1}{s(f)}\beta' \\
      -r(f)\gamma' & \alpha'
    \end{pmatrix},
\end{align*}
and multiplying out $v^{-1}v=1=vv^{-1}$ and using
\eqref{eq:k-0}$'$, we find that this relation is equivalent
to \eqref{eq:k-1}$'$ and \eqref{eq:k-2}$'$.  Hence, there
exists an isomorphism of $(\MC,\integers)$-algebras
$A \to \FR(\SL)$ sending
$\alpha',\beta',\gamma',\delta'$ to
$\alpha,\beta,\gamma,\delta$. This isomorphism is compatible
with the involution, comultiplication, counit and antipode
because \eqref{eq:k-d}$'$--\eqref{eq:k-star}$'$ are
equivalent to $\Delta_{2}(v)=v\mal v$,
$\epsilon_{2}(v)=\partial_{v}$, $S_{2}(v)=v^{-1}$ and
\begin{align*}
\bar v&=r_{2}(\hat G^{-1})vs_{2}(G) =
\begin{pmatrix}
  0 & q \\ -1 & 0
\end{pmatrix}
\begin{pmatrix}
  \alpha' & \beta' \\ \gamma' & \delta'
\end{pmatrix}
 \begin{pmatrix}
   0 & -1 \\ q^{-1} & 0
 \end{pmatrix}
 =
 \begin{pmatrix}
   \delta' & -q\gamma' \\ -q^{-1}\beta' & \alpha'
 \end{pmatrix}. \qedhere
\end{align*}
\end{proof}
We now refine the definition above as follows. The first
idea is to replace the base $\MC$ by the
$\integers$-invariant subalgebra containing $f$ and
$f^{-1}$. This subalgebra can be described in terms of the
functions $x(\lambda)=q^{\lambda}$,
$y(\lambda)=q^{-\lambda}$ and $z=x-y$ as follows. Since
$f=z_{(-2)}/z_{(-1)}$, this subalgebra is generated by all
fractions $z_{(k)}/z_{(l)}$, where $k,l \in \integers$, and
since $z_{(-1)}-qz=(q^{-1}-q)q^{-\lambda}$, also by all
fractions $x/z_{(k)}$ and $y/z_{(k)}$, where $k\in
\integers$. The second idea is to drop the relation $xy=1$
to allow the limit cases $\lambda \to \pm \infty$, and
regard $x,y$ as canonical coordinates on $\complex
P^{1}$. Finally, we also regard $q$ as a variable.

Let us now turn to the details.
Denote by $R \subset \complex(Q)$ the localization of
$\complex[Q]$ with respect to $Q$ and the polynomials
\begin{align*}
  S_{k} = (1-Q^{2k})/(1-Q^{2}) = 1 + Q^{2} + \cdots +
  Q^{2(k-1)}, \quad \text{where } k \in
  \naturals.
\end{align*}
Let $\integers$ act on the algebra $\complex(Q,X,Y)$
of rational functions in $Q,X,Y$ by
\begin{align*}
  Q_{(k)} &= Q, & X_{(k)} &= Q^{-k}X, & Y_{(k)}&= Q^{k}Y
  &&\text{for all } k\in \integers,
\end{align*}
where the lower index $(k)$ denotes the action of $k$.
Denote by $B \subset \complex(Q,X,Y)$  the subalgebra
generated by $R$ and all elements
\begin{align*}
  Z_{k,l} := (X-Y)_{(k)} / (X-Y)_{(l)}, \quad \text{where }
  k,l \in \integers.
\end{align*}
We equip $B$ with the induced action of $\integers$ and the
involution given by $Q=Q^{*}$ and $Z_{k,l}^{*} = Z_{k,l}$
for all $k,l\in \integers$. Note that this involution is the one
inherited from $\complex(Q,X,Y)$ when $Q=Q^{*}$ and either
$X^{*}=X$, $Y^{*}=Y$ or $X^{*}=-X$, $Y^{*}=-Y$.
Finally, let
\begin{align*}
  \nabla&= (1,-1), &
  F&=
  \begin{pmatrix}
    0 &  -1 \\ Z_{0,-1} & 0
  \end{pmatrix}, &
  G&=
  \begin{pmatrix}
      0 & -Q \\ 1 & 0
    \end{pmatrix}.
\end{align*}
Then $FG^{*}=G^{*}F$ and $G\nabla \bar G \nabla = G^{2} =
\diag(-Q,-Q)$.  
\begin{definition} \label{definition:sud} We let
  $\mathcal{O}(\SUd):=\Ao(\nabla,F,G)$.
\end{definition}
Thus, $\mathcal{O}(\SUd)$ is generated by the entries
$\alpha,\beta,\gamma,\delta$ of a $2\times 2$-matrix $v$
which satisfy the relations
\eqref{eq:k-0}--\eqref{eq:k-star} with $Z_{-2,-1}$ and $Q$
instead of $f$ and $q$.  This $(B,\integers)$-Hopf
$*$-algebroid aggregates several other interesting quantum
groups and quantum groupoids which can be obtained by
suitable base changes as follows.

Denote by $z\in \MC$ the function $\lambda \mapsto
q^{\lambda}-q^{-\lambda}$.  Equip $\complex(\lambda)$ with
an involution such that $\lambda^{*}=\lambda$, and a
$\integers$-action such that $\lambda_{(k)}=\lambda-k$. Let
$\Omega=(0,1]\times [-\infty,\infty]$ and let $\integers$
act on $C(\Omega)$ by $g_{(k)}(q,t) = g(q,t-k)$ for all
$g\in C(\Omega)$, $(q,t)\in \Omega$, $k\in \integers$.
\begin{lemma}
  There exist $\integers$-equivariant $*$-homomorphisms
  \begin{gather*}
   \begin{aligned}
     \mathrm{i)} & \!\! & \pi^{q}_{\MC} &\colon B \to \MC,
     &\!\! Q&\mapsto q, &\!\! Z_{k,l} &\mapsto
     \frac{z_{(k)}}{z_{(l)}}, \qquad
     \text{for  } q\in (0,1) \cup (1,\infty),\\
     \mathrm{ii)} & \!\! & \pi^{1}_{\MC} &\colon B \to
     \complex(\lambda), &\!\! Q &\mapsto 1, &\!\! Z_{k,l} &\mapsto
     \frac{\lambda
       -k}{\lambda-l}, \\
     \mathrm{iii)} &\!\! & \pi_{\pm \infty} &\colon B \to R, &\!\! Q
     &\mapsto Q, &\!\! Z_{k,l} &\mapsto \frac{Q^{\pm k}}{Q^{\pm
         l}} = Q^{\pm  k \mp l}, \\
     \mathrm{iv)} & \!\! & \pi^{q}_{\pm \infty} &\colon B\to
     \complex, &\!\! Q &\mapsto q, &\!\! Z_{k,l} &\mapsto q^{\pm k
       \mp l}, \qquad
     \text{for } q \in (0,\infty), \\
     \mathrm{v)} & \!\! & \pi_{\Omega} &\colon B\to C(\Omega), &\!\!
     Q &\mapsto\left((q,t) \mapsto q\right), &\!\! Z_{k,l}
     &\mapsto \left((q,t) \mapsto
     \begin{cases}
     \tfrac{q^{t-k}+q^{k-t}}{q^{t-l}+q^{l-t}}, & t\in \reals, \\      q^{\pm
       k \mp l}, & t =\pm \infty
     \end{cases}\right).
    \end{aligned}
  \end{gather*}
\end{lemma}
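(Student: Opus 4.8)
The plan is to build every one of these maps by specialising the variables $Q,X,Y$ of the ambient field $\complex(Q,X,Y)$ and restricting the resulting homomorphism to the subalgebra $B$, treating the two degenerate limits (ii) and (v) separately, since there $X$ and $Y$ admit no individual specialisation.

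First I would record the arithmetic of $B$. Writing $w_{(k)}=Q^{-k}X-Q^{k}Y$, so that $Z_{k,l}=w_{(k)}/w_{(l)}$, one has $Z_{k,k}=1$, the cocycle law $Z_{k,l}Z_{l,m}=Z_{k,m}$, the equivariance $(Z_{k,l})_{(m)}=Z_{k+m,l+m}$, and, solving the two defining equations of $w_{(k\pm1)}$ for $X$ and $Y$, the three-term recurrence
\[
  w_{(k+1)}=(Q+Q^{-1})\,w_{(k)}-w_{(k-1)},\qquad\text{equivalently}\qquad
  Z_{k+1,l}=(Q+Q^{-1})Z_{k,l}-Z_{k-1,l}.
\]
The only elements inverted in passing from $\complex[Q,X,Y]$ to $B$ are $Q$, the $S_{k}$, and the $w_{(l)}$ hidden inside the $Z_{k,l}$; so a substitution of $Q,X,Y$ into a commutative target extends to $B$ exactly when it sends each of $Q$, $S_{k}(Q)$ and $w_{(l)}$ to a unit.

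For cases (i), (iii) and (iv) the map is literally such a substitution. Case (i) is $Q\mapsto q$, $X\mapsto(\lambda\mapsto q^{\lambda})$, $Y\mapsto(\lambda\mapsto q^{-\lambda})$; then $w_{(l)}\mapsto z_{(l)}\neq0$, $S_{k}(q)=1+q^{2}+\cdots+q^{2(k-1)}>0$ and $q\neq0$ are units in the field $\MC$, so the map extends to $B$ and sends $Z_{k,l}\mapsto z_{(k)}/z_{(l)}$. Equivariance holds because $X_{(k)}=Q^{-k}X\mapsto q^{\lambda-k}$ matches the shift on $\MC$, and the map is a $*$-homomorphism because $q^{\lambda}$ is self-adjoint, whence each $Z_{k,l}$ has self-adjoint image. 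Cases (iii) and (iv) are the substitutions $X\mapsto0,\ Y\mapsto-1$ (for the $+\infty$ sign) and $X\mapsto1,\ Y\mapsto0$ (for the $-\infty$ sign), with $Q\mapsto Q$ or $Q\mapsto q$; here $w_{(k)}\mapsto Q^{\pm k}$, the inverted elements $Q,S_{k}$ remain units, and $Z_{k,l}\mapsto Q^{\pm(k-l)}$ as required, with equivariance and $*$-invariance again immediate on generators.

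The genuine obstacle is cases (ii) and (v), where $w_{(l)}$ has no nonzero specialisation, only the quotients $Z_{k,l}$ do. For (ii) I would take the joint substitution $Q\mapsto q$, $X\mapsto q^{\lambda}$, $Y\mapsto q^{-\lambda}$ as a homomorphism of $B$ into the field of meromorphic functions in $(q,\lambda)$, and then restrict to $q=1$: since $w_{(k)}\mapsto z_{(k)}=2\sinh((\lambda-k)\log q)$ vanishes along $q=1$, I first divide numerator and denominator of each $Z_{k,l}$ by $\log q$, after which $Z_{k,l}\mapsto\tilde z_{(k)}/\tilde z_{(l)}$ with $\tilde z_{(k)}|_{q=1}=2(\lambda-k)\not\equiv0$; the restriction to $q=1$ is then a well-defined $\integers$-equivariant $*$-homomorphism $B\to\complex(\lambda)$ sending $Z_{k,l}\mapsto(\lambda-k)/(\lambda-l)$, the recurrence there amounting to $2\frac{\lambda-k}{\lambda-l}-\frac{\lambda-k+1}{\lambda-l}=\frac{\lambda-k-1}{\lambda-l}$. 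For (v) I would assemble $\pi_{\Omega}$ from pointwise characters: for $(q,t)\in(0,1]\times\reals$ the substitution $Q\mapsto q$, $X\mapsto q^{t}$, $Y\mapsto-q^{-t}$ gives a character $\chi_{q,t}\colon B\to\complex$, well defined since $w_{(l)}\mapsto q^{t-l}+q^{l-t}>0$ and $S_{k}(q)>0$, and at $t=\pm\infty$ I use the characters $\pi^{q}_{\pm\infty}$ of (iv). The only non-formal point is that $b\mapsto\bigl((q,t)\mapsto\chi_{q,t}(b)\bigr)$ lands in $C(\Omega)$: because $B=R[Z_{k,l}:k,l]$ with $Z_{l,k}=Z_{k,l}^{-1}$ again a generator, the function $\chi_{\bullet}(b)$ is built from $\chi_{\bullet}(Q)=q$ and the $\chi_{\bullet}(Z_{k,l})$ by the ring operations of $C(\Omega)$, so continuity on $\Omega$ reduces to checking that $\chi_{q,t}(Z_{k,l})=\cosh((t-k)\log q)/\cosh((t-l)\log q)$ extends continuously to $t=\pm\infty$, where it tends to $q^{\pm(k-l)}$, jointly continuously up to the corners $q=1$. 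Granting this, $\pi_{\Omega}$ is a $\integers$-equivariant $*$-homomorphism because each $\chi_{q,t}$ is one on the generators, and its boundary values recover (iv). I expect this continuity verification at $t=\pm\infty$ to be where essentially all the work lies.
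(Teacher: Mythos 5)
Your proposal is correct and follows essentially the same route as the paper: each homomorphism is obtained by specialising $Q,X,Y$ into the target and restricting to the subalgebra $B$ (checking that $Q$, the $S_{k}$ and the $(X-Y)_{(l)}$ go to units), with (ii) handled as a $q\to 1$ limit of (i) and (v) via a two-variable substitution making $(X-Y)_{(k)}$ nonvanishing on $(0,1]\times\reals$. The only cosmetic differences are that the paper takes $X\mapsto iq^{t}$, $Y\mapsto -iq^{-t}$ in (v), corresponding to the involution $X^{*}=-X$, $Y^{*}=-Y$ rather than your real choice (both yield the same images of the $Z_{k,l}$), and that the paper states the $q\to1$ limit and the continuity at $t=\pm\infty$ just as tersely as you do.
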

\begin{proof}
  i) Restrict the homomorphism $\pi\colon
  \complex(Q,X,Y)\to\MC$ given by $Q\mapsto q$,
  $X\mapsto (\lambda \mapsto q^{\lambda})$, $Y \mapsto
  (\lambda \mapsto q^{-\lambda})$ to $B$.

  ii) Use i) and the fact that for all $k,l\in \integers$
  and $\lambda\in \complex \setminus \{l\}$,
  \begin{align*}
    \lim_{q\to 1}
    \pi^{q}_{\MC}(Z_{k,l})(\lambda)
    = \lim_{q\to
      1}\frac{q^{\lambda-k}-q^{k-\lambda}}{q^{\lambda-l}-q^{l-\lambda}}
    = \frac{\lambda-k}{\lambda-l}.
  \end{align*}

  iii) Define $\pi\colon \complex[Q,X,Y] \to R$ by $Q\mapsto
  Q$, $X\mapsto 1$, $Y \mapsto 0$.  Then $\pi$ extends to
  the localization $B$ of $\complex[Q,X,Y]$, giving
  $\pi_{-\infty}$, because $\pi((X-Y)_{(k)})= Q^{-k}$ is
  invertible for all $k\in \integers$. This homomorphism $\pi_{-\infty}$
  evidently is involutive, and $\integers$-equivariant because
  $\pi_{-\infty}(Z_{k+j,l+j})
  = \pi_{-\infty}(Z_{k,l})$ for all $j\in \integers$.
  Similarly, one obtains $\pi_{+\infty}$.

  iv) Immediate from iii).

  v) Define $\pi\colon \complex[Q,X,Y] \to C((0,1]\times
  \reals)$ by $Q\mapsto ((q,t) \mapsto q)$, $X \mapsto
  ((q,t) \mapsto iq^{t})$, $Y \mapsto ((q,t) \mapsto
  -iq^{t})$. Since $\pi((X-Y)_{(k)}) = i(q^{t-k}+q^{k-t})$
  is invertible for all $t\in \reals$, $k\in \integers$,
  this $\pi$ extends to $B$. Moreover, each $\pi(Z_{k,l})$
  extends to a continuous function on $C(\Omega)$ as
  desired, giving $\pi_{\Omega}$. 
\end{proof}
Note that $\pi_{+\infty}^{1}=\pi_{-\infty}^{1}$. Using this
map, we obtain for each algebra $C$ with an action
by $\integers$ an $\integers$-equivariant homomorphism
$\pi^{1}_{C}\colon B\to C$ sending $Q$ and each $Z_{k,l}$
to $1_{C}$.
\begin{proposition} \label{proposition:sud-base-change}
There exist isomorphisms of Hopf $*$-algebroids as follows:
  \begin{enumerate}
  \item $(\pi^{q}_{\MC})_{*} \mathcal{O}(\SUd) \cong
    \mathcal{F}_{R}(\SU)$ for each $q\in (0,1) \cup
    (1,\infty)$;
  \item $(\pi^{q}_{-\infty})_{*} \mathcal{O}(\SUd) \cong
    \mathcal{O}(\SUq)$  for each $q\in (0,\infty)$;
  \item  $(\pi^{q}_{\infty})_{*}  \mathcal{O}(\SUd) \cong
    \mathcal{O}(\SUq)^{\op}$ for each $q\in (0,\infty)$;
  \item $(\pi^{1}_{\complex[X]})_{*}\mathcal{O}(\SUd)$ is
    isomorphic to the $(\complex[X],\integers)$-Hopf
    $*$-algebroid in Example \ref{example:ao-su}.
  \end{enumerate}
\end{proposition}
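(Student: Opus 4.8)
The plan is to deduce all four isomorphisms from a single functoriality statement: \emph{base change commutes with the free orthogonal construction}. Precisely, for every $\integers$-equivariant $*$-homomorphism $\phi\colon B\to C$ and all $\nabla$-odd $F,G\in \GLn(B)$ with $GF^{*}=FG^{*}$, the images $\phi_{n}(F),\phi_{n}(G)$ are again $\nabla$-odd and satisfy $\phi_{n}(G)\phi_{n}(F)^{*}=\phi_{n}(F)\phi_{n}(G)^{*}$ (because $\phi$ is $\integers$-equivariant and $*$-preserving), and I claim there is a canonical isomorphism of $(C,\integers)$-Hopf $*$-algebroids
\[
\phi_{*}A^{B}_{\mathrm{o}}(\nabla,F,G)\cong A^{C}_{\mathrm{o}}(\nabla,\phi_{n}(F),\phi_{n}(G)).
\]
Granting this, each part of the proposition reduces to a short computation of the images $\phi_{n}(F),\phi_{n}(G)$ followed by an identification of the right-hand side with a known object.

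To prove the claim I would use the universal properties on both sides. Write $\tilde v:=(1\btimes v_{ij}\btimes 1)_{i,j}$ for the image in $\phi_{*}A^{B}_{\mathrm{o}}(\nabla,F,G)=C\btimes A^{B}_{\mathrm{o}}(\nabla,F,G)\btimes C$ of the fundamental matrix $v$. The balancing relations give $1\btimes r(b)\btimes 1=r(\phi(b))$ and $1\btimes s(b)\btimes 1=s(\phi(b))$, and since $a\mapsto 1\btimes a\btimes 1$ is a $*$-homomorphism, $\tilde v\in\Rn^{\times}$ is homogeneous with $\partial_{\tilde v}=\nabla$, and the intertwiners $v^{-\top}\xrightarrow{F}v$, $\bar v\xrightarrow{G}v$ over $B$ become $\tilde v^{-\top}\xrightarrow{\phi_{n}F}\tilde v$, $\overline{\tilde v}\xrightarrow{\phi_{n}G}\tilde v$ over $C$ (using $\phi_{n}(\nabla F\nabla)=\nabla\phi_{n}(F)\nabla$). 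The universal property of $A^{C}_{\mathrm{o}}(\nabla,\phi_{n}F,\phi_{n}G)$ yields a morphism into $\phi_{*}A^{B}_{\mathrm{o}}(\nabla,F,G)$, which is surjective because $\tilde v$ together with $r(C)$ and $s(C)$ generates the fiber product. For the inverse I use that $\phi_{*}$ is left adjoint to restriction of scalars along $\phi$: the fundamental matrix $v^{C}$ of $A^{C}_{\mathrm{o}}(\nabla,\phi_{n}F,\phi_{n}G)$, read over $B$ via $\phi$, is homogeneous with $\partial_{v^{C}}=\nabla$ and carries the intertwiners for $F,G$, so the universal property of $A^{B}_{\mathrm{o}}(\nabla,F,G)$ gives a $B$-algebra morphism $\Psi\colon A^{B}_{\mathrm{o}}(\nabla,F,G)\to A^{C}_{\mathrm{o}}(\nabla,\phi_{n}F,\phi_{n}G)$ with $\Psi_{n}(v)=v^{C}$, $\Psi(r(b))=r(\phi(b))$, $\Psi(s(b))=s(\phi(b))$; its adjunct $c\btimes a\btimes c'\mapsto r(c)\Psi(a)s(c')$ is well defined on the fiber product and inverts the first morphism on generators. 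Compatibility with $\Delta,\epsilon,S$ is automatic: by Proposition \ref{proposition:bg-cb} the matrix $\tilde v$ is again a matrix corepresentation, so the two Hopf $*$-algebroid structures coincide by the uniqueness in Theorems \ref{theorem:ao-hopf} and \ref{theorem:ao-prime-hopf}.

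Next I would evaluate the images. Using the formulas in the lemma preceding the proposition, $\pi^{q}_{\MC}(Z_{0,-1})=z_{(0)}/z_{(-1)}=f_{(1)}^{-1}$, so $(\pi^{q}_{\MC})_{n}(F)$ is exactly the matrix $F$ of Proposition \ref{proposition:sud}, while $(\pi^{q}_{\MC})_{n}(G)=\left(\begin{smallmatrix}0&-q\\1&0\end{smallmatrix}\right)$ differs from the $G$ there by the positive scalar $q$. Likewise $(\pi^{q}_{\mp\infty})_{n}(F)=\left(\begin{smallmatrix}0&-1\\q^{\mp1}&0\end{smallmatrix}\right)$ with $(\pi^{q}_{\mp\infty})_{n}(G)=\left(\begin{smallmatrix}0&-q\\1&0\end{smallmatrix}\right)$, and $(\pi^{1}_{\complex[X]})_{n}(F)=(\pi^{1}_{\complex[X]})_{n}(G)=\left(\begin{smallmatrix}0&-1\\1&0\end{smallmatrix}\right)$. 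In (i) the claim now follows from Proposition \ref{proposition:sud}, once I observe that rescaling $G$ by a positive scalar does not change $A^{\MC}_{\mathrm{o}}(\nabla,F,G)$: the matrix $Q=G\nabla\bar G\nabla$ stays a scalar multiple of the identity, so the relation $v\xrightarrow{Q}v$ remains vacuous, and the matrix $w=r_{n}(\nabla G\nabla)^{-1}vs_{n}(G)$ determining the involution in Theorem \ref{theorem:ao-prime-hopf} is manifestly invariant under $G\mapsto\mu G$. In (iv) the target is literally the $(\complex[X],\integers)$-Hopf $*$-algebroid of Example \ref{example:ao-su}. In (ii) and (iii) the base $\complex$ carries the trivial $\integers$-action, so by \S\ref{section:trivial} the free orthogonal object is an $\integers$-graded Hopf $*$-algebra; substituting the constants $f\mapsto q$, $Q\mapsto q$ (respectively $f\mapsto q^{-1}$, $Q\mapsto q$) into the relations \eqref{eq:k-0}--\eqref{eq:k-star}, exactly as in the proof of Proposition \ref{proposition:sud}, produces Woronowicz's presentation of $\mathcal{O}(\SUq)$, respectively its $q\leftrightarrow q^{-1}$ variant, which is $\mathcal{O}(\SUq)^{\op}$.

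The main obstacle is the base-change lemma, and within it the construction of the inverse morphism, that is, the verification that $\phi_{*}$ is left adjoint to restriction of scalars and that this adjunction is compatible both with the $*$-structure and with the fiber-product target of the comultiplication. Once that is in place the remaining work is bookkeeping: computing the five images and, in (ii) and (iii), matching the constant-coefficient relations with the standard presentations of $\SUq$ and $\SUq^{\op}$, where the only subtlety is keeping track of the $q\leftrightarrow q^{-1}$ symmetry relating $\pi^{q}_{-\infty}$ and $\pi^{q}_{+\infty}$.
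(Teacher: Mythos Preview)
Your proposal is correct and follows essentially the same route as the paper: compute the images $\phi_{n}(F),\phi_{n}(G)$ under each base change and identify the resulting free orthogonal algebra with the target object. The paper's proof is terser—it uses the functoriality $\phi_{*}\Ao(\nabla,F,G)\cong A^{C}_{\mathrm{o}}(\nabla,\phi_{n}F,\phi_{n}G)$ implicitly rather than stating it, and it silently absorbs the scalar discrepancy in $G$ that you address explicitly—but the underlying argument is the same.
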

\begin{proof}
   i) This is immediate from the definitions and Proposition
     \ref{proposition:sud}.

  ii), iii) Let $\pi^{\pm}=\pi^{q}_{\pm\infty}$. Then
  $(\pi^{\pm})_{*}\mathcal{O}(\SUd)$ is generated by the entries
  $\alpha',\beta',\gamma',\delta$ of a matrix $v'$ such that
  $\beta' = -q\gamma'{}^{*}$ and
  $\delta'=\alpha'{}^{*}$. Moreover, $v'{}^{-\top}
  =\pi^{\pm}_{2}(F)^{-1}v'\pi^{\pm}_{2}(F)$ and $\bar v' =
  \pi^{\pm}_{2}(G)^{-1}v'\pi^{\pm}_{2}(G)$, where
  \begin{align*}
    \pi^{-}_{2}(F) &=
    \begin{pmatrix}
      0 & -1 \\ \pi(Z_{0,-1}) & 0
    \end{pmatrix}
    =    \begin{pmatrix}
      0 & -1 \\ q^{-1} & 0
    \end{pmatrix} =\pi^{\pm}_{2}(G), &
    \pi^{+}_{2}(F) &=
    \begin{pmatrix}
      0 & -1 \\ q & 0
    \end{pmatrix}.    
  \end{align*}
  In the case of $\pi_{-}$, we find that $v'$ is unitary,
  and obtain the usual presentation of
  $\mathcal{O}(\SUq)$. Multiplying out the relation
  $v'{}^{-\top} =\pi^{+}_{2}(F)^{-1}v'\pi^{+}_{2}(F)$, one
  easily verifies the assertion on $\pi^{+}$.

  iv) Immediate from the relations
  $(\pi^{1}_{\complex[X]})_{2}(F)=(\pi^{1}_{\complex[X]})_{2}(G)
  =
  \begin{pmatrix}
    0 & -1 \\  1 & 0
  \end{pmatrix}$.
\end{proof}
We expect most of the results of \cite{koelink:su2} to carry
over from $\FR(\SU)$ to $\mathcal{O}(\SUd)$.

\section{The level of
  universal $C^{*}$-algebras}

\label{section:universal}

Throughout this section, we shall only work with unital
$C^{*}$-algebras. We assume all $*$-homomorphisms to be
unital, and $B$ to be a commutative, unital $C^{*}$-algebra
equipped with a left action of a discrete group $\Gamma$.
Given a subset $X$ of a normed space $V$, we denote by
$\overline{X} \subseteq v$ its closure and by
$[X]\subseteq V$ the closed linear span of $X$.

\subsection{The maximal cotensor product of $C^{*}$-algebras
with respect to $C^{*}(\Gamma)$}

\label{section:c}
This subsection reviews the cotensor product of
$C^{*}$-algebras with respect to the Hopf $C^{*}$-algebra
$C^{*}(\Gamma)$ and develops the main properties that will
be needed in \S\ref{section:cb}.  The material presented
here is certainly well known to the experts, but we didn't
find a suitable reference.

We first recall a few preliminaries.

Let $A$ be a $*$-algebra. A \emph{representation} of $A$ is
a $*$-homomorphism into a $C^{*}$-algebra. Such a
representation $\pi$ is \emph{universal} if every other
representation of $A$ factorizes uniquely through $\pi$.  A
universal representation exists if and only if for each
$a\in A$,
\begin{align*}
  |a|:= \sup\{ \|\pi(a)\| : \pi\text{ is a $*$-homomorphism
    of $A$ into some $C^{*}$-algebra}\} < \infty.
\end{align*}
Indeed, if $|a|$ is finite for all $a \in A$, then the
separated completion of $A$ with respect to $|\!-\!|$
carries a natural structure of a $C^{*}$-algebra, which is
denoted by $C^{*}(A)$ and called the enveloping
  $C^{*}$-algebra of $A$, and the natural representation
$A\to C^{*}(A)$ is universal.

The maximal tensor product of $C^{*}$-algebras $A$ and $C$
is the enveloping $C^{*}$-algebra of the algebraic tensor
product $A\otimes C$, and will be denoted by
$A\maxtimes C$.

The full group $C^{*}$-algebra
$C^{*}(\Gamma)$ of $\Gamma$ is the enveloping
$C^{*}$-algebra of the group algebra $\complex \Gamma$. We
denote by $\Delta_{\Gamma} \colon C^{*}(\Gamma) \to
C^{*}(\Gamma) \maxtimes C^{*}(\Gamma)$ the
\emph{comultiplication}, given by $\gamma\mapsto
\gamma\otimes \gamma$ for all $\gamma\in \Gamma$, and by
$\epsilon_{\Gamma} \colon C^{*}(\Gamma) \to \complex$ the
\emph{counit}, given by $\gamma \mapsto 1$ for all
$\gamma\in \Gamma$. Clearly, $(\epsilon_{\Gamma}\maxtimes
\Id) \Delta_{\Gamma} =\Id = (\Id\maxtimes
\epsilon_{\Gamma})\Delta_{\Gamma}$.

A completely positive (contractive) map, or brielfy
c.p.(c.)-map, from a $C^{*}$-algebra $A$ to a $C^{*}$-algebra
$C$ is a linear map $\phi \colon A\to C$ such that
$\phi_{n}\colon \Mn(A) \to \Mn(C)$ is positive (and
 $\|\phi_{n}\|\leq 1$) for all $n \in \naturals$.
\begin{definition} \label{definition:c-cg}
  A \emph{$(\complex,\Gamma)$-$C^{*}$-algebra} is a unital
  $C^{*}$-algebra $A$ with injective unital $*$-homomor\-phisms
  $\delta_{A}\colon A \to \cg \maxtimes A$ and $\bar \delta_{A}
  \colon A \to A\maxtimes \cg$ such that 
  $(\Id \maxtimes \delta_{A}) \circ \delta_{A}  = (\Delta_{\Gamma}
    \maxtimes \Id)  \circ \delta_{A}$,
    $(\bar \delta_{A} \maxtimes \Id) \circ \bar \delta_{A} = (\Id
    \maxtimes \Delta_{\Gamma}) \circ \bar \delta_{A}$ and 
    $(\delta_{A} \maxtimes \Id) \circ \bar \delta_{A} = (\Id
    \maxtimes \bar \delta_{A}) \circ \delta_{A}$.
  A \emph{morphism} of $(\complex,\Gamma)$-$C^{*}$-algebras
  $A$ and $C$ is a unital $*$-homomorphism $\pi \colon A\to
  C$ satisfying $\delta_{C} \circ \pi = (\Id \maxtimes \pi)
  \circ \delta_{A}$ and $\bar \delta_{C} \circ \pi = (\pi
  \maxtimes \Id) \circ \bar \delta_{A}$. We denote by
  $\cgcalg$ the category of all
  $(\complex,\Gamma)$-$C^{*}$-algebras. Replacing
  $*$-homomorphisms by c.p.-maps, we define c.p.-maps of
  $(\complex,\Gamma)$-$C^{*}$-algebras and the category
  $\cgcalg^{\cp}$.
\end{definition}
\begin{remark}
  Let $A$ be a $(\complex,\Gamma)$-$C^{*}$-algebra. Then
  $(\epsilon_{\Gamma} \maxtimes \Id) \circ \delta_{A} = \Id_{A}$
because
\begin{align*}
  \delta_{A} (\epsilon_{\Gamma} \maxtimes \Id)\circ
  \delta_{A} = (\epsilon_{\Gamma} \maxtimes \Id\maxtimes
  \Id) \circ (\Id \maxtimes \delta_{A})\circ\delta_{A} =
  ((\epsilon_{\Gamma} \maxtimes \Id) \circ \Delta_{\Gamma}
  \maxtimes \Id) \circ \delta_{A} = \delta_{A},
\end{align*}
and likewise $(\Id \maxtimes \epsilon_{\Gamma})\circ
\bar\delta_{A}=\Id_{A}$.

\end{remark}
Let $A$ and $C$ be $(\complex,\Gamma)$-$C^{*}$-algebras. 
Then 
 the maximal tensor product $A\maxtimes C$ is a
$(\complex,\Gamma)$-$C^{*}$-algebra with respect to $\delta_{A}
\maxtimes \Id$ and $\Id \maxtimes \bar \delta_{C}$, and the
assignments $(A,C) \mapsto A\maxtimes C$ and $(\phi,\psi)
\mapsto \phi\maxtimes \psi$
define a product $-\maxtimes-$ on $\cgcalg^{(\cp)}$ that is
associative in the obvious sense.  Unless $\Gamma$ is
trivial, this product can not be unital because it forgets
$\bar\delta_{A}$ and $\delta_{C}$. 

With respect to the
restrictions of $\delta_{A}\maxtimes \Id$ and
$\Id\maxtimes\bar \delta_{C}$, 
 the subspace
\begin{align*}
  A \gmaxtimes C := \{ x \in A \maxtimes C : (\bar
  \delta_{A} \maxtimes \Id)(x) = (\Id \maxtimes
  \delta_{C})(x) \} \subseteq A \maxtimes C
\end{align*}
evidently is a $(\complex,\Gamma)$-$C^{*}$-algebra
again. Moreover, given morphisms of
$(\complex,\Gamma)$-$C^{*}$-algebras $\phi\colon A\to C$ and
$\psi \colon D \to E$, the product $\phi\maxtimes \psi$
restricts to a morphism $\phi \gmaxtimes \psi \colon
A\gmaxtimes D \to C\gmaxtimes E$. We thus obtain a second
product $-\gmaxtimes-$ on $\cgcalg^{(\cp)}$ that is
associative in the natural sense, and unital in the
following sense. 

Regard $\cg$ as a $(\complex,\Gamma)$-$C^{*}$-algebra with
respect to $\Delta_{\Gamma}$.  Then for each
$(\complex,\Gamma)$-$C^{*}$-algebra $A$, the maps
$\delta_{A}$ and $\bar\delta_{A}$ are isomorphisms of $(\complex,\Gamma)$-$C^{*}$-algebras
\begin{align*}
  \delta_{A} &\colon A \xrightarrow{\cong} \cg\gmaxtimes A,&
  \bar\delta_{A} & \colon A \xrightarrow{\cong} A\gmaxtimes
  \cg.
\end{align*}
Indeed, they evidently are morphisms, and surjective because 
\begin{align*}
  x &=
  (\epsilon_{\Gamma} \maxtimes \Id \maxtimes
  \Id)((\Delta_{\Gamma} \maxtimes \Id)(x)) = (\epsilon_{\Gamma}
  \maxtimes \Id \maxtimes \Id) ((\Id \maxtimes \delta_{A})(x)) =
  \delta_{A}((\epsilon_{\Gamma} \maxtimes \Id)(x))
\end{align*}
for each $x\in C^{*}(\Gamma) \gmaxtimes A$ and likewise $y =
\bar \delta_{A}((\Id\maxtimes \epsilon_{\Gamma})(y))$ for
each $y\in A\gmaxtimes C^{*}(\Gamma)$.

We next construct a natural transformation $p \colon
(-\maxtimes -) \to (-\gmaxtimes -)$ which will be needed to
prove associativity of the product of
$(B,\Gamma)$-$C^{*}$-algebras in \S\ref{section:cb}. The
construction is based on ideas taken from
\cite[\S7]{baaj:1}, and carries over from $C^{*}(\Gamma)$ to
any Hopf $C^{*}$-algebra $H$ equipped with a Haar mean
$H\maxtimes H\to H$; see also
\cite{maghfoul}.
\begin{lemma}
  There exists a unique state $\nu$ on $C^{*}(\Gamma)
  \maxtimes C^{*}(\Gamma)$ such that
  $\nu(\gamma\otimes\gamma')=\delta_{\gamma,\gamma'}1$ for
  all $\gamma,\gamma' \in \Gamma$. Moreover, $\nu\circ
  \Delta_{\Gamma} = \epsilon_{\Gamma}$ and $(\Id \maxtimes
  \nu) \circ (\Delta_{\Gamma}
  \maxtimes \Id)=(\nu\maxtimes \Id) \circ  (\Id \maxtimes
  \Delta_{\Gamma})$.
\end{lemma}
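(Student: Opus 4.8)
The plan is to realize the prescribed values $\delta_{\gamma,\gamma'}$ as a diagonal matrix coefficient of a unitary representation of $\Gamma\times\Gamma$, which simultaneously delivers positivity and the bound needed to pass from the algebraic to the maximal tensor product. Concretely, let $\lambda$ and $\rho$ be the left and right regular representations of $\Gamma$ on $\ell^{2}(\Gamma)$, given by $\lambda_{\gamma}\delta_{g}=\delta_{\gamma g}$ and $\rho_{\gamma}\delta_{g}=\delta_{g\gamma^{-1}}$ for $g\in\Gamma$. These are commuting unitary representations, so by the universal property of $\cg$ and of the maximal tensor product there is a unique $*$-representation $\tilde\pi\colon \cg\maxtimes\cg\to B(\ell^{2}(\Gamma))$ with $\tilde\pi(\gamma\otimes\gamma')=\lambda_{\gamma}\rho_{\gamma'}$. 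I then define $\nu(x):=\langle\tilde\pi(x)\delta_{e},\delta_{e}\rangle$. As a vector state this is automatically a state, and $\nu(\gamma\otimes\gamma')=\langle\delta_{\gamma(\gamma')^{-1}},\delta_{e}\rangle=\delta_{\gamma,\gamma'}$, as required.

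For uniqueness, I would use that any state is bounded while the elementary tensors $\gamma\otimes\gamma'$ span a dense subspace of $\cg\maxtimes\cg$; hence a state is determined by its values on them, so $\nu$ is the unique state with the stated values.

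It remains to verify the two identities, and for both I would evaluate on generators and invoke continuity. The functionals $\nu\circ\Delta_{\Gamma}$ and $\epsilon_{\Gamma}$ are bounded and linear on $\cg$, and agree on each $\gamma$ since $\nu(\Delta_{\Gamma}(\gamma))=\nu(\gamma\otimes\gamma)=\delta_{\gamma,\gamma}=1=\epsilon_{\Gamma}(\gamma)$; hence they coincide. For the second identity, both $(\Id\maxtimes\nu)\circ(\Delta_{\Gamma}\maxtimes\Id)$ and $(\nu\maxtimes\Id)\circ(\Id\maxtimes\Delta_{\Gamma})$ are bounded linear maps $\cg\maxtimes\cg\to\cg$ (the slice maps $\Id\maxtimes\nu$ and $\nu\maxtimes\Id$ extend to the maximal tensor product because $\nu$ is completely positive, as already used in defining $-\maxtimes-$ on $\cgcalg^{\cp}$). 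On a generator $\gamma\otimes\gamma'$ the left side gives $(\Id\maxtimes\nu)(\gamma\otimes\gamma\otimes\gamma')=\delta_{\gamma,\gamma'}\gamma$ and the right side gives $(\nu\maxtimes\Id)(\gamma\otimes\gamma'\otimes\gamma')=\delta_{\gamma,\gamma'}\gamma'$; these agree, since $\delta_{\gamma,\gamma'}$ is nonzero only when $\gamma=\gamma'$. By density the two maps coincide.

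The only genuinely non-formal step is the first one: producing an honest state on the \emph{maximal} tensor product rather than merely a linear functional on the algebraic tensor product $\complex\Gamma\otimes\complex\Gamma$. The representation-theoretic realization above is designed to clear exactly this obstacle, after which everything reduces to bookkeeping on elementary tensors together with continuity of the maps involved.
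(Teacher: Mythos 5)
Your construction is exactly the one the paper uses: the left and right regular representations on $\ell^{2}(\Gamma)$ combine via the universal property of the maximal tensor product into a representation $\lambda\times\rho$, and $\nu$ is the vector state at $\delta_{e}$, with uniqueness and the two identities following from density of the span of elementary tensors. The proposal is correct and matches the paper's proof in all essentials, including the verification that both sides of the second identity give $\delta_{\gamma,\gamma'}\gamma=\delta_{\gamma,\gamma'}\gamma'$ on generators.
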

\begin{proof}
  This follows from \cite[Theorem 0.1]{maghfoul}, but let us
  include the short direct proof. Uniqueness is clear. To
  construct $\nu$, denote by $(\epsilon_{\gamma})_{\gamma\in
    \Gamma}$ the canonical orthonormal basis of
  $l^{2}(\Gamma)$, by $\lambda,\rho \colon C^{*}(\Gamma) \to
  \mathcal{L}(l^{2}(\Gamma))$ the representations given by
  $\lambda(\gamma)\epsilon_{\gamma'}
  =\epsilon_{\gamma\gamma'}$ and
  $\rho(\gamma)\epsilon_{\gamma'} =
  \epsilon_{\gamma'\gamma^{-1}}$ for all $\gamma,\gamma'\in
  \Gamma$, and by $\lambda \times \rho \colon C^{*}(\Gamma)
  \maxtimes C^{*}(\Gamma) \to \mathcal{L}(l^{2}(\Gamma))$
  the representation given by $x \otimes y \mapsto
  \lambda(x)\rho(y)$. Then $\nu:=\langle
  \epsilon_{e}|(\lambda\times \rho)(-)\epsilon_{e}\rangle$
  satisfies $\nu(\gamma\otimes
  \gamma')=\delta_{\gamma,\gamma'}1$ for all
  $\gamma,\gamma'\in \Gamma$. The remaining equations follow
  easily.
\end{proof}
\begin{lemma}
  \begin{enumerate}
  \item For every $(\complex,\Gamma)$-$C^{*}$-algebra $A$,
the maps
\begin{align*}
  \bar p_{A}&:= (\Id \maxtimes \nu) ( \bar \delta_{A}
  \maxtimes \Id) \colon A \maxtimes C^{*}(\Gamma) \to A,
  & p_{A} &:= (\nu \maxtimes \Id) ( \Id \maxtimes
  \delta_{A}) \colon C^{*}(\Gamma) \maxtimes A \to A
\end{align*}
are morphisms in $\cgcalg^{\cp}$ and satisfy $p_{A} \circ
\delta_{A}$ and $\bar p_{A}\circ \bar \delta_{A} = \Id$.
  \item The families $(p_{A})_{A}$ and $(\bar p_{A})_{A}$
    are natural transformations from $-\maxtimes \cg$ and
    $\cg \maxtimes -$, respectively, to $\Id$, regarded as
    functors on $\cgcalg^{\cp}$.
  \end{enumerate}
\end{lemma}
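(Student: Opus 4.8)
The plan is to prove everything for $p_{A}$; the assertions for $\bar{p}_{A}$ then follow verbatim after interchanging the two tensor legs and the roles of $\delta$ and $\bar{\delta}$. Throughout I would use repeatedly the \emph{slice-map commutation rule}: for a state $\omega$ on a $C^{*}$-algebra $M$ and a $\cp$-map $\phi\colon A\to D$ one has $\phi\circ(\omega\maxtimes\Id_{A}) = (\omega\maxtimes\Id_{D})\circ(\Id_{M}\maxtimes\phi)$, which is clear on elementary tensors and extends by continuity to the maximal tensor products.

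First, complete positivity and contractivity are immediate: $p_{A}$ is the composite of the unital $*$-homomorphism $\Id\maxtimes\delta_{A}$ with the slice $\nu\maxtimes\Id_{A}$ by the state $\nu$, hence a unital $\cp$-map, in particular contractive. The retraction identity follows from coassociativity of $\delta_{A}$ and the relation $\nu\circ\Delta_{\Gamma}=\epsilon_{\Gamma}$ of the preceding lemma:
\begin{align*}
  p_{A}\circ\delta_{A} = (\nu\maxtimes\Id)(\Id\maxtimes\delta_{A})\delta_{A} = (\nu\maxtimes\Id)(\Delta_{\Gamma}\maxtimes\Id)\delta_{A} = (\epsilon_{\Gamma}\maxtimes\Id)\delta_{A} = \Id_{A},
\end{align*}
and dually $\bar{p}_{A}\circ\bar{\delta}_{A}=\Id_{A}$.

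Next I would check that $p_{A}$ is a morphism of $(\complex,\Gamma)$-$C^{*}$-algebras, where $\cg\maxtimes A$ carries the coactions $\delta=\Delta_{\Gamma}\maxtimes\Id$ and $\bar{\delta}=\Id\maxtimes\bar{\delta}_{A}$. For the $\delta$-compatibility $\delta_{A}\circ p_{A}=(\Id\maxtimes p_{A})(\Delta_{\Gamma}\maxtimes\Id)$, I would push $\delta_{A}$ through the slice by the commutation rule and collapse the two resulting copies of $\delta_{A}$ by coassociativity; tracking the $\cg$-legs, both sides are then seen to differ only by the order in which $\nu$ and $\Delta_{\Gamma}$ act, so the identity holds precisely because $(\nu\maxtimes\Id)(\Id\maxtimes\Delta_{\Gamma}) = (\Id\maxtimes\nu)(\Delta_{\Gamma}\maxtimes\Id)$ by the preceding lemma. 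For the $\bar{\delta}$-compatibility $\bar{\delta}_{A}\circ p_{A}=(p_{A}\maxtimes\Id)(\Id\maxtimes\bar{\delta}_{A})$, the same manipulations show that here $p_{A}$ acts through $\delta_{A}$ on the very leg on which $\bar{\delta}_{A}$ acts, so the two sides agree by the commuting-coactions axiom $(\Id\maxtimes\bar{\delta}_{A})\delta_{A}=(\delta_{A}\maxtimes\Id)\bar{\delta}_{A}$.

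Finally, for naturality let $\phi\colon A\to C$ be a morphism in $\cgcalg^{\cp}$. Moving $\phi$ inside the slice and using its intertwining property $\delta_{C}\circ\phi=(\Id\maxtimes\phi)\circ\delta_{A}$ gives
\begin{align*}
  \phi\circ p_{A} = (\nu\maxtimes\Id)(\Id\maxtimes\delta_{C})(\Id\maxtimes\phi) = p_{C}\circ(\Id\maxtimes\phi),
\end{align*}
and symmetrically $\phi\circ\bar{p}_{A}=\bar{p}_{C}\circ(\phi\maxtimes\Id)$ from $\bar{\delta}_{C}\circ\phi=(\phi\maxtimes\Id)\circ\bar{\delta}_{A}$. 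The only real work lies in the leg bookkeeping of the morphism step, and the crux is recognising that the $\delta$-compatibility is governed by the mixed-associativity identity for $\nu$ whereas the $\bar{\delta}$-compatibility is governed by the commutation of the two coactions; everything else is formal, and since all maps involved are continuous, passing from elementary tensors to the whole maximal tensor products presents no difficulty.
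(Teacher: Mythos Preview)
Your proof is correct and follows essentially the same approach as the paper's: the retraction identity via $\nu\circ\Delta_{\Gamma}=\epsilon_{\Gamma}$, the $\delta$-compatibility via coassociativity of $\delta_{A}$ together with the mixed-associativity identity $(\nu\maxtimes\Id)(\Id\maxtimes\Delta_{\Gamma})=(\Id\maxtimes\nu)(\Delta_{\Gamma}\maxtimes\Id)$, the $\bar\delta$-compatibility via the commuting-coactions axiom, and naturality via the intertwining property of $\phi$ with $\delta$. The paper writes out the $\delta$-compatibility as an explicit chain of equalities whereas you describe the same manipulations verbally, but the content is identical.
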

\begin{proof}
  i) The map $p_{A}$ is a morphism in $\cgcalg^{\cp}$
  because $\bar \delta_{A} \circ p_{A} = (p_{A}
  \maxtimes \Id) \circ \bar \delta_{A}$ and
  \begin{align*}
    \delta_{A} \circ p_{A} &= (\nu\maxtimes \Id \maxtimes
    \Id)\circ(\Id \maxtimes \Id \maxtimes
    \delta_{A})(\Id \maxtimes \delta_{A}) \\
    &=(\nu\maxtimes \Id \maxtimes \Id) \circ (\Id \maxtimes
    \Delta_{\Gamma} \maxtimes
    \Id)\circ(\Id \maxtimes \delta_{A}) \\
    &= (\Id \maxtimes \nu \maxtimes \Id)\circ(\Delta_{\Gamma}
    \maxtimes \Id \maxtimes \Id)\circ(\Id \maxtimes \delta_{A})
    \\
    &=(\Id \maxtimes \nu \maxtimes \Id)\circ(\Id \maxtimes \Id
    \maxtimes \delta_{A})\circ(\Delta_{\Gamma} \maxtimes \Id) =
    (\Id \maxtimes p_{A})\circ(\Delta_{\Gamma} \maxtimes \Id).
  \end{align*}
  Moreover, $p_{A} \circ \delta_{A} = (\nu \maxtimes
  \Id)(\Id \maxtimes \delta_{A}) \delta_{A} = (\nu
  \Delta_{\Gamma} \maxtimes \Id) \delta_{A} =
  (\epsilon_{\Gamma} \maxtimes \Id)\delta_{A} = \Id$ and
  similarly $\bar p_{A} \circ \bar \delta_{A} = \Id$.

  ii) This follows from the fact that $(\delta_{A})_{A}$ and
  $(\bar \delta_{A})_{A}$ are natural
  transformations.
\end{proof}
\begin{proposition}
  \begin{enumerate}
  \item Let $A,C$ be
    $(\complex,\Gamma)$-$C^{*}$-algebras. Then the map
    \begin{align*}
      p_{A,C} := (\Id \maxtimes \nu \maxtimes
      \Id)\circ(\bar \delta_{A} \maxtimes \delta_{C}) \colon
      A\maxtimes C\to A\maxtimes C
    \end{align*}
    is equal to $(\Id\maxtimes p_{C}) \circ (\bar \delta_{A}
    \maxtimes \Id)$ and $(\bar p_{A} \maxtimes \Id)\circ(\Id
    \maxtimes\delta_{C})$, a morphism in $\cgcalg^{\cp}$, and a
    conditional expectation onto $A\gmaxtimes C\subseteq
    A\maxtimes C$ in the sense that $p_{A,C}(xyz) =
    xp_{A,C}(y)z$ for all $x,z \in A\gmaxtimes C$ and $y \in
    A\maxtimes C$.
  \item The family $(p_{A,C})_{A,C}$ is a natural
    transformation from $-\maxtimes -$ to $-\gmaxtimes-$,
    regarded as functors on $\cgcalg^{\cp}\times
    \cgcalg^{\cp}$.
  \end{enumerate}
\end{proposition}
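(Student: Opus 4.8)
The plan is to reduce the statement to the already-established properties of the slice maps $p_C$ and $\bar p_{A}$, of the state $\nu$, and to the coassociativity and commutation axioms for $\delta$ and $\bar\delta$; no new analysis is needed, only careful bookkeeping of tensor legs. First I would check that the three expressions for $p_{A,C}$ agree: inserting $p_C=(\nu\maxtimes\Id)(\Id\maxtimes\delta_C)$ into $(\Id\maxtimes p_C)\circ(\bar\delta_{A}\maxtimes\Id)$, and $\bar p_{A}=(\Id\maxtimes\nu)(\bar\delta_{A}\maxtimes\Id)$ into $(\bar p_{A}\maxtimes\Id)\circ(\Id\maxtimes\delta_C)$, both collapse to $(\Id\maxtimes\nu\maxtimes\Id)\circ(\bar\delta_{A}\maxtimes\delta_C)$, since $\bar\delta_{A}$ and $\delta_C$ act on the two disjoint outer factors.

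For the morphism claim I would use the factorization $p_{A,C}=(\bar p_{A}\maxtimes\Id)\circ(\Id\maxtimes\delta_C)$. The map $\bar p_{A}$ is already a morphism in $\cgcalg^{\cp}$, and $\delta_C$ is itself a morphism $C\to\cg\maxtimes C$: its two defining identities are precisely the coassociativity axiom $(\Delta_{\Gamma}\maxtimes\Id)\delta_C=(\Id\maxtimes\delta_C)\delta_C$ and the commutation axiom $(\delta_C\maxtimes\Id)\bar\delta_C=(\Id\maxtimes\bar\delta_C)\delta_C$ for $C$. Since $-\maxtimes-$ is functorial on $\cgcalg^{\cp}$, both $\Id\maxtimes\delta_C$ and $\bar p_{A}\maxtimes\Id$ are morphisms for the structure on $A\maxtimes\cg\maxtimes C$ (with the central $\cg$ inert), and hence so is their composite $p_{A,C}$. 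In particular $p_{A,C}$ is unital and completely positive, therefore contractive.

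Next I would show $p_{A,C}$ is a projection onto $A\gmaxtimes C$. For $x\in A\gmaxtimes C$, the defining relation $(\bar\delta_{A}\maxtimes\Id)(x)=(\Id\maxtimes\delta_C)(x)$ together with coassociativity of $\delta_C$ rewrites $(\bar\delta_{A}\maxtimes\delta_C)(x)$ as $(\Id\maxtimes\Delta_{\Gamma}\maxtimes\Id)(\Id\maxtimes\delta_C)(x)$; slicing by $\nu$ and invoking $\nu\circ\Delta_{\Gamma}=\epsilon_{\Gamma}$ and $(\epsilon_{\Gamma}\maxtimes\Id)\delta_C=\Id$ then gives $p_{A,C}(x)=x$. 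Conversely, starting from $p_{A,C}=(\Id\maxtimes p_C)\circ(\bar\delta_{A}\maxtimes\Id)$ and pushing $\delta_C$ through $p_C$ via the morphism property of $p_C$, combined with coassociativity of $\bar\delta_{A}$, I would verify $(\Id\maxtimes\delta_C)\circ p_{A,C}=(\bar\delta_{A}\maxtimes\Id)\circ p_{A,C}$, i.e. the range of $p_{A,C}$ lies in $A\gmaxtimes C$. Thus $p_{A,C}$ is a unital, completely positive (hence contractive) idempotent onto the $C^{*}$-subalgebra $A\gmaxtimes C$, and Tomiyama's theorem identifies it as a conditional expectation, which is exactly the asserted module identity $p_{A,C}(xyz)=xp_{A,C}(y)z$ for $x,z\in A\gmaxtimes C$ and $y\in A\maxtimes C$.

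For part (ii), naturality follows directly from the morphism identities $\bar\delta_{A'}\circ\phi=(\phi\maxtimes\Id)\bar\delta_{A}$ and $\delta_{C'}\circ\psi=(\Id\maxtimes\psi)\delta_C$ and the fact that $\nu$ is a fixed map: substituting these into $p_{A',C'}\circ(\phi\maxtimes\psi)$ and pulling $\phi$ and $\psi$ out past the $\nu$-slice yields $(\phi\maxtimes\psi)\circ p_{A,C}$, and since both sides take values in the respective cotensor products and $\phi\gmaxtimes\psi$ is the restriction of $\phi\maxtimes\psi$, this is the required square $(\phi\gmaxtimes\psi)\circ p_{A,C}=p_{A',C'}\circ(\phi\maxtimes\psi)$. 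The only genuine obstacle I anticipate is the tensor-leg bookkeeping in the range computation, where coassociativity of $\bar\delta_{A}$ and the morphism property of $p_C$ must be applied to the correct factors; once $p_{A,C}$ is known to be a contractive projection, the conditional-expectation property is immediate from Tomiyama.
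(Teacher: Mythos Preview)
Your proposal is correct and follows essentially the same approach as the paper: factor $p_{A,C}$ through $\bar p_{A}$ and $p_{C}$ to obtain the morphism property, verify $p_{A,C}$ restricts to the identity on $A\gmaxtimes C$ and has range in $A\gmaxtimes C$ via the coaction axioms and the identities for $\nu$, and then invoke Tomiyama's theorem for the conditional expectation property. The paper's range computation works directly with the $\nu$-identity $(\Id\maxtimes\nu)(\Delta_{\Gamma}\maxtimes\Id)=(\nu\maxtimes\Id)(\Id\maxtimes\Delta_{\Gamma})$ rather than the morphism property of $p_{C}$, but this is the same calculation reorganized.
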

\begin{proof}
  i) The equality follows immediately from the definitions
  and implies that $p_{A,C}$ is a morphism as claimed. Next,
  $p_{A,C}(A\maxtimes C) \subseteq A\gmaxtimes C$ because
\begin{align*}
  (\bar\delta_{A}\maxtimes \Id)\circ p_{A,C} &= (\Id \maxtimes
  \Id \maxtimes \nu \maxtimes \Id)\circ (\bar \delta_{A}
  \maxtimes \Id \maxtimes \Id \maxtimes \Id)\circ(\bar \delta_{A}
  \maxtimes
  \delta_{C}) \\
  &= (\Id \maxtimes \Id \maxtimes \nu \maxtimes \Id)\circ (\Id
  \maxtimes \Delta_{\Gamma} \maxtimes \Id \maxtimes
  \Id)\circ(\bar \delta_{A} \maxtimes
  \delta_{C}) \\
  &= (\Id \maxtimes \nu \maxtimes \Id \maxtimes \Id)\circ (\Id
  \maxtimes \Id \maxtimes \Delta_{\Gamma} \maxtimes
  \Id)\circ(\bar \delta_{A} \maxtimes \delta_{C}) = (\Id
  \maxtimes \delta_{C}) \circ p_{A,C}.
\end{align*}
On the other hand,  $  p_{A,C}(x) = (\bar p_{A}
  \maxtimes \Id) ((\Id \maxtimes
  \delta_{C})(x)) = (\bar p_{A}
  \maxtimes \Id) ((\delta_{A} \maxtimes
  \Id)(x)) = x$ for all $x\in A\gmaxtimes C$.
Thus,  $p_{A,C}$ is a completely positive projection from
$A\maxtimes C$ onto $A\gmaxtimes C$ and hence a conditional
expectation (see, e.g., \cite[Proposition 1.5.7]{brown-ozawa}).

ii) Straightforward.
\end{proof}

Denote by $\cgsalgu \subseteq \cgsalg$ the full subcategory
formed by all $(\complex,\Gamma)$-$*$-algebras that have an
enveloping $C^{*}$-algebra. We shall need an adjoint
pair of functors
\begin{align} \label{eq:c-functors}
  \xymatrix@C=40pt{
    \cgsalgu \ar@<+3pt>[r]^{\bfC} & \cgcalg. \ar@<+3pt>[l]^{\bfU}
  }
\end{align}

The functor $\bfC$ is defined as follows.  Let $A \in
\bgsalgu$. Using the universal property of $C^{*}(A)$, we
obtain unique $*$-homomorphisms $\delta_{C^{*}(A)} \colon
C^{*}(A) \to \cg \maxtimes C^{*}(A)$ and $\bar
\delta_{C^{*}(A)} \colon C^{*}(A) \to C^{*}(A) \maxtimes
\cg$ such that $\delta_{C^{*}(A)}(a) = \gamma \otimes a$ and
$\bar \delta_{C^{*}(A)}(a) = a\otimes \gamma'$ for all $a\in
A_{\gamma,\gamma'},\gamma,\gamma' \in A$, and with respect
to these $*$-homomorphisms, $C^{*}(A)$ becomes a
$(\complex,\Gamma)$-$C^{*}$-algebra.  Moreover, every
morphism $\pi\colon A\to C$ in $\cgsalgu$ extends uniquely
to a $*$-homomorphism $C^{*}(\pi) \colon C^{*}(A) \to
C^{*}(C)$ which is a morphism in $\cgcalg$.

 The functor $\bfU$ is defined as follows.
Let $A$ be a $(\complex,\Gamma)$-$C^{*}$-algebra and let
\begin{align*}
A_{\gamma,\gamma'}:= \{ a \in A : \delta(a) = \gamma
\otimes a, \bar\delta(a) = a\otimes \gamma'\} \subseteq A
\quad \text{for all } \gamma,\gamma' \in \Gamma.
\end{align*}
Then the sum $A_{*,*}:=\sum_{\gamma,\gamma'}
A_{\gamma,\gamma'} \subseteq A$ is a
$(\complex,\Gamma)$-$*$-algebra, and, every morphism
$\pi\colon A \to \complex$ of
$(\complex,\Gamma)$-$C^{*}$-algebras restricts to a morphism
$\pi_{*,*} \colon A_{*,*} \to C_{*,*}$ of
$(\complex,\Gamma)$-$*$-algebras.   We thus obtain a functor
$\bfU \colon \cgcalg \to \cgsalg$. 
\begin{lemma} \label{lemma:c-enveloping}
  $\bfU$ takes values in $\cgsalgu$.
\end{lemma}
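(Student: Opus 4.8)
The plan is to show that each element of $\bfU(A) = A_{*,*}$ has a finite universal seminorm $|{-}|$, which is precisely the condition for the enveloping $C^{*}$-algebra to exist. Since $A_{*,*}$ is a $*$-subalgebra of the $C^{*}$-algebra $A$, the inclusion $A_{*,*} \hookrightarrow A$ is itself a representation, so $|a| \geq \|a\|_{A}$; the real work is to bound $|a|$ from above, and the key idea is to reduce an arbitrary representation to its restriction to the unit component $A_{e,e}$.

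First I would record that $A_{e,e}$ is a $C^{*}$-subalgebra of $A$. Each $A_{\gamma,\gamma'}$ is closed, being the intersection of the kernels of the continuous maps $a \mapsto \delta_{A}(a) - \gamma\otimes a$ and $a \mapsto \bar\delta_{A}(a) - a\otimes \gamma'$. Since $\delta_{A}$ and $\bar\delta_{A}$ are $*$-homomorphisms, for $a,b\in A_{e,e}$ one has $\delta_{A}(ab) = (e\otimes a)(e\otimes b) = e\otimes ab$ and $\delta_{A}(a^{*}) = e\otimes a^{*}$, together with the analogous identities for $\bar\delta_{A}$, so $A_{e,e}$ is closed under multiplication and involution. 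Next, for any homogeneous $a\in A_{\gamma,\gamma'}$ the element $a^{*}a$ lies in $A_{e,e}$: using that $\delta_{A}$ is a $*$-homomorphism and $\gamma^{*}=\gamma^{-1}$ in $\cg$, we get $\delta_{A}(a^{*}a) = \delta_{A}(a)^{*}\delta_{A}(a) = (\gamma^{-1}\otimes a^{*})(\gamma\otimes a) = e\otimes a^{*}a$, and similarly $\bar\delta_{A}(a^{*}a) = a^{*}a\otimes e$.

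With these two facts the bound is immediate. Let $\pi\colon A_{*,*}\to D$ be any $*$-homomorphism into a $C^{*}$-algebra. Its restriction to the $C^{*}$-subalgebra $A_{e,e}\subseteq A_{*,*}$ is a $*$-homomorphism between $C^{*}$-algebras and hence contractive. Consequently, for homogeneous $a\in A_{\gamma,\gamma'}$ we have $\|\pi(a)\|^{2} = \|\pi(a^{*}a)\| \leq \|a^{*}a\|_{A} = \|a\|_{A}^{2}$, so $\|\pi(a)\|\leq \|a\|_{A}$. Writing an arbitrary $a\in A_{*,*}$ as a finite sum $a=\sum a_{\gamma,\gamma'}$ of homogeneous components and applying the triangle inequality gives $\|\pi(a)\|\leq \sum \|a_{\gamma,\gamma'}\|_{A}$, a bound independent of $\pi$. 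Hence $|a| < \infty$ for every $a\in A_{*,*}$, so $A_{*,*}=\bfU(A)$ admits an enveloping $C^{*}$-algebra and lies in $\cgsalgu$.

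The only delicate point, and the crux of the argument, is the reduction to $A_{e,e}$: once one sees that $a^{*}a$ lands in the $C^{*}$-subalgebra $A_{e,e}$, the otherwise uncontrolled representations $\pi$ are forced to be contractive there by the automatic continuity of $C^{*}$-homomorphisms, and everything else is bookkeeping with the triangle inequality. I do not expect any subtlety in the homogeneous decomposition, since by definition $A_{*,*}$ is the (algebraic) sum of the components $A_{\gamma,\gamma'}$, so every element is a finite sum of homogeneous ones and the estimate above applies to any such representation.
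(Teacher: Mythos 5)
Your proof is correct and follows essentially the same route as the paper's: restrict an arbitrary representation to the $C^{*}$-subalgebra $A_{e,e}$, use automatic contractivity there together with $a^{*}a\in A_{e,e}$ for homogeneous $a$, and finish by the triangle inequality on the finite homogeneous decomposition. You simply spell out the verifications (closedness of $A_{e,e}$, the computation of $\delta_{A}(a^{*}a)$) that the paper leaves implicit.
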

\begin{proof}
  Let $A$ be a $(\complex,\Gamma)$-$C^{*}$-algebra. Then for
  every $*$-representation $\pi$ of $A_{*,*}$, the
  restriction to the $C^{*}$-subalgebra $A_{e,e}$ is
  contractive and thus $\|\pi(a)\|^{2}=\|\pi(a^{*}a)\| \leq
  \|a^{*}a\|=\|a\|^{2}$ for all $a\in A_{\gamma,\gamma'}$
  $\gamma,\gamma'\in \Gamma$. Since such elements $a$ span
  $A_{*,*}$, we can conclude $|a'|<\infty$ for all $a'\in
  A_{*,*}$.
\end{proof}
For every $(\complex,\Gamma)$-$C^{*}$-algebra
$A$, the morphisms $p_{A}$ and
$\bar p_{A}$ yield a morphism
\begin{align*}
  P_{A} &:= \bar p_{A} \circ (p_{A} \maxtimes \Id) = p_{A}
  \circ (\Id \maxtimes \bar p_{A}) \colon \cg \maxtimes A
  \maxtimes \cg \to A && \text{in} && \cgcalg^{\cp}.
\end{align*}
\begin{lemma} \label{lemma:c-PA}
  \begin{enumerate}
  \item  Let $A \in \cgcalg$. Then for all 
    $\gamma,\gamma',\beta,\beta' \in \Gamma$,
   \begin{align*}
     P_{A}(\gamma \otimes A\otimes \gamma')
     &=A_{\gamma,\gamma'}, & P_{A}(\beta \otimes
     A_{\gamma,\gamma'} \otimes \beta') &=
     \delta_{\beta,\gamma}\delta_{\beta',\gamma'}A_{\gamma,\gamma'},
     & A&= \overline{A_{*,*}}.
   \end{align*} 
 \item Let $A \in \cgsalgu$. Then $
   C^{*}(A)_{\gamma,\gamma'} =
   \overline{A_{\gamma,\gamma'}}$ for all $\gamma,\gamma'\in
   \Gamma$.
  \end{enumerate}
\end{lemma}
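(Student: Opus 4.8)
The plan is to compute the maps $p_A$, $\bar p_A$ and $P_A$ directly on homogeneous elements, and then to leverage the fact (already established above) that $p_A$ and $\bar p_A$ are morphisms in $\cgcalg^{\cp}$ in order to control $P_A$ on all of $A$. For $a\in A_{\gamma,\gamma'}$ we have $\delta_A(a)=\gamma\otimes a$ and $\bar\delta_A(a)=a\otimes\gamma'$, so the defining property of $\nu$ gives at once
\[
p_A(\beta\otimes a)=\nu(\beta\otimes\gamma)\,a=\delta_{\beta,\gamma}\,a,\qquad \bar p_A(a\otimes\beta')=a\,\nu(\gamma'\otimes\beta')=\delta_{\beta',\gamma'}\,a.
\]
Feeding these through either factorisation of $P_A$ yields $P_A(\beta\otimes a\otimes\beta')=\delta_{\beta,\gamma}\delta_{\beta',\gamma'}a$, which is the second displayed identity; specialising to $\beta=\gamma$, $\beta'=\gamma'$ also gives the inclusion $A_{\gamma,\gamma'}\subseteq P_A(\gamma\otimes A\otimes\gamma')$.

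The crux, and the step I expect to be the main obstacle, is the reverse inclusion: for an \emph{arbitrary} $a\in A$ the element $b:=P_A(\gamma\otimes a\otimes\gamma')$ must again be homogeneous of degree $(\gamma,\gamma')$. To see $\delta_A(b)=\gamma\otimes b$ I would write $P_A=\bar p_A\circ(p_A\maxtimes\Id)$, apply $\delta_A$, and transport it inwards using the morphism identities $\delta_A\circ\bar p_A=(\Id\maxtimes\bar p_A)\circ(\delta_A\maxtimes\Id)$ and $\delta_A\circ p_A=(\Id\maxtimes p_A)\circ(\Delta_{\Gamma}\maxtimes\Id)$; coassociativity of $\Delta_{\Gamma}$ then collapses the extra $\cg$-leg to the single group element $\gamma$. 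The symmetric computation with $P_A=p_A\circ(\Id\maxtimes\bar p_A)$ and the identities for $\bar\delta_A\circ p_A$ and $\bar\delta_A\circ\bar p_A$ yields $\bar\delta_A(b)=b\otimes\gamma'$, so $b\in A_{\gamma,\gamma'}$. This establishes the first identity; the difficulty here is purely bookkeeping, namely keeping straight which tensor leg each map acts on.

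For the third identity $A=\overline{A_{*,*}}$ I would show $P_A$ is surjective by exhibiting the right inverse $\iota:=(\delta_A\maxtimes\Id)\circ\bar\delta_A$. Indeed, using $(\Id\maxtimes\bar p_A)\circ(\delta_A\maxtimes\Id)=\delta_A\circ\bar p_A$ together with $\bar p_A\circ\bar\delta_A=\Id$ and $p_A\circ\delta_A=\Id$ one computes $P_A\circ\iota=p_A\circ\delta_A\circ\bar p_A\circ\bar\delta_A=\Id$. Since $P_A$ is completely positive, hence norm-continuous, and the algebraic span of the elements $\gamma\otimes a\otimes\gamma'$ is dense in $\cg\maxtimes A\maxtimes\cg$, the image of this dense subspace is dense in $A$; but by the first identity that image is precisely the algebraic sum $A_{*,*}=\sum_{\gamma,\gamma'}A_{\gamma,\gamma'}$, so $A=\overline{A_{*,*}}$.

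Finally, for part (ii) the inclusion $\overline{A_{\gamma,\gamma'}}\subseteq C^{*}(A)_{\gamma,\gamma'}$ is immediate, since $C^{*}(A)_{\gamma,\gamma'}$ is closed (being cut out by the continuous conditions $\delta(\cdot)=\gamma\otimes(\cdot)$ and $\bar\delta(\cdot)=(\cdot)\otimes\gamma'$) and contains $A_{\gamma,\gamma'}$ by the construction of $\bfC$. For the converse I would apply part (i) to the $(\complex,\Gamma)$-$C^{*}$-algebra $C^{*}(A)$: any $b\in C^{*}(A)_{\gamma,\gamma'}$ satisfies $P_{C^{*}(A)}(\gamma\otimes b\otimes\gamma')=b$ by the second identity, whereas for $a\in A$, which is a finite sum of homogeneous components since $A=A_{*,*}$, the same map returns the component $a_{\gamma,\gamma'}\in A_{\gamma,\gamma'}$. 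Approximating $b$ by elements of $A$ and invoking continuity of $P_{C^{*}(A)}$ then exhibits $b$ as a limit of elements of $A_{\gamma,\gamma'}$, giving $b\in\overline{A_{\gamma,\gamma'}}$ and completing the proof.
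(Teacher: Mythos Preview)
Your proof is correct and follows essentially the same route as the paper. The paper compresses your computation showing $P_A(\gamma\otimes A\otimes\gamma')\subseteq A_{\gamma,\gamma'}$ into the single observation that $P_A$ is a morphism in $\cgcalg^{\cp}$ and $\Delta_\Gamma(\gamma)=\gamma\otimes\gamma$, and it takes surjectivity of $P_A$ for granted (the right inverse you write down is exactly what justifies it); your explicit reduction of part~(ii) to part~(i) via the continuous projection $a\mapsto P_{C^*(A)}(\gamma\otimes a\otimes\gamma')$ is a clean way to spell out what the paper dismisses as ``follows similarly''.
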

\begin{proof}
  We only prove i); assertion ii) follows similarly. First,
  $P_{A}(\gamma \otimes A \otimes \gamma') \subseteq
  A_{\gamma,\gamma'}$ because $P_{A}$ is a morphism in
  $\cgcalg$ and $\Delta_{\Gamma}(\gamma'')=\gamma''\otimes
  \gamma''$ for $\gamma''=\gamma,\gamma'$ .  This inclusion,
  the relation $C^{*}(\Gamma) \maxtimes A\maxtimes
  C^{*}(\Gamma) = \overline{\sum_{\gamma,\gamma'}
    \gamma\otimes A\otimes \gamma' }$ and continuity and
  surjectivity of $P_{A}$ imply $\overline{A_{*,*}}=A$. The
  equation $P_{A}(\beta \otimes A_{\gamma,\gamma'} \otimes
  \beta') =
  \delta_{\beta,\gamma}\delta_{\beta',\gamma'}A_{\gamma,\gamma'}$
  follows from the definitions and implies that the
  inclusion $P_{A}(\gamma \otimes A \otimes \gamma')
  \subseteq A_{\gamma,\gamma'}$ is an equality.
\end{proof}

 For every $A$ in $\cgsalgu$ and $C$ in $\cgcalg$,
we get canonical morphisms $\eta_{A} \colon A \to
C^{*}(A)_{*,*}$ in $\cgsalgu$ and $\epsilon_{C} \colon
C^{*}(C_{*,*}) \to C$ in $\cgcalg$.
\begin{proposition} \label{proposition:c-adjoints}
  The functors $\bfC$ and $\bfU$ are adjoint, where the unit
  and counit of the adjunction are the families
  $(\eta_{A})_{A}$ and $(\epsilon_{C})_{C}$,
  respectively. Furthermore, $\bfU$ is faithful.
\end{proposition}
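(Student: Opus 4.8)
The plan is to verify the two triangle identities for the prescribed unit $(\eta_{A})_{A}$ and counit $(\epsilon_{C})_{C}$; by the density statements in Lemma~\ref{lemma:c-PA}, both reduce to comparing $*$-homomorphisms on a generating set. Recall that $\eta_{A}\colon A\to C^{*}(A)_{*,*}$ is the corestriction of the universal representation $A\to C^{*}(A)$, which is available because $A$ is $\Gamma^{\ev}$-graded and $C^{*}(A)_{\gamma,\gamma'}=\overline{A_{\gamma,\gamma'}}$ by Lemma~\ref{lemma:c-PA}(ii), while $\epsilon_{C}\colon C^{*}(C_{*,*})\to C$ is the factorization of the inclusion $C_{*,*}\hookrightarrow C$ through the universal representation $C_{*,*}\to C^{*}(C_{*,*})$.

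First I would check the identity $(\epsilon_{C})_{*,*}\circ\eta_{C_{*,*}}=\Id_{C_{*,*}}$ for $C\in\cgcalg$. For a homogeneous $a\in C_{*,*}$ of degree $(\gamma,\gamma')$, the map $\eta_{C_{*,*}}$ carries $a$ to its image in $C^{*}(C_{*,*})_{\gamma,\gamma'}$, and $\epsilon_{C}$ returns this image to $a$, now regarded in $C_{\gamma,\gamma'}\subseteq C_{*,*}$; restricting to spectral subspaces shows the composite is the identity on each homogeneous component, hence on $C_{*,*}$. For the complementary identity $\epsilon_{C^{*}(A)}\circ C^{*}(\eta_{A})=\Id_{C^{*}(A)}$ with $A\in\cgsalgu$, both sides are $*$-homomorphisms, so I compare them on the dense image of $A$ in $C^{*}(A)$: there $C^{*}(\eta_{A})$ sends the class of $a$ to the class of $\eta_{A}(a)\in C^{*}(A)_{*,*}$, and $\epsilon_{C^{*}(A)}$, being the extension of the inclusion $C^{*}(A)_{*,*}\hookrightarrow C^{*}(A)$, returns $\eta_{A}(a)$, i.e.\ the class of $a$. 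Continuity and density then yield the identity on all of $C^{*}(A)$.

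Faithfulness of $\bfU$ follows at once from $C=\overline{C_{*,*}}$ in Lemma~\ref{lemma:c-PA}(i): if $\phi,\psi\colon C\to D$ are morphisms in $\cgcalg$ with $\phi|_{C_{*,*}}=\psi|_{C_{*,*}}$, then, being continuous, they agree on the closure $C$. I expect the only delicate point to be the interplay of the two universal properties---that of $C^{*}(-)$ and that of the spectral-subspace functor $(-)_{*,*}$---together with the observation that a $*$-homomorphism produced by the universal property of $C^{*}(A)$ is automatically a morphism in $\cgcalg$; this compatibility with the coactions is checked on generators, where $\delta_{C^{*}(A)}(a)=\gamma\otimes a$ and $\bar\delta_{C^{*}(A)}(a)=a\otimes\gamma'$ for $a\in A_{\gamma,\gamma'}$, and then propagated by continuity.
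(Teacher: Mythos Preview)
Your proof is correct. The paper argues the adjunction slightly differently: instead of verifying the two triangle identities, it directly exhibits the hom-set bijection, observing that for $A\in\cgsalgu$ and $C\in\cgcalg$ the assignment $\pi\mapsto\pi_{*,*}\circ\eta_{A}$ is a bijection from morphisms $C^{*}(A)\to C$ to morphisms $A\to C_{*,*}$, because the universal representation $A\to C^{*}(A)$ has dense image and is universal. Both routes are standard and of comparable length; yours makes the role of the unit and counit more explicit, while the paper's packages the universal property of $C^{*}(A)$ into a single bijection. Your faithfulness argument, via density of $C_{*,*}$ in $C$ from Lemma~\ref{lemma:c-PA}(i), is exactly what the paper does.
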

\begin{proof}
  Let $A \in \bgsalgu$ and $C\in \bgcalg$. Since the
  representation $A \to C^{*}(A)$ has dense image and is
  universal, the assignment $(C^{*}(A) \xrightarrow{\pi} C)
  \mapsto (A \xrightarrow{\eta_{A}} C^{*}(A)_{*,*}
  \xrightarrow{\pi_{*,*}} C_{*,*})$ yields a bijective
  correspondence between morphisms $C^{*}(A)\to C$ and
  morphisms $A \to C_{*,*}$.    The functor $\bfU$   is
  faithful because  $A_{*,*} \subseteq A$ is dense.
\end{proof}
\begin{remark} \label{remark:c-monoidal}
 Similar arguments as in the proof of Lemma
\ref{lemma:c-PA} show that  
for all $A,C \in \cgcalg$, $D,E \in \cgsalgu$ and
  all $\gamma,\gamma'' \in \Gamma$,
 \begin{align*}
   (A \gmaxtimes C)_{\gamma,\gamma''} &=
   \overline{\sum_{\gamma'} A_{\gamma,\gamma'} \otimes
     C_{\gamma',\gamma''}}, & (C^{*}(D) \gmaxtimes
   C^{*}(E))_{\gamma,\gamma'} &= \overline{\sum_{\gamma'}
     D_{\gamma,\gamma'} \otimes
     E_{\gamma',\gamma''}}.  \end{align*}
\end{remark}
A \emph{short exact sequence} of
$(\complex,\Gamma)$-$C^{*}$-algebra is a sequence of
morphisms $J \xrightarrow{\iota} A \xrightarrow{\pi} C$ in
$\cgcalg$ such that $\ker\iota=0$, $\iota(J)=\ker \pi$ and $\pi(A)=C$. A functor
on $\cgcalg$ is \emph{exact} if it maps  short exact
sequences to short exact sequences.
\begin{proposition} \label{proposition:c-exact}
  For every $(\complex,\Gamma)$-$C^{*}$-algebra $D$, the
  functors $-\gmaxtimes D$ and $D\gmaxtimes -$ on $\cgcalg$
  are exact.
\end{proposition}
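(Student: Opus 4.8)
The plan is to reduce exactness at the level of the cotensor product to the (standard) exactness of the maximal tensor product, and to bridge the two using the natural family of conditional expectations $p_{A,C}$ constructed above. Fix a short exact sequence $J \xrightarrow{\iota} A \xrightarrow{\pi} C$ in $\cgcalg$ and a $(\complex,\Gamma)$-$C^{*}$-algebra $D$; I will treat $-\gmaxtimes D$, the functor $D \gmaxtimes -$ being symmetric. The starting point is that the maximal tensor product is exact (see, e.g., \cite{brown-ozawa}), so that
$$J \maxtimes D \xrightarrow{\iota \maxtimes \Id} A \maxtimes D \xrightarrow{\pi \maxtimes \Id} C \maxtimes D$$
is short exact: $\iota \maxtimes \Id$ is injective, $\pi \maxtimes \Id$ is surjective, and $\ker(\pi \maxtimes \Id) = (\iota \maxtimes \Id)(J \maxtimes D)$, the latter a closed ideal since the isometric map $\iota\maxtimes\Id$ has closed range.

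I will combine this with three facts about $p$ established above: $A \gmaxtimes D$ is exactly the range of the projection $p_{A,D}$; the restriction of $p_{A,D}$ to $A \gmaxtimes D$ is the identity; and $p$ is natural, so that $(\iota \gmaxtimes \Id) \circ p_{J,D} = p_{A,D} \circ (\iota \maxtimes \Id)$ and $(\pi \gmaxtimes \Id) \circ p_{A,D} = p_{C,D} \circ (\pi \maxtimes \Id)$. Injectivity of $\iota \gmaxtimes \Id$ is then immediate, since it is the restriction of the injective map $\iota \maxtimes \Id$. For surjectivity of $\pi \gmaxtimes \Id$, given $z \in C \gmaxtimes D$ I would choose $w \in A \maxtimes D$ with $(\pi \maxtimes \Id)(w) = z$ and set $w' := p_{A,D}(w) \in A \gmaxtimes D$; naturality together with $p_{C,D}(z) = z$ yields $(\pi \gmaxtimes \Id)(w') = p_{C,D}((\pi\maxtimes\Id)(w)) = z$.

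It remains to identify the kernel, the least formal of the three verifications. Since $\pi \gmaxtimes \Id$ is the restriction of $\pi \maxtimes \Id$, one has $\ker(\pi \gmaxtimes \Id) = (A \gmaxtimes D) \cap (\iota \maxtimes \Id)(J \maxtimes D)$. The inclusion of $(\iota \gmaxtimes \Id)(J \gmaxtimes D)$ into this set is clear, because $\iota \gmaxtimes \Id$ maps into $A \gmaxtimes D$ and is a restriction of $\iota \maxtimes \Id$; for the reverse inclusion I would take $x = (\iota \maxtimes \Id)(y)$ with $y \in J \maxtimes D$ and $x \in A \gmaxtimes D$, and use $p_{A,D}(x) = x$ together with naturality to write $x = p_{A,D}((\iota \maxtimes \Id)(y)) = (\iota \gmaxtimes \Id)(p_{J,D}(y))$, where $p_{J,D}(y) \in J \gmaxtimes D$. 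This exhibits $x$ in the image of $\iota \gmaxtimes \Id$ and finishes the kernel computation. The one genuinely nontrivial input is the exactness of the maximal tensor product; everything else is a formal consequence of the fact that $p$ is a natural family of conditional expectations onto the cotensor products, and the main point to keep in view is that $p_{A,D}$ is an honest projection onto $A \gmaxtimes D$, so that applying it to a preimage taken in the maximal tensor product always lands back in the cotensor product.
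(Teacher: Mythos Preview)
Your proof is correct and follows essentially the same approach as the paper: reduce to exactness of the maximal tensor product and transfer the three properties (injectivity, surjectivity, kernel identification) to the cotensor product using naturality of the conditional expectations $p_{-,-}$ together with the fact that each $p_{A,D}$ is a projection onto $A\gmaxtimes D$. The paper compresses the kernel computation into the single chain $\ker(\pi\gmaxtimes\Id)=p_{A,D}(\ker(\pi\maxtimes\Id))=(\iota\gmaxtimes\Id)(p_{J,D}(J\maxtimes D))=(\iota\gmaxtimes\Id)(J\gmaxtimes D)$, but this is exactly your argument written more tersely.
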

\begin{proof}
  If $J \xrightarrow{\iota} A \xrightarrow{\pi} C$ is a
  short exact sequence in $\cgcalg$, then $J \maxtimes D
  \xrightarrow{\iota \maxtimes \Id} A\maxtimes D
  \xrightarrow{\pi\maxtimes \Id} C \maxtimes D$ is exact
  (see, e.g., \cite[Proposition 3.7]{brown-ozawa}), whence
  $\ker (\iota \gmaxtimes \Id) = 0$ and
  \begin{align*}
    \ker(\pi \gmaxtimes \Id) = p_{A,D}(\ker (\pi \maxtimes
    \Id)) &= p_{A,D}((\iota \maxtimes \Id)(J\maxtimes D)) \\ &=
    (\iota \gmaxtimes \Id)(p_{J,D}(J\maxtimes D)) =     (\iota
    \gmaxtimes \Id)(J\gmaxtimes D), \\
    (\pi \gmaxtimes \Id)(A \gmaxtimes D) &= (\pi \gmaxtimes
    \Id)(p_{A, D}(A \maxtimes D)) \\ &= p_{C,D} ((\pi \maxtimes
    \Id)(A\maxtimes D)) = p_{C,D}(C\maxtimes D) = C\gmaxtimes
    D. \qedhere
  \end{align*}
\end{proof}
\subsection{The monoidal category of $(B,\Gamma)$-$C^{*}$-algebras}

\label{section:cb}

We now define an analogue of $(B,\Gamma)$-$*$-algebras on
the level of universal $C^{*}$-algebras, and construct a
monoidal product  which is unital and associative.
\begin{definition} \label{definition:c-bg-algebra} A
  \emph{$(B,\Gamma)$-$C^{*}$-algebra} is a
  $(\complex,\Gamma)$-$C^{*}$-algebra $A$ equipped with
  unital $*$-homomorphisms $r_{A}, s_{A}\colon B \to
  A_{e,e}$ such that $A_{*,*}$ is a $(B,\Gamma)$-$*$-algebra
  with respect to the map $r_{A}\times s_{A} \colon B\otimes
  B \to A_{e,e}$, $b\otimes b' \mapsto r_{A}(b)s_{A}(b')$.
  A \emph{morphism} of $(B,\Gamma)$-$C^{*}$-algebras is a
  $B\otimes B$-linear morphism of
  $(\complex,\Gamma)$-$C^{*}$-algebras.  We denote by
  $\bgcalg$ the category of all
  $(B,\Gamma)$-$C^{*}$-algebras.  Replacing
  $*$-homomorphisms by c.p.-maps, we define c.p.-maps of
  $(B,\Gamma)$-$C^{*}$-algebras and the category
  $\bgcalg^{\cp}$.
\end{definition}

Denote by $\bgsalgu \subseteq \bgsalg$ the full subcategory
formed by all $(B,\Gamma)$-$*$-algebras that have an
enveloping $C^{*}$-algebra. This category is related to
$\bgcalg$ as follows.  If $C\in \bgcalg$, then $C_{*,*} \in
\bgsalgu$ by Lemma \ref{lemma:c-enveloping}. Conversely, if
$A \in \bgsalgu$, then $C^{*}(A)$ carries a natural
structure of a $(B,\Gamma)$-$C^{*}$-algebra.  The
canonical maps $\eta_{A} \colon A \to C^{*}(A)_{*,*}$ and
$\epsilon_{C} \colon C^{*}(C_{*,*}) \to C$ are morphisms in
$\bgsalgu$ and $\bgcalg$, respectively, and  Proposition
\ref{proposition:c-adjoints} therefore implies:
\begin{corollary} \label{corollary:c-bg-adjoints}
  The assignments $A\mapsto C^{*}(A), \pi \mapsto
  C^{*}(\pi)$ and $A \mapsto A_{*,*}, \pi \mapsto \pi_{*,*}$
  form a pair of adjoint functors
  \begin{align*}
    \xymatrix@C=40pt{ \bgsalgu \ar@<+3pt>[r]^{\bfC} &
      \bgcalg \ar@<+3pt>[l]^{\bfU,}  }
  \end{align*}
  where the unit and counit of the adjunction are the
  families $(\eta_{A})_{A}$ and $(\epsilon_{C})_{C}$,
  respectively. Furthermore, $\bfU$ is faithful.
\end{corollary}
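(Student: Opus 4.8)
The plan is to deduce the $(B,\Gamma)$-version of the adjunction directly from the $(\complex,\Gamma)$-version established in Proposition \ref{proposition:c-adjoints}, by checking that every ingredient is compatible with the additional structure maps $r,s\colon B\to A_{e,e}$. First I would verify that both functors are well defined on the $(B,\Gamma)$-level. If $C\in\bgcalg$, then $C_{*,*}$ is a $(B,\Gamma)$-$*$-algebra by definition and lies in $\bgsalgu$ by Lemma \ref{lemma:c-enveloping}, so $\bfU$ restricts to a functor $\bgcalg\to\bgsalgu$. If $A\in\bgsalgu$, then $C^{*}(A)$ is a $(B,\Gamma)$-$C^{*}$-algebra as noted in the text: the structure maps $r_{A},s_{A}\colon B\to A_{e,e}$ compose with $A\to C^{*}(A)$ to give $*$-homomorphisms of $C^{*}$-algebras into $C^{*}(A)_{e,e}=\overline{A_{e,e}}$ by Lemma \ref{lemma:c-PA} ii). Moreover, for a $B\otimes B$-linear morphism $\pi\colon A\to C$ in $\bgsalgu$, the extension $C^{*}(\pi)$ satisfies $C^{*}(\pi)\circ r_{C^{*}(A)}=r_{C^{*}(C)}$ and likewise for $s$, so it is $B\otimes B$-linear by multiplicativity; hence $\bfC$ restricts to a functor $\bgsalgu\to\bgcalg$.

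Next I would recall from Proposition \ref{proposition:c-adjoints} the natural bijection $\Hom_{\cgcalg}(C^{*}(A),C)\cong\Hom_{\cgsalgu}(A,C_{*,*})$, $\pi\mapsto\pi_{*,*}\circ\eta_{A}$, with unit $(\eta_{A})_{A}$ and counit $(\epsilon_{C})_{C}$. The crux is to show that this bijection restricts to the $B\otimes B$-linear morphisms on each side. Since the structure maps take values in the unit component $A_{e,e}$ and the algebras are unital, a $*$-homomorphism $\pi\colon C^{*}(A)\to C$ is $B\otimes B$-linear precisely when $\pi\circ r_{C^{*}(A)}=r_{C}$ and $\pi\circ s_{C^{*}(A)}=s_{C}$. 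If $\pi$ satisfies these, then so does $\pi_{*,*}\circ\eta_{A}$, because $\eta_{A}$ intertwines the structure maps of $A$ and $C^{*}(A)$. Conversely, if $\psi:=\pi_{*,*}\circ\eta_{A}$ is $B\otimes B$-linear, then for all $b\in B$ one has $\pi(r_{C^{*}(A)}(b))=\pi(\eta_{A}(r_{A}(b)))=\psi(r_{A}(b))=r_{C}(b)$, and similarly for $s$, so $\pi$ is $B\otimes B$-linear. Thus the bijection restricts to a natural bijection $\Hom_{\bgcalg}(C^{*}(A),C)\cong\Hom_{\bgsalgu}(A,C_{*,*})$, whose unit and counit are the families $(\eta_{A})_{A}$ and $(\epsilon_{C})_{C}$, since these maps are morphisms in $\bgsalgu$ and $\bgcalg$ respectively.

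Faithfulness of $\bfU$ then carries over verbatim from Proposition \ref{proposition:c-adjoints}: because $A_{*,*}\subseteq A$ is dense, any two morphisms in $\bgcalg$ agreeing after applying $\bfU$ agree on a dense subalgebra and hence coincide.

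I expect the only genuine (and minor) obstacle to be the reverse implication in the restriction argument, namely that $B\otimes B$-linearity of the restricted morphism $\psi$ forces $B\otimes B$-linearity of $\pi$ on all of $C^{*}(A)$. The point is that this does \emph{not} require a density argument but follows instead from the fact that $r_{A},s_{A}$ land in $A_{e,e}$ together with multiplicativity of $\pi$: once $\pi$ is known to intertwine $r$ and $s$ on $B$, the identity $\pi(r_{C^{*}(A)}(b)s_{C^{*}(A)}(b')x)=r_{C}(b)s_{C}(b')\pi(x)$ is immediate for all $x\in C^{*}(A)$. Everything else is a routine transfer of the established $(\complex,\Gamma)$-adjunction through the identification of $B\otimes B$-linear morphisms.
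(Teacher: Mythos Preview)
Your proposal is correct and follows exactly the approach the paper indicates: the paper does not give an explicit proof but simply notes that, once the canonical maps $\eta_{A}$ and $\epsilon_{C}$ are morphisms in $\bgsalgu$ and $\bgcalg$ respectively, the result is immediate from Proposition~\ref{proposition:c-adjoints}. You have spelled out precisely the compatibility check the paper leaves implicit---that the adjunction bijection restricts to $B\otimes B$-linear morphisms---and your argument for this restriction is sound.
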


Let $A$ and $C$ be $(B,\Gamma)$-$C^{*}$-algebras. Then the
$(\complex,\Gamma)$-$C^{*}$-algebra $A \gmaxtimes C$ is a 
$(B,\Gamma)$-$C^{*}$-algebra with respect to the $*$-homomorphisms $r \colon b\mapsto r_{A}(b) \maxtimes 1$
and $s \colon b' \mapsto 1
\maxtimes s_{C}(b')$, and the assignments $(A,C) \mapsto
A\gmaxtimes C$ and $(\phi,\psi)\mapsto \phi\gmaxtimes \psi$
define a product $-\gmaxtimes-$ on $\bgcalg^{(\cp)}$ that is
associative in the obvious sense. Using the map
\begin{align*}
  t_{A,C} \colon B \to A\gmaxtimes C, \quad b\mapsto
  s_{A}(b)\maxtimes 1 - 1\maxtimes r_{C}(b),
\end{align*}
we define an ideal $(t_{A,C}(B)) \subseteq A\gmaxtimes
C$. Since $t_{A,C}(B) \subseteq (A \gmaxtimes C)_{e,e}$, the quotient
\begin{align*}
  A \gbmaxtimes C &:= (A \gmaxtimes C)/(t_{A,C}(B)).
\end{align*}
inherits the $(B,\Gamma)$-$C^{*}$-algebra structure of
$A\gmaxtimes C$. For every pair of morphisms $\phi \colon
A\to C$ and $\psi \colon D\to E$ in $\bgcalg^{(\cp)}$, the
morphism $\phi \gmaxtimes \psi$ maps $t_{A,D}(B)$ to
$t_{C,E}(B)$ and thus factorizes to a morphism $\phi
\gbmaxtimes \psi \colon A\gbmaxtimes D \to C\gbmaxtimes
E$. We thus obtain a product $-\gbmaxtimes-$ on
$\bgcalg^{(\cp)}$, and the canonical quotient map
$q_{A,C} \colon A \gmaxtimes C \to A\gbmaxtimes C$ yields a
natural transformation $q=(q_{A,C})_{A,C}$ from
$-\gmaxtimes-$ to $-\gbmaxtimes-$.
\begin{remarks}
  \begin{enumerate}
  \item For all $(B,\Gamma)$-$C^{*}$-algebras $A,C$, we have
    $ [t_{A,C}(B) (A\gmaxtimes C)] = (t_{A,C}(B)) =
    [(A\gmaxtimes C)t_{A,C}(B)]$.  Indeed, a short
    calculation shows that for all $\gamma,\gamma',\gamma''
    \in \Gamma$, $a \in A_{\gamma,\gamma'}$, $c\in
    C_{\gamma',\gamma''}$, $b\in B$, $(a \otimes
    c)t_{A,C}(b) = t_{A,C}(\gamma'(b))(a\otimes c)$, and now
    the assertion follows from Remark
    \ref{remark:c-monoidal}.
  \item For every $(B,\Gamma)$-$C^{*}$-algebra $D$, the
    functors $-\gbmaxtimes D$ and $D\gbmaxtimes -$ on
    $\bgcalg$ preserve surjections because the functors
    $-\gmaxtimes D$ and $D\gmaxtimes -$ do so by Proposition
    \ref{proposition:c-exact}.
  \end{enumerate}
\end{remarks}

We show that the full crossed product $B\hat\rtimes
\Gamma:=C^{*}(B\rtimes \Gamma)$ is the unit for the product
$-\gbmaxtimes-$.  Denote by $\iota_{\Gamma} \colon
C^{*}(\Gamma) \to B\hat\rtimes \Gamma$ the natural
inclusion.
\begin{proposition}
  \begin{enumerate}
  \item For each $(B,\Gamma)$-$C^{*}$-algebra $A$, the
    $*$-homomorphisms
    \begin{align*}
      L_{A} &\colon A \xrightarrow{\delta_{A}} \cg
      \gmaxtimes A \xrightarrow{\iota_{\Gamma} \gmaxtimes
        \Id} (B\hat\rtimes \Gamma) \gmaxtimes A
      \xrightarrow{q_{B\hat\rtimes \Gamma,A}} (B\hat\rtimes
      \Gamma) \gbmaxtimes A
\end{align*}
and
\begin{align*}
      R_{A} &\colon A \xrightarrow{\bar \delta_{A}}
      A\gmaxtimes \cg \xrightarrow{\Id\gmaxtimes
        \iota_{\Gamma}} A \gmaxtimes (B\hat\rtimes \Gamma)
      \xrightarrow{q_{A,B\hat\rtimes \Gamma}} A \gbmaxtimes
      (B\hat\rtimes \Gamma),
    \end{align*}
    are isomorphisms of $(B,\Gamma)$-$C^{*}$-algebras.
  \item The families $R=(R_{A})_{A}$ and $L=(L_{A})_{A}$
    form natural isomorphism from $\Id$ to $((B\hat \rtimes
    \Gamma)\gbmaxtimes -)$ and $(-\gbmaxtimes (B\hat \rtimes
    \Gamma))$, respectively, regarded as functors on
    $\bgcalg^{(\cp)}$.
  \end{enumerate}
\end{proposition}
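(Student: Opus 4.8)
The plan is to prove both parts at once by reducing the unit property to a single isomorphism statement about the cotensor product and then inverting an explicit structure map. First I would observe that $L_{A}$ and $R_{A}$ are morphisms in $\bgcalg^{(\cp)}$: each is a composite of the isomorphism $\delta_{A}$ (resp.\ $\bar\delta_{A}$) onto $\cg \gmaxtimes A$ (resp.\ $A\gmaxtimes \cg$), of $\iota_{\Gamma}\gmaxtimes \Id$ (resp.\ $\Id \gmaxtimes \iota_{\Gamma}$) obtained by applying the functor $-\gmaxtimes A$ to the morphism $\iota_{\Gamma}$, and of the quotient morphism $q$; the required $B\otimes B$-linearity is checked on $r_{A}(b)$ and $s_{A}(b)$, using that both are homogeneous of degree $(e,e)$ and that in the quotient $1\maxtimes r_{A}(b)\equiv b\maxtimes 1$. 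Since $\delta_{A}$ and $\bar\delta_{A}$ are already isomorphisms onto the respective cotensor products, it suffices to show that
\[
\rho_{A} := q_{B\hat\rtimes \Gamma,A}\circ (\iota_{\Gamma}\gmaxtimes \Id)\colon \cg\gmaxtimes A \to (B\hat\rtimes \Gamma)\gbmaxtimes A
\]
is an isomorphism; I would treat $\rho_{A}$ and deduce the statement for $R_{A}$ by the symmetric argument, interchanging $r_{A}\leftrightarrow s_{A}$, $\delta_{A}\leftrightarrow \bar\delta_{A}$ and left $\leftrightarrow$ right throughout.

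Surjectivity of $\rho_{A}$ is immediate: in the quotient defining $\gbmaxtimes$ one has $s_{B\hat\rtimes \Gamma}(b)\maxtimes 1 = b\maxtimes 1 \equiv 1\maxtimes r_{A}(b)$, so for homogeneous $a\in A_{\gamma,\gamma'}$ one gets $b\gamma\maxtimes a \equiv \gamma\maxtimes r_{A}(b)a$, which lies in the image of $\rho_{A}$; by Remark \ref{remark:c-monoidal} such elements span $(B\hat\rtimes \Gamma)\gmaxtimes A$ densely, and the image of a $*$-homomorphism is closed. For injectivity I would construct a one-sided inverse on the dense homogeneous $*$-subalgebra. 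By Remark \ref{remark:c-monoidal} this subalgebra is $\mathcal A_{0}:=\sum_{\gamma,\gamma'}B\gamma\otimes A_{\gamma,\gamma'}$, and I set $\beta_{A}(b\gamma\maxtimes a):=\gamma\maxtimes r_{A}(b)a$; since $r_{A}(b)a\in A_{\gamma,\gamma'}$ this lands in $\cg\gmaxtimes A$. A direct computation shows $\beta_{A}$ is a $*$-homomorphism: multiplicativity is exactly the $(B,\Gamma)$-algebra relation $a\,r_{A}(b')=r_{A}(\partial^{r}_{a}(b'))a$, which absorbs the crossed-product twist, and $*$-compatibility uses that the involution reverses the bidegree. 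Moreover $\beta_{A}$ kills the generators $t_{B\hat\rtimes \Gamma,A}(B)$, so it descends to a $*$-homomorphism $\bar\beta_{A}$ on the quotient with $\bar\beta_{A}\circ \rho_{A}=\Id$ on $\mathcal A_{0}$.

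The hard part, and the main obstacle, is the continuity of $\beta_{A}$, i.e.\ its extension to the $C^{*}$-completion $(B\hat\rtimes \Gamma)\gmaxtimes A$. No such map exists on the full maximal tensor product, because $b\mapsto 1\maxtimes r_{A}(b)$ and $\gamma\mapsto \gamma\maxtimes 1$ violate the crossed-product relation; the formula is consistent only after the gradings are aligned inside the cotensor product. I would resolve this by showing that $(B\hat\rtimes \Gamma)\gmaxtimes A$ carries the \emph{maximal} $C^{*}$-norm on $\mathcal A_{0}$, so that the $*$-homomorphism $\beta_{A}\colon \mathcal A_{0}\to \cg\gmaxtimes A$ is automatically contractive and extends. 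The natural route is via the adjunction $\bfC\dashv \bfU$ (Proposition \ref{proposition:c-adjoints}, Corollary \ref{corollary:c-bg-adjoints}): its counit gives a surjection $\bfC(\mathcal A_{0})\to (B\hat\rtimes \Gamma)\gmaxtimes A$ which, by Lemma \ref{lemma:c-PA}(ii), restricts to an isomorphism on each graded component $\mathcal A_{\gamma,\gamma'}$. The essential point to verify is that this surjection is injective, equivalently that the family of graded conditional expectations $P(\gamma\otimes-\otimes\gamma')$ of Lemma \ref{lemma:c-PA}, which are natural and hence commute with the surjection, separates points; this faithfulness is the delicate step of the whole argument. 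Granting it, $\rho_{A}$ is a surjective $*$-homomorphism admitting a left inverse, hence bijective, hence an isomorphism, and the same holds for $R_{A}$.

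Finally, naturality of $L=(L_{A})_{A}$ and $R=(R_{A})_{A}$ is routine. For a morphism $\pi\colon A\to C$ the relevant squares commute because $(\delta_{A})_{A}$, $(\bar\delta_{A})_{A}$ and $q=(q_{A,C})_{A,C}$ are natural transformations and $-\gmaxtimes-$ is a bifunctor; it therefore suffices to chase a homogeneous generator $b\gamma\maxtimes a$ through the two composites and read off equality.
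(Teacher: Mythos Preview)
Your surjectivity argument and the overall reduction to $\rho_{A}$ match the paper. The gap is exactly where you flag it: the continuity of $\beta_{A}$, i.e.\ the claim that $(B\hat\rtimes\Gamma)\gmaxtimes A$ carries the maximal $C^{*}$-norm on $\mathcal A_{0}$. Your proposed route via the adjunction does not close this gap. Knowing that the counit $\bfC(\mathcal A_{0})\to (B\hat\rtimes\Gamma)\gmaxtimes A$ restricts to a bijection on each algebraic graded piece does not give injectivity on the completion, because $C^{*}(\mathcal A_{0})_{\gamma,\gamma'}$ and $((B\hat\rtimes\Gamma)\gmaxtimes A)_{\gamma,\gamma'}$ are closures of $(\mathcal A_{0})_{\gamma,\gamma'}$ in \emph{different} norms; Lemma~\ref{lemma:c-PA} only compares each to its own completion. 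And the ``faithfulness of the $P(\gamma\otimes-\otimes\gamma')$'' you invoke is precisely a statement about the source $\bfC(\mathcal A_{0})$ that you have no independent handle on: you would need to know that the graded projections on $\bfC(\mathcal A_{0})$ jointly separate points, but nothing in the paper gives you this for an arbitrary enveloping $C^{*}$-algebra of a graded $*$-algebra.

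The paper avoids building a global inverse $*$-homomorphism altogether. It shows injectivity of $L_{A}$ by proving that the ideal $(t_{B\hat\rtimes\Gamma,A}(B))$ meets $\iota_{\Gamma}(\cg)\gmaxtimes A$ trivially, and it does this component by component. The key device is the linear map $R_{\gamma,\gamma'}\colon B\gamma\otimes A_{\gamma,\gamma'}\to A_{\gamma,\gamma'}$, $b\gamma\otimes a\mapsto r(b)a$ (essentially your $\beta_{A}$ followed by $\epsilon_{\Gamma}\maxtimes\Id$), together with the observation that $R_{e,e}$ \emph{is} a $*$-homomorphism on the honest $C^{*}$-subalgebra $B\maxtimes A_{e,e}\subseteq (B\hat\rtimes\Gamma)\gmaxtimes A$ and hence contractive. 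Then for $z\in B\gamma\otimes A_{\gamma,\gamma'}$ one has $\|R_{\gamma,\gamma'}(z)\|^{2}=\|R_{e,e}(zz^{*})\|\leq\|zz^{*}\|=\|z\|^{2}$, so each $R_{\gamma,\gamma'}$ extends boundedly to the closure $[B\gamma\otimes A_{\gamma,\gamma'}]$. These extensions vanish on the ideal generators and act as the identity on $\gamma\otimes A_{\gamma,\gamma'}$, forcing $J_{\gamma,\gamma'}=0$; density of $J_{*,*}$ in $J$ (Lemma~\ref{lemma:c-PA}) finishes the argument. This ``square down to degree $(e,e)$'' trick is the missing idea in your approach: it yields boundedness of the would-be inverse on each graded piece without ever asserting maximality of the norm on all of $\mathcal A_{0}$.
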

\begin{proof}
  One easily checks that each $L_{A}$ is a morphism of
  $(B,\Gamma)$-$C^{*}$-algebras and that $L=(L_{A})_{A}$ is
  a natural transformation. We show that $L_{A}$ is an
  isomorphism for every $(B,\Gamma)$-$C^{*}$-algebra
  $A$. The assertions concerning $R=(R_{A})_{A}$ then follow
  similarly.

  To prove that $L_{A}$ is surjective, we only need to show
  that $(t_{B\hat\rtimes\Gamma,A}(B)) + C^{*}(\Gamma)
  \gmaxtimes A$ is dense in $(B\hat\rtimes \Gamma)
  \gmaxtimes A$. But by Remark \ref{remark:c-monoidal},
  elements of the form
  \begin{align*}
    b\gamma \otimes a &= t_{B\hat\rtimes \Gamma,A}(b)
    (\gamma \otimes a) + \gamma \otimes r_{A}(b)a, \quad
    \text{where } b\in B,a\in
    A_{\gamma,\gamma'},\gamma,\gamma' \in \Gamma,
  \end{align*}
  are linearly dense in $(B\hat\rtimes \Gamma) \gmaxtimes
  A$. 

  To prove that $L_{A}$ is injective, we only need to show
  that the intersection 
  \begin{align*}
    J:=(\iota_{\Gamma}(\cg) \gmaxtimes A) \cap
    (t_{B\hat\rtimes \Gamma,A}(B)) \subseteq (B\hat\rtimes
    \Gamma) \gmaxtimes A
  \end{align*}
  equals $0$. Since $J=\overline{J_{*,*}}$ by Lemma \ref{lemma:c-PA},
  it suffices to show that $J_{\gamma,\gamma'} = 0$ for all
  $\gamma,\gamma'\in \Gamma$. Note that $J_{\gamma,\gamma'}
  = [\gamma \otimes A_{\gamma,\gamma'}] \cap [(B\gamma
  \otimes A_{\gamma,\gamma'})t_{B\hat\rtimes
    \Gamma,A}(B)]$. For each $\gamma,\gamma' \in \Gamma$,
  define a linear map $R_{\gamma,\gamma'}\colon B\gamma
  \otimes A_{\gamma,\gamma'} \to A_{\gamma,\gamma'}$ by
  $b\gamma \otimes a \mapsto r(b)a$. Then $R_{e,e}$ extends
  to a $*$-homomorphism on the $C^{*}$-subalgebra $B
  \maxtimes A_{e,e} \subseteq (B\hat\rtimes \Gamma)
  \gmaxtimes A$, and each $R_{\gamma,\gamma'}$ extends to a
  bounded linear map on $[B\gamma \otimes
  A_{\gamma,\gamma'}] \subseteq (B\hat\rtimes \Gamma)
  \gmaxtimes A$ because
  \begin{align*}
    \|R_{\gamma,\gamma'}(z)\|^{2} =
    \|R_{\gamma,\gamma'}(z)R_{\gamma,\gamma'}(z)^{*}\| =
    \|R_{e,e}(zz^{*})\| \leq \|zz^{*}\| = \|z\|^{2}
  \end{align*} 
  for all $z \in B\gamma \otimes A_{\gamma,\gamma'}$.  Now,
  $R_{\gamma,\gamma'}(zt_{B\hat\rtimes\Gamma,A}(b)) = 0$ for
  all $z \in [B\gamma \otimes A_{\gamma,\gamma'}]$ and $b\in
  B$, and $R_{\gamma,\gamma'}(\gamma \otimes a) = a$ for all
  $a \in A_{\gamma,\gamma'}$. Consequently,
  $J_{\gamma,\gamma'} =0$.
\end{proof}

We now show that the product $-\gbmaxtimes-$ is
associative.  Let $A,C,D$ be $(B,\Gamma)$-$C^{*}$-algebras,
denote by $a_{A,C,D} \colon (A \gmaxtimes C) \gmaxtimes D
\to A \gmaxtimes (C\gmaxtimes D)$ the canonical isomorphism
and let
\begin{align*}
  \Phi_{A,C,D} := q_{\big(A\gbmaxtimes C\big),D} \circ
  (q_{A,C} \gmaxtimes \Id) &\colon (A \gmaxtimes C)
  \gmaxtimes D \to (A
  \gbmaxtimes C) \gbmaxtimes D, \\
  \Psi_{A,C,D} := q_{A,\big(C\gbmaxtimes D\big)} \circ
  (\Id \gmaxtimes q_{C,D}) &\colon A \gmaxtimes
  (C\gmaxtimes D) \to A \gbmaxtimes (C\gbmaxtimes D).
\end{align*}
\begin{lemma} \label{lemma:c-associativity}
  \begin{enumerate}
  \item $\ker \Phi_{A,C,D}$ and $ \ker \Psi_{A,C,D}$ are
    generated as ideals by
    $t_{A,C}(B) \otimes 1_{D} + t_{\big(A\gmaxtimes
        C,D\big)}(B)$ and $1_{A} \otimes
      t_{C,D}(B) + t_{A,\big(C\gmaxtimes D\big)}(B)$,
    respectively.
  \item There exists a unique isomorphism of
    $(B,\Gamma)$-$C^{*}$-algebras $\tilde a_{A,C,D} \colon
    (A\gbmaxtimes C) \gbmaxtimes D \to A \gbmaxtimes
    (C\gbmaxtimes D)$ such that $\tilde a_{A,C,D} \circ
    \Phi_{A,C,D} =\Psi_{A,C,D} \circ a_{A,C,D}$.
  \end{enumerate}
\end{lemma}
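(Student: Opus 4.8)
The plan is to compute the kernels of $\Phi_{A,C,D}$ and $\Psi_{A,C,D}$ explicitly in part (i), and then to observe in part (ii) that the canonical associativity isomorphism $a_{A,C,D}$ for the product $-\gmaxtimes-$ carries the first kernel onto the second, so that it descends to the desired isomorphism of the iterated fibre products.

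For part (i), I would factor $\Phi_{A,C,D}$ as the composition of $q_{A,C}\gmaxtimes\Id$ followed by $q_{(A\gbmaxtimes C),D}$, and track the kernel through each stage. Writing $J := (t_{A,C}(B))\subseteq A\gmaxtimes C$ for the kernel of $q_{A,C}$, exactness of the functor $-\gmaxtimes D$ (Proposition \ref{proposition:c-exact}) identifies $\ker(q_{A,C}\gmaxtimes\Id)$ with the image of $J\gmaxtimes D$, and a short density argument based on the description of homogeneous components in Remark \ref{remark:c-monoidal} shows that this image is the ideal of $(A\gmaxtimes C)\gmaxtimes D$ generated by $t_{A,C}(B)\otimes 1_{D}$. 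Since $s_{A\gbmaxtimes C}(b)=q_{A,C}(1_{A}\maxtimes s_{C}(b))$, the generator $t_{(A\gbmaxtimes C),D}(b)$ of $\ker q_{(A\gbmaxtimes C),D}$ is the image of $t_{(A\gmaxtimes C),D}(b)$ under $q_{A,C}\gmaxtimes\Id$; because the latter map is surjective, the preimage of $\ker q_{(A\gbmaxtimes C),D}$ is the ideal generated by $\ker(q_{A,C}\gmaxtimes\Id)$ together with $t_{(A\gmaxtimes C),D}(B)$. This yields the stated generators of $\ker\Phi_{A,C,D}$, and the symmetric argument handles $\ker\Psi_{A,C,D}$.

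For part (ii), I would compute the effect of $a_{A,C,D}$ on the generators found in (i). Using $r_{C\gmaxtimes D}(b)=r_{C}(b)\maxtimes 1_{D}$ and $s_{A\gmaxtimes C}(b)=1_{A}\maxtimes s_{C}(b)$, together with the rule $(a\maxtimes c)\maxtimes d\mapsto a\maxtimes(c\maxtimes d)$ on elementary tensors, one obtains $a_{A,C,D}(t_{A,C}(b)\otimes 1_{D})=t_{A,(C\gmaxtimes D)}(b)$ and $a_{A,C,D}(t_{(A\gmaxtimes C),D}(b))=1_{A}\otimes t_{C,D}(b)$ for all $b\in B$. Thus $a_{A,C,D}$ maps the generating set of $\ker\Phi_{A,C,D}$ bijectively onto that of $\ker\Psi_{A,C,D}$, and since it is $B\otimes B$-linear and hence an isomorphism of $(B,\Gamma)$-$C^{*}$-algebras, it maps the ideal $\ker\Phi_{A,C,D}$ onto $\ker\Psi_{A,C,D}$. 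It therefore descends to an isomorphism $\tilde a_{A,C,D}$ of the quotients satisfying $\tilde a_{A,C,D}\circ\Phi_{A,C,D}=\Psi_{A,C,D}\circ a_{A,C,D}$, and uniqueness is immediate from the surjectivity of $\Phi_{A,C,D}$.

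I expect the only genuine obstacle to lie in part (i): the passage from ``the image of the ideal $J\gmaxtimes D$'' to ``the ideal generated by $t_{A,C}(B)\otimes 1_{D}$'' requires some care with closed ideals and density in the $C^{*}$-setting, and one must correctly lift the generators $t_{(A\gbmaxtimes C),D}(b)$ through the first quotient map. Once the kernels are in the stated explicit form, part (ii) reduces to the transparent computation above, and the construction of $\tilde a_{A,C,D}$ is formal.
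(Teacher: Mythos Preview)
Your proposal is correct and follows essentially the same route as the paper: factor $\Phi_{A,C,D}$ through the two quotient maps, use exactness of $-\gmaxtimes D$ (Proposition~\ref{proposition:c-exact}) to identify $\ker(q_{A,C}\gmaxtimes\Id)$ with $(t_{A,C}(B))\gmaxtimes D$, lift the generators of $\ker q_{(A\gbmaxtimes C),D}$ via the surjection $q_{A,C}\gmaxtimes\Id$, and then check that $a_{A,C,D}$ swaps the two generating sets so that it descends. Your explicit computation of where $a_{A,C,D}$ sends each generator, and your remark that the only delicate point is identifying $(t_{A,C}(B))\gmaxtimes D$ with the ideal generated by $t_{A,C}(B)\otimes 1_{D}$, are exactly the details the paper leaves implicit.
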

\begin{proof}
  i) By Proposition \ref{proposition:c-exact}, $\ker (q_{A,C}
  \gmaxtimes \Id_{D}) = (\ker q_{A,C}) \gmaxtimes D =
  (t_{A,C}(B)) \gmaxtimes D$, and $\ker q_{\big(A\gbmaxtimes
    C\big),D}$ is generated as an ideal by $(q_{A,C}
  \gmaxtimes \Id_{D})\big(t_{\big(A\gmaxtimes
    C,D\big)}(B)\big)$. The assertion on $\Phi_{A,C,D}$
  follows, and the assertion concerning $\Psi_{A,C,D}$
  follows similarly.

  ii) Using i), one easily verifies that $a_{A,C,D}(\ker
  \Phi_{A,C,D}) =\ker \Psi_{A,C,D}$. We thus get an
  isomorphism $\tilde a_{A,C,D}$ of $C^{*}$-algebras which
  is easily seen to be an isomorphism of
  $(B,\Gamma)$-$C^{*}$-algebras.
\end{proof}
\begin{proposition}
  The family $(\tilde a_{A,C,D})_{A,C,D}$ is a natural
  isomorphism from $(-\gbmaxtimes-)\gbmaxtimes -$ to
  $-\gbmaxtimes(-\gbmaxtimes-)$.
\end{proposition}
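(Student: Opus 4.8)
The plan is to deduce naturality of the family $(\tilde a_{A,C,D})$ from the naturality already available at the level of the product $-\gmaxtimes-$, exploiting that each $\Phi_{A,C,D}$ is surjective. Fix morphisms $\phi\colon A\to A'$, $\psi\colon C\to C'$, $\chi\colon D\to D'$ in $\bgcalg^{(\cp)}$ and abbreviate
\begin{align*}
  F &:= (\phi\gmaxtimes \psi)\gmaxtimes \chi, & G &:= \phi\gmaxtimes(\psi\gmaxtimes \chi), \\
  f &:= (\phi\gbmaxtimes \psi)\gbmaxtimes \chi, & g &:= \phi\gbmaxtimes(\psi\gbmaxtimes \chi).
\end{align*}
The goal is the identity $g\circ \tilde a_{A,C,D} = \tilde a_{A',C',D'} \circ f$, which together with Lemma \ref{lemma:c-associativity} ii) (each component is an isomorphism) will exhibit $(\tilde a_{A,C,D})$ as a natural isomorphism.

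First I would record three auxiliary identities. Since $-\gmaxtimes-$ is associative in the natural sense, the maps $a_{A,C,D}$ form a natural transformation, so $G\circ a_{A,C,D} = a_{A',C',D'}\circ F$. Next, using that $q=(q_{A,C})$ is a natural transformation from $-\gmaxtimes-$ to $-\gbmaxtimes-$ together with the bifunctoriality of $-\gmaxtimes-$, I would derive the two intertwining relations $\Phi_{A',C',D'}\circ F = f\circ \Phi_{A,C,D}$ and $\Psi_{A',C',D'}\circ G = g\circ \Psi_{A,C,D}$. Concretely, naturality of $q$ in the first two slots gives $(q_{A',C'}\gmaxtimes \Id)\circ F = ((\phi\gbmaxtimes \psi)\gmaxtimes \chi)\circ (q_{A,C}\gmaxtimes \Id)$, and a further application of naturality of $q$ absorbs the outer quotient map $q_{(A'\gbmaxtimes C'),D'}$, yielding the first relation; the second is symmetric.

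With these in hand, I would perform the diagram chase after precomposing the target identity with $\Phi_{A,C,D}$. Using the defining property $\tilde a_{A,C,D}\circ \Phi_{A,C,D} = \Psi_{A,C,D}\circ a_{A,C,D}$ of Lemma \ref{lemma:c-associativity} ii), one computes
\begin{align*}
  \tilde a_{A',C',D'}\circ f\circ \Phi_{A,C,D}
  &= \tilde a_{A',C',D'}\circ \Phi_{A',C',D'}\circ F
  = \Psi_{A',C',D'}\circ a_{A',C',D'}\circ F \\
  &= \Psi_{A',C',D'}\circ G\circ a_{A,C,D}
  = g\circ \Psi_{A,C,D}\circ a_{A,C,D}
  = g\circ \tilde a_{A,C,D}\circ \Phi_{A,C,D}.
\end{align*}
Since each $q_{A,C}$ is surjective and $-\gmaxtimes D$ preserves surjections by Proposition \ref{proposition:c-exact}, the composite $\Phi_{A,C,D} = q_{(A\gbmaxtimes C),D}\circ (q_{A,C}\gmaxtimes \Id)$ is surjective, so it may be cancelled on the right to obtain $g\circ \tilde a_{A,C,D} = \tilde a_{A',C',D'}\circ f$.

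The main obstacle I anticipate is purely bookkeeping: verifying the two intertwining relations $\Phi_{A',C',D'}\circ F = f\circ \Phi_{A,C,D}$ and $\Psi_{A',C',D'}\circ G = g\circ \Psi_{A,C,D}$ requires threading the naturality of $q$ through the nested products and carefully tracking which variables the several quotient maps act on. Everything else—naturality of $a$, the defining property of $\tilde a$, and the surjectivity of $\Phi_{A,C,D}$—is immediate from results already established, so once the intertwining relations are in place the argument reduces to the single displayed computation above.
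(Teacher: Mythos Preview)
Your proof is correct and follows exactly the approach the paper has in mind: the paper's proof simply says that by Lemma \ref{lemma:c-associativity} each $\tilde a_{A,C,D}$ is an isomorphism and that naturality is ``straightforward,'' and your argument is precisely the straightforward check spelled out in full, using naturality of $a$ and of $q$ together with surjectivity of $\Phi_{A,C,D}$ to cancel on the right.
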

\begin{proof}
  By Lemma \ref{lemma:c-associativity}, we only need to
  check naturality which is  straightforward.
\end{proof}

\subsection{Free dynamical quantum groups on the level of
  universal $C^{*}$-algebras}
\label{section:free-c}

Given the monoidal structure on the category of all
$(B,\Gamma)$-$C^{*}$-algebras, the definitions in
\S\ref{section:bg}--\S\ref{section:ao} carry over  as follows:
\begin{definition}
  A \emph{compact $(B,\Gamma)$-Hopf $C^{*}$-algebroid} is a
  $(B,\Gamma)$-$C^{*}$-algebra $A$ with a morphism $\Delta
  \colon A \to A \gbmaxtimes A$ satisfying
  \begin{enumerate}
  \item $(\Delta \gbmaxtimes \Id)\circ\Delta = (\Id
  \gbmaxtimes \Delta)\circ\Delta$ (\emph{coassociativity}),
\item $[\Delta(A)(1 \otimes A_{e,*})]=A \gbmaxtimes A =
  [(A_{*,e} \otimes 1)\Delta(A)]$, where $A_{e,*} =
  [\sum_{\gamma} A_{e,\gamma}] \subseteq A$ and
  $A_{*,e}=[\sum_{\gamma} A_{\gamma,e}] \subseteq A$
  (\emph{cancellation}).
\end{enumerate}
A \emph{counit} for a compact $(B,\Gamma)$-Hopf
$C^{*}$-algebroid $(A,\Delta)$ is a morphism $\epsilon \colon
A \to B\hat\rtimes \Gamma$  of $(B,\Gamma)$-$C^{*}$-algebras
satisfying $(\epsilon \gbmaxtimes \Id) \circ \Delta =
\Id_{A} = (\Id \gbmaxtimes \epsilon) \circ \Delta$. A
\emph{morphism} of compact $(B,\Gamma)$-Hopf
$C^{*}$-algebroids  $(A,\Delta_{A})$ and $(C,\Delta_{C})$ is a morphism $\pi\colon A\to C$ satisfying $\Delta_{C}\circ
\pi = (\pi\gbmaxtimes\pi)\circ \Delta_{A}$. We denote the
category of all compact $(B,\Gamma)$-Hopf $C^{*}$-algebroids
by $\cHopf_{(B,\Gamma)}$.
\end{definition}
Denote by $\Hopf_{(B,\Gamma)}^{0}$ the full subcategory of
$\Hopf_{(B,\Gamma)}^{*}$ formed by all $(B,\Gamma)$-Hopf
$*$-algebroids $(A,\Delta,\epsilon,S)$ where $A \in
\bgsalgu$. 
\begin{proposition}
  Let $(A,\Delta,\epsilon,S)
  \in\Hopf_{(B,\Gamma)}^{*}$. Then $\Delta$ extends to a
  morphism of $(B,\Gamma)$-$C^{*}$-algebras
  $\Delta_{C^{*}(A)} \colon C^{*}(A) \to C^{*}(A)
  \gbmaxtimes C^{*}(A)$ such that
  $(C^{*}(A),\Delta_{C^{*}(A)})$ is a compact
  $(B,\Gamma)$-Hopf $C^{*}$-algebroid with counit
  $C^{*}(\epsilon) \colon C^{*}(A) \to C^{*}(B\rtimes
  \Gamma) = B\hat\rtimes \Gamma$.
\end{proposition}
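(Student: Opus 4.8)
The plan is to model the algebraic fiber product inside $C^{*}(A)\gbmaxtimes C^{*}(A)$, extend $\Delta$ by the universal property, and then verify every axiom by continuity from the dense subalgebra, the only substantial point being the cancellation condition.

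First I would construct a $*$-homomorphism $\kappa\colon A\todot A\to C^{*}(A)\gbmaxtimes C^{*}(A)$. For $a\in A_{\gamma,\gamma'}$ and $c\in A_{\gamma',\gamma''}$ the elementary tensor $a\otimes c$ satisfies $(\bar\delta\maxtimes\Id)(a\otimes c)=a\otimes\gamma'\otimes c=(\Id\maxtimes\delta)(a\otimes c)$, so the image of $A\gtimes A$ lies in $C^{*}(A)\gmaxtimes C^{*}(A)$; the resulting $*$-homomorphism sends each generator $s_{A}(b)\otimes 1-1\otimes r_{A}(b)$ of the ideal defining the fiber product $A\todot A$ to $t_{C^{*}(A),C^{*}(A)}(b)$, so composing with $q_{C^{*}(A),C^{*}(A)}$ kills that ideal and yields $\kappa$. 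Since $\kappa\circ\Delta\colon A\to C^{*}(A)\gbmaxtimes C^{*}(A)$ is a $*$-homomorphism into a $C^{*}$-algebra, the universal property of $C^{*}(A)$ produces a unique unital $*$-homomorphism $\Delta_{C^{*}(A)}$ with $\Delta_{C^{*}(A)}\circ\eta_{A}=\kappa\circ\Delta$ (unital because $\Delta(1)=1\todot 1\mapsto 1$).

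Next I would verify that $\Delta_{C^{*}(A)}$ is a morphism in $\bgcalg$, that it is coassociative, and that $C^{*}(\epsilon)$ is a counit, all by density. Because $\Delta$ preserves the $\Gamma^{\ev}$-grading and the maps $r_{A},s_{A}$, the map $\Delta_{C^{*}(A)}$ satisfies the intertwining relations of Definition \ref{definition:c-cg} and is $B\otimes B$-linear on the dense subalgebra $\eta_{A}(A)$, and these persist on $C^{*}(A)$ since all maps in sight are continuous; moreover $C^{*}(\epsilon)$ is a morphism in $\bgcalg$ by functoriality of $\bfC$ (Corollary \ref{corollary:c-bg-adjoints}). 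For coassociativity, both $\tilde a_{C^{*}(A),C^{*}(A),C^{*}(A)}\circ(\Delta_{C^{*}(A)}\gbmaxtimes\Id)\circ\Delta_{C^{*}(A)}$ and $(\Id\gbmaxtimes\Delta_{C^{*}(A)})\circ\Delta_{C^{*}(A)}$ are $*$-homomorphisms that, on $\eta_{A}(A)$, reduce via $\kappa$ to the algebraic identity $(\Delta\todot\Id)\circ\Delta=(\Id\todot\Delta)\circ\Delta$, hence coincide. Similarly, $L_{C^{*}(A)}^{-1}\circ(C^{*}(\epsilon)\gbmaxtimes\Id)\circ\Delta_{C^{*}(A)}$ and $R_{C^{*}(A)}^{-1}\circ(\Id\gbmaxtimes C^{*}(\epsilon))\circ\Delta_{C^{*}(A)}$ agree with $\Id_{C^{*}(A)}$ on $\eta_{A}(A)$ by the counit axioms of Definition \ref{definition:bg-hopf}, hence everywhere.

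The main obstacle is the cancellation condition. The crucial input is that $\kappa(A\todot A)$ is dense in $C^{*}(A)\gbmaxtimes C^{*}(A)$: by Lemma \ref{lemma:c-PA} one has $C^{*}(A)_{\gamma,\gamma'}=\overline{A_{\gamma,\gamma'}}$, so by Remark \ref{remark:c-monoidal} the span $A\gtimes A=\sum_{\gamma,\gamma',\gamma''}A_{\gamma,\gamma'}\otimes A_{\gamma',\gamma''}$ has dense image in $C^{*}(A)\gmaxtimes C^{*}(A)$, and applying the continuous surjection $q_{C^{*}(A),C^{*}(A)}$ shows $\kappa(A\todot A)$ is dense. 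The algebraic cancellation $\Delta(A)(1\todot A_{e,*})=A\todot A$ of Remarks \ref{remarks:bg-hopf} therefore maps under $\kappa$ onto the dense set $\Delta_{C^{*}(A)}(\eta_{A}(A))\,(1\otimes\eta_{A}(A_{e,*}))$. Since $\eta_{A}(A_{e,*})\subseteq C^{*}(A)_{e,*}$, this dense set is contained in the closed subspace $[\Delta_{C^{*}(A)}(C^{*}(A))(1\otimes C^{*}(A)_{e,*})]$, which is thus all of $C^{*}(A)\gbmaxtimes C^{*}(A)$; the identity $[(C^{*}(A)_{*,e}\otimes 1)\Delta_{C^{*}(A)}(C^{*}(A))]=C^{*}(A)\gbmaxtimes C^{*}(A)$ follows symmetrically, completing the cancellation condition and the proof.
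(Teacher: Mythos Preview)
Your proof is correct and follows essentially the same approach as the paper: construct the canonical map $A\todot A\to C^{*}(A)\gbmaxtimes C^{*}(A)$, extend $\Delta$ by the universal property of $C^{*}(A)$, deduce coassociativity and the counit property by density, and derive cancellation from Remarks~\ref{remarks:bg-hopf}~ii). The paper's proof is a terse three-sentence sketch of exactly these steps; you have simply filled in the details, in particular the verification that $\kappa(A\todot A)$ is dense via Lemma~\ref{lemma:c-PA} and Remark~\ref{remark:c-monoidal}.
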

\begin{proof}
  The composition of $\Delta$ with the canonical map
  $A\todot A \to C^{*}(A) \gbmaxtimes C^{*}(A)$ extends to a
  morphism $\Delta_{C^{*}(A)}$ by the universal property of
  $C^{*}(A)$.  Coassociativity of $\Delta$ and density of
  $A$ in $C^{*}(A)$ imply coassociativity of
  $\Delta_{C^{*}(A)}$, and cancellation follows from Remark
  \ref{remarks:bg-hopf} ii).
\end{proof}
The assignments $(A,\Delta,\epsilon,S) \mapsto
(C^{*}(A),\Delta_{C^{*}(A)})$ and $\pi \mapsto C^{*}(\pi)$
evidently form a functor $\Hopf_{(B,\Gamma)}^{0} \to
\cHopf_{(B,\Gamma)}$. 

We now apply this functor to the free unitary and free
orthogonal dynamical quantum groups $\Au(\nabla,F)$ and
$\Ao(\nabla,F,G)$ introduced in Definition
\ref{definition:intro-au-hopf}, Theorem
\ref{theorem:intro-au-hopf} and Definition
\ref{definition:matrix-hopf-involution}, Theorem
\ref{theorem:ao-prime-hopf}, respectively. 

Let $\gamma_{1},\ldots,\gamma_{n} \in \Gamma$ and $\nabla =
\diag(\gamma_{1},\ldots,\gamma_{n}) \in \Mn(B\rtimes
\Gamma)$.

Assume that $F \in \GLn(B)$ be $\nabla$-even in the sense
that $\nabla F\nabla^{-1} \in \Mn(B)$. Then   the
  $(B,\Gamma)$-Hopf $*$-algebroid $\Au(\nabla,F)$  is generated by a copy of
  $B\otimes B$ and entries of a unitary matrix $v\in
  \Mn(\Au(\nabla,F))$ and therefore has an enveloping
  $C^{*}$-algebra.  Applying the functor $\bfC$ and
  unraveling the definitions, we find:
  \begin{corollary}
    $C^{*}(\Au(\nabla,F))$ is the universal $C^{*}$-algebra
    generated by a inclusion $r\times s$ of $B\otimes B$ and
    by the entries of a unitary $n\times n$-matrix $v$
    subject to the relations 
    \begin{enumerate}
    \item $v_{ij}r(b)=r(\gamma_{i}(b))v_{ij}$ and
      $v_{ij}s(b)=s(\gamma_{j}(b))v_{ij}$ for all $i,j$ and
      $b\in B$,
    \item  $v^{-\top} = \bar v$ is
    invertible and $r_{n}(\nabla F\nabla^{-1})\bar
    v^{-\top}=vs_{n}(F)$.
    \end{enumerate}
    It has the
    structure of a compact $(B,\Gamma)$-Hopf
    $C^{*}$-algebroid with counit, where for all $i,j$,
    \begin{align} \label{eq:explicit}
      \delta(v_{ij}) &= \gamma_{i} \otimes v_{ij}, &\bar
      \delta(v_{ij})  &= v_{ij} \otimes \gamma_{j}, &
      \Delta(v_{ij}) &= \sum_{k} v_{ik} \otimes v_{kj}, &
      \epsilon(v_{ij}) &= \delta_{i,j}.
    \end{align}
  \end{corollary}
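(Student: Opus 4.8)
The plan is to deduce everything from the general constructions of \S\ref{section:universal} applied to the $(B,\Gamma)$-Hopf $*$-algebroid $\Au(\nabla,F)$ of Definition~\ref{definition:intro-au-hopf} and Theorem~\ref{theorem:intro-au-hopf}. First I would record that $\Au(\nabla,F)$ lies in $\bgsalgu$: as a $*$-algebra it is generated by the image of the unital $*$-homomorphism $r\times s\colon B\otimes B\to\Au(\nabla,F)$ and by the entries of the unitary $v$, and since $r,s$ are $*$-homomorphisms of the $C^{*}$-algebra $B$ and $v$ is unitary, every generator has bounded image under every $*$-representation, so the enveloping $C^{*}$-algebra exists. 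Hence $(\Au(\nabla,F),\Delta,\epsilon,S)\in\Hopf^{0}_{(B,\Gamma)}$, and the functor $\Hopf^{0}_{(B,\Gamma)}\to\cHopf_{(B,\Gamma)}$ induced by the preceding Proposition equips $C^{*}(\Au(\nabla,F))$ with the structure of a compact $(B,\Gamma)$-Hopf $C^{*}$-algebroid whose counit is $C^{*}(\epsilon)\colon C^{*}(\Au(\nabla,F))\to B\hat\rtimes\Gamma$. This settles the final clause of the statement.

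The substance is the explicit $C^{*}$-presentation, which I would obtain by unravelling the definition of $\Au(\nabla,F)$ into plain generators and relations. The grading condition $\partial_{v}=\nabla$, i.e.\ $v_{ij}\in\Au(\nabla,F)_{\gamma_{i},\gamma_{j}}$, is equivalent via the $(B,\Gamma)$-algebra axioms $ar(b)=r(\partial^{r}_{a}(b))a$, $as(b)=s(\partial^{s}_{a}(b))a$ to the commutation relations (i); conversely, assigning $\partial_{v_{ij}}=(\gamma_{i},\gamma_{j})$ and $\partial_{r(b)}=\partial_{s(b)}=(e,e)$ turns the universal $*$-algebra on a unital $*$-homomorphism $r\times s$ and a unitary $v$ subject to (i) and (ii) into a $(B,\Gamma)$-$*$-algebra. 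For the intertwiner relation I would use that $v$ is unitary, so $\bar v=v^{-\top}$ is invertible and $(v^{-\top})^{-\top}=\bar v^{-\top}$, together with $\partial_{(v^{-\top})^{-\top}}=\partial_{v}=\nabla$ (see \S\ref{section:rn}), which gives $\hat F=\partial_{v}F\partial_{v}^{-1}=\nabla F\nabla^{-1}$ by Lemma~\ref{lemma:rn-grading}; the abstract condition that $(v^{-\top})^{-\top}\xrightarrow{F}v$ be an intertwiner, namely $r_{n}(\hat F)(v^{-\top})^{-\top}=vs_{n}(F)$, is then exactly (ii). Comparing universal properties, the universal $*$-algebra defined by (i) and (ii) coincides with $\Au(\nabla,F)$, and the universal property of the enveloping $C^{*}$-algebra (equivalently, the adjunction of Corollary~\ref{corollary:c-bg-adjoints}) identifies $C^{*}(\Au(\nabla,F))$ with the universal $C^{*}$-algebra on $r\times s$ and the unitary $v$ subject to (i) and (ii).

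Finally, the formulas~\eqref{eq:explicit} are read off from the construction of $\bfC$ and the corepresentation property of $v$. By the defining formulas of $\delta_{C^{*}(A)}$ and $\bar\delta_{C^{*}(A)}$ on homogeneous elements and $v_{ij}\in A_{\gamma_{i},\gamma_{j}}$, one gets $\delta(v_{ij})=\gamma_{i}\otimes v_{ij}$ and $\bar\delta(v_{ij})=v_{ij}\otimes\gamma_{j}$. Since $v$ is a matrix corepresentation, $\Delta_{n}(v)=v\mal v$ and $\epsilon_{n}(v)=\nabla$ by Theorem~\ref{theorem:intro-au-hopf}, and applying the canonical map $A\todot A\to C^{*}(A)\gbmaxtimes C^{*}(A)$, $a\todot a'\mapsto a\otimes a'$, yields $\Delta(v_{ij})=\sum_{k}v_{ik}\otimes v_{kj}$ and $\epsilon(v_{ij})=\delta_{i,j}\gamma_{i}$, the $(i,j)$-entry of $\nabla$. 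I expect the only genuine point requiring care to be the unravelling in the middle paragraph --- verifying that (i) and (ii) impose neither more nor fewer relations than the defining data of $\Au(\nabla,F)$, in particular that the $\Gamma^{\ev}$-grading is faithfully recovered from (i) and that (ii) is precisely the transported intertwiner relation; the remaining assertions are direct appeals to results already established.
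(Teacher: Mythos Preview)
Your proposal is correct and follows exactly the approach the paper takes: the paper merely states ``Applying the functor $\bfC$ and unravelling the definitions, we find'' before the corollary, and you have carried out precisely that unravelling in detail, including the verification that $\Au(\nabla,F)\in\bgsalgu$ (which the paper notes just before the statement) and the translation of the grading and intertwiner conditions into the relations (i) and (ii). Your closing formula $\epsilon(v_{ij})=\delta_{i,j}\gamma_{i}$ is in fact the correct one, matching $\epsilon_{n}(v)=\nabla$; the paper's $\epsilon(v_{ij})=\delta_{i,j}$ appears to be a minor slip.
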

  Let $F,G \in \GLn(B)$ be $\nabla$-odd in the sense that
  $\nabla F \nabla, \nabla G\nabla \in \Mn(B)$, and assume
  that $GF^{*}=FG^{*}$. If $G^{-1}F=\lambda \bar HH^{\top}$
  for some $\lambda \in \complex$ and some $\nabla$-even
  $H\in \GLn(B)$, then $\Ao(\nabla,F,G)$ is generated by a
  copy of $B\otimes B$ and entries of a unitary matrix
  $u\in \Mn(\Ao(\nabla,F,G))$ by Remark \ref{remarks:ao-fg}
  iii), and therefore has an enveloping $C^{*}$-algebra.
  \begin{corollary}
    $C^{*}(\Ao(\nabla,F,G))$ is is the universal
    $C^{*}$-algebra generated by a inclusion $r\times s$ of
    $B\otimes B$ and by the entries of an invertible $n\times
    n$-matrix $v$ subject to the relations 
    \begin{enumerate}
    \item $v_{ij}r(b)=r(\gamma_{i}(b))v_{ij}$ and
      $v_{ij}s(b)=s(\gamma_{j}(b))v_{ij}$ for all $i,j$ and
      $b\in B$,
    \item $r_{n}(\nabla F\nabla) v^{-\top}=vs_{n}(F)$ and $r_{n}(\nabla G\nabla)\bar v = vs_{n}(G)$.
    \end{enumerate}
    It carries the
    structure of a compact $(B,\Gamma)$-Hopf
    $C^{*}$-algebroid with counit such that
    \eqref{eq:explicit} holds.
  \end{corollary}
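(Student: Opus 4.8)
The plan is to deduce the statement from the accumulated functoriality rather than by a direct computation: I transport the universal property of the $(B,\Gamma)$-$*$-algebra $\Ao(\nabla,F,G)$ along the enveloping-$C^{*}$-algebra functor $\bfC\colon\bgsalgu\to\bgcalg$, and then read off the Hopf $C^{*}$-algebroid structure from the functor $\Hopf^{0}_{(B,\Gamma)}\to\cHopf_{(B,\Gamma)}$ constructed just above. The first thing to record is that $\Ao(\nabla,F,G)$ genuinely lies in $\bgsalgu$: under the standing hypothesis $G^{-1}F=\lambda\bar{H}H^{\top}$, Remark \ref{remarks:ao-fg} supplies a \emph{unitary} matrix corepresentation $u$ whose entries, together with a copy of $B\otimes B$, generate the algebra, and since $u$ is unitary and $r,s$ are unital $*$-homomorphisms, every $*$-representation is contractive on these generators, so the enveloping $C^{*}$-algebra exists.

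The core step is to pin down the presentation. As a bare $*$-algebra, $\Ao(\nabla,F,G)$ is generated by $r(b),s(b)$ $(b\in B)$ and the entries of an invertible matrix $v$, subject only to the relations that $r,s$ are commuting $*$-homomorphisms of $B$, the homogeneity $\partial_{v}=\nabla$, and the two intertwiner relations $v^{-\top}\xrightarrow{F}v$, $\bar v\xrightarrow{G}v$ of the reformulation in Remark \ref{remarks:ao-fg}. I would unravel these using Lemma \ref{lemma:rn-grading}: since $\partial_{v}=\nabla$ forces $\partial_{v^{-\top}}=\partial_{\bar v}=\nabla^{-1}$, we obtain $\partial_{v}F\partial_{v^{-\top}}^{-1}=\nabla F\nabla$ and $\partial_{v}G\partial_{\bar v}^{-1}=\nabla G\nabla$, so the two intertwiner relations read $r_{n}(\nabla F\nabla)v^{-\top}=vs_{n}(F)$ and $r_{n}(\nabla G\nabla)\bar v=vs_{n}(G)$, which are precisely the relations (ii); likewise Definition \ref{definition:bg-algebra} rewrites $v_{ij}\in A_{\gamma_{i},\gamma_{j}}$ as the relations (i). A $*$-homomorphism from this $*$-algebra into a $C^{*}$-algebra $D$ is therefore the same datum as an inclusion $r\times s$ of $B\otimes B$ and an invertible $v\in\Mn(D)$ obeying (i)--(ii), and combined with the universal property of $\bfC$ (Corollary \ref{corollary:c-bg-adjoints}) this yields exactly the asserted universal $C^{*}$-algebra.

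For the Hopf structure I would invoke Theorem \ref{theorem:ao-prime-hopf}, by which $\Ao(\nabla,F,G)$ is a $(B,\Gamma)$-Hopf $*$-algebroid; together with the first step it is then an object of $\Hopf^{0}_{(B,\Gamma)}$, and applying the functor of the preceding Proposition equips $C^{*}(\Ao(\nabla,F,G))$ with a compact $(B,\Gamma)$-Hopf $C^{*}$-algebroid structure with counit $C^{*}(\epsilon)$. The explicit formulas \eqref{eq:explicit} are then immediate: $\delta$ and $\bar\delta$ are the grading data built into $\bfC$ applied to $v_{ij}\in A_{\gamma_{i},\gamma_{j}}$, while $\Delta$ and $\epsilon$ are the images of the algebraic formulas $\Delta(v_{ij})=\sum_{k}v_{ik}\todot v_{kj}$ and $\epsilon_{n}(v)=\nabla$ under the canonical maps $A\todot A\to C^{*}(A)\gbmaxtimes C^{*}(A)$ and $B\rtimes\Gamma\to B\hat\rtimes\Gamma$.

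I expect no single hard estimate, so the main difficulty is one of care rather than depth. Two points deserve attention: that the unravelling of the two intertwiner relations produces \emph{exactly} (i)--(ii), which is the computation indicated above; and, more delicately, that $r\times s$ is an honest inclusion in $C^{*}(\Ao(\nabla,F,G))$, so that ``inclusion of $B\otimes B$'' is a legitimate part of the universal presentation. The latter is the point that genuinely needs thought: I would establish it by exhibiting a $*$-representation of $\Ao(\nabla,F,G)$ on a $C^{*}$-algebra whose restriction to $B\otimes B$ is faithful, so that the universal $C^{*}$-seminorm is a norm there; such a representation can be produced from the injectivity of $r\times s$ at the $*$-algebra level, for instance via a base change to a suitable commutative $C^{*}$-algebra.
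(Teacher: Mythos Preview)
Your approach is essentially the paper's own: the paper gives no proof beyond the lead-in sentence establishing $\Ao(\nabla,F,G)\in\bgsalgu$ via the unitary generator of Remark~\ref{remarks:ao-fg} and the phrase ``applying the functor $\bfC$ and unraveling the definitions,'' which is exactly what you spell out. Your flagging of the injectivity of $r\times s$ at the $C^{*}$-level is in fact more careful than the paper, which simply writes ``inclusion'' without further comment.
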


\medskip

\emph{Acknowledgments.} I thank Erik Koelink for introducing
me to dynamical quantum groups and for  stimulating
discussions, and the referee for helpful suggestions.

\def\cprime{$'$}


\begin{thebibliography}{10}

\bibitem{baaj:1}
S.~Baaj and G.~Skandalis.
\newblock {$C\sp \ast$}-alg\`ebres de {H}opf et th\'eorie de {K}asparov
  \'equivariante.
\newblock {\em $K$-Theory}, 2(6):683--721, 1989.

\bibitem{banica:orthogonal}
T.~Banica.
\newblock Th\'eorie des repr\'esentations du groupe quantique compact libre
  {${\rm O}(n)$}.
\newblock {\em C. R. Acad. Sci. Paris S\'er. I Math.}, 322(3):241--244, 1996.

\bibitem{banica:unitary}
T.~Banica.
\newblock Le groupe quantique compact libre {${\rm U}(n)$}.
\newblock {\em Comm. Math. Phys.}, 190(1):143--172, 1997.

\bibitem{boehm:algebroids}
G.~B{\"o}hm.
\newblock Hopf algebroids.
\newblock In {\em Handbook of algebra. {V}ol. 6}, volume~6 of {\em Handb.
  Algebr.}, pages 173--235. Elsevier/North-Holland, Amsterdam, 2009.

\bibitem{brown-ozawa}
N.~P. Brown and N.~Ozawa.
\newblock {\em {$C^*$}-algebras and finite-dimensional approximations},
  volume~88 of {\em Graduate Studies in Mathematics}.
\newblock American Mathematical Society, Providence, RI, 2008.

\bibitem{day:quantum-cat}
B.~Day and R.~Street.
\newblock Quantum categories, star autonomy, and quantum groupoids.
\newblock In {\em Galois theory, {H}opf algebras, and semiabelian categories},
  volume~43 of {\em Fields Inst. Commun.}, pages 187--225. Amer. Math. Soc.,
  Providence, RI, 2004.


\bibitem{etingof:book}
P.~Etingof and F.~Latour.
\newblock {\em The dynamical {Y}ang-{B}axter equation, representation theory,
  and quantum integrable systems}, volume~29 of {\em Oxford Lecture Series in
  Mathematics and its Applications}.
\newblock Oxford University Press, Oxford, 2005.

\bibitem{etingof:qdybe}
P.~Etingof and A.~Varchenko.
\newblock Solutions of the quantum dynamical {Y}ang-{B}axter equation and
  dynamical quantum groups.
\newblock {\em Comm. Math. Phys.}, 196(3):591--640, 1998.

\bibitem{etingof:exchange}
P.~Etingof and A.~Varchenko.
\newblock Exchange dynamical quantum groups.
\newblock {\em Comm. Math. Phys.}, 205(1):19--52, 1999.

\bibitem{kadison:pseudo-hopf}
L.~Kadison.
\newblock Pseudo-{G}alois extensions and {H}opf algebroids.
\newblock In {\em Modules and comodules}, Trends Math., pages 247--264.
  Birkh\"auser Verlag, Basel, 2008.

\bibitem{koelink:su2}
E.~Koelink and H.~Rosengren.
\newblock Harmonic analysis on the {${\rm SU}(2)$} dynamical quantum group.
\newblock {\em Acta Appl. Math.}, 69(2):163--220, 2001.

\bibitem{maclane}
S.~Mac~Lane.
\newblock {\em Categories for the working mathematician}, volume~5 of {\em
  Graduate Texts in Mathematics}.
\newblock Springer-Verlag, New York, second edition, 1998.

\bibitem{maghfoul}
M.~Maghfoul.
\newblock {$C^*$}-alg\`ebre de {W}oronowicz et moyenne de {H}aar.
\newblock {\em C. R. Acad. Sci. Paris S\'er. I Math.}, 324(4):393--396, 1997.

\bibitem{vainer}
D.~Nikshych and L.~Vainerman.
\newblock Finite quantum groupoids and their applications.
\newblock In {\em New directions in Hopf algebras}, volume~43 of {\em Math.
  Sci. Res. Inst. Publ.}, pages 211--262. Cambridge University Press,
  Cambridge, 2002.

\bibitem{timmermann:measured}
T.~Timmermann.
\newblock Measured quantum groupoids associated to proper dynamical quantum
  groups.
\newblock arXiv:1206.6744.

\bibitem{timmermann:buch}
T.~Timmermann.
\newblock {\em An invitation to quantum groups and duality}.
\newblock EMS Textbooks in Mathematics. European Mathematical Society (EMS),
  Z\"urich, 2008.

\bibitem{wang:universal}
A.~van Daele and S.~Wang.
\newblock Universal quantum groups.
\newblock {\em Internat. J. Math.}, 7(2):255--263, 1996.

\bibitem{wang:thesis}
S.~{Wang}.
\newblock {\em General constructions of compact quantum groups}.
\newblock PhD thesis, University of California, Berkeley, 1993.

\bibitem{wang:classify}
S.~Wang.
\newblock Structure and isomorphism classification of compact quantum groups
  {$A\sb u(Q)$} and {$B\sb u(Q)$}.
\newblock {\em J. Operator Theory}, 48(3, suppl.):573--583, 2002.

\bibitem{woron:1}
S.~L. Woronowicz.
\newblock Compact matrix pseudogroups.
\newblock {\em Comm. Math. Phys.}, 111(4):613--665, 1987.

\bibitem{woron:0}
S.~L. Woronowicz.
\newblock Twisted {${\rm SU}(2)$} group. {A}n example of a noncommutative
  differential calculus.
\newblock {\em Publ. Res. Inst. Math. Sci.}, 23(1):117--181, 1987.

 \end{thebibliography}
\end{document}